\documentclass[11pt, reqno]{amsart}

\usepackage[top=1in,bottom=1in,left=1.2in,right=1.2in]{geometry}
\linespread{1.05}

\usepackage{amsmath,amssymb,amsthm,amsfonts,enumerate, mathrsfs, mathtools}
\usepackage{relsize}
\usepackage{stmaryrd}
\usepackage{esint}
\usepackage{xcolor}
\usepackage{graphicx}

\usepackage{tikz}
\usetikzlibrary{patterns,positioning,arrows,shapes,trees,calc}
\def\vdist{1.5cm}  
\def\hdist{2cm}  
\tikzset{
  myStyle/.style={
    node distance=\vdist and \hdist, 
    every node/.style={draw, align=center, rounded corners}, 
    every path/.style={draw, thick, -stealth}, 
  }
}
\usepackage[
colorlinks=true,
linkcolor=blue,
filecolor=blue,
urlcolor=red,
citecolor=
blue]{hyperref}
\usepackage{cleveref}



\newtheorem{theorem}{Theorem}[section]
\newtheorem{lemma}[theorem]{Lemma}
\newtheorem{proposition}[theorem]{Proposition}
\newtheorem{corollary}[theorem]{Corollary}
\newtheorem{remark}[theorem]{Remark}
\newtheorem{definition}{Definition}[section]

\newcounter{counterConstant} 
\newcommand{\const}[1]{	\addtocounter{counterConstant}{1}
\edef#1{\arabic{counterConstant}}
}

\numberwithin{equation}{section}
\def\theequation{\arabic{section}.\arabic{equation}}


\def\cB{{\mathcal B}}
\def\cC{{\mathcal C}}

\def\cE{{\mathcal E}}
\def\cF{{\mathcal F}}

\def\cH{{\mathcal H}}

\def\cM{{\mathcal M}}

\def\cP{{\mathcal P}}
\def\cQ{{\mathcal Q}}

\def\cT{{\mathcal T}}

\def\mE{{\mathbb E}}

\def\mN{{\mathbb N}}

\def\mP{{\mathbb P}}

\def\mR{{\mathbb R}}

\def\bE{{\mathbf E}}

\def\bP{{\mathbf P}}
\def\bQ{{\mathbf Q}}

\def\sF{{\mathscr F}}

\def\sI{{\mathscr I}}

\def\sS{{\mathscr S}}
\def\sT{{\mathscr T}}

\def\l{\left}
\def\r{\right}
\def\<{\langle}
\def\>{\rangle}

\def\geq{\geqslant}
\def\leq{\leqslant}

\def\1{{\mathbf{1}}}
\def\p{\partial}
\def\d{\text{\rm{d}}}
\def\e{\mathrm{e}}
\def\eps{\varepsilon}

\def\div{\mathord{{\rm div}}}
\def\osc{\mathop{{\rm Osc}}}
\def\sgn{\textit{\rm sgn}}

\newcommand{\avert}{\Vert\hspace{-0.64 em}{\mathsmaller{-}}}
\newcommand{\norm}[1]{{\left\vert\kern-0.25ex\left\vert\kern-0.25ex\left\vert #1 
		\right\vert\kern-0.25ex\right\vert\kern-0.25ex\right\vert}}

\allowdisplaybreaks

\usepackage{fancyhdr}
\pagestyle{fancy}
\fancyhead{} 
\setlength{\headheight}{14pt}
\fancyhead[CE]{M. Röckner, D. Zhang \& G. Zhao} 
\fancyhead[LE]{\thepage} 
\fancyhead[CO]{Stochastic Differential Equations with Critical Drifts/Interaction Kernels} 
\fancyhead[RO]{\thepage} 
\fancyfoot{}{}{} 




\begin{document}
	
\title{McKean-Vlasov equations and nonlinear Fokker-Planck equations with critical singular Lorentz kernels}
	
\author{Michael Röckner, Deng Zhang and Guohuan Zhao}

\address{Faculty of Mathematics, Bielefeld University, 33615 Bielefeld, Germany; Academy of Mathematics and System Sciences, CAS, Beijing, 100190, China}
\email{roeckner@math.uni-bielefeld.de}
 
\address{School of Mathematical Sciences , Shanghai Jiao Tong University, Shanghai, 200240, China}
\email{dzhang@sjtu.edu.cn}
	
\address{State Key Laboratory of Mathematical Sciences, Academy of Mathematics and Systems Science, CAS, Beijing, 100190, China}
\email{gzhao@amss.ac.cn}

\begin{abstract} 
    We prove the existence and conditional uniqueness in the Krylov class for SDEs  with singular divergence-free  drifts in the endpoint critical Lorentz space $L^\infty(0,T; L^{d,\infty}(\mathbb{R}^d))$, $d\geq 2$, which particularly includes the $2$D Biot-Savart law. The uniqueness result is shown to be optimal in dimensions $d \geq 3$,  by constructing different martingale solutions in the case of supercritical Lorentz drifts. As a consequence, the well-posedness of McKean-Vlasov equations and nonlinear Fokker-Planck equations with critical singular kernels is derived. In particular, this yields the uniqueness of the 2D vorticity Navier-Stokes equations even in certain supercritical-scaling spaces. Furthermore, we prove that the path laws of solutions to McKean–Vlasov equations with critical singular kernel  form  a nonlinear Markov process in the sense of McKean.  
\end{abstract}
	
\maketitle

\tableofcontents
	
\noindent \textbf{Keywords}: Critical SDEs,  McKean-Vlasov equations, Meyers-type estimate, nonlinear Fokker-Planck equations, nonlinear Markov process. 
	
\noindent  {\bf AMS 2020 Mathematics Subject Classification: 39A50, 35K08, 35K67, 35Q84} 
	
\section{Introduction and main results}  
\label{Sec-Intro}

Nonlinear Markov evolutions arise in various non-equilibrium statistical models, such as kinetic equations of Vlasov, Boltzmann and Landau. In the seminal paper  \cite{mckean1966class}, McKean proposed a deep connection  between nonlinear Fokker-Planck equations 
with a wide class of nonlinear Markov processes. This kind of relationship enables one to study nonlinear parabolic PDEs through underlying stochastic mechanisms governed by SDEs, now referred to as McKean-Vlasov equations, and vice versa.    

The purpose of this paper is to make progress, in the spirit of McKean, towards the understanding of McKean-Vlasov equations and nonlinear Fokker-Planck equations, particularly, with divergence-free drifts/{\it singular kernels} in the {\it endpoint critical} case for dimensions $d\geq 2$. 

One primary model of our nonlinear Fokker-Planck equations is the  2D vorticity Navier-Stokes equation \eqref{Eq:NS} 
\begin{equation}\label{Eq:NS}
    \partial_t \rho = \Delta \rho - \div(u\rho), \quad \rho|_{t=0}=\zeta, \tag{NSE}
\end{equation}
where the velocity field \(u\) can be reconstructed from \(\rho\) by the Biot-Savart law 
\begin{equation} \label{eq:BS}
    u(t,x)= \int_{\mR^2}K_{\mathrm{BS}}(x-y)\rho(t,\d y), \quad K_{\mathrm{BS}}(x)= \frac{1}{2\pi} \frac{(-x_2,x_1)}{x_1^2+x_2^2}. 
\end{equation} 
In view of It\^o's formula, \eqref{Eq:NS} corresponds to the following general McKean-Vlasov equation 
\begin{equation}\label{Eq:MV}
    \left\{
    \begin{aligned}
         &\mathrm{d} X_t = \int_{\mathbb{R}^d} K(t, X_t - y) \rho(t,\mathrm{d} y) \, \mathrm{d} t + \sqrt{2} \, \mathrm{d} W_t\\
         &\rho(t) = \mathrm{law}(X_t), \quad \rho|_{t=0} = \zeta  \in \mathcal{P}(\mathbb{R}^d)
    \end{aligned} 
    \right.  \tag{MVE}
\end{equation} 
with \(K(t,x)=K_{\mathrm{BS}}(x)\) given by \eqref{eq:BS}. The marginal law of McKean-Vlasov solutions satisfies the $2$D vorticity \eqref{Eq:NS}, and thus, \eqref{eq:BS}-\eqref{Eq:MV} provide a {probabilistic interpretation} of \eqref{Eq:NS}. We note that, rather than in the usual space $L^2_{loc}(\mR^2)$, the Biot-Savart law belongs to the {\it critical Lorentz space} $L^{2,\infty}(\mR^2)$.  
Hence, the nonlinearity in \eqref{Eq:NS} involves a singular kernel. Therefore, achieving the McKean picture for the 2D vorticity NSE, or more general nonlinear Fokker-Planck equations  
\begin{equation}\label{Eq:NFP}
\begin{cases}
    \p_t \rho  = \Delta \rho - \div \l((K*\rho) \rho\r)\\ \rho|_{t=0} = \zeta, 
\end{cases}
\tag{NFPE}
\end{equation}  
where the interaction kernel $K$ can be singular and time-dependent,  
requires the solvability theory for McKean-Vlasov equations \eqref{Eq:MV} with singular kernels. Although 
the well-posedness of the $2$D vorticity NSE has been extensively studied 
(see,  
e.g., \cite{FHM14, Isabelle2005uniqueness, gallagher2005uniqueness,giga1988two, hao2023second, barbu2023ns}), 
the singularity and criticality of kernels pose a major challenge to solve singular McKean-Vlasov equations. As a matter of fact,  very few results are known in the case of {\it critical} singular kernels in dimensions $d\geq 2$. 

In contrast to this, the solvability of McKean-Vlasov equations with {\it subcritical} kernels has been extensively studied, see, e.g., \cite{de2022wasserstein, mishura2016existence,rockner2021DDSDE, zhao2024existence}.  One efficient approach is to analyze the corresponding linearized version of \eqref{Eq:MV}, leading to SDEs with  subcritical drifts that have been well explored in the literature. We refer to \cite{krylov2005strong, mohammed2015sobolev, veretennikov1980strong2, zhang2011stochastic} and the references therein. 

The well-posedness of SDEs with {\it critical} drifts is much more challenging. Recently, significant progress has been made by Krylov in a series of papers \cite{krylov2021stochastic2, krylov2023diffusion, krylov2023strong, krylov2025strong}  in dimensions $d\geq 3$, where the strong well-posedness of SDEs with non-endpoint critical Morrey-type drifts was proved. In the endpoint critical case,  
the case of drifts in the Lorentz space \( L^\infty_t L^{d,\infty}_x\) under smallness conditions was addressed in \cite{kinzebulatov2025strong} and \cite{krylov2025strong}. For arbitrary divergence-free drifts in \( L^\infty_t L^{d,\infty}_x\) in high dimensions $d\geq 3$, the weak well-posedness of the SDEs was proved by the first and third authors \cite{rockner2023weak}.  

The remaining $2$D case is more subtle than the higher dimensional case. 
Actually, even in the simple case of SDEs with time-independent drifts, 
the standard weak sector condition is unclear for the corresponding Dirichlet form 
in dimension two, while it is valid in dimensions $d\geq 3$. Thus, solving singular SDEs with the {\it endpoint} critical \(L^\infty_tL^{2,\infty}_x\)-Lorentz drifts 
in dimension two, closely related to the Biot-Savart law, remains a challenging open problem.  

In the present paper we address this problem for singular SDEs in dimension two. Furthermore, we achieve the  picture  of McKean \cite{mckean1966class} 
for more general McKean-Vlasov equations and nonlinear Fokker-Planck equations with critical singular Lorentz kernels in all dimensions $d\geq 2$. The main results can be summarized as follows:

\begin{enumerate}[(i)]
    \item
    Well-posedness of SDEs with divergence-free  drifts in the 2D endpoint critical Lorentz space \(L^{\infty}_tL^{2,\infty}_x\).
    \item Well-posedness of McKean-Vlasov equations and nonlinear Fokker-Planck equations with singular kernels in the endpoint critical Lorentz space \(L^{\infty}_tL^{d,\infty}_x\) for dimensions $d\geq 2$. In particular, uniqueness holds in the {\it Krylov class}. 
    \item Nonlinear Markov process, in the sense of McKean, formed by the path laws of solutions to McKean-Vlasov equations with critical kernels. 
\end{enumerate} 

It should be mentioned that uniqueness is in general more difficult than existence of weak solutions to McKean-Vlasov equations and nonlinear Fokker-Planck equations. We  refer to \cite{barbu2023uniqueness} for  uniqueness in the case of  Nemytskii-type nonlinearities. For non-Nemytskii type nonlinearities like the Biot-Savart law in the 2D vorticity NSE, in the recent work \cite{barbu2023ns}, which much motivated the present work, uniqueness was obtained in a certain (sub)critical regime, and the nonlinear Markov property of the path laws of solutions to the corresponding McKean-Vlasov equations was proved in the class \(L^4_t(L^4_x\cap L^{{4}/{3}}_x)\cap L^\infty_t H^{-1}_x\), in which strong existence and uniqueness is also proved for good initial conditions. In the present work, our general uniqueness results also apply to  the 2D vorticity NSE 
model and render the following new contributions:
\begin{enumerate}[(a)]
     \item
     The uniqueness is proved for the 2D vorticity NSE in certain {\it Krylov class}, which includes a {\it supercritical} scaling space $ L^{\mathfrak{q}'}_t L^{\mathfrak{p}'}_x$ with $1/\mathfrak{p}'+ 1/\mathfrak{q}'>1$.  
     \item
     The present proof is of probabilistic nature, which is different from the analytic approach based on  the superposition principle in \cite{barbu2023ns}. In particular, it permits to derive the integration representation formula for solutions to \eqref{Eq:NS} and associated \eqref{Eq:MV} in the Krylov class,  with kernels satisfying the Aroson-type estimate, which is important in the derivation of uniqueness. 

    \item
    The probabilistic proof is also stable under small perturbations of integrable exponents,  which reveals that the unique solution class has a non-empty open interior of $(\mathfrak{p}', \mathfrak{q}')$ in the supercritical regime. 

    \item
    The nonlinear Markov property of solutions is derived in the Krylov class $ L^{\mathfrak{q}'}_t L^{\mathfrak{p}'}_x$ with $1/\mathfrak{p}'+ 1/\mathfrak{q}'>1$ for \eqref{Eq:MV} associated to the 2D vorticity NSE. 
\end{enumerate}

Let us mention that, in view of the Lady\v{z}enskaja-Prodi-Serrin  (LPS) criteria,  weak solutions to NSE in the (sub)critical-scaling regime are unique and automatically in the Leray-Hopf class \cite[Theorem 1.3]{CL22} for \(d\geq 2\). 
In the supercritical-scaling regime,  
it is usually expected that solutions are not unique. 
This has been confirmed near one endpoint of the LPS criteria, 
that is, in the space $C_tL^p_x$ with $p<2$, 
for the 2D NSE, 
recently proved by Cheskidov and Luo \cite{cheskidov2023nonuniqueness}.  
This phenomenon also occurs near another LPS endpoint in the space $L^p_tL^\infty_x$ with $p<2$
for the 3D NSE \cite{CL22}, 
and near two LPS endpoints 
for the hyper-dissipative NSE with viscosity beyond the Lions exponent $5/4$ \cite{LQZZ24}.  
The non-uniqueness of weak solutions is also expected in 
the remaining supercritical regime  \cite{CL22,LQZZ24}. 
Our uniqueness result in the Krylov class 
reveals that, 
even in the supercritical regime,  
there would exist certain unique solution class 
for the 2D vorticity NSE, as well as for the 2D NSE, 
see Theorem \ref{Thm:NS} and \Cref{Cor:NS} below.  
 
\subsection{Main results} 

\subsubsection{SDEs with endpoint critical drifts} 
We start  with  SDEs with endpoint critical drifts, which will serve as the linearization of McKean-Vlasov equations \eqref{Eq:MV} when the drift is independent of the marginal law $\rho(t)$.  
    
To be precise, we consider the following SDE in $\mR^d$: 
    \begin{equation}\label{Eq:SDE}
        X_{t}^{s,x} = x+ \int_s^t b(r, X_{r}^{s,x})\,\d r+ \sqrt{2} (W_t-W_s), \quad 0\leq s<t\leq T,   \tag{{SDE}}
    \end{equation}
where $b$ is a divergence-free vector field and belongs to the critical 
Lorentz space
    \begin{equation}\label{Eq:Ab}
        \|b\|^d_{L^\infty_t L_x^{d,\infty}} := \sup_{t\in [0,T]}\sup_{\lambda>0} \lambda^d\l|\l\{x: |b(t,x)|>\lambda\r\}\r|<\infty  \ \mbox{ and }\  \div b=0. \tag{$A_b$}
    \end{equation} 
The criticality can be seen from scaling arguments, 
see Subsubsection \ref{Subsub-SDE-(sub)critical} below for detailed explanations. 

\medskip

Let $h$ denote the heat kernel for $\Delta$ in $\mR^d$ 
 \begin{equation}\label{Eq:H}
     h(t, x):= (4\pi t)^{-\frac{d}{2}} \exp\l( -{|x|^2}/{4t} \r)
 \end{equation} 
 and 
\[
    \sI := \l\{(p,q)\in (1,\infty)^2: \frac{d}{p}+\frac{2}{q}<2\r\}. 
\]

The main results for SDEs  with endpoint critical drifts in dimension two are formulated in Theorem \ref{Thm:SDE} below. 
  
\begin{theorem}[Well-posedness of 2D critical SDE]\label{Thm:SDE}
    Let $d=2$. Assume that the drift $b$ satisfies \eqref{Eq:Ab}.  Then the following conclusions hold: 
    \begin{enumerate}[(i)]
	\item {\bf Existence:} For any $0\leq s<T$ and $x\in \mR^2$, there exists a weak solution $(X_t^{s,x})_{t\in [s,T]}$ to \eqref{Eq:SDE}, such that its density $ p_{s,t}(x,\cdot)$ satisfies the Aronson-type estimate: for any \(0\leq s\leq t\leq T\) and  \(x,y\in \mR^2\),  
	\begin{equation}\label{Eq:AE}
        \begin{aligned}
            \frac{1}{C} h\l({(t-s)}/{C}, x-y\r)&\leq  p_{s,t}(x,y) \leq C h\l(C(t-s), x-y\r), 
        \end{aligned}\tag{{AE}} 
	\end{equation}
         where  $C$ is a constant only depending on $\|b\|_{L^\infty_t L_x^{2,\infty}}$.  
        In particular, for any $(p,q) \in  \sI$,  the following Krylov-type estimate holds: 
\begin{equation}\label{Eq:Kry}
    \bE \int_{s}^T f(t,X_{t}^{s,x}) \d t \leq C \|f\|_{L^q(s,T; L_x^p)}, \tag{{KE}}
\end{equation}
where $C$ is a constant depending only on $p, q, T$ and $\|b\|_{L^\infty_t L_x^{2,\infty}}$. 
	
    \item {\bf Conditional Uniqueness:} There exists a pair \((\mathfrak{p}, \mathfrak{q}) \in \sI\), depending only on \(\|b\|_{L^\infty_t L^{2,\infty}_x}\), such that the law of the solution to \eqref{Eq:SDE} is unique within the class of all processes satisfying the following 
    generalized Krylov-type estimate 
    \begin{equation}\label{Eq:Kry'}
	    \bE \int_{s+\delta}^T f(t, X_{t}^{s,x}) \d t \leq C_\delta \|f\|_{L^{\mathfrak{q}}(s+\delta,T; L_x^\mathfrak{p})}, 
        \quad 
        \forall \delta\in (0,T-s), \tag{{KE'}}
    \end{equation}
    where $C_\delta$  depends on \(\delta\) as well as $p, q, T, \|b\|_{L^\infty_t L_x^{2,\infty}}$. Moreover, the collection of distributions \(\mP_{s,x}\) of  \((X^{s,x})\) forms a strong (linear) Markov process. 
    \end{enumerate}
\end{theorem}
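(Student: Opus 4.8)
The overall strategy is to construct the solution in part~(i) by compactness from smooth approximations for which a \emph{uniform} two-sided Gaussian bound is available, and then to deduce part~(ii) by turning the Krylov estimate into well-posedness of the associated linear Fokker--Planck equation, from which uniqueness of the law and the Markov property both follow.

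\medskip
\noindent\textbf{Existence and \eqref{Eq:AE}.} Mollify $b$ in space, $b^n:=b(t,\cdot)*\phi_{1/n}$. Since convolution with a probability density commutes with $\div$ and is bounded on $L^{2,\infty}$, each $b^n$ is smooth, bounded, divergence free, $\sup_n\|b^n\|_{L^\infty_tL^{2,\infty}_x}\le\|b\|_{L^\infty_tL^{2,\infty}_x}$, and $b^n\to b$ in $L^\infty_tL^p_{x,\mathrm{loc}}$ for every $p<2$. The equation \eqref{Eq:SDE} with drift $b^n$ is classically well-posed, with a smooth transition density $p^n_{s,t}(x,y)$ solving the Fokker--Planck equation. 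The core step is the Gaussian bound \eqref{Eq:AE} for $p^n$ uniformly in $n$. For the upper bound I would run a Nash--Davies argument exploiting $\div b^n=0$: the drift then drops out of every $L^p$-energy identity, so all $L^p$-norms of $p^n_{s,\cdot}(x,\cdot)$ are nonincreasing and Nash's inequality yields the on-diagonal estimate $p^n_{s,t}(x,x)\lesssim(t-s)^{-1}$; Davies' exponential perturbation $\rho\mapsto\rho\,\e^{\lambda v\cdot x}$ then produces the Gaussian factor, the only drift contribution being $-\lambda\int(b^n\cdot v)(\rho\,\e^{\lambda v\cdot x})^2\,\d x$, which in $d=2$ is absorbed by the dissipation via the sharp Lorentz-type inequality $\|g\|_{L^{4,2}(\mR^2)}^2\lesssim\|g\|_{L^2}\|\nabla g\|_{L^2}$ together with $b^n\in L^{2,\infty}$ --- precisely the point where the endpoint 2D scaling is handled without recourse to a weak sector condition. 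The matching lower bound follows from the on-diagonal estimate by Aronson's chaining through Chapman--Kolmogorov and a parabolic Harnack inequality for divergence-free critical drifts. The uniform upper bound gives tightness of $\{\Law(X^n)\}$ on $C([s,T];\mR^2)$; along a subsequence $X^n\to X$ almost surely after Skorokhod, and $\int_s^\cdot b^n(r,X^n_r)\,\d r\to\int_s^\cdot b(r,X_r)\,\d r$ because the difference is controlled, via the Krylov estimate \eqref{Eq:Kry} for $b^n$, by $\|b^n-b\|$ on bounded sets plus a continuous-function approximation of $b$; hence $X$ solves \eqref{Eq:SDE}. The Gaussian bounds pass to the limit for the marginal densities, which is \eqref{Eq:AE}, and \eqref{Eq:Kry} is then an immediate H\"older computation against the heat kernel $h$.

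\medskip
\noindent\textbf{Conditional uniqueness.} Fix $(\mathfrak p,\mathfrak q)\in\sI$ \emph{strictly} inside --- how deep inside being dictated by $\|b\|_{L^\infty_tL^{2,\infty}_x}$ --- with $\mathfrak p\in(1,2)$. Let $\tilde X$ be any weak solution of \eqref{Eq:SDE} with $\tilde X_s=x$ satisfying \eqref{Eq:Kry'}. For $t>s$, \eqref{Eq:Kry'} forces $\Law(\tilde X_t)=\tilde\rho(t,\cdot)\,\d y$ with $\tilde\rho\in L^{\mathfrak q'}((s+\delta,T);L^{\mathfrak p'}_x)$ for all $\delta\in(0,T-s)$, and It\^o's formula shows $\tilde\rho$ solves $\p_t\rho=\Delta\rho-\div(b\rho)$ distributionally, hence in the mild form $\rho(t)=h(t-s,x-\cdot)-\int_s^t\big(\nabla h(t-\sigma,\cdot)\big)*(b_\sigma\rho_\sigma)\,\d\sigma$, the divergence term being meaningful since $\mathfrak p'>2$. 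Two such solutions have difference $w$ satisfying $w(t)=-\int_s^t\big(\nabla h(t-\sigma,\cdot)\big)*(b_\sigma w_\sigma)\,\d\sigma$ on $(s,T]$, the initial layers cancelling. I would then close a fixed-point estimate for $w$ on a scale of mixed-norm spaces: Lorentz--H\"older ($b\in L^{2,\infty}$) combined with mixed-norm maximal-regularity/smoothing estimates for the heat kernel --- including a Meyers-type self-improvement of the spatial integrability of the gradient in the dual equation, which is what lets the critical drift be absorbed --- yields an operator norm equal to $\|b\|_{L^\infty_tL^{2,\infty}_x}$ times a factor tending to $0$ as $(\mathfrak p,\mathfrak q)$ approaches the critical line $\tfrac{d}{p}+\tfrac2q=2$; choosing $(\mathfrak p,\mathfrak q)$ so that this product is $<1$ forces $w\equiv0$, i.e.\ the one-dimensional marginals of $\tilde X$ are uniquely determined. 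Because the $\delta$-shift in \eqref{Eq:Kry'} makes the admissible class stable under restart/conditioning, this marginal uniqueness lifts --- via a superposition-principle argument, iterated over a partition of $[s,T]$ --- to uniqueness of the whole law, so the law $\mP_{s,x}$ of the solution constructed above is the only one in the \eqref{Eq:Kry'}-class.

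\medskip
\noindent\textbf{Markov property and the main obstacle.} The solution of part~(i) has transition density $p_{s,t}(x,\cdot)$ obeying \eqref{Eq:AE}, hence \eqref{Eq:Kry}, hence \eqref{Eq:Kry'} with $\delta$-dependent constants; so for $r\in(s,T)$ its regular conditional probability given $\cF_r$, restarted at $(r,X_r)$, again lies in the \eqref{Eq:Kry'}-class and, by the uniqueness just proved, equals $\mP_{r,X_r}$. This is the Chapman--Kolmogorov identity, so $(\mP_{s,x})$ is a Markov family with transition densities $p_{s,t}$; joint continuity of $(s,x,t,y)\mapsto p_{s,t}(x,y)$ --- from the Gaussian bounds together with De~Giorgi--Nash--Moser equicontinuity --- gives the Feller property, and the strong Markov property then follows by the standard discretisation of stopping times. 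The principal obstacle is the uniform Aronson estimate in dimension two: at the endpoint $L^\infty_tL^{2,\infty}_x$ there is no perturbative slack and, as stressed in the Introduction, the weak sector condition for the associated Dirichlet form is not available for $d=2$, so one must genuinely exploit $\div b=0$ together with sharp Lorentz-space functional inequalities to close the Nash/Davies estimates. The analogous difficulty on the uniqueness side --- the loss of regularity of the dual parabolic equation at the endpoint, which rules out a Zvonkin-type transformation --- is exactly why only \emph{conditional} uniqueness, in a $\|b\|$-dependent subclass of $\sI$, can be expected.
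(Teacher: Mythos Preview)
Your existence argument is essentially the paper's: mollify, get the uniform two-sided Gaussian bound via Nash--Davies, pass to the limit by tightness and Skorokhod. The paper obtains the lower bound by the Fabes--Stroock log-integral method rather than Aronson chaining, but this is a cosmetic difference.

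The uniqueness argument, however, has a genuine gap. Your proposed mechanism is a contraction estimate on the Fokker--Planck difference
\[
w(t)=-\int_s^t \nabla h(t-\sigma,\cdot)*\bigl(b_\sigma w_\sigma\bigr)\,\d\sigma,
\]
with the claim that the operator norm is $\|b\|_{L^\infty_tL^{2,\infty}_x}$ times a factor that \emph{tends to zero} as $(\mathfrak p,\mathfrak q)$ approaches the critical line. This cannot be right: at the endpoint the problem is scale-invariant, so no choice of exponents produces a small dimensionless factor. Concretely, if $w\in L^{\mathfrak q'}_tL^{\mathfrak p'}_x$ with $\mathfrak p'>2$, then $bw\in L^{\mathfrak q'}_tL^{r}_x$ with $\frac1r=\frac1{\mathfrak p'}+\frac12$, and the heat-kernel smoothing $\|\nabla h(t-\sigma)*g\|_{L^{\mathfrak p'}}\lesssim (t-\sigma)^{-\frac12-\frac{d}{2}(\frac1r-\frac1{\mathfrak p'})}\|g\|_{L^r}$ produces, in $d=2$, the non-integrable singularity $(t-\sigma)^{-1}$. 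So the direct fixed-point on the FPE simply does not close, regardless of where $(\mathfrak p,\mathfrak q)$ sits in $\sI$.

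The paper's route is not a contraction at all but a \emph{duality/test-function} argument. The substantial work is to solve the backward Kolmogorov equation $\partial_s v+\Delta v+b\cdot\nabla v+cv=f$, $v(T)=0$, in the second-order Sobolev space $\cH^{2,\mathfrak p}_{\mathfrak q}$ with $(\mathfrak p,\mathfrak q)\in\sI$. In $d=2$ this is exactly where the difficulty lies: the standard $L^q$-$L^p$ theory yields only $\nabla u\in L^\nu_tL^\mu_x$ with $\mu<2$, which is not enough to absorb $b\cdot\nabla u$. The paper overcomes this by a Meyers-type reverse-H\"older estimate (via Gehring's lemma) giving $\nabla u\in L^\gamma_{t,x}$ for some $\gamma>2$ depending on $\|b\|_{L^\infty_tL^{2,\infty}_x}$, followed by two interpolation upgrades (the second using a fractional Gagliardo--Nirenberg inequality with a H\"older endpoint) to push the spatial integrability of $\nabla u$ above $2$ while keeping $(\mathfrak p,\mathfrak q)\in\sI$. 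Once $v\in\cH^{2,\mathfrak p}_{\mathfrak q}$ is in hand, a generalized It\^o formula applied to any weak solution $X$ satisfying \eqref{Eq:Kry'} yields the Feynman--Kac identity
\[
v(0,x)=\bE\int_0^T \exp\Bigl(\int_0^t c(s,X_s)\,\d s\Bigr)f(t,X_t)\,\d t,
\]
and varying $c$ determines the moment generating function of $\int_0^t\1_A(X_s)\,\d s$ for every $A$, hence the full law --- not merely the one-dimensional marginals. This bypasses your superposition-principle step entirely. You correctly sensed that a Meyers-type ingredient for the dual equation is the key, but it is used to \emph{solve} the backward equation in a strong enough space, not to manufacture a small operator norm on the forward equation.
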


In the sequel, we say that a process \((X_t)_{t\in [s,T]}\) 
(or equivalently, a probability measure $\mP$ on 
$C([s,T];\mR^d)$) belongs to the {\em Krylov class} if $X$ 
(or the canonical process under $\mP$) satisfies \eqref{Eq:Kry'} for some \((\mathfrak{p},\mathfrak{q})\in \sI\). With slight abuse of terminology, we also say a time-space function \(f: (s,T)\times \mR^d \to \mR\) belongs to the Krylov class, if \(f\in L^{\frac{\mathfrak{q}}{\mathfrak{q-1}}}(s+\delta, T; L^{\frac{\mathfrak{p}}{\mathfrak{p}-1}}_x)\) for any \(\delta\in (0, T-s)\) and some \((\mathfrak{p},\mathfrak{q})\in \sI\).

Let us mention that in the high dimensional case, where $d\geq 3$, the existence of weak solutions and the conditional uniqueness have been proved for \eqref{Eq:SDE} with the critical $L^\infty_t L^{d,\infty}_x$-Lorentz drifts by the first and third named authors \cite{rockner2023weak}. The Aronson-type estimate when $d\geq 3$ is  implied by the previous work by Qian-Xi \cite{qian2019parabolic} and Kinzebulatov-Sem{\"e}nov \cite{kinzebulatov2022heat}. 
As a consequence, together with \Cref{Thm:SDE} in the remaining case where $d=2$, 
one has  the well-posedness and the Aronson-type estimate for all dimensions $d\geq 2$, 
as formulated in Theorem \ref{Thm:SDE'} below. 

\begin{theorem}\label{Thm:SDE'}
    For any $d \geq 2$, the results in \Cref{Thm:SDE} hold with the exponents  $\mathfrak{p}$ and  $\mathfrak{q}$ potentially depending on $d$ and $\|b\|_{L^\infty_t L_x^{d,\infty}}$. 
\end{theorem}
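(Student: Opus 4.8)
The plan is to obtain \Cref{Thm:SDE'} by combining \Cref{Thm:SDE}, which already settles the case $d=2$, with results available in the literature for $d\geq 3$. Thus it remains only to assemble the four assertions of \Cref{Thm:SDE} --- existence of a weak solution, the Aronson-type estimate \eqref{Eq:AE}, the (generalized) Krylov estimates \eqref{Eq:Kry}--\eqref{Eq:Kry'}, and conditional uniqueness together with the strong Markov property --- in the higher-dimensional regime $d\geq 3$, with exponents now permitted to depend on $d$.

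\textbf{Existence and the Aronson estimate for $d\geq 3$.} For divergence-free $b$ satisfying \eqref{Eq:Ab} in dimension $d\geq 3$, the existence of a weak solution $(X_t^{s,x})$ to \eqref{Eq:SDE} whose transition density $p_{s,t}(x,\cdot)$ is well-defined was proved by the first and third named authors in \cite{rockner2023weak}. The two-sided Gaussian bound \eqref{Eq:AE} for this density is exactly the Aronson-type estimate established, for divergence-free critical $L^\infty_tL^{d,\infty}_x$ drifts in $d\geq 3$, by Qian--Xi \cite{qian2019parabolic} and Kinzebulatov--Sem\"enov \cite{kinzebulatov2022heat}; here the divergence-free hypothesis is what makes the lower bound available. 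The one point that needs care --- and is really the only nonroutine step --- is to identify the density produced probabilistically in \cite{rockner2023weak} with the fundamental solution to which the analytic bounds of \cite{qian2019parabolic, kinzebulatov2022heat} apply: this follows from weak uniqueness in \cite{rockner2023weak} together with the fact that the time marginals of $p_{s,t}(x,\cdot)$ solve the Fokker--Planck equation $\p_t p = \Delta p - \div(bp)$ in the distributional sense, which also shows that the constant $C$ in \eqref{Eq:AE} depends only on $\|b\|_{L^\infty_tL^{d,\infty}_x}$ (and $d$).

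\textbf{Krylov estimates.} Granting \eqref{Eq:AE}, the Krylov-type estimate \eqref{Eq:Kry} is immediate from the Gaussian upper bound: for $(p,q)\in\sI$ and $0\leq f$,
\[
    \bE\int_s^T f(t,X_t^{s,x})\,\d t
    =\int_s^T\!\!\int_{\mR^d} f(t,y)\,p_{s,t}(x,y)\,\d y\,\d t
    \leq C\int_s^T\!\!\int_{\mR^d} f(t,y)\,h\l(C(t-s),x-y\r)\d y\,\d t,
\]
and H\"older's inequality in space with the scaling $\|h(\tau,\cdot)\|_{L^{p'}_x}\approx \tau^{-\frac{d}{2p}}$ gives $\int_{\mR^d} f(t,y)\,h(C(t-s),x-y)\,\d y\leq C\,(t-s)^{-\frac{d}{2p}}\|f(t,\cdot)\|_{L^p_x}$; a further H\"older application in time yields \eqref{Eq:Kry}, since $\frac{d}{2p}\cdot\frac{q}{q-1}<1$ is equivalent to $\frac{d}{p}+\frac{2}{q}<2$, i.e.\ $(p,q)\in\sI$. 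The generalized estimate \eqref{Eq:Kry'} is obtained in the same way by integrating from $s+\delta$: the time singularity at $t=s$ is then harmless and $C_\delta$ is finite for the exponent $(\mathfrak p,\mathfrak q)$ furnished by the uniqueness step below.

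\textbf{Conditional uniqueness and the strong Markov property.} For $d\geq 3$, \cite{rockner2023weak} establishes that the law of the solution to \eqref{Eq:SDE} is unique within the class of processes satisfying a Krylov-type estimate of the form \eqref{Eq:Kry'} for a suitable pair $(\mathfrak p,\mathfrak q)\in\sI$ depending only on $d$ and $\|b\|_{L^\infty_tL^{d,\infty}_x}$, and that the resulting family $\{\mP_{s,x}\}$ is a strong (linear) Markov process; the Markov property alternatively follows from existence and conditional uniqueness by the same gluing argument used in the proof of \Cref{Thm:SDE}. Putting this together with \Cref{Thm:SDE} for $d=2$ proves all the statements of \Cref{Thm:SDE} for every $d\geq 2$, with $\mathfrak p,\mathfrak q$ allowed to depend on $d$. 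The main (and essentially only) obstacle is the bookkeeping of the previous paragraphs: making sure the uniqueness class in \cite{rockner2023weak} is precisely \eqref{Eq:Kry'}, and that the probabilistic density coincides with the analytic fundamental solution carrying the bounds of \cite{qian2019parabolic, kinzebulatov2022heat}.
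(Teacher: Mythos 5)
Your proof is logically sound, but it does not follow the route the paper actually takes. Your route is the one sketched informally in the remark immediately after \Cref{Thm:SDE}: combine \Cref{Thm:SDE} for $d=2$ with the existence and conditional uniqueness from \cite{rockner2023weak} and the Aronson bounds from \cite{qian2019parabolic,kinzebulatov2022heat} for $d\geq 3$. The paper, however, does not treat the high-dimensional case by citation. Section~\ref{Sec:SDE} opens by stating that its aim is to establish \Cref{Thm:SDE'}, and the existence and uniqueness proofs there are carried out for all $d\geq 2$ at once: the Aronson estimate (\Cref{Thm:Heat}), H\"older estimate (\Cref{Prop:Holder}), and second-order Sobolev estimate (\Cref{Thm:LqLp}) are each proved in a unified way for $d\geq 2$ in Section~\ref{Sec:PDE} (with the $d\geq 3$ strategy displayed in Figure~\ref{Fig:Dgeq3} and the $d=2$ strategy in Figure~\ref{Fig:2D}), and the existence and uniqueness arguments of Section~\ref{Sec:SDE} build directly on these. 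What your approach buys is brevity and reliance on existing literature; what the paper's approach buys is self-containedness, a uniform choice of parameters, and in particular a proof that the \emph{generalized} Krylov class \eqref{Eq:Kry'} suffices for uniqueness. That last point is the one substantive gap in your proposal: \cite{rockner2023weak} proves uniqueness for processes satisfying a Krylov estimate of the form \eqref{Eq:Kry}, not the weaker $\delta$-localized form \eqref{Eq:Kry'}. The reduction from \eqref{Eq:Kry'} to \eqref{Eq:Kry} — shown at the end of the uniqueness proof in Section~\ref{Sec:SDE} by applying the solution $v$ of the backward Kolmogorov equation on $[\delta,T]$ with a $\delta$-uniform bound and then letting $\delta\to 0$ — is a separate argument not present in the cited reference, so you would need to supply it rather than simply pointing to \cite{rockner2023weak}; you flag this as ``bookkeeping,'' but it is genuinely an additional step.

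Your derivation of \eqref{Eq:Kry} from the Gaussian upper bound is correct and matches the paper's Lemma~\ref{lem:heat}; your handling of the identification of the probabilistic density with the fundamental solution is also in the right spirit. But because the paper's proof is the unified one in Sections~\ref{Sec:PDE}--\ref{Sec:SDE}, and because of the \eqref{Eq:Kry'}-versus-\eqref{Eq:Kry} gap noted above, your proposal should be regarded as a plausible alternative route with one nontrivial missing lemma rather than a reproduction of the paper's argument.
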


\subsubsection{Optimality of well-posedness}  

Theorem \ref{Thm:SDE'} is optimal 
for dimensions $d\geq 3$ 
as revealed by the following non-uniqueness result.

\begin{theorem}[Non-uniqueness of supercritical SDEs] \label{Thm:example} 
    Let $d\geq 3$. 
    Then, for any $p\in (d/2, d)$, there exists a supercritical divergence-free vector field $b\in L^{p,\infty}_x$, such that there are  two distinct weak solutions to \eqref{Eq:SDE} starting from the origin, and both of them satisfy 
    the Krylov estimate \eqref{Eq:Kry} for any $(p,q)\in \sI$. 
\end{theorem}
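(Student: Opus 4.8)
The plan is to construct, for each $p\in(d/2,d)$, an explicit divergence-free vector field $b$ with a singularity at the origin of the right homogeneity, namely $b(x) = c\,|x|^{-1}\,\Theta(x/|x|)$ scaled so that $|b(x)|\sim |x|^{-1}$ near $0$ — this lies in $L^{p,\infty}_x$ precisely when $p<d$ (since $\{|b|>\lambda\}$ is a ball of radius $\sim\lambda^{-1}$, of measure $\sim\lambda^{-d}$), and it is \emph{supercritical} when $p<d$ is strictly less than the critical $d$ but we additionally want $p>d/2$, which forces the angular profile to be genuinely spiraling rather than purely radial. Concretely I would take $b$ to be (a smooth modification away from $0$ of) a combination of a radial inward-pointing part $-\kappa\, x/|x|^2$ and a divergence-free rotational part, chosen so that the first component creates an inward drift strong enough that one solution is \emph{absorbed} at the origin (or spends positive-measure time there in a degenerate sense) while a second solution, obtained by a different limiting/regularization procedure, \emph{passes through or reflects off} the origin. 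The two solutions are then distinguished by the behavior of the process at the hitting time of $0$: one continues after hitting $0$ according to one rule, the other according to another, both being legitimate weak solutions because at the single point $\{0\}$ the drift is not locally integrable enough to pin down the dynamics.

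The key steps, in order: (1) Fix the vector field $b_\kappa(x) = -\kappa\,x/|x|^2 + V(x)$ with $V$ a smooth compactly supported divergence-free field and $\kappa>0$ a parameter; verify $\div b_\kappa = 0$ (using $\div(x/|x|^2) = (d-2)/|x|^2$, so one actually needs $d=2$ specially — hence for $d\ge3$ replace the radial part by a divergence-free field that is still inward-biased on average, e.g. built from a stream matrix, or accept a radial part $-\kappa x/|x|^2$ only in $d=2$ and for $d\ge 3$ use $b(x)=A x/|x|^2$ for a suitable \emph{antisymmetric-plus-trace-free} matrix $A$ giving $\div b=0$ while $x\cdot b(x)<0$). (2) Show $b\in L^{p,\infty}_x$ with $p$ ranging over $(d/2,d)$ as the homogeneity degree $-1$ is fixed but we tune the angular amplitude — actually the exponent $p$ for a degree-$(-1)$ field is always $d$, so to hit $p\in(d/2,d)$ I must instead use homogeneity degree $-\alpha$ with $\alpha=d/p\in(1,2)$, i.e. $b(x)\sim |x|^{-\alpha}$; this is in $L^{p,\infty}$ with $p=d/\alpha\in(d/2,d)$ and is supercritical since $\alpha>1$. (3) Construct the two weak solutions: solve the SDE away from a small ball $B_\eps$, obtain tightness of the laws (using that $b\in L^p$ locally away from $0$ and Krylov's estimate), and pass to the limit $\eps\to0$ along two different "boundary conditions" at $\partial B_\eps$ — e.g. killing-and-restarting-at-$0$ versus instantaneous-reflection — producing two distinct limit laws. (4) Verify each limit is a genuine weak solution of \eqref{Eq:SDE}: the drift integral $\int_s^t b(r,X_r)\,dr$ makes sense because the occupation measure of $X$ near $0$ is controlled — here I use that a $d$-dimensional Brownian motion (perturbed by the drift, via Girsanov away from $0$) spends Lebesgue-negligible time at $\{0\}$ and, crucially, that $\int_0 |x|^{-\alpha}\,dx$-type local integrability against the heat kernel still converges for $\alpha<2$, so the drift term is well-defined along each path a.s. (5) Verify the Krylov estimate \eqref{Eq:Kry} for both: since each solution is, away from the singularity, a Girsanov transform of Brownian motion with a drift whose singular part is $|x|^{-\alpha}$, $\alpha<2$, and since $d/p+2/q<2$ gives enough room, the standard Krylov estimate for Brownian motion transfers via a localized Girsanov argument (the Radon–Nikodym density has finite moments because $\int |b|^2$ near $0$ behaves like $\int |x|^{-2\alpha}$ which may diverge — so instead one uses the Aronson-type two-sided bound on the transition density of each constructed solution, exactly as in the proof of \Cref{Thm:SDE}(i), to deduce \eqref{Eq:Kry}).

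The main obstacle I expect is step (3)–(4): making precise \emph{two genuinely different} admissible continuations of the process at the singular point while keeping both inside the class of weak solutions of the SDE (not just of the martingale problem for some extended generator). Because $\{0\}$ is a single point and the Brownian part is nondegenerate, one has to be careful that the process actually \emph{reaches} $0$ with positive probability (otherwise the two "boundary conditions" never differ and the solution is unique after all) — this is where the inward bias of the drift, i.e. $x\cdot b(x) \le -\kappa|x|^{-\alpha}\cdot|x| = -\kappa|x|^{1-\alpha}$ with $1-\alpha<0$, is essential: a comparison/Lyapunov argument on $|X_t|$ (or on $|X_t|^{2-\alpha}$, a natural scale function) should show the origin is hit in finite time with positive probability. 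Balancing this against the requirement $p>d/2$ (i.e. $\alpha<2$, so that the drift is not \emph{too} singular to integrate) is the delicate sweet spot, and I anticipate that the construction works exactly on the open interval $\alpha\in(1,2)$, matching $p\in(d/2,d)$, with the two-sided heat kernel bound \eqref{Eq:AE} for each branch supplying \eqref{Eq:Kry} as a corollary just as in \Cref{Thm:SDE}.
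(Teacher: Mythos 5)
Your step (2) correctly identifies the needed homogeneity ($b$ of degree $-\alpha$ with $\alpha=d/p\in(1,2)$, so that $b\in L^{p,\infty}_x$), and this matches the paper's construction. But the non-uniqueness mechanism you propose does not work, for a reason you partially notice and then do not resolve. A divergence-free field cannot be ``inward-biased on average'': by the divergence theorem the flux $\int_{\partial B_r} b\cdot n\,\d\sigma$ vanishes for every sphere around the origin, so there is no net transport of mass toward $0$. Your own repair attempt confirms this: for $b(x)=Ax/|x|^2$ (or any positively homogeneous analogue), $\div b=0$ forces $A$ antisymmetric, whence $x\cdot b(x)\equiv 0$, and the Lyapunov/comparison argument on $|X_t|$ that you need in order to make the process hit $\{0\}$ with positive probability has nothing to act on. Even if one could arrange positive hitting probability of the single point $\{0\}$, the step you yourself flag as the main obstacle --- turning ``killing-and-restarting'' versus ``reflection'' at an interior point into two genuine weak solutions of \eqref{Eq:SDE} --- is exactly where the argument fails to close: these prescriptions do not obviously produce processes satisfying the integral equation, and no construction is given. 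Finally, your fallback for the Krylov estimate (the Aronson bound \eqref{Eq:AE} ``as in \Cref{Thm:SDE}(i)'') is unavailable here: \eqref{Eq:AE} is proved only for critical drifts in $L^\infty_tL^{d,\infty}_x$, not for supercritical $L^{p,\infty}_x$ with $p<d$; the paper instead quotes the existence-with-Krylov-estimate result of Zhang--Zhao \cite{zhang2021stochastic} for divergence-free supercritical drifts.

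The paper's mechanism is essentially the opposite of yours: the drift is an \emph{outward} jet of strength $\sim N|x_d|^{-\alpha}$ concentrated in a cone around the $x_d$-axis, anti-symmetric with respect to the hyperplane $\Pi=\{x_d=0\}$ (divergence-freeness is maintained because the jet is fed by inflow through the transverse components). No solution ever needs to reach the origin. One argues by contradiction: assuming uniqueness in the Krylov class, the family $\{\mP_x\}$ is strong Markov and $x\mapsto\mP_x$ is continuous at $0$ along every sequence. Taking $y^n\to 0$ inside $\Pi$, anti-symmetry forces a certain continuous functional $\sT(\mP_{y^n})$ to vanish; taking $x^n\to 0$ along the positive $x_d$-axis, a deterministic pathwise analysis on a high-probability event (where the Brownian increments are H\"older-controlled) shows the trajectories are swept up to $\{x_d=2\}$ before exiting the cone, so $\sT(\mP_{x^n})\ge p_0>0$. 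Both limits would have to equal $\sT(\mP_0)$, a contradiction. So the two distinct solutions arise as distinct directional limits of $\mP_x$ as $x\to 0$, not as distinct continuations after hitting a singular point.
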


As a consequence, 
we also obtain the non-uniqueness 
for linear Fokker-Planck equations 
with supercritical drifts. 

\begin{corollary}[Non-uniqueness of supercritical FPE] \label{Cor-LinFPK-Nonuniq}
    Let \( d \geq 3 \), and \(b\) be the same divergence-free vector field as in \Cref{Thm:example}. Then, there exist at least two distinct solutions to \eqref{Eq:LFP} in \(C_t\cP_x\) with initial data \(\delta_0 \), where $C_t \mathcal{P}_x$ denotes the set of weakly continuous probability measure-valued curves on $[0, T]$. 
\end{corollary}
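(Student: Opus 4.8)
The plan is to transfer the non-uniqueness from the level of \eqref{Eq:SDE} down to the level of the linear Fokker--Planck equation \eqref{Eq:LFP}, by replacing the two distinct path laws produced by \Cref{Thm:example} with their one-dimensional time marginals and invoking the forward Kolmogorov equation. Concretely, I would let $\mP^1\neq\mP^2$ denote the laws on $C([0,T];\mR^d)$ of the two weak solutions $X^1,X^2$ to \eqref{Eq:SDE}, with $b$ the divergence-free field from \Cref{Thm:example}, started at the origin, each satisfying \eqref{Eq:Kry} for all $(p,q)\in\sI$, and set $\rho^i_t:=\Law(X^i_t)\in\cP(\mR^d)$. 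Since the canonical process has a.s.\ continuous paths, dominated convergence shows that $t\mapsto\rho^i_t$ is weakly continuous, with $\rho^i_0=\delta_0$; hence $\rho^i\in C_t\cP_x$ with the prescribed initial datum. Passing to laws on path space at the outset also avoids any issue with the two weak solutions living on different probability spaces.

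The first substantive step is to check that each $\rho^i$ solves \eqref{Eq:LFP} in the distributional sense. For $\varphi\in C_c^\infty(\mR^d)$ one applies It\^o's formula to $r\mapsto\varphi(X^i_r)$ and takes expectations; the local martingale term has zero mean since $\nabla\varphi$ is bounded, giving
\begin{equation*}
    \int_{\mR^d}\varphi\,\d\rho^i_t = \varphi(0) + \int_0^t \int_{\mR^d}\bigl(\Delta\varphi + b\cdot\nabla\varphi\bigr)\,\d\rho^i_r\,\d r, \qquad t\in[0,T],
\end{equation*}
which is exactly the weak formulation of \eqref{Eq:LFP} with initial datum $\delta_0$. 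The only point requiring care is finiteness of the drift contribution $\bE\int_0^T|b\cdot\nabla\varphi|(r,X^i_r)\,\d r$: since $b\in L^{p,\infty}_x$ with $p\in(d/2,d)$, the restriction $b\,\1_{\operatorname{supp}\varphi}$ lies in $L^{\tilde p}_x$ for every $\tilde p<p$, so choosing $\tilde p\in(d/2,p)$ (whence $d/\tilde p<2$) and $\tilde q$ large enough that $(\tilde p,\tilde q)\in\sI$, the bound follows from \eqref{Eq:Kry} applied to $f=|b\cdot\nabla\varphi|$. Hence both $\rho^1$ and $\rho^2$ solve \eqref{Eq:LFP} in $C_t\cP_x$ with initial datum $\delta_0$.

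The remaining, and genuinely delicate, point is to show $\rho^1\neq\rho^2$, i.e.\ that the two distinct path laws already differ at the level of a single time marginal — in general distinct probability measures on path space can share all one-time marginals, so distinctness of $\mP^1$ and $\mP^2$ alone does not suffice. To resolve this I would revisit the explicit construction behind \Cref{Thm:example}: the two solutions are distinguished by their behaviour near the singular set of $b$, and this discrepancy should already be registered at a fixed time, so that one can exhibit some $t_0\in(0,T]$ and a bounded continuous $\varphi$ (supported near that singular set) with $\bE\,\varphi(X^1_{t_0})\neq\bE\,\varphi(X^2_{t_0})$, i.e.\ $\rho^1_{t_0}\neq\rho^2_{t_0}$. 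This produces two distinct solutions of \eqref{Eq:LFP} in $C_t\cP_x$ emanating from $\delta_0$, as claimed. I expect extracting the right $t_0$ and $\varphi$ from the construction to be the main obstacle, the remaining verifications being routine.
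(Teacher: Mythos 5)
Your first two steps are fine: taking the path-space laws $\mP^1\neq\mP^2$ from \Cref{Thm:example}, forming the time marginals $\rho^i_t=\Law(\pi_t)$, and checking via It\^o's formula and the Krylov estimate that each $\rho^i\in C_t\cP_x$ solves \eqref{Eq:LFP} from $\delta_0$ is all correct and routine. But the step you flag as ``genuinely delicate'' is a real gap, not a routine verification to be deferred: you correctly observe that distinct laws on path space can share all one-time marginals, and you then propose to close the gap by ``revisiting the explicit construction'' to find a $t_0$ and test function $\varphi$ with $\int\varphi\,\d\rho^1_{t_0}\neq\int\varphi\,\d\rho^2_{t_0}$. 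That patch is not available, because \Cref{Thm:example} is itself proved by contradiction: one \emph{assumes} uniqueness of the martingale problem from the origin (hence, via Stroock--Varadhan, from every point), uses the sequences $(x^n),(y^n)$ to derive $0<p_0\leq\sT(\mP_0)=0$, and concludes nonexistence of a unique solution. No explicit pair $(\mP^1,\mP^2)$ is produced, and nothing in that argument tells you how their one-time marginals compare, so there is no $t_0,\varphi$ to ``extract.''

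The paper closes this gap by running the contradiction in the opposite direction, which is a genuinely different structure. Assume \eqref{Eq:LFP} has a unique solution $\rho$ in $C_t\cP_x$ from $\delta_0$, and let $\mP_x$ be any martingale solution from $x$. Since $b$ is smooth and locally bounded away from the origin, the law $\mP_x\!\restriction_{\cF_\sigma}$ up to the hitting time $\sigma=\inf\{t>0:\omega_t=0\}$ is uniquely determined by classical theory; decomposing $\mE_x f(\omega_t)$ at $\sigma$ and invoking the strong Markov property, the post-$\sigma$ contribution is $\mE_x[\<f,\rho(t-\sigma)\>\1_{\{\sigma\leq t\}}]$, which is pinned down by the assumed FPE uniqueness from $\delta_0$. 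Hence the one-dimensional marginals of \emph{every} $\mP_x$, $x\in\mR^d$, are uniquely determined, and \cite[Theorem~6.2.3]{stroock2007multidimensional} (uniqueness of one-time marginals for all starting points $\Rightarrow$ uniqueness of the martingale problem) then forces $\mP_x$ itself to be unique — contradicting \Cref{Thm:example}. The ingredients you are missing are precisely this hitting-time decomposition, which upgrades uniqueness of marginals from the single starting point $0$ to all starting points, and the appeal to Stroock--Varadhan's marginal-uniqueness criterion; without them the direct ``marginals must already differ'' route does not go through.
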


\begin{remark}
    \begin{enumerate}[(i)]
        \item As mentioned before, the well-posedness of \eqref{Eq:SDE} in dimension two is more difficult than the higher dimensional case. 
        The detailed explanations of their distinctions are contained in Subsubsection \ref{Subsub-d2-d3} below.  

        \item 
        The uniqueness result in Theorem \ref{Thm:SDE} is important to establish the uniqueness of solutions to McKean-Vlasov equations and nonlinear Fokker-Planck equations with 
        endpoint critical kernels. 
        In particular, 
        the Krylov class in Theorem \ref{Thm:SDE} enables to derive the uniqueness of solutions to the $2$D vorticity NSE in certain supercritical spaces. 
        See Theorem \ref{Thm:NS} below. 

        \item 
        We also prove the uniqueness for the linear Fokker-Planck equations, see \Cref{Prop:LFP} below.  
        We mention that 
        the uniqueness for \eqref{Eq:SDE} in general cannot imply that of the corresponding linear Fokker-Planck equation. 
        For more details, 
        we refer to the monograph \cite{bogachev2015fokker}, 
in particular to Section 9.2 which contains examples for non-uniqueness.

        \item It is worth noting that the divergence-free condition  is necessary in Theorems \ref{Thm:SDE'} and \ref{Thm:example} to guarantee both the existence of weak solutions to SDEs with critical drifts and the uniqueness in the Krylov class.  Actually, if the drift \(b\) is 
        not divergence free, 
        it was shown in \cite[Example 7.4]{beck2019stochastic} that \eqref{Eq:SDE} 
        may not have weak solutions 
        in the case of critical drifts in $L^{d,\infty}(\mR^d)$, $d\geq 2$. 
        The divergence-free condition of drifts also matches naturally  the incompressibility  condition of velocity fields in the NSEs. 
    \end{enumerate}
\end{remark}

\subsubsection{McKean-Vlasov equations and nonlinear Fokker-Planck equations with critical kernels}

By virtue of \Cref{Thm:SDE'}, we obtain the well-posedness results for both the McKean-Vlasov equations \eqref{Eq:MV} and nonlinear Fokker-Planck equations \eqref{Eq:NFP} with critical singular kernels in all dimensions $d\geq 2$. 

Let \(\cM(\mathbb{R}^d)\) denote the space of all real measures on \(\mathbb{R}^d\) with finite total variation, and \(\cP(\mathbb{R}^d)\) the space of all probability measures on \(\mathbb{R}^d\). We denote by $|\zeta |(\mathbb{R}^d)$ the total variation of $\zeta\in \cM(\mathbb{R}^d)$.  
Any \(\zeta \in \cM(\mR^d)\) 
can be decomposed uniquely as 
\[
    \zeta = \zeta_c+\zeta_a, 
\]
where $\zeta_c$ is the continuous part, i.e.,  $\zeta_c(\{x\})=0$ for all $x\in \mR^d$, and $\zeta_a=\sum_{i}c_i \delta_{x_i}$ is the purely atomic part. 
Set 
\[
    \cP_\eps(\mR^d):=\l\{\zeta \in \cP(\mR^d): \zeta_a(\mR^d)\leq \eps \r\}. 
\]

We say a map $\mu:[0,T]\to \cM(\mR^d)$ is narrowly continuous if $t\mapsto \int f \d \mu_t$ is continuous for all 
$f\in C_b(\mR^d)$.

    \begin{theorem} [Well-posedness of McKean-Vlasov equations]  \label{Thm:MV}
        Let $d\geq 2$. Assume $K\in L^\infty_t L_x^{d,\infty}$ and $\div K=0$.  Then, the following holds: 
        \begin{enumerate}[(a)]
            \item[(i)] For any $\zeta \in\cP(\mR^d)$, there exists at least one weak solution \( (X_t) \) to \eqref{Eq:MV}. Moreover, for each \(t\in (0,T]\) the distribution of \(X_t\) admits a smooth bounded density. 
            \item[(ii)] There exist $(\mathfrak{p},\mathfrak{q})\in\sI$ and  $\eps_0>0$, depending on $d$ and $\|K\|_{L^\infty_tL^{d,\infty}_x}$,  such that if $\zeta \in \cP_{\eps_0}(\mR^d)$, then the weak solution to equation \eqref{Eq:MV} is unique in  the class of processes $(Y_t)$ satisfying the following property: for any $\delta\in(0,T)$, there is a constant $C_\delta$ such that 
            \begin{equation}\label{eq:kry_d}
                \bE \int_\delta^T f(t, Y_t) \d t\leq C_\delta \|f\|_{L^{\mathfrak{q}}(\delta, T; L^{\mathfrak{p}}_x)}. 
            \end{equation}
        \end{enumerate}
    \end{theorem}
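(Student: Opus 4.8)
The plan is to establish existence via a fixed-point/compactness argument built on the linear theory of \Cref{Thm:SDE'}, and uniqueness via a Gronwall-type estimate exploiting the Krylov estimate \eqref{Eq:Kry} together with smallness of the atomic part of the initial datum.

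For part (i), I would first set up the \emph{decoupling scheme}: given a narrowly continuous curve $\mu:[0,T]\to\cP(\mR^d)$, define the frozen drift $b^\mu(t,x):=\int_{\mR^d}K(t,x-y)\,\mu(t,\d y)$. The key observation is that $\|b^\mu(t,\cdot)\|_{L^{d,\infty}_x}\le \|K(t,\cdot)\|_{L^{d,\infty}_x}\,|\mu_t|(\mR^d)=\|K(t,\cdot)\|_{L^{d,\infty}_x}$ by Young's inequality in Lorentz spaces (convolution of $L^{d,\infty}$ with $L^1$ stays in $L^{d,\infty}$), and $\div b^\mu=0$ since $\div K=0$. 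Thus $b^\mu$ satisfies \eqref{Eq:Ab} with a bound \emph{uniform in $\mu$}, so \Cref{Thm:SDE'} produces a weak solution $X^\mu$ to the linear \eqref{Eq:SDE} with the same drift, whose time-$t$ marginals $\rho^\mu_t$ have densities obeying the Aronson bound \eqref{Eq:AE} and the Krylov estimate \eqref{Eq:Kry}, \emph{with constants depending only on $\|K\|_{L^\infty_tL^{d,\infty}_x}$ and $d$}. Define the map $\Phi:\mu\mapsto(\rho^\mu_t)_{t\in[0,T]}$ on the convex set $\cS_\zeta:=\{\mu\in C([0,T];\cP(\mR^d)):\mu_0=\zeta,\ \rho_t\text{ satisfies \eqref{Eq:AE}--\eqref{Eq:Kry} for }t>0\}$. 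I would then verify: (a) $\Phi$ maps $\cS_\zeta$ into itself (the Aronson bounds propagate by construction, giving equicontinuity in $t$ and tightness); (b) $\cS_\zeta$ is compact in $C([0,T];\cP(\mR^d))$ with the topology of narrow convergence, using the uniform Aronson upper bound to get tightness and the heat-kernel-type modulus of continuity; (c) $\Phi$ is continuous: if $\mu^n\to\mu$ narrowly, then $b^{\mu^n}\to b^\mu$ in a suitable sense (dominated convergence, using that $K(t,\cdot)\in L^{d,\infty}_x$ is convolved against weakly converging probability measures with a uniform Aronson density bound, which upgrades narrow convergence of $\mu^n_t$ to $L^1_{loc}$ convergence of the densities away from $t=0$), and the solution map of the linear SDE is stable under such drift convergence — here one passes to the limit in the martingale problem, using the Krylov estimate to control the drift terms $\int_0^t b^{\mu^n}(r,X_r)\,\d r$ uniformly. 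Schauder's fixed point theorem then yields $\mu=\Phi(\mu)$, i.e. $\rho^\mu_t=\Law(X^\mu_t)$, which is the desired McKean–Vlasov solution; the smooth bounded density for each $t>0$ follows from \eqref{Eq:AE} and parabolic regularity for the linear Fokker–Planck equation with the now-fixed drift.

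For part (ii), let $X,Y$ be two solutions with $\Law(X_0)=\Law(Y_0)=\zeta\in\cP_{\eps_0}(\mR^d)$, both satisfying \eqref{eq:kry_d} for some $(\mathfrak p,\mathfrak q)\in\sI$, and set $\rho_t=\Law(X_t)$, $\tilde\rho_t=\Law(Y_t)$. Since $X$ solves the \emph{linear} SDE with drift $b^\rho$ and likewise $Y$ with $b^{\tilde\rho}$, I would use the conditional uniqueness half of \Cref{Thm:SDE'}: it suffices to show $\rho=\tilde\rho$ and that both processes satisfy the generalized Krylov estimate \eqref{Eq:Kry'} for the \emph{same} $(\mathfrak p,\mathfrak q)$ attached to the drift bound $\|K\|_{L^\infty_tL^{d,\infty}_x}$ — the latter is arranged by choosing the exponents in \eqref{eq:kry_d} to be exactly those from \Cref{Thm:SDE'} for that bound. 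To get $\rho=\tilde\rho$, estimate $\|\rho_t-\tilde\rho_t\|$ in a norm adapted to the Krylov class (e.g. testing against the parabolic dual, or directly via the Duhamel/mild formulation of \eqref{Eq:NFP}): writing $w_t=\rho_t-\tilde\rho_t$, one has $\partial_t w=\Delta w-\div(b^\rho w)-\div(b^{w}\tilde\rho)$ with $b^{w}(t,x)=\int K(t,x-y)w_t(\d y)$, so Duhamel gives $w_t=\int_0^t e^{(t-s)\Delta}\big[-\div(b^{\rho}_s w_s)-\div(b^{w}_s\tilde\rho_s)\big]\,\d s$ with $w_0=0$. Using $\|\nabla e^{(t-s)\Delta}\|_{L^p\to L^p}\lesssim(t-s)^{-1/2}$, the Aronson bound on $\tilde\rho_s$ (hence $\tilde\rho_s\in L^\infty_x$ for $s$ bounded below, with a borderline singularity $\sim s^{-d/2}$ near $0$), and Young/Hölder in Lorentz spaces to bound $b^\rho_s,b^w_s$ in terms of $\|w_s\|$ and $\|K\|_{L^{d,\infty}}$, one derives a singular Gronwall inequality $\|w_t\|\le C\int_0^t(t-s)^{-1/2}s^{-\alpha}\|w_s\|\,\d s$ for a suitable norm; the constant $C$ is proportional to $\|K\|_{L^\infty_tL^{d,\infty}_x}$ times the $L^\infty$-bound on $\tilde\rho_s$, and this is where the \emph{smallness} $\zeta\in\cP_{\eps_0}$ enters — for small $\eps_0$ one keeps the relevant norm of $\tilde\rho_s$ small on a short initial interval (the continuous part of $\zeta$ is regularized instantly by the heat semigroup, while the atomic mass $\le\eps_0$ stays small), making $C$ small enough that the singular Gronwall lemma forces $w\equiv0$ on $[0,T_0]$, and then a standard continuation argument (the density is bounded away from $0$, so no smallness is needed to iterate) extends this to $[0,T]$. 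Once $\rho=\tilde\rho$, both $X$ and $Y$ solve the \emph{same} linear SDE \eqref{Eq:SDE} with drift $b^\rho$, and \Cref{Thm:SDE'}(ii) gives $\Law(X)=\Law(Y)$ on path space.

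The main obstacle I anticipate is the uniqueness argument near $t=0$: the linearized drift $b^\rho$ depends on $\rho$ only through its \emph{critical} $L^{d,\infty}$ norm, which does \emph{not} become small as $t\downarrow0$ (it is bounded by $\|K\|_{L^{d,\infty}}$ regardless), so smallness cannot come from the drift's critical norm. The smallness must instead be extracted from the \emph{source term} $\div(b^w\tilde\rho)$, i.e. from controlling $\tilde\rho_s$ in a strong norm that is small when $\zeta$ has small atomic part — quantifying "$\zeta$ close to having an $L^p$ density for some $p>1$" and propagating this through the heat flow with singular drift, uniformly in the two solutions, is the delicate point, and it is precisely what the hypothesis $\zeta\in\cP_{\eps_0}(\mR^d)$ is designed to make tractable. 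A secondary technical point is the continuity of the solution map $\Phi$ in part (i): narrow convergence of measures does not directly control the singular convolution $K*\mu^n$, so one must route through the uniform Aronson bound to upgrade to strong convergence of densities on $[\delta,T]\times\mR^d$, and handle the initial layer $[0,\delta]$ separately using the uniform Krylov estimate.
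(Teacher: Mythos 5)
Your uniqueness argument (part~ii) matches the paper's proof essentially step for step: the representation formula $\rho^{(i)}(t,y)=\int p^{b^{(i)}}_{0,t}(x,y)\,\zeta(\d x)$ from \Cref{Thm:SDE'}, a Duhamel formula for the difference $\rho^{(1)}-\rho^{(2)}$, a Kato-type norm $\sup_t t^{d/(2r')}\|\cdot\|_{L^r_x}$ with $r\in(1,d')$, smallness of the Beta-function constant supplied by the Giga--Miyakawa--Osada decay of \Cref{Lem:Decay}, and a final upgrade to path-law uniqueness by the conditional uniqueness in \Cref{Thm:SDE'}. Your diagnosis of where the smallness $\eps_0$ must come from (the coefficient $\tilde\rho_s$ via the atomic-part bound, not the scale-invariant critical norm of $b^\rho$) is precisely the paper's mechanism; the paper absorbs the error directly in the weighted norm rather than iterating a singular Gronwall lemma, but this is a cosmetic difference, and your continuation past $T_0$ is what the paper implicitly invokes.

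Your existence argument (part~i) is a genuinely different route. You run a Schauder fixed-point scheme on the frozen-drift map $\mu\mapsto\Law(X^\mu_\cdot)$, whereas the paper mollifies the \emph{kernel}, $K^n=K*\varrho^n$, solves the resulting subcritical McKean--Vlasov equation by classical theory, establishes the Aronson and Krylov estimates uniformly in $n$ via \Cref{Thm:Heat} and \Cref{Prop:Holder}, and passes to the limit in the nonlinear martingale problem --- the main work being the convergence $K^n*\rho^n\to K*\rho$ in $L^{q_i}_tL^{p_i}_x$ after the split $K=K\1_{\{|K|>1\}}+K\1_{\{|K|\le1\}}$. Both routes rest on the same uniform-in-$\mu$ bound $\|K*\mu_t\|_{L^{d,\infty}_x}\le C\|K\|_{L^{d,\infty}_x}$. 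What the paper's approximation buys is that it never needs continuity of a solution map: by compactness alone, any accumulation point of the approximate solutions is a solution. Your fixed-point route must instead prove continuity of $\Phi$ in the narrow topology, which, as you correctly flag, is the delicate step because narrow convergence of $\mu^n_t$ does not directly control the singular convolution $K*\mu^n_t$. To close this, you would enlarge the definition of the invariant set $\cS_\zeta$ to also impose the H\"older bound \eqref{Eq:pHolder} on the densities, so that narrow convergence of uniformly bounded, uniformly H\"older densities upgrades to locally uniform convergence on $[\delta,T]\times\mR^d$ (hence strong convergence of $K*\mu^n$ there), with the initial layer $[0,\delta]$ handled by the uniform Krylov estimate as you indicate.
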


    Parallel to the above results of \eqref{Eq:MV}, we have  the well-posedness of \eqref{Eq:NFP}.

    \begin{theorem}[Well-posedness of nonlinear Fokker-Planck equations] \label{Thm:NFP}
        Let $d\geq 2$. Assume that $K\in L^\infty_t L_x^{d,\infty}$ and $\div K=0$. 
        Then, the following holds: 
        \begin{enumerate}[(i)]
            \item For any $\zeta \in\cM(\mR^d)$, there exists at least one weak solution \(\rho\) to \eqref{Eq:NFP}. Moreover, for each \(t\in (0,T]\), \(\rho(t)\) is smooth and bounded.
            \item There exist 
$(\mathfrak{p},\mathfrak{q})\in\sI$ and   $\eps_0>0$, 
depending on $d$ and $\|K\|_{L^\infty_t L_x^{d,\infty}}$,  
such that if 
$\zeta_a \in \mathcal{M}(\mathbb{R}^d)$ 
with $|\zeta_a|(\mR^d) \leq \eps_0$, then the narrowly continuous weak solution to equation \eqref{Eq:NFP} is unique in the class of functions $\rho$ satisfying the following property: for each $\delta \in (0,T)$, 
		$$
		    \rho\in L^{\mathfrak{q}'}(\delta, T; L^{\mathfrak{p}'}_x),
		$$
            where $(\mathfrak{p}',\mathfrak{q}')=(\mathfrak{p}/(\mathfrak{p}-1), \mathfrak{q}/(\mathfrak{q}-1))$.  
        \end{enumerate}
    \end{theorem}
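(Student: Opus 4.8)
The plan is to transfer the well-posedness of \eqref{Eq:MV} from \Cref{Thm:MV} to \eqref{Eq:NFP} via the superposition principle, using the Aronson-type estimate from \Cref{Thm:SDE'} to produce the regularity and the correct integrability class. For existence, given $\zeta\in\cM(\mR^d)$, I would first reduce to the probability case by normalizing the positive and negative parts: write $\zeta=\zeta^+-\zeta^-$ with $\zeta^\pm$ finite nonnegative measures (Hahn decomposition), solve \eqref{Eq:MV} with initial data $\zeta^\pm/|\zeta^\pm|(\mR^d)$ as in \Cref{Thm:MV}(i) to obtain weak solutions whose marginal laws $\rho^\pm(t)$ solve \eqref{Eq:NFP} in the nonnegative-measure case, and then set $\rho(t):=|\zeta^+|(\mR^d)\,\rho^+(t)-|\zeta^-|(\mR^d)\,\rho^-(t)$. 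One must check this linear combination still solves \eqref{Eq:NFP}: the drift $K*\rho$ is not linear in $\rho$, so this step is \emph{not} immediate and is in fact the first subtlety — it works only because, by \Cref{Thm:MV}(i), each $\rho^\pm(t)$ for $t>0$ has a smooth bounded density with Gaussian two-sided bounds, so $K*\rho^\pm$ is a genuine function and the equation for $\rho$ can be verified directly as a linear Fokker-Planck equation with the (a priori fixed) drift $u:=K*\rho$ once one knows $\rho$ itself has the Aronson bounds; alternatively, one reconstructs $\rho$ directly from an $\cM(\mR^d)$-valued McKean-Vlasov SDE driven by the signed measure, which is the cleaner route. The smoothness and boundedness of $\rho(t)$ for $t\in(0,T]$ then follows from the smoothing of the heat semigroup combined with the Krylov estimate \eqref{Eq:Kry} applied to the Duhamel representation $\rho(t)=h(t-\delta)*\rho(\delta)-\int_\delta^t h(t-r)*\div(u\rho)(r)\,\d r$ and a bootstrap.

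For uniqueness, suppose $\rho$ is a narrowly continuous weak solution of \eqref{Eq:NFP} lying in $L^{\mathfrak{q}'}(\delta,T;L^{\mathfrak{p}'}_x)$ for every $\delta\in(0,T)$, with $\zeta_a$ small. The key step is the superposition principle of Trevisan (or Figalli, or Ambrosio–Trevisan) in the form adapted to singular vector fields: since $u=K*\rho\in L^\infty_tL^{d,\infty}_x$ with $\|u\|_{L^\infty_tL^{d,\infty}_x}\le\|K\|_{L^\infty_tL^{d,\infty}_x}\,|\rho|(\mR^d)$ bounded, and since $\rho\in L^{\mathfrak{q}'}_tL^{\mathfrak{p}'}_x$ locally away from $0$ gives $\int_\delta^T\!\!\int |u\rho|\,\d x\,\d t<\infty$ (Hölder, $1/\mathfrak p+1/\mathfrak p'=1$), any such $\rho$ is the time-marginal family of a martingale solution to \eqref{Eq:SDE} with drift $b:=u$. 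Because $\rho$ satisfies the integrability $\rho\in L^{\mathfrak q'}_tL^{\mathfrak p'}_x$, the associated process $Y$ satisfies the generalized Krylov estimate \eqref{Eq:Kry'} with exponents $(\mathfrak p,\mathfrak q)$ — this is exactly the duality: $\bE\int_\delta^T f(t,Y_t)\d t=\int_\delta^T\!\!\int f\rho\,\d x\,\d t\le\|f\|_{L^{\mathfrak q}L^{\mathfrak p}}\|\rho\|_{L^{\mathfrak q'}(\delta,T;L^{\mathfrak p'}_x)}$. Hence $Y$ lies in the Krylov class, and the smallness of $\zeta_a$ places the initial law inside $\cP_{\eps_0}$ after normalization; by the conditional uniqueness in \Cref{Thm:MV}(ii) (equivalently, the linear conditional uniqueness of \Cref{Thm:SDE'} applied to the now-fixed drift $u$, noting $u$ is divergence-free since $\div K=0$), the law of $Y$ is determined. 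Therefore $\rho(t)=\Law(Y_t)$ is unique.

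The main obstacle I expect is twofold. First, the drift $u=K*\rho$ is only known to be divergence-free and in $L^\infty_tL^{d,\infty}_x$ once one knows $|\rho|(\mR^d)$ is controlled, but for a general signed solution $\rho$ of \eqref{Eq:NFP} the total variation $|\rho(t)|(\mR^d)$ need not be conserved (only the mass $\int\rho(t)\,\d x$ is), so one must argue — using the Aronson lower bound from \eqref{Eq:AE}, which forces $\rho(t)$ to inherit a definite sign structure away from $t=0$, or using the $L^1$-contraction / maximum principle for the linear Fokker-Planck flow with drift $u$ — that $\|\rho(t)\|_{L^1}\le|\zeta|(\mR^d)$ uniformly, so that $u\in L^\infty_tL^{d,\infty}_x$ with a bound depending only on $\|K\|_{L^\infty_tL^{d,\infty}_x}$ and $|\zeta|(\mR^d)$. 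Second, and more delicate: the uniqueness class in \Cref{Thm:MV}(ii) is a probabilistic Krylov class on \emph{processes}, while here we are handed only a measure-valued curve; the superposition principle must be invoked \emph{with} the integrability bound propagated to the path law, and one must be careful that the exponents $(\mathfrak p,\mathfrak q)$ furnished by \Cref{Thm:SDE'} — which depend on $\|u\|_{L^\infty_tL^{d,\infty}_x}$ hence ultimately on $|\zeta|(\mR^d)$ — are chosen uniformly over the admissible range of initial data, which is why the statement fixes $(\mathfrak p,\mathfrak q)$ depending on $d$ and $\|K\|_{L^\infty_tL^{d,\infty}_x}$ together with the smallness $\eps_0$. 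Handling the continuous part $\zeta_c$ (arbitrary) versus the atomic part $\zeta_a$ (small) requires splitting $\zeta=\zeta_c+\zeta_a$, using that the heat semigroup instantly smooths $\zeta_c$ into an $L^1\cap L^\infty$ density, so only the atomic part constrains the admissible class — this is where the $\cP_{\eps_0}$ condition of \Cref{Thm:MV} enters, via $|\zeta_a|(\mR^d)\le\eps_0$.
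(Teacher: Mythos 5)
Your existence argument does not close. You correctly flag that the decomposition $\rho := |\zeta^+|\,\rho^+ - |\zeta^-|\,\rho^-$ fails because the drift $K*\rho$ is quadratic in $\rho$: expanding $(K*\rho)\rho$ produces cross terms $(K*\rho^\pm)\rho^\mp$ that are absent from the linear combination of the two equations for $\rho^\pm$. But the two escape routes you offer are both circular or undefined — ``verify directly as a linear Fokker–Planck equation with the a priori fixed drift $u:=K*\rho$'' presupposes knowledge of $\rho$, and an ``$\cM(\mR^d)$-valued McKean–Vlasov SDE'' is not a thing, since the law of a process is always a probability measure. The paper instead works entirely at the PDE level: it mollifies $K$, solves the regularized nonlinear Fokker–Planck equation, uses the Aronson bound $\|\rho^n(t)\|_{\cM}\le\|\zeta\|_{\cM}$ together with the two-sided heat-kernel estimate to get equicontinuity in $C_t\cM_x$ (Arzel\`a–Ascoli), and passes to the limit in the nonlinear term using the Krylov estimate and the dominated convergence argument already set up for \Cref{Thm:MV}(i). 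This avoids the decomposition issue entirely.

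Your uniqueness argument has a more serious gap: the superposition principle lifts a curve of \emph{probability} measures to a law on path space, and \Cref{Thm:MV}(ii) is stated for initial data $\zeta\in\cP_{\eps_0}(\mR^d)$. For general signed $\zeta\in\cM(\mR^d)$, there is no process $Y$ with $\Law(Y_t)=\rho(t)$, so the reduction ``$\rho$ solves NFPE $\Rightarrow$ lift to $Y$ solving MVE $\Rightarrow$ apply \Cref{Thm:MV}(ii)'' breaks at the first arrow. Moreover, even granting the lift, your conclusion ``by linear conditional uniqueness for the now-fixed drift $u$, the law of $Y$ is determined'' does not give what you want: two NFPE solutions $\rho^{(1)},\rho^{(2)}$ produce two \emph{different} frozen drifts $b^{(i)}=K*\rho^{(i)}$, and linear uniqueness (for a fixed drift) says nothing about comparing solutions across drifts. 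The paper's actual argument is a Gronwall scheme: use \Cref{Prop:LFP} (linear FPE uniqueness, valid for any $\zeta\in\cM$) to get the representation $\rho^{(i)}(t)=\int p^{b^{(i)}}_{0,t}(x,\cdot)\,\zeta(\d x)$; combine the Aronson bound with Giga–Miyakawa–Osada's decay estimate (\Cref{Lem:Decay}) to obtain $\sup_{t\le T_0}t^{d/2r'}\|\rho^{(i)}(t)\|_{L^{r,\beta}_x}\le C|\zeta_a|$; then estimate $\rho^{(1)}-\rho^{(2)}$ via Duhamel in the weighted norm $\|\cdot\|_{r,T_0}$, closing the estimate precisely because $C|\zeta_a|\le C\eps_0<1$. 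This quantitative contraction in the singular-time-weighted norm is the mechanism by which the smallness of the atomic part enters, and it is missing from your proposal.
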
  
    
    In the specific case where the singular kernel $K$ is the Biot-Savart law, we have the following uniqueness result for the 2D voricity NSE in certain scaling-supercritical spaces.

    \begin{theorem}[Well-posedness of 2D vorticity NSE] \label{Thm:NS}
        For any $\zeta \in \cM(\mR^2)$, the narrowly continuous weak solution to equation \eqref{Eq:NS} is unique in the class of functions $\rho$ satisfying the following property: for each $\delta \in (0,T)$, 
	\begin{align} \label{uniq-supercritical}
		\rho\in L^{\mathfrak{q}'}(\delta, T; L^{\mathfrak{p}'}_x),
	\end{align} 
	where $\mathfrak{p}'$ and $\mathfrak{q}'$ are the same exponents as in Theorem \ref{Thm:NFP} satisfying ${1}/{\mathfrak{p}'}+{1}/{\mathfrak{q}'}>1$.  
    \end{theorem}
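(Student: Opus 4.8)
The plan is to deduce Theorem \ref{Thm:NS} as a special case of Theorem \ref{Thm:NFP}, so the bulk of the work is checking that the Biot-Savart law falls into the framework and that the stated exponents satisfy the supercriticality inequality. First I would observe that $K_{\mathrm{BS}}$ is time-independent and, as noted in the introduction, $K_{\mathrm{BS}}\in L^{2,\infty}(\mR^2)$; moreover $\div K_{\mathrm{BS}}=0$ in the distributional sense since $K_{\mathrm{BS}}=\nabla^\perp G$ with $G(x)=\tfrac{1}{2\pi}\log|x|$ the Newtonian potential in $\mR^2$, so $\div K_{\mathrm{BS}}=\partial_1\partial_2 G-\partial_2\partial_1 G=0$. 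Hence the kernel $K=K_{\mathrm{BS}}$ (with $d=2$) verifies the hypotheses $K\in L^\infty_tL^{d,\infty}_x$ and $\div K=0$ of Theorem \ref{Thm:NFP}.

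Next I would apply Theorem \ref{Thm:NFP} with $d=2$ to obtain exponents $(\mathfrak{p},\mathfrak{q})\in\sI$ and $\eps_0>0$. The point is that for $\zeta\in\cM(\mR^2)$, although $\zeta$ may have atoms of arbitrary mass (so the condition $|\zeta_a|(\mR^2)\le\eps_0$ from Theorem \ref{Thm:NFP}(ii) is not met at $t=0$), the smoothing property from Theorem \ref{Thm:NFP}(i) — that $\rho(t)$ is smooth and bounded for every $t\in(0,T]$ — shows that for any $\delta\in(0,T)$ the measure $\rho(\delta)$ is absolutely continuous, i.e. $\rho(\delta)_a=0$. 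Restarting the equation from time $\delta$ with initial datum $\rho(\delta)$ then places us in the regime $|\zeta_a|(\mR^2)=0\le\eps_0$, so the uniqueness statement of Theorem \ref{Thm:NFP}(ii) applies on $[\delta,T]$ within the class $\rho\in L^{\mathfrak{q}'}(\delta',T;L^{\mathfrak{p}'}_x)$ for all $\delta'\in(\delta,T)$. Since $\delta>0$ is arbitrary and the class \eqref{uniq-supercritical} is exactly the local-in-time-away-from-zero class, this gives uniqueness of the narrowly continuous weak solution to \eqref{Eq:NS} in the class \eqref{uniq-supercritical}. One minor point to address carefully here is that two narrowly continuous weak solutions agreeing on $[\delta,T]$ for every $\delta>0$ must agree on $(0,T]$, and then by narrow continuity at $t=0$ they share the initial datum $\zeta$; conversely any solution in the class is, after time $\delta$, a solution of the restarted equation, so the restriction to $[\delta,T]$ lies in the Theorem \ref{Thm:NFP}(ii) uniqueness class.

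Finally I would record the inequality ${1}/{\mathfrak{p}'}+{1}/{\mathfrak{q}'}>1$. Writing $\mathfrak{p}'=\mathfrak{p}/(\mathfrak{p}-1)$, $\mathfrak{q}'=\mathfrak{q}/(\mathfrak{q}-1)$, the condition $(\mathfrak{p},\mathfrak{q})\in\sI$ reads $\tfrac{2}{\mathfrak{p}}+\tfrac{2}{\mathfrak{q}}<2$, equivalently $\tfrac1{\mathfrak{p}}+\tfrac1{\mathfrak{q}}<1$, i.e. $(1-\tfrac1{\mathfrak{p}'})+(1-\tfrac1{\mathfrak{q}'})<1$, which rearranges precisely to $\tfrac1{\mathfrak{p}'}+\tfrac1{\mathfrak{q}'}>1$. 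So the exponent pair furnished by Theorem \ref{Thm:NFP} automatically sits in the scaling-supercritical regime for $d=2$, which is the content of the final clause of the theorem. The main obstacle, such as it is, is not a hard estimate but the bookkeeping of the restarting argument: one must verify that the restarted solution is genuinely a weak solution of \eqref{Eq:NS} with the regularized initial datum and that narrow continuity lets one patch the uniqueness on $[\delta,T]$ across $\delta\downarrow 0$ without gaining spurious non-uniqueness at $t=0$; this is where the absolute continuity of $\rho(\delta)$ from the smoothing estimate of Theorem \ref{Thm:NFP}(i) is essential.
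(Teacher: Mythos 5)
Your identification of the Biot--Savart kernel as satisfying the hypotheses of Theorem~\ref{Thm:NFP}, and your algebra showing $1/\mathfrak{p}'+1/\mathfrak{q}'>1$ from $(\mathfrak{p},\mathfrak{q})\in\sI$ with $d=2$, are both correct. However, the restarting argument has a genuine gap and is also not what the paper does.

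The gap: given two narrowly continuous Krylov-class solutions $\rho^{(1)}$, $\rho^{(2)}$ of \eqref{Eq:NS} with the same initial datum $\zeta$, restarting at time $\delta>0$ and invoking Theorem~\ref{Thm:NFP}(ii) only tells you that each $\rho^{(i)}\restriction_{[\delta,T]}$ is the unique Krylov-class solution with initial datum $(\delta,\rho^{(i)}(\delta))$. It does not tell you that $\rho^{(1)}(\delta)=\rho^{(2)}(\delta)$ --- and establishing that would require uniqueness on $[0,\delta]$, which is what you are trying to prove. Narrow continuity gives $\rho^{(i)}(\delta)\rightharpoonup\zeta$ as $\delta\to 0$, but there is no continuous-dependence statement in Theorem~\ref{Thm:NFP} that would let you pass this weak limit through the uniqueness and conclude $\rho^{(1)}=\rho^{(2)}$ on $(0,T]$. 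So "restarting for every $\delta>0$" is vacuous for the comparison of two solutions. There is also a secondary issue: the smoothing assertion in Theorem~\ref{Thm:NFP}(i) is about \emph{the constructed} weak solution, not every Krylov-class solution; to get $\rho(\delta)$ absolutely continuous for an \emph{arbitrary} Krylov-class $\rho$ you must first derive the representation formula $\rho(t)=\int p^{b}_{0,t}(x,\cdot)\,\zeta(dx)$ from \Cref{Prop:LFP} (which is applicable because $b=K_{\mathrm{BS}}*\rho$ is divergence-free, critical, and the solution is Krylov-class).

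The paper's actual route is different and avoids the restarting entirely: once the representation formula is in hand, the Aronson bounds and the Hölder estimate \eqref{Eq:pHolder} yield $\rho\in C((0,T];L^1_x\cap L^\infty_x)$ with a uniform $L^1$ bound and $\rho(t)\rightharpoonup\zeta$ as $t\to0$. This places any Krylov-class solution in the uniqueness class of Gallagher--Gallay \cite{Isabelle2005uniqueness}, whose uniqueness theorem for the 2D vorticity NSE requires no smallness on the atomic part of $\zeta$. That 2D-specific result is exactly what buys the removal of the smallness condition $|\zeta_a|\le\eps_0$ present in the general Theorem~\ref{Thm:NFP}(ii); there is no way to recover it from Theorem~\ref{Thm:NFP}(ii) alone by restarting.
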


\begin{remark}
\begin{enumerate}[(i)]
   \item  
   The uniqueness problem of the 2D vorticity NSE has been extensively studied in literature. 
    In the remarkable paper \cite{Isabelle2005uniqueness},  
    the uniqueness was proved for solutions 
    \( w \in C((0,T); L^1 \cap L^\infty) \)  
    satisfying $\|w(t)\|_{L^1} \leq C$ for all $t\in (0,T)$ 
    and 
    $w(t) \rightharpoonup \mu \in \mathcal{M}(\mR^2)$  
    as $t\to 0$. 
    In \cite{FHM14},  
    the uniqueness   was proved 
for weak solutions $w\in C([0,T]; \mathcal{D}') \cap L^\infty(0,T; \mathcal{P})$ 
    such that $\nabla w \in L^{2q/(3q-2)}(0,T; L^q)$, 
    $\forall q\in [1,2)$.  
    Very recently, 
the uniqueness in the class $L^\infty_t H^{-1}_x \cap L^4_t(L^4_x\cap L^{{4}/{3}}_x)$ was proved in \cite{barbu2023ns}.

    In Theorem \ref{Thm:NFP},  
     the uniqueness is derived in the Krylov class 
    for general nonlinear Fokker-Planck equations with critical kernels, 
    which include the 2D vorticity NSE. 

    \item     
    We remark that the Krylov class allows for obtaining 
    uniqueness of solutions to  \eqref{Eq:NS} in certain scaling-supercritical spaces \( L^{\mathfrak{q}'}_t L^{\mathfrak{p}'}_x \) 
    with $(\mathfrak{p},\mathfrak{q})\in\sI$. 
 
    Actually, 
    the $2$D vorticity NSE has the invariant scaling  
    \[ 
        \rho_\lambda(t,x) := \lambda^2 \rho(\lambda^2 t, \lambda x), \quad K_\lambda (t,x) := \lambda K(\lambda^2 t, \lambda x),\ \ \lambda>0. 
    \]
    Note that, the Biot-Savart kernel 
    is invariant under the scaling, namely, $K_\lambda(t,x) = K(t,x)$, $\forall \lambda>0$. 
    The above scaling leaves the mixed Lebesgue space $L^q(\mathbb{R}_+; L^{p}_x)$ invariant if  $ {1}/{p} + {1}/{q} = 1$. For the exponents $(\mathfrak{p}',\mathfrak{q}')$  
      in \eqref{uniq-supercritical},  
      since ${1}/{\mathfrak{p}'}  + {1}/{\mathfrak{q}'}  >1$,  
      one has 
      \begin{align*}
        \|\rho_\lambda\|_{L^{\mathfrak{q}'}_tL_x^{\mathfrak{p}'}} 
        = \lambda^{2(1-\frac{1}{\mathfrak{q}'} - \frac{1}{\mathfrak{p}'})} 
        \to \infty
      \end{align*} 
      as $\lambda \to 0^+$. 
      This justifies to say that 
       \( L^{\mathfrak{q}'}_t L^{\mathfrak{p}'}_x \) is a scaling-supercitical space for \eqref{Eq:NS}.

    It is somewhat surprising to find the uniqueness solution class for \eqref{Eq:NS} in a supercritical regime, because  usually weak solutions exhibit non-uniqueness in the supercritical regime. See, e.g., \cite{BCV22, BV19, CL22, cheskidov2023nonuniqueness,LQZZ24} for NSE in supercritical spaces with respect to the LPS criterion. In contrast to this,   Theorem \ref{Thm:NS} reveals that, even in the supercritical regime, there exist certain uniqueness class for the 2D vorticity NSE. 

    \item 
    In Theorems \ref{Thm:MV}-\ref{Thm:NS}, 
    the uniqueness results in the Krylov class for McKean-Vlasov equations and nonlinear Fokker-Planck equations also hold for an open set of the index \((\mathfrak{p}, \mathfrak{q}) \in \mathscr{I}\), thanks to the stability of our probabilistic arguments under small perturbations of the indices in $\mathscr{I}$. 
     
    The present probabilistic proof also avoids the explicit estimates used in \cite[(4.19)]{giga1988two} and \cite[Theorem 6.1]{hao2023second} for proving the uniqueness of (fractional) NSE. 
\end{enumerate} 
\end{remark}

\subsubsection{Nonlinear Markov processes}  
An important consequence of 
the above results is the following 
nonlinear Markov property for 
the path laws of solutions to \eqref{Eq:MV} in the Krylov class.    
We refer to \Cref{Sec:NMP} below for the precise definition of nonlinear Markov process.

\begin{theorem} [Nonlinear Markov process] \label{Thm:NMP}
   Assume that $K \in L^\infty_t L^{d,\infty}_x$ and $\div K = 0$. Then there exists a nonlinear Markov process $\{ \mathbb{P}_{s,\zeta} \}_{(s,\zeta) \in [0,T] \times \mathcal{P}(\mathbb{R}^d)}$ in the sense of \Cref{Def:NMP}, such that for each $(s, \zeta) \in [0,T] \times \mathcal{P}(\mathbb{R}^d)$, the measure $\mathbb{P}_{s,\zeta}$ is the law of a weak solution to \eqref{Eq:MV} with the initial condition $(s, \zeta)$, and \(\mP_{s,\zeta}\) lies in the Krylov class with the same index $(\mathfrak{p}', \mathfrak{q}')$ as in \Cref{Thm:NFP}. Moreover, the following uniqueness results hold: 
    \begin{enumerate}[(i)]
        \item \(\mP_{s,\zeta}, s\in [0,T],\zeta\in \cP_{\eps_0}\), is unique in the Krylov class, where \(\eps_0>0\) is the small number as in \Cref{Thm:MV}. 
        \item \(\mP_{s,\zeta}, s\in [0,T],\zeta\in \cP(\mR^2)\), is unique in the Krylov class, if \(d=2\) and \(K\) is the Biot-Savart kernel. 
    \end{enumerate}
\end{theorem}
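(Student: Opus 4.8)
The plan is to build the family $\{\mathbb{P}_{s,\zeta}\}$ by a measurable selection from the sets of weak solutions to \eqref{Eq:MV}, and then to verify the nonlinear Markov (Chapman--Kolmogorov) identity of \Cref{Def:NMP} using the linear Markov property from \Cref{Thm:SDE'} applied to the frozen-drift SDE. First I would fix $(s,\zeta)\in[0,T]\times\cP(\mR^d)$ and invoke \Cref{Thm:MV}(i) to produce a weak solution $X^{s,\zeta}$; its one-dimensional time marginals $\rho_{s,\zeta}(t)=\mathrm{law}(X_t^{s,\zeta})$ solve \eqref{Eq:NFP} and, for $t>s$, admit smooth bounded densities, so in particular the (deterministic) vector field $b^{s,\zeta}(t,x):=(K(t,\cdot)*\rho_{s,\zeta}(t))(x)$ lies in $L^\infty_{t,\mathrm{loc}((s,T])}L^{d,\infty}_x$ (indeed in much better spaces away from $t=s$), is divergence free, and $\mathbb{P}_{s,\zeta}:=\mathrm{Law}(X^{s,\zeta})$ is then a weak solution of the \emph{linear} \eqref{Eq:SDE} with this drift. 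By \Cref{Thm:SDE'} the density of this linear SDE satisfies the Aronson estimate \eqref{Eq:AE}, hence $\mathbb{P}_{s,\zeta}$ lies in the Krylov class with the index $(\mathfrak{p}',\mathfrak{q}')$ of \Cref{Thm:NFP}, and the collection of its laws (for varying starting point) forms a strong linear Markov process with transition densities $p^{s,\zeta}_{r,t}(x,y)$.

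Next I would establish the \emph{flow / restriction-and-concatenation} consistency that is the content of the nonlinear Markov property. The key observation is that if $X^{s,\zeta}$ is a weak solution to \eqref{Eq:MV} on $[s,T]$, then for any $s\le r\le T$ the time-shifted, conditioned process is a weak solution to \eqref{Eq:MV} started at $(r,\rho_{s,\zeta}(r))$: its marginals solve the \emph{same} nonlinear Fokker--Planck equation \eqref{Eq:NFP} on $[r,T]$ with initial datum $\rho_{s,\zeta}(r)$, and for $r>s$ that initial datum has a smooth bounded density, so it belongs to $\cP_{\eps_0}(\mR^d)$ automatically. Under the uniqueness hypotheses of \Cref{Thm:NFP}(ii) (resp. \Cref{Thm:NS} when $d=2$ and $K=K_{\mathrm{BS}}$), the marginal flow $r\mapsto\rho_{s,\zeta}(r)$ is therefore \emph{uniquely} determined for $r>s$, hence so is the frozen drift $b^{s,\zeta}$, and consequently $\mathbb{P}_{s,\zeta}$ restricted to $C([r,T];\mR^d)$ coincides with $\int \mathbb{P}_{r,\delta_x}\,\rho_{s,\zeta}(r)(\d x)$ wherever $\mathbb{P}_{r,\zeta'}$ for general $\zeta'$ is defined by mixing the point-mass versions. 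This is precisely the identity
\begin{equation*}
    \mathbb{P}_{s,\zeta}\big|_{[r,T]} \;=\; \int_{\mR^d} \mathbb{P}_{r,\delta_x}\,\rho_{s,\zeta}(r,\d x)\,, \qquad \rho_{s,\zeta}(r) = (\pi_r)_*\mathbb{P}_{s,\zeta},
\end{equation*}
together with the requirement $\rho_{r,\rho_{s,\zeta}(r)}(t)=\rho_{s,\zeta}(t)$ for $t\ge r$, which are the defining axioms of \Cref{Def:NMP}. To make a clean \emph{single} selection one sets $\mathbb{P}_{s,\zeta}$ for general $\zeta$ by first fixing measurably the family $\{\mathbb{P}_{s,\delta_x}\}_{x}$ (via a measurable selection from the closed set of martingale solutions, as in the linear theory) and then defining $\mathbb{P}_{s,\zeta}:=\int \mathbb{P}_{s,\delta_x}\zeta(\d x)$ whenever this mixture solves \eqref{Eq:MV} — which it does, because its marginals solve the linear Fokker--Planck equation with drift built from its own marginals by linearity of the Biot--Savart/convolution operator in $\rho$; superposition of solutions of the linear FPE with a \emph{common} drift gives a solution of the nonlinear FPE when the drift is self-consistent, and self-consistency here follows since all the $\mathbb{P}_{s,\delta_x}$ were built with the \emph{same} nonlinear flow started from their respective initial data. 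The uniqueness statements (i) and (ii) are then immediate from \Cref{Thm:MV}(ii) and \Cref{Thm:NS}: any Krylov-class solution has marginals in $L^{\mathfrak{q}'}_tL^{\mathfrak{p}'}_x$ locally away from the initial time, hence coincides with $\rho_{s,\zeta}$, hence the frozen linear SDE is the same, hence (by the conditional uniqueness in \Cref{Thm:SDE'}, whose exceptional set is handled by the $\delta$-shift in \eqref{Eq:Kry'}) the path law is the same.

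The main obstacle, I expect, is the \emph{measurability in $(s,\zeta)$} of the selection together with making the concatenation/restriction identity hold \emph{on the nose} rather than merely up to the a.e.\ ambiguities inherent in the conditional-uniqueness statement (recall \eqref{Eq:Kry'} only pins down the law modulo an initial time-layer, reflected in the $\delta$-shift). Concretely: at the base point $s$ with a general atomic-part-large $\zeta$, uniqueness of $\mathbb{P}_{s,\zeta}$ is \emph{not} available, so the flow property must be arranged by \emph{construction} (defining $\mathbb{P}_{s,\zeta}$ as the push-forward mixture of the point-mass laws and checking the axioms directly), and one must verify that this construction is compatible with the already-unique evolution for $r>s$. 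The technical heart is therefore: (a) a measurable-selection lemma for the $x\mapsto \mathbb{P}_{s,\delta_x}$ map exploiting that the set of weak solutions is a nonempty, convex, compact (in the topology of weak convergence on $C([s,T];\mR^d)$) subset depending measurably on the data — standard in spirit but needing the a priori Krylov/Aronson bounds from \Cref{Thm:SDE'} for tightness; and (b) the identification, for $r>s$, of the conditional law $\mathbb{P}_{s,\zeta}(\,\cdot\mid \pi_{[s,r]})$ with $\mathbb{P}_{r,\delta_{X_r}}$, which uses the linear strong Markov property of the frozen SDE from \Cref{Thm:SDE'} plus the uniqueness of the nonlinear flow to guarantee the drift seen after time $r$ is exactly $b^{r,\rho_{s,\zeta}(r)}=b^{s,\zeta}$. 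Once (a) and (b) are in place, the verification of \Cref{Def:NMP} is bookkeeping, and statements (i)--(ii) drop out of the uniqueness theorems already proved.
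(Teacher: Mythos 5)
Your overall architecture is close to the paper's, which also reduces the result to two ingredients: the flow property of the one-dimensional marginals $\mu^{s,\zeta}$ and a linearized uniqueness statement. The paper imports both as hypotheses of \cite[Corollary 3.9]{rehmeier2022nonlinear}, rather than re-deriving the nonlinear Markov property from the linear strong Markov property as you attempt. However, two steps in your construction do not hold up.

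First, the definition $\mathbb{P}_{s,\zeta}:=\int \mathbb{P}_{s,\delta_x}\,\zeta(\d x)$ is \emph{not} a weak solution of \eqref{Eq:MV}. Each $\mathbb{P}_{s,\delta_x}$ is a McKean--Vlasov solution started from $\delta_x$ and therefore has drift $K*\rho^{s,\delta_x}$ built from its own marginals $\rho^{s,\delta_x}_t=\mathrm{Law}(X_t^{s,\delta_x})$; these drifts vary with $x$. Under the mixture, the trajectory started near $x$ is governed by $K*\rho^{s,\delta_x}$, not by $K*\rho$ with $\rho_t=\int\rho^{s,\delta_x}_t\,\zeta(\d x)$, so the self-consistency you claim fails. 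Your invocation of ``superposition of linear FPE solutions with a \emph{common} drift'' would require all the $\mathbb{P}_{s,\delta_x}$ to be disintegrations of a \emph{single} nonlinear flow run from $\zeta$, which is exactly what you are trying to construct — the argument is circular. The paper instead builds $\mathbb{P}_{s,\zeta}$ directly, for each $(s,\zeta)$, as a tight limit of the regularized McKean--Vlasov solutions (the approximation scheme of \Cref{Sec:MVE}), with the Aronson/Krylov bounds of \Cref{Thm:SDE'} supplying compactness, and only then verifies the flow property and the linearized uniqueness condition for the so-constructed family.

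Second, your identification of the conditional law $\mathbb{P}_{s,\zeta}(\cdot\mid\pi_{[s,r]})$ with $\mathbb{P}_{r,\delta_{X_r}}$ misreads \Cref{Def:NMP}. The kernel $p_{(s,\zeta),(r,y)}$ required there is the regular conditional probability of $\mathbb{P}_{r,\mu_r^{s,\zeta}}$ given $\pi_r^r=y$ — that is, the \emph{frozen-drift} linear martingale solution with drift $K*\mu^{s,\zeta}$, started from $y$ at time $r$. It is \emph{not} the McKean--Vlasov law $\mathbb{P}_{r,\delta_y}$, whose drift $K*\rho^{r,\delta_y}$ is built from a completely different marginal flow. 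This is precisely the distinction between the nonlinear and the ordinary Markov property, and conflating the two makes the Chapman--Kolmogorov verification appear easier than it is. The paper's condition (b) — uniqueness for the \emph{linearized} Fokker--Planck equation $\partial_t\rho=\Delta\rho-\div((K*\mu^{s,\zeta})\rho)$ among solutions dominated by $\mu^{s,\zeta}$, supplied by \Cref{Prop:LFP} — is exactly what encodes the correct kernel, and is not interchangeable with uniqueness of the point-mass McKean--Vlasov problem.
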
 

\subsection{Related literature}\label{sec:comment}

\subsubsection{
SDEs with critical and subcritical drifts}  
\label{Subsub-SDE-(sub)critical}

The study of strong well-posedness of SDEs with irregular drifts dates back to the pioneering works by Zvonkin \cite{zvonkin1974transformation} and Veretennikov \cite{veretennikov1980strong2}. 
The criticality of drifts of SDE 
can be seen by scaling arguments.  
More precisely, let $X(t)$ be a solution to \eqref{Eq:SDE} and $X_{\lambda}(t)=\lambda^{-1} X(\lambda^2 t)$ 
    for $\lambda>0$. Then, 
\[
    \d X_{\lambda}(t) = b_\lambda (t, X_{\lambda}(t)) \d t + \sqrt{2} \d W_{\lambda}(t), 
\]
where $b_\lambda (t,x)=\lambda b(\lambda^2 t, \lambda x)$ and, by the scaling invariance,  $W_{\lambda}(t)=\lambda^{-1} W(\lambda^2 t)$ is another standard Brownian motion. One has 
\[
    \left\|b_\lambda\right\|_{L^q_tL^{p,r}_x(\widetilde{Q}_R)}=\lambda^{1-\frac{d}{p}-\frac{2}{q}}\|b\|_{L^q_tL^{p,r}_x(\widetilde{Q}_{\lambda R})} 
\]
with $\widetilde{Q}_R= \left\{z=(t,y): s<t<s+R^2, |y-x|<R \right\}$, 
which remains invariant 
if the LPS condition holds 
\begin{align} \label{LPS} 
      \frac{2}{q} + \frac{d}{p} =1  \mbox{ with } p,q \in [2,\infty] \mbox{ and } r\in (0,\infty]
\end{align}
(see also \cite{beck2019stochastic}). 
For the small-time behavior of the process, 
it is more reasonable to consider the local norm $\|b_\lambda\|_{L^q_tL^{p,r}_x(\widetilde{Q}_R)}$ with $R<\infty$, rather than the global one. 
Note that $\|b_\lambda \|_{L^q_tL^{p,r}_x(\widetilde{Q}_R)} \to 0$ as $\lambda\to 0^+$, except the endpoint case $(p,q)=(d,\infty)$. 
This intuitively indicates that the {endpoint} critical case is more difficult to analyze than the non-endpoint critical and subcritical cases.

In the literature, there is a large number of  results 
for SDEs with subcritical drifts, 
see, for instance, 
\cite{krylov2005strong, mohammed2015sobolev,  
zhang2011stochastic} and the references therein. 
One standard strategy 
in the subcritical case is to construct a homeomorphism, 
e.g. the Zvonkin transform, to 
deal with the irregular drifts by solving the corresponding Kolmogorov equations. 
But it seems not applicable well in the critical case.

Very recently, 
the progress in dimensions $d\geq 3$ has been obtained by Krylov  \cite{krylov2021strong, krylov2023diffusion,krylov2023strong,krylov2025strong} regarding the existence of strong solutions to SDEs with non-endpoint critical and supercritical drifts. These developments are based on an analytical criterion for the existence of strong solutions initially applied in \cite{krylov2021strong}. Related results obtained via Malliavin calculus for \( d \geq 3 \) can be found in \cite{kinzebulatov2025strong} and \cite{rockner2025strong}. 

Lastly, 
we also would like to refer to  \cite{galeati2025almost,grafner2024weak, hao2023sdes, lu2025non, zhang2021stochastic} 
for SDEs in the supercritical case.

\subsubsection{Distinctions 
of critical SDEs in dimensions $d=2$ and $d\geq 3$}  
\label{Subsub-d2-d3} 

The subtleness in dimension two  to solve 
SDEs with critical $L^{d,\infty}_x$-Lorentz
drifts 
can be seen as follows. 

From the perspective of the theory of Dirichlet forms, 
the Dirichlet form corresponding to the operator \( \Delta + b \cdot \nabla \), 
where \( b \) is time-independent, 
is given by:
    \[ 
        \cE (u, v) = (\nabla u, \nabla v) - (b \cdot \nabla u, v), \quad \forall u, v \in H^1(\mathbb{R}^d).  
    \] 
    In dimensions \( d \geq 3 \), 
    it is a regular Dirichlet form and satisfies the standard  sector condition   
    \begin{align*}
	    \cE (u, v) \leq&  \sqrt{\cE (u, u)} \sqrt{\cE (v, v)}+ \sqrt{\cE (u, u)} \|bv\|_{L^2}\\
	    \leq & \sqrt{\cE (u, u)} \l( \sqrt{\cE (v, v)}+C\|b\|_{L^{d,\infty}}\|v\|_{L^{\frac{2d}{d-2},2}} \r)\\
        \leq& \l(1+C\|b\|_{L^{d,\infty}}\r) \sqrt{\cE (u, u)} \sqrt{\cE (v, v)}, \quad \forall u,v\in H^1(\mR^d), 
    \end{align*} 
    where the last step is due to estimate \eqref{Eq:Sob} below. 
    Hence, by virtue of the theory of Dirichlet forms (see  \cite{ma1992dirichletform}),  
    there exists a unique Hunt process associated with the Dirichlet form \((\cE, H^1(\mathbb{R}^d))\).  
    But 
    the sector condition is still unclear 
    in the low dimension 
    case $d=2$. 
    
   Another delicate point can be seen from the viewpoint  of martingale problems,  where a key step is to find a solution to the associated linear backward Kolmogorov equation \eqref{Eq:BKE1} below in the space  
   \begin{align} \label{H-2pq}
        \cH^{2,\mathfrak{p}}_{\mathfrak{q}}:=\left\{u\in L^{\mathfrak{q}}_tL^{\mathfrak{p}}_x: \p_t u, \nabla^2 u \in L^{\mathfrak{q}}_tL^{\mathfrak{p}}_x\right\}  \mbox{ with } (\mathfrak{p},\mathfrak{q})\in \sI. 
    \end{align}
 This requires the integrability  
    \( b \cdot \nabla u \in L^{\mathfrak{q}}_t L^{\mathfrak{p}}_x \) with \( ({\mathfrak{p}},{\mathfrak{q}})\in\sI\), 
which can  be achieved 
in dimensions $d\geq 3$ (\cite{rockner2023weak}),  
but is more difficult in dimension $d=2$.

\subsection{Ideas of the proof}   

The main novelties of our proofs 
can be summarized as follows.

\medskip 
\paragraph{\bf Meyers-type estimate} 
    The keypoint to solve the backward Kolmogorov equation \eqref{Eq:BKE1} in 
    the required space \eqref{H-2pq} 
    is obtaining a high integrability estimate of the gradient $\| \nabla u\|_{L^{\mathfrak{q}}_tL^{\mathfrak{l},1}_x}$, where $\mathfrak{l}>d$ and $\mathfrak{q}>2\mathfrak{l}/(\mathfrak{l}-d)$.  
    
    For dimensions $d \geq 3$, this estimate can be derived by proving an a priori H\"older estimate for solutions to \eqref{Eq:BKE1}, and utilizing a Gagliardo-Nirenberg type inequality together with the $L^q$-$L^p$ theory for parabolic equations. 
    See \cite{rockner2023weak} or 
    Figure $3$ in Subsection \ref{Subsec-Sob-Esti} below.

    For dimension $d=2$, 
    the primary obstacle is that the \(L^q\)-\(L^p\) theory gives only a priori estimate of 
    \(\|\nabla u\|_{L^\nu_t L^\mu_x}\) with \(\mu< 2=d\), 
    and the standard energy method gives merely an upper bound of \(\|\nabla u\|_{L^2_{t,x}}\). 
    These low integrability estimates, 
    however, are insufficient to solve the backward Kolmogorov equation \eqref{Eq:BKE1} in second order Sobolev spaces. 
    
To overcome this limitation, we introduce upgradation procedures to upgrade the integrability of the gradient of solutions. For the convenience of the readers, 
we present \Cref{Fig:2D} below to illustrate the proof strategy.

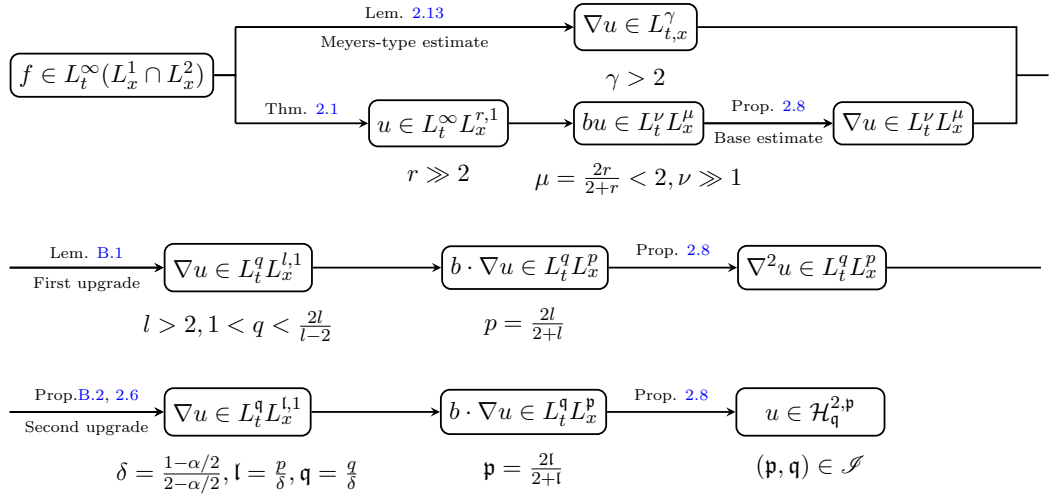
\begin{figure}[ht]
    \centering
    \resizebox{0.9\textwidth}{!}{
    \begin{tikzpicture}[myStyle] 

    \node (f) {$f\in L_t^\infty(L_x^1\cap L_x^2)$}; 
    \node (du) [right=of f, yshift=\vdist/2, xshift=1.8*\hdist] {$\nabla u\in L_{t,x}^\gamma$};
    \node [below=of du, draw=none, yshift=\vdist/1.1] {$\gamma>2$};
    \node (u)  [right=of f, yshift=-\vdist/2, xshift=\hdist/5] {$u \in L^\infty_t L^{r,1}_x$}; 
    \node [below=of u, draw=none, yshift=\vdist/1.1] {$r\gg 2$};
    \node (bu) [right=\hdist/2 of u] {$b u \in L^\nu_t L^\mu_x$};
    \node [below=of bu, draw=none, yshift=\vdist/1.1] {$\mu=\frac{2r}{2+r}<2,\nu\gg 1$};
    \node (nablau) [right=of bu] {$\nabla u \in L^\nu_t L^\mu_x$}; 

    \coordinate (blank1) at ($(f.east) + (\hdist/6, 0)$); 
    \coordinate (blank11) at (blank1 |- du); 
    \coordinate (blank12) at (blank1 |- u); 
    \draw [-] (f) -| (blank11); 
    \draw [-] (f) -| (blank12);
    \draw (blank11) -- node[above, draw=none]{\tiny Lem. \ref{Lem:Meyers}} (du);
    \draw (blank11) -- node[below, draw=none]{\tiny Meyers-type estimate} (du);
    \draw (blank12) -- node[above, draw=none] {\tiny Thm. \ref{Thm:Heat}} (u);
    \draw (u) -- (bu);
    \draw (bu) -- node[above, draw=none] {\tiny Prop. \ref{Prop:lplq}} (nablau);
    \draw (bu) -- node[below, draw=none] {\tiny Base estimate} (nablau);
 
    \coordinate (fright-center) at ($(f.east) + (6*\hdist+\hdist/4, 0)$);
    \coordinate (fright-center-1) at ($(f.east) + (6*\hdist+\hdist/2, 0)$);
    \draw [-] ($(du.east)$) -| (fright-center);
    \draw [-] ($(nablau.east)$) -| (fright-center);
    \draw [-] (fright-center) -- (fright-center-1);

    \coordinate (blank2) at ($(f.west)+(0,-\vdist*2.0)$);
    \node (nablau1) [right=\hdist*1.2 of blank2] {$\nabla u \in L^q_t L^{l,1}_x$};
    \node [below=of nablau1, draw=none, yshift=\vdist/1.1] {$l>2, 1<q<\frac{2l}{l-2}$};
    \node (bnablau) [right=\hdist of nablau1] {$b \cdot \nabla u \in L^q_t L^{p}_x$};
    \node [below=of bnablau, draw=none, yshift=\vdist/1.1] {$p=\frac{2l}{2+l}$};
    \node (nablau2) [right=\hdist of bnablau] {$\nabla^2 u \in L^q_t L^{p}_x$}; 

    \draw (blank2) --node[above, draw=none] {\tiny Lem. \ref{Lem:Inter1}}
    (nablau1);
    \draw (blank2) -- node[below, draw=none]{\tiny First upgrade} (nablau1);
    \draw (nablau1) -- (bnablau);
    \draw (bnablau) -- node[above, draw=none] {\tiny Prop. \ref{Prop:lplq}} (nablau2); 
    \coordinate (nablau2-right) at ($(nablau2.east) + (1.2*\hdist, 0)$);
    \draw [-] ($(nablau2.east)$) -| (nablau2-right);  
    \coordinate (blank3) at ($(blank2.west)-(0,\vdist*1.5)$); 
    \node (nablau3) [right=\hdist*1.2 of blank3] {$\nabla u \in L^{\mathfrak{q}}_t L^{\mathfrak{l},1}_x$};
    \node [below=of nablau3, draw=none, yshift=\vdist/1.1] {$\delta=\frac{1-\alpha/2}{2-\alpha/2}, \mathfrak{l}=\frac{p}{\delta}, \mathfrak{q}=\frac{q}{\delta}$};
    \node (bnablau1) [right=\hdist of nablau3] {$b \cdot \nabla u \in L^{\mathfrak{q}}_t L^{\mathfrak{p}}_x$};
    \node [below=of bnablau1, draw=none, yshift=\vdist/1.1] {$\mathfrak{p}=\frac{2\mathfrak{l}}{2+\mathfrak{l}}$};
    \node (w2p) [right=\hdist of bnablau1] {$\, \ \ u\in \cH^{2,\mathfrak{p}}_{\mathfrak{q}}\, \ \ $}; 
    \node [below=of w2p, draw=none, yshift=\vdist/1.1] {$(\mathfrak{p},\mathfrak{q})\in \sI$};
 
    \draw (blank3) -- node[above, draw=none] {\tiny Prop.\ref{Prop:GNI}, \ref{Prop:Holder}} (nablau3);
    \draw (blank3) -- node[below, draw=none] {\tiny Second upgrade} (nablau3); 
    \draw (nablau3) -- (bnablau1); 
    \draw (bnablau1) -- node[above, draw=none] {\tiny Prop. \ref{Prop:lplq}} (w2p);
    \end{tikzpicture}}
    \caption{Proof Strategy when \(d = 2\)}\label{Fig:2D}
\end{figure}

    We first prove that, 
    in addition to the standard energy bound, the gradient of solutions indeed obeys 
    a {\em Meyers-type estimate}. That is, the gradient of solutions has an improved integrability in \(L^\gamma_{t,x}\) for some \(\gamma > 2\), see \Cref{Lem:Meyers} below. 
    The 
    Meyers-type estimate  relies crucially on Gehring's Lemma (\Cref{Lem:Gerhing}) and 
    the reverse Hölder estimate (\Cref{Lem:RHolder}), and serves as one of the key steps in upgrading the integrability
    \begin{align*}
       \|\nabla u\|_{L^q_t L^{l,1}_x} 
       < \infty 
    \end{align*} 
    for certain exponents $l>2$ and \(1<q<2l/(l-2)\).  
    It is important that the spatial integrability exponent here can be raised above \(2\). 
    
    Then, in the second upgradation step, 
    we make use of the refined Gagliardo-Nirenberg inequality 
    (see \eqref{Eq:NE} below) 
    to further improve the integrability 
    \begin{align*}
        \|\nabla u\|_{L^\mathfrak{q}_t L^{\mathfrak{l}}_x} <\infty, 
    \end{align*} 
    where the exponents \(\mathfrak{l}>2\) and \(\mathfrak{q}>2\mathfrak{l}/(\mathfrak{l}-2)\). In both steps, it is quite delicate to 
    select appropriate integrability exponents 
    to ensure that 
    the final upgraded integrability exponents $(\mathfrak{p}, \mathfrak{q}) \in \mathscr{I}$, i.e., ${1}/{\mathfrak{p}}+{1}/{\mathfrak{q}}<1$, as required by the Krylov class.
    
    As a result, we can solve the backward Kolmogorov equation \eqref{Eq:BKE1} in 
    the desired Sobolev space 
    \eqref{H-2pq} in dimension $d=2$. 
To the best of our knowledge, 
    the solvability of equation \eqref{Eq:BKE1} in \(\cH^{2,\mathfrak{p}}_{\mathfrak{q}}\) for dimension two is new in the existing PDE literature.

\medskip 
\paragraph{\bf Construction of non-unique solutions} 
\begin{figure}[ht]
    \centering
    \includegraphics[width=4in]{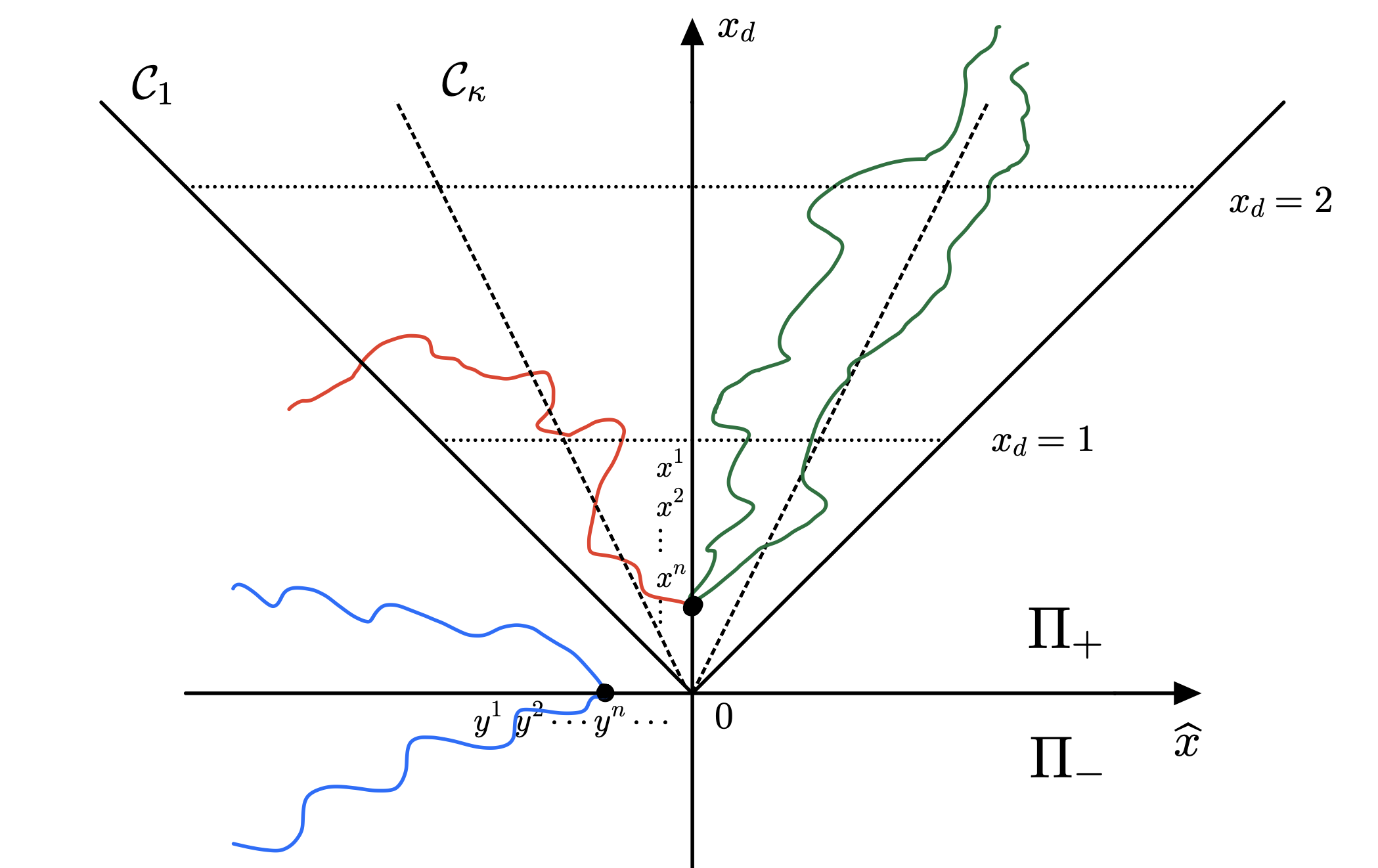}
    \caption{Solution paths}\label{Fig:path}
    \small{The solution process starting from \(x^n\) exhibits the green trajectories with high probability.} 
\end{figure}

   Concerning the non-uniqueness in Theorem \ref{Thm:example}, 
   the key observation is that
   the drift in the supercritical regime can be very singular, say, near the origin, 
   so that it suppresses the fluctuations of Brownian motions,  
   thus results in different concentrations  
   of solution paths. 

   More precisely, 
   we construct a divergence-free vector field \( b\in L_x^{p,\infty} \), 
   $p\in (d/2,d)$. 
   It is anti-symmetric with respect to the hyperplane $\Pi=\{x\in \mR^d: x_d=0\}$, 
   and when \(x\) is located in the cone \(\cC_{\kappa}=\{x: x_d>\kappa|\widehat{x}|\}\) with \(\kappa\geq 1\), 
   it can be very singular 
   in the direction of the last coordinate 
    \[
    b(x) \approx (0,\cdots, 0, \sgn (x_d) b_d(x))\approx (0,\cdots, 0,\sgn (x_d)|x|^{-d/p}).
    \] 
    
    Then, we consider two sequences 
   \( (x^n) \subseteq \{\hat{x} = 0\} \) and  \( (y^n) \subseteq \Pi \) converging to the singular point $0$ in different ways.  
    Because of the anti-symmetry of the drift,  
    the corresponding martingale solutions 
    \( \{\mathbb{P}_{y^n}\} \) are invariant under reflection across the hyperplane \(\Pi\), 
    and so is the limit \(\mathbb{P}_0 \). 
    
    In contrast, for the martingale solution \(\mathbb{P}_{x^n}\),  
    the singularity of the drift along the axis $x_d$ forces 
    the solution trajectories starting from the small cone \(\cC_\kappa\), 
    with  a uniform probability (independent of \(n\)), 
    to stay in the larger cone \(\cC_1\) and return back to \(\cC_\kappa\) 
    after certain time. 
    As a result, 
    the trajectories are more likely to 
    intersect with the hyperplane \(\{x_d = 2\}\) before exiting \(\cC_1\), and to remain in the region \(\{x_d > 1\}\) 
    for a unit time. Intuitively, as illustrated in Figure~\ref{Fig:path}, the solution process follows the green trajectories with high probability.
    Consequently, the limit \(\widetilde{\mathbb{P}}_{0} \)  of \( (\mathbb{P}_{x^n}) \) is  more concentrated on the half space 
    $\Pi_+=\{x\in \mR^d: x_d>0\}$, 
    and thus, leads to the 
    non-uniqueness  
    $  \widetilde{\mathbb{P}}_{0} \not = \mathbb{P}_{0}$.

\medskip 
\paragraph{\bf Uniqueness of solutions in the Krylov class}   
\label{Subsub-Uniq-Krylov} 

Uniqueness is usually more difficult than existence of weak solutions to McKean-Vlasov equations.  
See, for instance,  the recent work \cite{BR2024nonlinear} and \cite{barbu2023ns} for an analytic method based on  nonlinear Fokker-Planck equations and the superposition principle, and \cite[Theorem 6.3]{hao2023second} 
for the well-posedness with initial data in certain Besov spaces including \(L^{1+\eps}\cap L^1\) 
based on the semigroup method.

In the present work, we provide a direct probabilistic method to obtain the uniqueness of solutions in the Krylov class. Building upon our well-posedness results for critical SDEs, we derive an integral representation formula for the solutions in Krylov class to nonlinear Fokker–Planck equations 
\begin{align*} 
    \rho (t,y) = \int_{\mR^d} p_{0, t}^{K*\rho}(x,y) \zeta (x) \d x, 
\end{align*}
where the kernel \(p_{0,t}^{K*\rho}(x,y)\)
satisfies the Aronson-type estimate \eqref{Eq:AE},  
see \eqref{eq:rho} below. The representation formula   
is important to close Gronwall-type estimates of solutions 
in the derivation of the uniqueness.

Concerning the nonlinear Markov property, 
the one-to-one correspondence 
for a large class of nonlinear parabolic PDEs and nonlinear Markov processes was proved in \cite{rehmeier2022nonlinear}. 
See also \cite{barbu2023ns} 
and \cite{Barbu2024} in the case of singular drifts.

Let \(\cQ\subseteq \cP(\mR^d)\). In view of the criteria in \cite{rehmeier2022nonlinear}, 
two conditions are crucial for 
verifying the nonlinear Markov property for the path laws of solutions to McKean–Vlasov equations: 
\begin{itemize}  
   \item
   Flow property: the marginal distribution \((\mu^{s,\zeta})_{t\in [s,T]}\) satisfies 
   \begin{align}  \label{eq:flow} 
\mu^{s,\zeta}_t\in \cQ~\mbox{ and }~\mu^{s,\zeta}_t=\mu^{r,\mu^{s,\zeta}_r}_{t}, \ \ \forall s\leq r\leq t, \ \zeta\in \cQ\subseteq \cP(\mR^d). 
   \end{align}  
   \item  
   Extremality: $\mu^{s, \zeta}$ is an extreme point in the convex set of all weakly continuous probability solutions with the initial datum $(s, \zeta)$ to the linearized Fokker-Planck equations obtained by freezing $\rho=\mu^{s,\zeta}$ in the convolution term in \eqref{Eq:NFP}.  
\end{itemize}

Thanks to the uniqueness result of solutions to the nonlinear Fokker–Planck equation, the flow property in the present work is guaranteed. Moreover, 
our well-posedness result for SDEs ensures that $\mu^{s,\zeta}$ is an extreme point of the aforementioned solution set of the linearized Fokker-Planck equation. As a consequence, we obtain the nonlinear Markov property for \eqref{Eq:MV}. 
   
    \medskip
    
    \paragraph{\bf Organization:} 
    This paper is organized as follows: \Cref{Sec:PDE} is dedicated to establishing  the solvability of the linear backward Kolmogorov equation \eqref{Eq:BKE1} in  certain second order Sobolev spaces when $b$ satisfies \eqref{Eq:Ab}. The proofs of \Cref{Thm:SDE} and \Cref{Thm:MV} are presented in \Cref{Sec:SDE} and \Cref{Sec:MVE}, respectively. In \Cref{Sec:EX}, we provide an example demonstrating the optimality of 
    the condition \eqref{Eq:Ab}. 
    At last, 
    in \Cref{App:Lorentz} 
    we present the preliminaries of Lorentz spaces. 
    Then,  \Cref{App:Inter} 
    contains several useful interpolation estimates 
    in Lorentz spaces 
    used in this paper.

    \medskip
    
    \paragraph{\bf Notations.} 
    Let $T$ be any fixed time horizon. For $p\in [1,\infty]$,  
    $p'$ denotes its conjugate number $\frac{p}{p-1}$.  
    For $z=(t,x)\in \mR\times \mR^d$, we set $Q_R(z):= (t-R^2,t)\times B_R(x)$ and 
    $Q_R:=(-R^2,0)\times B_R$.  
    For any $p, q\in (1,\infty)$,  
    let 
    \[
    \|f\|_{L^q_t L_x^p}:= \|f\|_{L^q([0,T]; L^p(\mR^d))}.
    \]
    Moreover, let $\|f\|_{\dot{H}_x^{s,p}}:= \|\Lambda^s f\|_{L^p_x}$ and $\|f\|_{{H}_x^{s,p}}:= \|f\|_{L^p_x}+\|\Lambda^s f\|_{L^p_x}$, 
    where $\Lambda := (-\Delta)^{\frac{1}{2}}$.

\section{Linear backward Kolmogorov equations}\label{Sec:PDE}
    In this section, we study the  solvability of the backward Kolmogorov equation  corresponding to \eqref{Eq:SDE} when the drift $b$ satisfies \eqref{Eq:Ab}. We mainly focus on the most delicate case where $d=2$.  The method also applies to high dimensions \( d \geq 3 \). 

    \medskip 
    Define the Kolmogorov operator by 
    \begin{align*}
         A_t u :=\Delta u+ b(t,\cdot)\cdot\nabla u, 
    \end{align*}
     and the dual Fokker-Planck operator 
     \begin{align*}
          A^*_t u:=\Delta u-\div (b(t,\cdot) u)=\Delta u-b(t,\cdot)\cdot\nabla u. 
     \end{align*}  
     To avoid inessential issues arising from the singularity of \( b \), in the rest of this section, we assume that $b\in C_b^\infty(\mR^{d+1})$ and $b$ is divergence-free. We note that the constants in all of the following a priori estimates do not depend on the regularity of $b$, but only on the critical Lorentz-norm $\|b\|_{L^\infty_t L^{d,\infty}_x}$ , so the results in this section are valid for the drift $b$ satisfying \eqref{Eq:Ab}. 

    \subsection{Aronson-type estimate} \label{App:HKE} 
	Let us first prove the Aronson-type estimate for the fundamental solutions of $\p_t-A^*_t$. 

	\begin{theorem} [Aronson-type estimate] 
 \label{Thm:Heat}
		Let $d\geq 2$. 
        Then, $\p_t-A_t^*$ admits a fundamental solution $p_{s,t}(x,y), t\in[s, T], x,y\in \mR^d$, i.e. 
        \[
            \begin{cases}
                \p_t p_{s,t}(x,y)=[A_t^* p_{s,t}(x,\cdot)](y)\\
                \lim_{t\downarrow s}p_{s,t}(x,y)=\delta(x-y),  
            \end{cases}
        \]
        and $p_{s,t}(x,y)$ satisfies  
		\[
		    \int_{\mR^d} p_{s,r}(x,z) p_{r,t}(z,y) \d z=p_{s,t}(x,y), \quad \int_{\mR^d} p_{s,t}(x,y) \d y =1,  
		\]
		and 
		\begin{equation}\label{Eq:heat}
        \frac{1}{C} h\l({(t-s)}/{C}, x-y\r) \leqslant p_{s,t}(x,y) \leqslant C h\l(C(t-s), x-y\r), 
	\end{equation}
        where $h$ is the heat kernel for $\Delta$ on $\mR^d$ given by \eqref{Eq:H}, 
        $C$ depends only on the dimension $d$ and $\|b\|_{L^\infty_t L_x^{d,\infty}}$. Moreover, there exist constants  $\alpha\in (0,1)$ and $C>0$,  only depending on $d$ and $\|b\|_{L^\infty_t L_x^{d,\infty}}$, such that for any $t>s$ and $x, y, y'\in \mR^d$, 
		\begin{equation}\label{Eq:pHolder}
			\begin{aligned}
				&\l|p_{s,t}(x,y)-p_{s,t}(x,y')\r|\\
				\leq& C \l[ \l( \frac{|y-y'|}{\sqrt{t-s}} \r)^\alpha \wedge 1\r]
				\l[ h(C(t-s), x-y)+h(C(t-s),x-y') \r]. 
			\end{aligned}
		\end{equation}
	\end{theorem}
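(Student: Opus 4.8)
\textbf{Plan for proving the Aronson-type estimate (Theorem \ref{Thm:Heat}).}
The strategy is to reduce the claim to the classical Aronson theory by exploiting the divergence-free structure of $b$. Since $b \in C_b^\infty(\mR^{d+1})$ and $\div b = 0$, the operator $A_t^* u = \Delta u - \div(b(t,\cdot)u)$ is in divergence form with bounded measurable coefficients in the lower-order terms, so a unique fundamental solution $p_{s,t}(x,y)$ exists by standard parabolic theory, and the Chapman–Kolmogorov identity plus the conservation $\int p_{s,t}(x,y)\,\d y=1$ follow immediately from $\div b=0$ (integrating the equation). The content is the two-sided Gaussian bound \eqref{Eq:heat} with constants depending \emph{only} on $d$ and $\|b\|_{L^\infty_t L^{d,\infty}_x}$ — not on $\|b\|_{L^\infty}$. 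The plan is: first establish the global upper bound, then the matching lower bound, and finally the H\"older estimate \eqref{Eq:pHolder} as a consequence of a De Giorgi–Nash–Moser oscillation estimate combined with the bounds already obtained.

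\textbf{Upper bound.} First I would derive an $L^1\to L^\infty$ smoothing estimate $\|p_{s,t}(x,\cdot)\|_{L^\infty}\le C(t-s)^{-d/2}$ with $C=C(d,\|b\|_{L^\infty_tL^{d,\infty}_x})$. The natural route is a Moser/Nash-type iteration: testing the equation for $u$ against powers $u^{2k-1}$, the drift term $\int \div(bu)\, u^{2k-1} = -(2k-1)\int b\cdot\nabla u \, u^{2k-1}$, which after integrating by parts (using $\div b=0$) is controlled by $\|b\|_{L^{d,\infty}}\|\nabla (u^k)\|_{L^2}\|u^k\|_{L^{2d/(d-2),2}}$, and then absorbed by the Sobolev embedding \eqref{Eq:Sob} into the energy term — this is exactly the mechanism that fails to leave a remainder precisely because $b$ lies in the \emph{critical} Lorentz space $L^{d,\infty}$ (for $d\ge 3$; the $d=2$ case needs the substitute of Sobolev embedding via a slightly better exponent, but the quantities are still controlled by the critical norm). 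Once the on-diagonal bound is in hand, the off-diagonal Gaussian decay is obtained by Davies' exponential-weight method (conjugating the semigroup by $\e^{\lambda\psi}$ for Lipschitz $\psi$ and optimizing $\lambda$), which again only sees $\|b\|_{L^{d,\infty}}$ through the same absorption inequality. For $d\ge 3$ one may alternatively just cite Qian–Xi and Kinzebulatov–Sem\"enov as the excerpt notes, so the real work is $d=2$.

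\textbf{Lower bound and H\"older estimate.} For the lower bound I would use the standard chaining argument: a local (near-diagonal) lower bound $p_{s,t}(x,y)\ge c\,(t-s)^{-d/2}$ for $|x-y|\le \sqrt{t-s}$ follows from the parabolic Harnack inequality (valid here since the coefficients satisfy the structural conditions uniformly in terms of the critical norm — this is where one needs a Moser–Harnack statement robust under $L^{d,\infty}$ drifts, e.g. via Nash's approach or the reverse-H\"older/Gehring machinery already invoked elsewhere in the paper), and then the full Gaussian lower bound follows by iterating along a chain of $O(|x-y|^2/(t-s))$ intermediate points and multiplying the local estimates via Chapman–Kolmogorov. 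The H\"older bound \eqref{Eq:pHolder} comes from the interior oscillation decay for solutions of $\p_t u = A_t^* u$ (again De Giorgi–Nash–Moser with constants depending only on $d$ and $\|b\|_{L^\infty_tL^{d,\infty}_x}$): fixing $x$, the function $(t,y)\mapsto p_{s,t}(x,y)$ solves the equation away from $t=s$, so its H\"older modulus in $y$ at scale $\sqrt{t-s}$ is controlled by its sup over a parabolic cylinder, which by the upper bound is $\lesssim h(C(t-s),x-\cdot)$; rescaling gives the factor $(|y-y'|/\sqrt{t-s})^\alpha\wedge 1$.

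\textbf{Main obstacle.} The crux — and the reason the $d=2$ case is singled out — is establishing the De Giorgi–Nash–Moser package (on-diagonal $L^1$–$L^\infty$ bound, Harnack inequality, interior H\"older estimate) for the divergence-form operator $\Delta - \div(b\,\cdot)$ with $b\in L^\infty_t L^{2,\infty}_x$ and constants depending only on the critical Lorentz norm. In dimension two the borderline Sobolev embedding $H^1\hookrightarrow L^{2d/(d-2)}$ degenerates, so the drift term in the Moser iteration cannot be absorbed in the naive way; the fix is to run the iteration in Lorentz/BMO-type norms or to exploit the reverse H\"older inequality (Lemma \ref{Lem:RHolder}) and Gehring's lemma (Lemma \ref{Lem:Gerhing}) to gain the small amount of extra integrability needed to close the estimate — precisely the Meyers-type mechanism the paper develops for the Kolmogorov equation. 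Everything else (existence of the fundamental solution, Chapman–Kolmogorov, conservation of mass, the chaining for the lower bound, rescaling for \eqref{Eq:pHolder}) is then routine once these uniform local estimates are available.
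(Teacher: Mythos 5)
Your overall skeleton (Nash/Moser iteration for the on-diagonal bound, exponential conjugation \`a la Davies/Fabes--Stroock for the Gaussian decay, De Giorgi--Nash--Moser continuity for the H\"older estimate) matches the paper, and you correctly flag divergence-freeness as the reason the drift does not spoil the constants. But the way you locate the difficulty, and hence the fix you propose, is off in a way that would leave the proof unclosed.

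First, the term you identify as the one to absorb does not exist. After testing $\p_t u = A_t^{\varphi*} u$ against $u^{2p-1}$, the piece $\int b\cdot\nabla u\,u^{2p-1}=\tfrac{1}{2p}\int b\cdot\nabla(u^{2p})$ vanishes \emph{identically} because $\div b=0$; there is nothing there to control by Sobolev embedding, degenerate or not. The only drift contribution that survives the conjugation is the zeroth-order term $\int (\alpha\cdot b)\,u^{2p}$ coming from \eqref{eq:Aphi-f}. This is bounded via H\"older in Lorentz scales by $|\alpha|\,\|b\|_{L^{d,\infty}}\,\|u^p\|_{L^{2d/(d-1),2}}^2$ and then by the interpolation/Lady\v{z}enskaja estimate \eqref{Eq:Lady}, $\|g\|_{L^{2d/(d-1),2}}\le C\|\nabla g\|_{L^2}^{1/2}\|g\|_{L^2}^{1/2}$, which corresponds to the $\dot H^{1/2}$-embedding and is \emph{uniform over all $d\ge 2$}. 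There is no borderline degeneration at $d=2$ in the Aronson-estimate proof, and no need for extra integrability gain. Your proposed fix — running the Moser iteration through the reverse-H\"older estimate and Gehring's lemma — addresses a problem that is not there, and transplants machinery (\Cref{Lem:RHolder}, \Cref{Lem:Gerhing}, the Meyers-type estimate) that the paper deploys for an entirely different purpose, namely the second-order Sobolev solvability of the backward Kolmogorov equation in \Cref{Thm:LqLp}, where the real $d=2$ obstruction lives.

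Second, for the lower bound you propose a Harnack-plus-chaining argument. That route is plausible in principle but is not what the paper does, and it would require you to first prove a parabolic Harnack inequality with constants depending only on $\|b\|_{L^\infty_tL^{d,\infty}_x}$ — which is essentially as hard as the estimate you want, and which you have not addressed. The paper instead follows Fabes--Stroock: it introduces $G(t,y)=\int \log p_{1-t,1}(x,y)\,\mu(x)\,\d x$ with a Gaussian weight $\mu$, derives a differential inequality for $\p_t G$ (again using $\div b=0$ to move the drift onto $\nabla\mu$ and the Lorentz-H\"older/Lady\v{z}enskaja machinery to absorb it uniformly in $d\ge 2$), and concludes from there; no chaining and no standalone Harnack inequality are used. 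Your H\"older-estimate paragraph is essentially right in outline (Nash continuity plus rescaling against the already-established Gaussian upper bound), but the same misdiagnosis of where the $d=2$ difficulty lies carries over and would lead you to invoke the wrong tools.
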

	
	\begin{remark}
		\begin{enumerate}[(i)]
		\item The case where $d\geq 3$  follows from \cite{kinzebulatov2022heat}. For the convenience of readers, 
        we give a unified estimate for both the  cases $d =2$ and $d\geq 3$. 
            \item Since $p_{s,t}(x,y)$ also satisfies 
                the backward equation 
                \[
                \begin{cases}
                    \p_s p_{s,t}(x,y)=[A_s p_{s,t}(\cdot,y)](x)\\
                    \lim_{s\uparrow t}p_{s,t}(x,y)=\delta(x-y),  
                \end{cases}
                \]
                and the operators \( A_t \) and \( A_t^* \) have the same form, except the opposite signs of the drift coefficients, we also have  
			\begin{equation}\label{Eq:pHolder2}
				\begin{aligned}
					&\l|p_{s,t}(x,y)-p_{s,t}(x',y)\r|\\
					\leq& C \l[ \l( \frac{|x-x'|}{\sqrt{t-s}} \r)^\alpha \wedge 1\r]
					\l[ h(C(t-s), x-y)+h(C(t-s),x'-y) \r]. 
				\end{aligned}
			\end{equation}
			\item In light of \Cref{Thm:Heat}, using standard approximation arguments 
            one also has that for any drift \( b \) satisfying \eqref{Eq:Ab}, there exists a Markov process $(\mP_{s,x}, X_t)$ such that 
			\[
			\mP_{s,x}(X_s=x)=1, \quad \mP_{s,x}(X_t\in A)= \int_{A} p_{s,t}(x,y) \d y, 
			\]
			where $p_{s,t}$ is the fundamental solution as in \Cref{Thm:Heat}.  
		\end{enumerate}
	\end{remark}

    The proof of \Cref{Thm:Heat} follows from the Nash iteration as in \cite{nash1958continuity, fabes1986new, qian2019parabolic}. Two key ingredients are Nash's inequality 
	\begin{equation}\label{Eq:Nash}
		\|u\|_{L^2_x}^{2+\frac{4}{d}} \leqslant C_d\|\nabla u\|_{L^2_x}^2\|u\|_{L^1_x}^{\frac{4}{d}}, \quad \forall u \in L^1 \cap \dot{H}^{1},  
	\end{equation}
	and the following lemma.  

\begin{lemma} [\cite{fabes1986new}]  \label{Lem:Nash}
    Suppose that $w$ is a nonnegative, nondecreasing continuous function on $[0,\infty)$. Let $p\geq 2, a,\beta,\Gamma>0$ and $\delta\in (0,1]$. Then, there exists $C$ depending only on $a$ and $\beta$ such that if
    \[
    u^{\prime}(t) \leq-\frac{a}{p} \frac{t^{p-2} \l(u(t)\r)^{1+\beta p}}{\l(w(t)\r)^{\beta p}}+p \Gamma u(t),
    \]
    then 
    \[
    t^{\frac{1}{\beta}-\frac{1}{\beta p}}u(t) \leq \left(\frac{C p^2}{\delta}\right)^{\frac{1}{\beta p}}  e^{\frac{\Gamma\delta t}{p}} w(t). 
    \]
\end{lemma}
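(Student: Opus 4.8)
\textbf{Proof proposal for \Cref{Lem:Nash}.}
The plan is to establish this as a Gronwall-type differential inequality argument, exploiting the specific power structure on the right-hand side. First I would perform a change of unknown to absorb the linear term: set $v(t):=e^{-p\Gamma t}u(t)$. Since $w$ is nondecreasing, one checks that $v$ satisfies
\begin{equation*}
    v'(t)\leq -\frac{a}{p}\,\frac{t^{p-2}\,e^{\beta p^2\Gamma t}\,(v(t))^{1+\beta p}}{(w(t))^{\beta p}}
    \leq -\frac{a}{p}\,\frac{t^{p-2}\,(v(t))^{1+\beta p}}{(w(t))^{\beta p}},
\end{equation*}
where in the second inequality I dropped the exponential factor $e^{\beta p^2\Gamma t}\geq 1$ (this is the crude step, which is responsible for the final $e^{\Gamma\delta t/p}$ correction being inserted later by hand rather than derived sharply; alternatively one keeps track of it on a bounded interval). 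So it suffices to treat the homogeneous inequality $v'\leq -\frac{a}{p}t^{p-2}v^{1+\beta p}(w(t))^{-\beta p}$ and then revert.

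Next I would separate variables. Writing the inequality as $\frac{d}{dt}\big(v(t)^{-\beta p}\big)\geq a\beta\, t^{p-2}(w(t))^{-\beta p}$ and integrating from $0$ to $t$ — using that $v^{-\beta p}$ starts from a nonnegative quantity (or passing to the limit from a positive starting time to avoid the singularity at $0$, which is the standard maneuver in Nash iteration) — gives
\begin{equation*}
    v(t)^{-\beta p}\geq a\beta\int_0^t s^{p-2}(w(s))^{-\beta p}\,\d s
    \geq a\beta\,(w(t))^{-\beta p}\int_0^t s^{p-2}\,\d s
    = \frac{a\beta}{p-1}\,t^{p-1}(w(t))^{-\beta p},
\end{equation*}
again using monotonicity of $w$ to pull $(w(t))^{-\beta p}$ out of the integral as a lower bound. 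Rearranging yields $v(t)\leq \big(\tfrac{p-1}{a\beta}\big)^{1/(\beta p)}t^{-(p-1)/(\beta p)}w(t)$, hence $t^{(p-1)/(\beta p)}v(t)\leq \big(\tfrac{p-1}{a\beta}\big)^{1/(\beta p)}w(t)$. Since $\tfrac{p-1}{\beta p}=\tfrac1\beta-\tfrac1{\beta p}$, this is exactly the claimed power of $t$. Finally, undoing $v=e^{-p\Gamma t}u$ reinstates a factor $e^{p\Gamma t}$; the exponent quoted in the statement, $e^{\Gamma\delta t/p}$ with $\delta\in(0,1]$, is weaker and is obtained by a slightly more careful bookkeeping of the exponential (splitting $e^{p\Gamma t}$ across the two occurrences of $p$ in the iteration, where the $\delta$ is a free slack parameter one introduces when optimizing; this is a standard but fiddly point).

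The genuinely delicate point, and the one I expect to be the main obstacle, is the constant $\big(\tfrac{Cp^2}{\delta}\big)^{1/(\beta p)}$ and the exponential $e^{\Gamma\delta t/p}$: getting the $p$-dependence to be at worst polynomial inside the $1/(\beta p)$-th root (so that it stays bounded as $p\to\infty$, which is what makes the Nash/De Giorgi iteration converge) requires choosing the slicing parameter $\delta$ and tracking where powers of $p$ enter — notably the $p$ in $a/p$, the $p^{p-2}$-type factors if one rescales time, and the interplay with $p\Gamma$ in the linear term. The bulk of the argument above is elementary ODE comparison; the care is entirely in not losing control of constants uniformly in $p$, which is precisely why the lemma is stated with the explicit $\delta$-dependent form rather than a bare $O(1)$ constant.
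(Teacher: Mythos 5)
The overall strategy---change of variable $v=e^{-p\Gamma t}u$, separation of variables in $v^{-\beta p}$, monotonicity of $w$, then reverting---is sound up to and including the identity $\tfrac{1}{\beta}-\tfrac{1}{\beta p}=\tfrac{p-1}{\beta p}$ for the power of $t$. But the step you flag as ``crude,'' discarding $e^{\beta p^2\Gamma t}\geq 1$, is in fact fatal as written, and your subsequent remarks about it are backwards: your argument terminates with the factor $e^{p\Gamma t}$, whereas the lemma asks for $e^{\Gamma\delta t/p}$, and since $\delta/p<p$ the latter is a \emph{strictly smaller} bound, i.e. the stated conclusion is \emph{stronger} than the one you derived, not weaker. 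In the Nash iteration that consumes this lemma one sends $p=2^k\to\infty$, and the entire point of the $\delta$-dependent form is that $(Cp^2/\delta)^{1/(\beta p)}e^{\Gamma\delta t/p}\to 1$; your $e^{p\Gamma t}$ instead blows up, so this is not a bookkeeping constant that can be absorbed. There is also no ``iteration'' internal to this single ODE lemma across which $e^{p\Gamma t}$ could be ``split,'' so the resolution you gesture at does not exist as described.

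The fix stays within your framework: keep the exponential and extract its benefit from the time integral rather than throwing it away. Retaining $e^{\beta p^2\Gamma s}$ when you integrate $\tfrac{\d}{\d t}v^{-\beta p}$ gives
\begin{equation*}
v(t)^{-\beta p}\;\geq\;a\beta\,(w(t))^{-\beta p}\int_0^{t}s^{p-2}\,e^{\beta p^2\Gamma s}\,\d s .
\end{equation*}
Now restrict the integral to $[(1-\eta)t,\,t]$ with $\eta\in(0,1)$; since $p\geq 2$ makes $s^{p-2}$ nondecreasing and the exponential is nondecreasing, the integral is at least $\eta(1-\eta)^{p-2}\,t^{p-1}\,e^{\beta p^2\Gamma(1-\eta)t}$. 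Reverting $u=e^{p\Gamma t}v$, the surviving exponential is $e^{p\Gamma\eta t}$ and the constant is $\bigl(a\beta\,\eta(1-\eta)^{p-2}\bigr)^{-1/(\beta p)}$. Choosing $\eta=\delta/p^2$ turns the exponential into $e^{\Gamma\delta t/p}$, and since $(1-\delta/p^2)^{p-2}$ is bounded below uniformly over $p\geq 2$ and $\delta\in(0,1]$, the constant is $\bigl(Cp^2/\delta\bigr)^{1/(\beta p)}$ with $C$ depending only on $a$ and $\beta$---exactly the claimed form. So your route is viable, but the sharp choice $\eta=\delta/p^2$ of the integration window is the missing ingredient, and it cannot be recovered once the exponential has been dropped.
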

\begin{proof}[Proof of \Cref{Thm:Heat}]
    {\bf $(i)$ Upper bound:} As in \cite{fabes1986new}, for any $\alpha\in \mR^d$, we define  
    \begin{equation*}
        \varphi(x)=\alpha\cdot x, \quad \phi=\e^{-\varphi}, \quad A_t^{\varphi*}f= \phi^{-1}A^*_t(f\phi), 
    \end{equation*}
    and 
    \[
    p^\varphi_{s,t}(x, y)=\phi^{-1}(y) p_{s,t}(x,y) \phi(x), \quad P_{s,t}^{\varphi *}f:=\int_{\mR^d} p^{\varphi}_{s,t}(x,\cdot)f(x)\d x. 
    \]
    By definition, \(p_{s,t}^{\varphi}\) is the fundamental solution of \(\partial_t-A^{\varphi*}_t\), and 
    \begin{equation}\label{eq:Aphi-f}
	\begin{aligned}
            A_t^{\varphi*} f=& \phi^{-1} \Delta (f\phi)-\phi^{-1} b\cdot \nabla (f\phi)\\
            =&\Delta f + 2\nabla f\cdot \nabla \log(\phi) + f\frac{\Delta \phi}{\phi} - b\cdot \nabla f - b\cdot\nabla \log(\phi) f\\
            =&\Delta f - (b+2 \alpha) \cdot \nabla f + (|\alpha|^2 +\alpha \cdot b) f. 
	\end{aligned}
    \end{equation}
    
    For any \(f\in L^1_x\cap L^\infty_x\), put \(f_t=P_{0,t}^{\varphi*} f\). Then \(\p_t f_t= A^{\varphi*}_t f_t\).  Using \eqref{Eq:Lady}, \eqref{Eq:Nash} and \eqref{eq:Aphi-f}, we get that for any $p\in [1,\infty)$, 
	\begin{equation}\label{eq:Df}
		\begin{aligned}
                \frac{\d }{\d t}\int f_t^{2p} =& 2p \int f_t^{2p-1} \p_t f_t = 2p \int f_t^{2p-1}  A^{\varphi*}_t f_t\\
                = &-\l(4- 2/p\r) \int  |\nabla f_t^p|^2 + 2p \int |\alpha|^2 f_t^{2p} + 2p \int \alpha\cdot b\,f_t^{2p} \\
                {\leq} &-\l(4- 2/p\r)\|\nabla f_t^p\|_{L^2_x}^2 + p|\alpha|^2 \|f_t^{p}\|_{L^2_x}^2 +C p|\alpha| \|b\|_{L_t^\infty L_x^{d,\infty}} \|f_t^p\|_{L^2_x} \|\nabla f_t^p\|_{L^2_x} \\
                \leq & - \|\nabla f_t^p\|_{L^2_x}^2 +  \l(1+C\|b\|^2_{L_t^\infty L_x^{d,\infty}}\r)p^2 |\alpha|^2 \|f_t^p\|_{L^2_x}^2\\ {\leq} & -c \frac{\|f_t^p\|_{L^2_x}^{2+\frac{4}{d}}}{\|f_t^p\|_{L^1_x}^{\frac{d}{4}}}+  \l(1+C\|b\|^2_{L_t^\infty L_x^{d,\infty}}\r)p^2|\alpha|^2 \|f_t^p\|_{L^2_x}^2,  
		\end{aligned}
	\end{equation}
    where $c$ and $C$ only depend on $d$. 
  
    Let 
		\[
		u_{p}(t) := \|f_t\|_{L^{2p}_x}, \quad   w_p (t):=\sup _{0 \leqslant s \leqslant t} s^{\frac{d}{4}-\frac{d}{2p}} \|f_s\|_{L^p_x}. 
		\]
		Then, \eqref{eq:Df} implies 
		\[
		u_p'(t) \leq - \frac{c}{p} \frac{t^{p-2} \l( u_p(t) \r)^{1+4 p / d}}{\left(w_p(t)\right)^{4 p / d}}+ C\l(1+\|b\|_{L^\infty_t L^{d,\infty}_x}^2\r) p |\alpha|^2 u_p(t). 
		\]
	Applying \Cref{Lem:Nash}  we get 
		\begin{align*}
			\|f_t\|_\infty\leq& \limsup_{k\to\infty} w_{2^k}(t) \leq C(\eps) \exp\l(\eps |\alpha|^2 t\r) w_2(t)\\
			\leq& C(\eps) \exp\l(\eps|\alpha|^2 t\r)  \sup_{s\in[0,t]}\|f_s\|_{L^2_x}. 
		\end{align*}
    Using \eqref{eq:Df} again with $p=1$ we derive that
    $$
    \|f_t\|_{L^2_x}^2 \leq \exp\l(C\l(1+\|b\|_{L_t^\infty L_x^{d,\infty}}^2\r)|\alpha|^2 t \r)\|f\|_{L^2_x}^2.
    $$
    Combining the above two estimates together one sees 
    \[
    \|P_{0,t}^{\varphi*}\|_{L^2_x \to L^\infty_x} \leq C \exp\l(C\l(1+\|b\|^2_{L_t^\infty L_x^{d,\infty}}\r)|\alpha|^2 t\r). 
    \]
    Noting that \(b\) is divergence-free, by duality, we also have 
    \[
    \|P_{t,2t}^{\varphi*}\|_{L^1_x \to L^2_x}\leq C \exp\l(C\l(1+\|b\|^2_{L_t^\infty L_x^{d,\infty}}\r)|\alpha|^2 t\r). 
    \]
    Therefore,  
    \[
	\|P_{0,t}^{\varphi*}\|_{L^1_x\to L^\infty_x}  \leq C \exp\l(C\l(1+\|b\|^2_{L_t^\infty L_x^{d,\infty}}\r)|\alpha|^2 t\r), 
    \]
    i.e., 
    \[
	p_{0,t}(x,y) \leq C \exp\l(C\l(1+\|b\|^2_{L_t^\infty L_x^{d,\infty}}\r)|\alpha|^2 t + {\alpha\cdot (y-x)}\r). 
    \]
    Letting $\alpha= (y-x)\big(2C(1+\|b\|^2_{L_t^\infty L_x^{d,\infty}}) t \big)^{-1}$, we then obtain the upper bound estimate. 
		
    \medskip 
    \paragraph{\bf $(ii)$ Lower bound:} 
    Regarding the lower bound, following \cite{fabes1986new}, we set
		\[
		\mu(x) :=(2\pi)^{-\frac{d}{2}} \exp\left(-|x|^2/2\right)
		\]
		and 
		\begin{equation*}
			G(t,y) := \int_{\mathbb{R}^d} \log p_{1-t, 1}(x,y)\,  \mu(x) \, \mathrm{d}x, \quad t\in (0,1] \mbox{ and } y\in \mR^d. 
		\end{equation*}
		By a straightforward computation with the integration-by-parts formula, we obtain
		\begin{equation*}
			\begin{aligned}
				\p_t G(t,y)=&\int_{\mathbb{R}^d} \left[\Delta_x p_{1-t, 1}(x,y)+b(1-t,x)\cdot\nabla_x p_{1-t, 1}(x,y)\right] \frac{\mu(x)}{p_{1-t, 1}(x,y)} \, \mathrm{d}x\\
				=&\int_{\mathbb{R}^d} \left[ x\cdot \nabla_x \log p_{1-t, 1}(x,y)+ |\nabla_x \log p_{1-t, 1}(x,y) |^2 \right]\mu(x)\mathrm{d} x\\
				&+\int_{\mathbb{R}^d}  b(1-t,x)\cdot \left[\nabla \mu(x) \log p_{1-t, 1}(x,y) \right] \mathrm{d}x \\
				\geq& -C \|\nabla_x \log p_{1-t, 1}(\cdot,y) \|_{L^2(\mu)}+ \|\nabla_x \log p_{1-t, 1}(\cdot,y) \|_{L^2(\mu)}^2\\
				&-C \|b\|_{L_t^\infty L_x^{d,\infty}} \|\nabla \mu \log p_{1-t, 1}(\cdot,y)\|_{L_x^{d',1}} \\
				\geq& -C- C  \left\||\cdot| \log p_{1-t, 1}(\cdot,y) \, \mu \right\|_{L_x^{d',1}} + \frac{3}{4} \|\nabla_x \log p_{1-t, 1}(\cdot,y) \|_{L^2(\mu)}^2. 
			\end{aligned}
		\end{equation*}
	In the case where $d=2$, we have  
    by \eqref{Eq:Lady}, 
		\begin{align*}
			\| |\cdot|  f\mu \|_{L_x^{2,1}}	\leq& C \||\cdot| \mu^{\frac{1}{3}}\|_{L_x^{4,2}}  \|f \mu^{\frac{2}{3}}\|_{L_x^{4,2}} \\
			{\leq} &C \|f\mu^{\frac{2}{3}}\|_{L_x^2}^{\frac{1}{2}} \|\nabla (f\mu^{\frac{2}{3}})\|_{L_x^2}^{\frac{1}{2}}\\
			\leq &  C_\eps\|f\mu^{\frac{2}{3}}\|_{L_x^2}+ \eps \|f |\cdot| \mu^{\frac{2}{3}}\|_{L_x^2}+\eps \|\nabla f \mu^{\frac{2}{3}}\|_{L_x^2} \\
			\leq &C_{\eps} \|f\|_{L^2(\mu)}+\eps \|\nabla f\|_{L^2(\mu)}^2; 
		\end{align*}
		While for $d\geq 3$, H\"older's inequality \eqref{Eq:Holder} yields 
		\begin{align*}
			\| |\cdot|  f\mu \|_{L^{d',1}_x} \leq C \||\cdot| \sqrt{\mu}\|_{L_x^{\frac{2d}{d-2}, 2}} \|f\sqrt{\mu}\|_{L_x^2}\leq C \|f\|_{L^2(\mu)}. 
		\end{align*}
		Thus, we conclude that for any $d\geq 2$, 
		\begin{equation}
			\p_tG(t,y)\geq \frac{1}{2} \|\nabla_x \log p_{1-t, 1}(\cdot,y) \|_{L^2(\mu)}^2-C \|\log p_{1-t, 1}(\cdot,y) \|_{L^2(\mu)}-C. 
		\end{equation}
		Then the proof for the lower bound can be argued in an analogous way as in \cite{fabes1986new} and \cite{qian2019parabolic}, so 
        the details are omitted here.  
		
\medskip 
  \paragraph{\bf{$(iii)$ H\"older regularity estimate}.} 
We note that the Aronson-type estimate  implies Nash’s continuity theorem (see, e.g., \cite{nash1958continuity}): suppose that $u$ is a solution to  equation 
$$
\p_t u= A^*_t u\,\  (\mbox{or}\ \  \p_t u= A_tu)
$$ 
in $Q_{R}(\tau, \xi)$, then there exist two universal constants $\alpha\in (0,1)$ and $C>0$ such that 
		\begin{equation}\label{eq:nash}
			\left|u\left(t, y\right)-u\left(t', y'\right)\right| \leq C\left(\frac{\left|t-t'\right|^{1 / 2}+\left|y-y'\right|}{R}\right)^\alpha \l( \sup_{Q_{R}(\tau, \xi)}u-\inf_{Q_{R}(\tau, \xi)} u \r) 
		\end{equation}
		for all $(t,y), (t', y')\in Q_{R/2}(\tau, \xi)$. 
		
		Below we set that for any $0\leq s<t<\infty$, 
		\[
		    R:=\sqrt{t-s}~~\mbox{ and }~~ \tau:=t+\frac{R^2}{8}=s+\frac{9R^2}{8}. 
		\] 
		\begin{figure}[ht]
			\centering
			\includegraphics[width=4in]{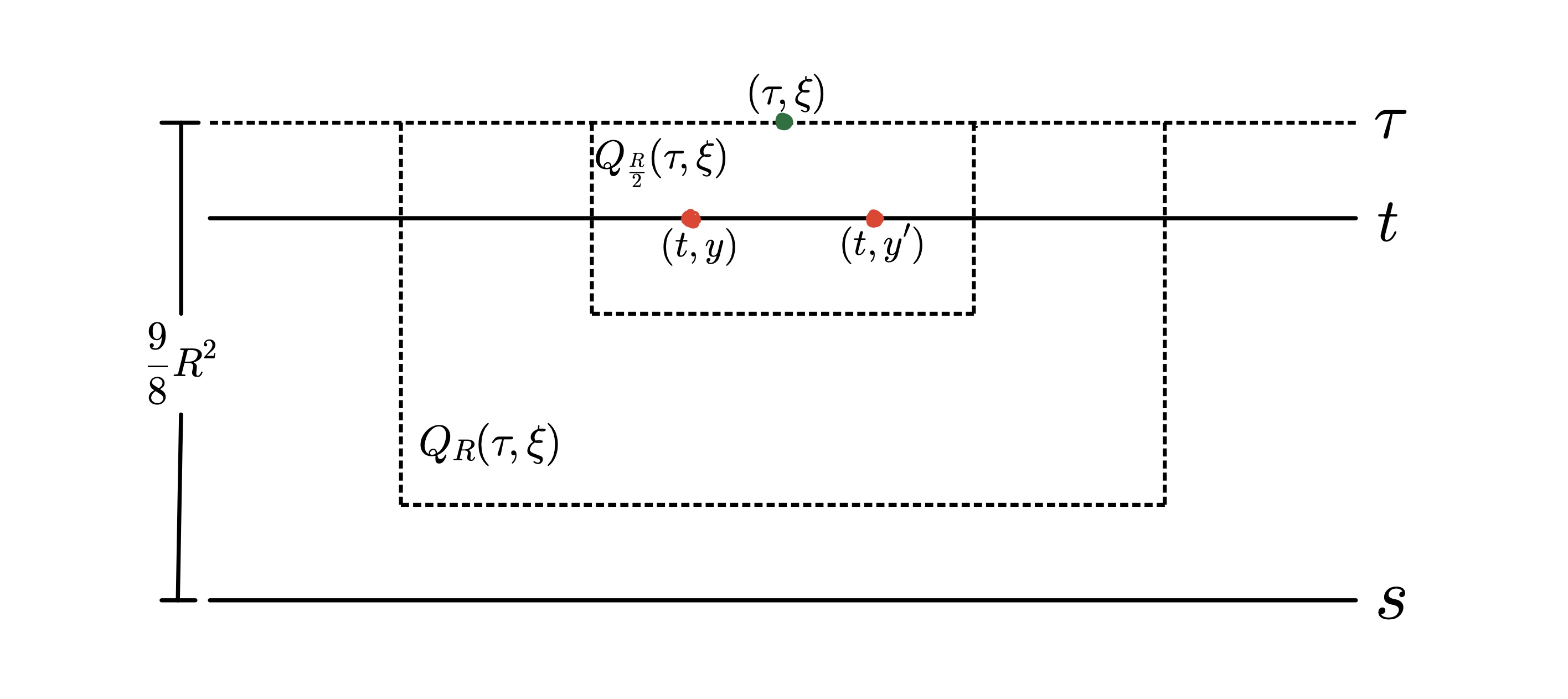}
			\caption{Cube $Q_R(\tau, \xi)$}\label{Fig-cube}
		\end{figure}  
        
        We consider two cases 
        where $|y-y'|\leq R/2$ 
        and $|y-y'| > R/2$, 
        respectively. 
        
		(iii.a) Suppose that $y,y'\in \mR^d$ satisfying $|y-y'|\leq R/2$. 
  Let $\xi=\frac{y+y'}{2}$.  Applying \eqref{eq:nash} to $u(t, y)= p_{s,t}(x,y)$, we obtain that 
		\begin{equation*}
			\begin{aligned}
				\l|p_{s,t}(x,y)-p_{s,t}(x,y')\r| \leq& C  \l( \frac{|y-y'|}{R}\r)^\alpha \sup_{\substack{z\in B_{R}(\xi); \\r\in (\tau-R^2, \tau)}}  p_{s,r}(x, z) \\
				\leq& C  |y-y'|^\alpha (t-s)^{-\frac{d+\alpha}{2}} \sup_{\substack{z\in B_{R}(\xi);\\r\in (s+R^2/8, s+(9R^2/8))}} \exp\l( \frac{-|x-z|^2}{CR^2}\r). 
			\end{aligned}
		\end{equation*}
		Since   
		\[
		    \sup_{z\in B_{R}(\xi)}\exp\l( \frac{-|x-z|^2}{CR^2}\r)\leq 1 \leq C \exp\l( \frac{-|x-y|^2}{CR^2}\r) ~\mbox{ when }~x\in B_{2R}(\xi) 
		\] 
		and 
		\[
		    \sup_{z\in B_{R}(\xi)} \exp\l( \frac{-|x-z|^2}{CR^2}\r)\asymp \exp\l( \frac{-|x-y|^2}{C'R^2}\r)~\mbox{ when }~x\in B^c_{2R}(\bar y), 
		\]
		we get 
		\begin{equation}\label{eq:leq-R}
			\l|p_{s,t}(x,y)-p_{s,t}(x,y')\r| \leq C  |y-y'|^\alpha (t-s)^{-\frac{\alpha}{2}} h\l(C(t-s), x-y\r). 
		\end{equation}
		
		(iii.b) If $|y-y'|>R/2$,  
        then we have 
		\begin{equation}\label{eq:geq-R}
			\begin{aligned}
				 \l|p_{s,t}(x,y)-p_{s,t}(x,y')\r| 
                 \leq&  \l|p_{s,t}(x,y)\r|+\l|p_{s,t}(x,y')\r|\\
				\leq & h\l(C(t-s), x-y\r)+h\l(C(t-s), x-y'\r). 
			\end{aligned}
		\end{equation}
		Combining \eqref{eq:leq-R} and \eqref{eq:geq-R} together we obtain the desired conclusion. 
	\end{proof}

\subsection{H\"older regularity estimate}
    Now, let us consider the  Kolmogorov equation 
    \begin{equation}\label{Eq:BKE1}
    	\p_t u = A_t u + c u  + f, \quad u(0)=0
    \end{equation}
    or 
    \begin{equation}\label{Eq:BKE2}
    	\p_s v + A_s v + cv = f, \quad v(T)=0. 
    \end{equation}  
    The main result of this subsection 
    is Proposition \ref{Prop:Holder} below 
    concerning the H\"older regularity estimate of solutions to the Kolmogorov equation. 

    \begin{definition}
        Let $I$ be an open interval in $\mR_+$ and $D$ a domain of $\mR^d$. Set $Q:=I\times D$. We say that $u\in L^\infty_t L^2_x(Q)\cap L^2_t H^1_x(Q)$ is a subsolution (resp. supersolution) of
        \begin{equation}\label{eq:pde}
            \p_t u = A_t u + c u  + f 
        \end{equation}
        in $Q$, if for any $\varphi \in C_c^\infty(Q)$ with $\varphi\geq 0$,  
        \begin{equation*} 
          \int_{Q} [-u \p_t \varphi +  \nabla u\cdot\nabla \varphi + (b\cdot\nabla u) \varphi  - cu \varphi] \leq(\text{resp.}\ \geq ) \int_{Q} f \varphi. 
        \end{equation*} 
        If $u\in C_tL^2_x\cap L^2_t H^1_x$ is a solution to \eqref{eq:pde} on $(0,T)\times \mR^d$ and $u(0)=0$, then we say that $u$ is a solution to \eqref{Eq:BKE1}. 
    \end{definition}

    We first have the following energy estimate, the proof is standard and thus is omitted.  
    
    \const{\Cenergy}
    \begin{proposition}[Energy estimate]\label{Prop:energy}
        Assume that $f=g+\div F$ with $g\in L^1_t L^2_x$ and $F\in L^2(\mR^d; \mR^d)$. Let $u\in C_tL^2_x\cap L^2_t H^1_x$ be the solution to \eqref{Eq:BKE1}. 
     Then 
        \begin{equation*}
            \|u\|_{L^\infty_t L_x^2} + \|\nabla u\|_{L^2_{t,x}} \leq C \l(\|g\|_{ L^1_t L^2_x}+\|F\|_{L^2_{t,x}} \r) , 
        \end{equation*}
       where $C$ only depends on $\|c\|_{L^\infty}$ and $T$. 
       When $d=2$ and $c\equiv 0$,   
       one has 
       \begin{equation*}
           \|u\|_{L^\infty_t L_x^2}^2 + \|\nabla u\|_{L^2_{t,x}}^2 \leq C_{\Cenergy} \|f\|_{L^{\frac{4}{3}}_{t,x}}^2, 
       \end{equation*}
       where $C_{\Cenergy}$ is independent of $T$. 
    \end{proposition}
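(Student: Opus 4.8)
The plan is to run the classical $L^2$ energy method: test equation \eqref{Eq:BKE1} against $u$ itself, exploit $\div b=0$ to eliminate the first‑order term, and then close the resulting differential inequality by Gronwall in the first part and by a scale‑invariant nonlinear Gronwall argument in the second. Since the paper labels this estimate as standard, the emphasis here is on laying out the mechanism, especially the point where $T$-independence in two dimensions is extracted.

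First I would justify, via the weak formulation (using a time-regularization of $u$ as a test function, legitimate because $\p_t u\in L^2_tH^{-1}_x$ by the equation, together with the Lions–Magenes identity $\langle\p_t u,u\rangle=\tfrac12\tfrac{\d}{\d t}\|u\|_{L^2_x}^2$), the energy identity
\[
  \tfrac12\tfrac{\d}{\d t}\|u(t)\|_{L^2_x}^2+\|\nabla u(t)\|_{L^2_x}^2=\int_{\mR^d}c\,u^2\,\d x+\langle f,u\rangle,\qquad\text{a.e. }t,
\]
where the key cancellation is $\int (b\cdot\nabla u)\,u=\tfrac12\int b\cdot\nabla(u^2)=-\tfrac12\int(\div b)\,u^2=0$. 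For the first assertion, write $f=g+\div F$, so $\langle f,u\rangle=\int g u-\int F\cdot\nabla u$; bound $|\int gu|\le\|g(t)\|_{L^2_x}\|u(t)\|_{L^2_x}$, absorb $|\int F\cdot\nabla u|\le\tfrac12\|\nabla u\|_{L^2_x}^2+\tfrac12\|F(t)\|_{L^2_x}^2$ into the dissipation term, estimate $\int cu^2\le\|c\|_{L^\infty}\|u\|_{L^2_x}^2$, and conclude by Gronwall on $[0,T]$ with $u(0)=0$; this yields a constant depending only on $\|c\|_{L^\infty}$ and $T$.

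For the second assertion ($d=2$, $c\equiv0$) one must avoid Gronwall, which would introduce $\e^{CT}$. Instead I would estimate $\langle f,u\rangle\le\|f(t)\|_{L^{4/3}_x}\|u(t)\|_{L^4_x}$ and invoke the scale-invariant $2$D Ladyzhenskaya/Gagliardo–Nirenberg inequality \eqref{Eq:Lady}, $\|u\|_{L^4_x}^2\lesssim\|u\|_{L^2_x}\|\nabla u\|_{L^2_x}$, obtaining $\langle f,u\rangle\le C\|f(t)\|_{L^{4/3}_x}\|u(t)\|_{L^2_x}^{1/2}\|\nabla u(t)\|_{L^2_x}^{1/2}$. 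Young's inequality with exponents $(4/3,4)$ absorbs a quarter of $\|\nabla u\|_{L^2_x}^2$ and leaves, with $y(t):=\|u(t)\|_{L^2_x}^2$,
\[
  \tfrac12 y'(t)+\tfrac34\|\nabla u(t)\|_{L^2_x}^2\le C\|f(t)\|_{L^{4/3}_x}^{4/3}\,y(t)^{1/3}.
\]
Since $y(0)=0$, dividing by $y^{1/3}$ gives $\tfrac{\d}{\d t}\bigl(y^{2/3}\bigr)\le C\|f(t)\|_{L^{4/3}_x}^{4/3}$, hence $y(t)^{2/3}\le C\int_0^T\|f(s)\|_{L^{4/3}_x}^{4/3}\,\d s=C\|f\|_{L^{4/3}_{t,x}}^{4/3}$, i.e. $\|u\|_{L^\infty_tL^2_x}^2\le C\|f\|_{L^{4/3}_{t,x}}^2$; integrating the differential inequality in $t$ and using this $L^\infty$ bound for the $y^{1/3}$ factor then also yields $\|\nabla u\|_{L^2_{t,x}}^2\le C\|f\|_{L^{4/3}_{t,x}}^2$, with $C$ absolute.

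The only genuinely delicate point is the bookkeeping in the last step: one must use the scale-invariant form of the $2$D interpolation inequality (no lower-order $\|u\|_{L^2_x}$ correction), tune the Young exponents so that strictly less than the full $\|\nabla u\|_{L^2_x}^2$ is generated, and crucially exploit $u(0)=0$ so that the nonlinear Gronwall for $y^{2/3}$ integrates to a clean, $T$-free bound (a standard continuity argument disposes of the intervals where $y$ vanishes). Everything else — the weak-formulation justification of the energy identity, the drift cancellation, and the first Gronwall estimate — is routine.
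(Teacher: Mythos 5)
Your proof is correct and is precisely the standard energy-method argument the paper has in mind (the paper omits the proof as "standard"). One small bookkeeping remark on the first part: after the energy identity, the term $\int g u \le \|g(t)\|_{L^2_x}\|u(t)\|_{L^2_x}$ carries only an $L^1_t$ weight in $\|g\|$, so a raw Gronwall on $y(t)=\|u(t)\|_{L^2_x}^2$ does not immediately close; one should integrate in time, bound $\int_0^T\|g\|_{L^2_x}\|u\|_{L^2_x}\,\d t \le M\|g\|_{L^1_tL^2_x}$ with $M:=\|u\|_{L^\infty_tL^2_x}$, apply Gronwall only to the $\|c\|_\infty\int y$ term, and then absorb the linear-in-$M$ term by Young's inequality — a refinement of "conclude by Gronwall" but not a genuine gap, and the remainder of your argument (the drift cancellation, the Ladyzhenskaya bound via \eqref{Eq:Lady}, the Young exponents $(4/3,4)$, and the nonlinear Gronwall on $y^{2/3}$ yielding a $T$-free constant when $c\equiv 0$) is exactly right.
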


    The following fact will be used frequently in this paper. 
	
    \begin{lemma}\label{lem:heat}
	 Let $h$ be the heat kernel given by \eqref{Eq:H}. Then, for any  $l, \alpha\geq 1$, it holds that  
	\begin{equation}\label{Eq:h-Lr}
		\l\| h(t,\cdot) \r\|_{L_x^{l,\alpha}}\leq C t^{-\frac{d}{2}(1-\frac{1}{l})}. 
	\end{equation}
        As a consequence, for any $(p,q)\in \sI$, $r \geq p$ and $\beta\geq 1$, 
	\begin{equation}\label{Eq:cov-fh}
         \sup_{t\in [0,T]}\l\| \int_0^t f(t-s)*h(s) \d s \r\|_{L^{r,\beta}_x}\leq C \|f\|_{L^q_t L^p_x}. 
	\end{equation}
    \end{lemma}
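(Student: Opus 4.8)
The plan is to establish the two bounds \eqref{Eq:h-Lr} and \eqref{Eq:cov-fh} in sequence. For the first bound, I would compute the Lorentz (quasi-)norm of the Gaussian $h(t,\cdot)$ directly from its distribution function. Write $h(t,x) = (4\pi t)^{-d/2}\e^{-|x|^2/4t}$; a point $x$ satisfies $h(t,x) > \lambda$ precisely when $|x| < 2\sqrt{t}\,\big(\log\big((4\pi t)^{-d/2}/\lambda\big)\big)^{1/2}$, so the distribution function is $\mu_h(\lambda) = c_d\, t^{d/2}\big(\log((4\pi t)^{-d/2}/\lambda)\big)_+^{d/2}$. Then the decreasing rearrangement $h^*(s)$ is obtained by inverting $s = \mu_h(\lambda)$, giving $h^*(s) \asymp (4\pi t)^{-d/2}\exp(-c\, s^{2/d}/t)$ for an explicit $c$. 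Plugging this into $\|h(t,\cdot)\|_{L^{l,\alpha}_x} = \big(\int_0^\infty (s^{1/l} h^*(s))^\alpha\,\frac{\d s}{s}\big)^{1/\alpha}$ and substituting $s = t^{d/2}\sigma$ scales out the $t$-dependence, leaving $\|h(t,\cdot)\|_{L^{l,\alpha}_x} = C\, t^{-\frac{d}{2}(1-\frac1l)}\big(\int_0^\infty (\sigma^{1/l}\e^{-c\sigma^{2/d}})^\alpha\,\frac{\d\sigma}{\sigma}\big)^{1/\alpha}$, and the remaining $\sigma$-integral converges for all $l,\alpha\geq 1$ since the integrand decays super-polynomially at infinity and behaves like $\sigma^{\alpha/l - 1}$ near zero. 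Alternatively, and perhaps more cleanly, one can interpolate: $h(t,\cdot)\in L^1_x$ with norm $1$ and $h(t,\cdot)\in L^\infty_x$ with norm $c_d t^{-d/2}$, so by the scaling $\|h(t,\cdot)\|_{L^r_x} = c_{d,r} t^{-\frac d2(1-\frac1r)}$ for every $r\in[1,\infty]$, and then the real-interpolation characterization of Lorentz spaces (or the fact that $L^{l,\alpha}_x$ sits between $L^{l-\eps}_x$ and $L^{l+\eps}_x$ with comparable norms on a function already known to lie in all $L^r$) upgrades this to the stated $L^{l,\alpha}$ bound.

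For the second bound \eqref{Eq:cov-fh}, I would use Minkowski's integral inequality together with Young's convolution inequality in Lorentz spaces. Fix $t\in[0,T]$. Then
\[
    \Big\| \int_0^t f(t-s,\cdot)*h(s,\cdot)\,\d s \Big\|_{L^{r,\beta}_x}
    \leq \int_0^t \big\| f(t-s,\cdot)*h(s,\cdot) \big\|_{L^{r,\beta}_x}\,\d s.
\]
By the Young-type inequality for Lorentz spaces (which holds since $L^{r,\beta}_x\supseteq L^{r,p}_x$ for $\beta\geq p$, or directly by the Lorentz Young inequality with the appropriate third index), $\|f(t-s,\cdot)*h(s,\cdot)\|_{L^{r,\beta}_x} \leq C\|f(t-s,\cdot)\|_{L^p_x}\|h(s,\cdot)\|_{L^{a,1}_x}$ where $\tfrac1a = 1 + \tfrac1r - \tfrac1p$; applying \eqref{Eq:h-Lr} with that exponent $a$ gives $\|h(s,\cdot)\|_{L^{a,1}_x}\leq C s^{-\frac d2(1-\frac1a)} = C s^{-\frac d2(\frac1p - \frac1r)}$. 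Thus the spatial factor contributes a time weight $s^{-\frac d2(\frac1p-\frac1r)}$, and we are left to bound $\int_0^t \|f(t-s,\cdot)\|_{L^p_x}\, s^{-\frac d2(\frac1p-\frac1r)}\,\d s$ by $C\|f\|_{L^q_t L^p_x}$. This last step is Hölder's inequality in the time variable: it works provided the weight $s\mapsto s^{-\frac d2(\frac1p-\frac1r)}$ lies in $L^{q'}(0,T)$, i.e. $\frac d2(\frac1p-\frac1r)q' < 1$. One checks that the condition $(p,q)\in\sI$, namely $\frac dp + \frac2q < 2$, is exactly what permits choosing $r\geq p$ (including $r=p$, or $r$ slightly larger) so that this integrability holds; when $r=p$ the weight is simply integrable ($\frac d2(\frac1p-\frac1r)=0<1$) and the bound is immediate, and for $r>p$ one uses that $\frac2q > \frac dp - 1 \geq \frac dp - \frac dr$ rearranges to $\frac d2(\frac1p-\frac1r)q' < 1$.

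The main obstacle I anticipate is bookkeeping the Lorentz-space exponents in the Young inequality step of \eqref{Eq:cov-fh} and verifying that the conditions $(p,q)\in\sI$, $r\geq p$, $\beta\geq 1$ are precisely sufficient for the time integral to close — in particular handling the borderline behavior as $r$ varies and making sure the third (fine) index in the Young convolution inequality can be taken to be $1$ for $h$ and still land in $L^{r,\beta}_x$ on the output. The Gaussian computation in \eqref{Eq:h-Lr} is routine once one commits to either the direct rearrangement calculation or the interpolation shortcut; I would present the interpolation argument to keep it short, citing the Lorentz-space preliminaries in \Cref{App:Lorentz}.
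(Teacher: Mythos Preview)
Your approach to \eqref{Eq:cov-fh} --- Minkowski, then Young in Lorentz spaces, then H\"older in time --- matches the paper's exactly. However, your verification of the exponent condition is garbled: the chain ``$\tfrac{2}{q} > \tfrac{d}{p} - 1 \geq \tfrac{d}{p} - \tfrac{d}{r}$'' is wrong as written (the first inequality is not what $(p,q)\in\sI$ says, and the second would require $r\le d$, which is not assumed). The correct check is immediate: $(p,q)\in\sI$ gives $\tfrac{d}{2p}+\tfrac{1}{q}<1$, hence $\tfrac{d}{2}\big(\tfrac{1}{p}-\tfrac{1}{r}\big)\le \tfrac{d}{2p}<1-\tfrac{1}{q}=\tfrac{1}{q'}$, which is precisely the $L^{q'}(0,T)$ integrability of $s\mapsto s^{-\frac{d}{2}(\frac{1}{p}-\frac{1}{r})}$ you need.

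For \eqref{Eq:h-Lr}, the paper's argument is shorter than either of your two options: it just uses the self-similarity $h(t,x)=t^{-d/2}h(1,x/\sqrt{t})$, from which the dilation behavior of the Lorentz norm yields $\|h(t,\cdot)\|_{L^{l,\alpha}_x}=t^{-\frac{d}{2}(1-\frac{1}{l})}\|h(1,\cdot)\|_{L^{l,\alpha}_x}$ in one line, with no rearrangement computation or interpolation needed.
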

    \begin{proof}
    Since \( h(t, x) = t^{-\frac{d}{2}} h(1, x / \sqrt{t}) \), one has  
    \[ \|h(t, \cdot)\|_{L_x^{l, \alpha}} = t^{-\frac{d}{2}(1 - \frac{1}{l})} \|h(1, \cdot)\|_{L_x^{l, \alpha}}, 
    \]
    from which \eqref{Eq:h-Lr} can be derived. 
     
    Then, an application of  \Cref{Prop:Lorentz} (iii) and \eqref{Eq:h-Lr} yields 
    \begin{equation*}
	\begin{aligned}
		\sup_{t\in [0,T]}\l\| \int_0^t f(t-s)*h(s) \d s \r\|_{L_x^{r,\beta}} \leq C \sup_{t\in [0,T]} \int_0^{t} s^{-\frac{d}{2}(\frac{1}{p}-\frac{1}{r})} \|f(t-s, \cdot)\|_{L^p_x}\, \d s \leq C \|f\|_{L^q_t L_x^p}. 
	\end{aligned}
    \end{equation*}
    where we also used the fact that $-\frac{d}{2}(\frac{1}{p}-\frac{1}{r}) (1-\frac{1}{q})^{-1}>-1$ in the last step, due to the conditions on $p$, $q$ and $r$.  
    Thus, \eqref{Eq:cov-fh} is proved. 
    \end{proof}

	\begin{proposition} [H\"older regularity estimate] \label{Prop:Holder}
		Let $d\geq 2$ and $(p,q)\in \sI$.  Then, for any $f\in L^q_tL^p_x$, the Kolmogorov equation \eqref{Eq:BKE1} admits a weak solution $u$ such that 
		\begin{equation}\label{Eq:Holder1}
			\sup_{t\in [0,T]}\|u(t)\|_{C^{\alpha}_{x}} \leq C \|f\|_{L^q_t L_x^p}, 
		\end{equation}
		where $\alpha$ depends only on $d, p, q$ and $\|b\|_{L^\infty_t L_x^{d,\infty}}$, and $C$ depends on $d, \alpha, p, q, T$, $\|b\|_{L^\infty_t L_x^{d,\infty}}$ and $\|c\|_{L^\infty_{t,x}}$. 
	\end{proposition}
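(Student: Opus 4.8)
The plan is to combine the Aronson-type upper bound from \Cref{Thm:Heat} with Moser/De Giorgi--Nash--type oscillation decay to get an interior H\"older modulus, and then upgrade this to a global estimate using the representation of $u$ in terms of the fundamental solution. First I would construct the solution $u$: mollify $b$ to $b^\eps\in C_b^\infty$, solve the (now classical) parabolic equation \eqref{Eq:BKE1} with drift $b^\eps$, and obtain uniform-in-$\eps$ control. The natural starting bound is the energy estimate \Cref{Prop:energy}, which for $d=2$, $c\equiv 0$ gives $\|u\|_{L^\infty_tL^2_x}+\|\nabla u\|_{L^2_{t,x}}\lesssim\|f\|_{L^{4/3}_{t,x}}$; more generally for $f\in L^q_tL^p_x$ with $(p,q)\in\sI$ one splits $f$ and uses that $L^q_tL^p_x$ embeds suitably so that the right-hand side is of the admissible form $g+\div F$. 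This yields $u\in C_tL^2_x\cap L^2_tH^1_x$ uniformly in $\eps$, hence a weak limit solving \eqref{Eq:BKE1} with the original $b$.

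Next I would establish the \emph{interior} H\"older bound. Writing \eqref{Eq:BKE1} as $\partial_t u = \Delta u + b\cdot\nabla u + cu + f$ with $\div b=0$, the key point is that the drift term $b\cdot\nabla u=\div(bu)$ is in divergence form with coefficient $b\in L^\infty_tL^{d,\infty}_x$, which (by the Sobolev--Lorentz embedding \eqref{Eq:Sob}, i.e.\ $L^{d,\infty}\cdot L^{2d/(d-2),2}\hookrightarrow L^2$, and in $d=2$ the corresponding borderline computation as carried out in the proof of \Cref{Thm:Heat}) is in the right scaling class for a Moser iteration / Nash-type oscillation lemma. Concretely, Nash's continuity theorem \eqref{eq:nash} applies on cylinders to $u - (\text{contribution of }f)$; to handle the source $f\in L^q_tL^p_x$ with $(p,q)\in\sI$ one uses the subcriticality $d/p+2/q<2$ to absorb the $L^q_tL^p_x$-forcing into a local oscillation bound with a gain of a small power of the cylinder radius (a standard Morrey-type estimate for parabolic equations). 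This gives, on every parabolic cylinder $Q_R(z)$ compactly inside the domain, $\osc_{Q_{R/2}(z)}u \le C(R^\alpha \,\osc_{Q_R(z)}u + R^{\theta}\|f\|_{L^q_tL^p_x})$ for some $\theta>0$, and iterating yields local $C^\alpha$ control with $\alpha=\alpha(d,p,q,\|b\|_{L^\infty_tL^{d,\infty}_x})$.

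To pass from interior to \emph{global} (uniform in $x\in\mR^d$, uniform up to $t=T$) I would use the Duhamel/representation formula $u(t,x)=\int_0^t\!\!\int_{\mR^d} p_{s,t}(x,y)\big(c(s,y)u(s,y)+f(s,y)\big)\,dy\,ds$, where $p_{s,t}$ is the fundamental solution of \Cref{Thm:Heat}; here one uses the heat-kernel H\"older regularity \eqref{Eq:pHolder2} in the $x$-variable together with the Gaussian bound \eqref{Eq:heat}. The term $\int_0^t (c u)(s)*h(s)\,ds$-type piece is controlled by $\|u\|_{L^\infty_{t,x}}$ (which follows from the interior bound plus the Gaussian bound near $t=0$ since $u(0)=0$) and the term involving $f$ is controlled via \eqref{Eq:cov-fh} of \Cref{lem:heat} and the H\"older bound \eqref{Eq:pHolder2}: using $|p_{s,t}(x,y)-p_{s,t}(x',y)|\le C(|x-x'|/\sqrt{t-s})^\alpha[h+h]$ and $\int_0^t (t-s)^{-\alpha/2}\|f(s)\|_{L^p_x}\|h(C(t-s))\|_{L^{p',\cdot}_x}\,ds$, the condition $(p,q)\in\sI$ is exactly what makes the time integral converge with a gain $|x-x'|^\alpha$. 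Combining, $\sup_t\|u(t)\|_{C^\alpha_x}\le C\|f\|_{L^q_tL^p_x}$ after an absorption argument for the $cu$ contribution (using $\|c\|_{L^\infty}<\infty$ and Gr\"onwall in $t$).

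The main obstacle I expect is making the local oscillation/Morrey estimate work uniformly in dimension $d=2$ at the \emph{endpoint} Lorentz regularity of $b$: unlike $d\ge3$, where $b\in L^{d,\infty}$ sits comfortably below the critical exponent for the De Giorgi--Nash machinery, in $d=2$ the drift term is genuinely borderline, and one must use the Lorentz-refined Sobolev inequality and the Ladyzhenskaya-type estimate \eqref{Eq:Lady} (as in the proof of \Cref{Thm:Heat}) rather than crude H\"older to close the Caccioppoli/iteration inequality — ensuring the small constant multiplying $\|\nabla u\|_{L^2}^2$ can be absorbed. The second delicate point is quantifying the forcing-dependence: tracking that the $f\in L^q_tL^p_x$ contribution produces a genuine power gain $R^\theta$ with $\theta>0$ requires the strict inequality $d/p+2/q<2$, and the final global estimate hinges on the matching strict inequality in \eqref{Eq:cov-fh}.
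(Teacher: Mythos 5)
Your proposal takes essentially the same route as the paper's proof: both hinge on representing the solution via the fundamental solution $p_{s,t}$ of \Cref{Thm:Heat}, applying the Gaussian bound \eqref{Eq:heat} and the heat-kernel H\"older estimate \eqref{Eq:pHolder2}, and using the subcriticality $d/p+2/q<2$ to make the resulting time integral converge with a gain $|x-x'|^\alpha$. The one structural deviation is your proposed separate interior H\"older step via Moser/Nash oscillation decay on cylinders: this is a detour, since the De Giorgi--Nash machinery has already been packaged into \eqref{Eq:pHolder2} (whose proof within \Cref{Thm:Heat} invokes Nash's continuity theorem \eqref{eq:nash}), so once that kernel regularity is available the representation formula yields the \emph{global} $C^\alpha$ bound directly and the paper therefore dispenses with an interior oscillation argument entirely. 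The only other divergence is the handling of the zeroth-order term: you propose a Gr\"onwall absorption of $cu$, whereas the paper first derives an $L^q_t L^p_x$ bound on $v$ (estimate \eqref{eq:v-lplq}) straight from the Gaussian upper bound and then closes in one step via $\|g\|_{L^q_tL^p_x}=\|cv+f\|_{L^q_tL^p_x}\lesssim \|f\|_{L^q_tL^p_x}$; both work, the latter being marginally shorter. The fact that the paper sets things up via the backward equation \eqref{Eq:BKE2} and Feynman--Kac rather than the forward Duhamel formula you write down is purely cosmetic.
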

	
	\begin{proof}[Proof]
		By standard approximation arguments, it suffices to consider the 
        smooth case where  $b,c,f\in C^\infty_b(\mR^d)$
        and to prove that 
        \[
            \|v(t)\|_{C^{\alpha}_{x}} \leq C \|f\|_{L^q_t L_x^p}, \quad t\in [0,T], 
        \]
        for $v$ satisfying \eqref{Eq:BKE2}. 

For this purpose, by the Feynman-Kac formula, we derive 
		\begin{equation*}
			v(s,x)= \mE_{s,x} \int_s^T f(t, X_{t}) \exp\l(-{\int_s^t c(r, X_{r}) \d r}\r)\d t, 
		\end{equation*}
        where $(\mP_{s,x}, X_t)$ is the Markov process corresponding to $A_t$. 
        Then, the Aronson-type estimate \eqref{Eq:heat} yields 
	    \[
	        |v(s,x)| \leq C\e^{\|c\|_{L^\infty}T} \int_s^T\!\!\!\int_{\mR^d}  |f(t, y)| \, h(C(t-s), x-y)~\d y\, \d t. 
	    \]
	   Let $l=p/(p-1)$ in \eqref{Eq:h-Lr}. 
        Since $d/p+2/q<2$, we get 
	    \begin{equation}\label{eq:vbdd}
	    	\|v(s,\cdot)\|_{L^{\infty}_x} \leq C \int_0^{T-s} t^{-\frac{d}{2p}} \|f(s+t, \cdot)\|_{L^p}\, \d t \leq C \|f\|_{L^q_t L_x^p}. 
	    \end{equation}
	    Similarly, one can also derive that  
	    \begin{equation}\label{eq:v-lplq}
	        \|v\|_{L^q_t L^p_x}\leq C \|f\|_{L^q_tL^p_x}. 
	    \end{equation} 
		Now, let $g:=cv+f$. Since 
		$$
		\p_s v+ A_s v+ g=0, \quad v(T)=0, 
		$$
		  it holds that  
		\begin{equation*}
			v(s, x) = \mE_{s,x} \int_s^T g(t, X_{t}) \d t =  \int_s^T\!\!\!\int_{\mR^d}p_{s,t}(x,y) g(t, y)\, \d y \d t. 
		\end{equation*}
		Let $R=|x-x'|\ll1$.  By \eqref{Eq:pHolder2} and H\"older's inequality, there exits $\alpha\in (0,1)$ such that 
		\begin{equation*}
			\begin{aligned}
				|v(s,x)-v(s,x')|\leq& \int_s^T|g(t,y)|\l|p_{s,t}(x,y)-p_{s,t}(x',y)\r|\d t\\
                    \leq& C \int_s^{s+R^2} |g(t, y)| \l(p_{s,t}(x,y)+p_{s,t}(x',y)\r) \d y \d t\\
				&+C R^\alpha \int_{s+R^2}^{T} (t-s)^{-\frac{\alpha}{2}} \d t \int_{\mR^d} |g(t, y)| (t-s)^{-\frac{d}{2}} \exp\l( \frac{-|x-y|^2}{C(t-s)}\r)  \d y\\
				\leq&C  \int_0^{R^2} t^{-\frac{d}{2p}} \|g(s+t, \cdot)\|_{L^p_x} \d t +C R^\alpha \int_{R^2}^{T} t^{-\frac{\alpha}{2}-\frac{d}{2p}}  \|g(s+t, \cdot)\|_{L^p_x} \d t\\
				\leq& C R^{2-\frac{d}{p}-\frac{2}{q}} \|g\|_{L^q (s,T; L^p_x)} 
				+ CR^\alpha \|g\|_{L^q (s,T; L_x^p)}  \l(\int_{R^2}^T t^{-\frac{\alpha q'}{2}-\frac{dq'}{2p}} \d t \r)^{\frac{1}{q'}}. 
			\end{aligned}
		\end{equation*}
		Re-selecting the parameter $\alpha \in (0, 2-d/p-2/q)$ and combining \eqref{eq:v-lplq} with the above estimate  we thus obtain 
		\begin{equation*}
		\begin{aligned}
		    |v(s,x)-v(s,x')| \leq& C |x-x'|^\alpha \|g\|_{L^q(0,T;L_x^p)} \\
		    \leq& C |x-x'|^\alpha \l(\|v\|_{L^q_tL_x^p}+\|f\|_{L^q_tL_x^p}\r) \\
		    \leq& C |x-x'|^\alpha \|f\|_{L^q_tL_x^p}. 
		\end{aligned}
		\end{equation*}
        This together with \eqref{eq:vbdd} prove the desired assertion. 
	\end{proof}

\subsection{Sobolev regularity estimate} 
\label{Subsec-Sob-Esti}

The main result of this subsection is the 
following second order Sobolev regularity estimate for backward Kolmogorov equations, 
which is crucial in the proof of Theorem \ref{Thm:SDE}. 

    \begin{theorem} [Second order Sobolev regularity estimate] \label{Thm:LqLp}
    Let $d\geq 2$. Assume that $b$ satisfies \eqref{Eq:Ab}. Then, there exists $(\mathfrak{p},\mathfrak{q})\in \sI$, which only depends on $d$ and $\|b\|_{L^\infty_tL^{d,\infty}_x}$, such that for any $c\in L^\infty_t (L^1_x \cap L^\infty_x)$ and $f\in L^\infty_t (L^1_x\cap L^d_x)$, the weak solution $u$ to \eqref{Eq:BKE1} satisfies 
    \begin{equation}\label{Eq:D2u}
    \|u\|_{L^{\mathfrak{q}}_t W^{2, \mathfrak{p}}_x} + \|\p_t u\|_{L^{\mathfrak{q}}_t L_x^{\mathfrak{p}}} \leq C \|f\|_{L^\infty_t(L^1_x\cap L^d_x)}, 
    \end{equation}
    where $C$ only depends on $d, \mathfrak{p}, \mathfrak{q}, T$, $\|b\|_{L^\infty_t L_x^{d,\infty}}$ and $\|c\|_{L^\infty_t (L^1_x\cap L^\infty_x)}$. 
    \end{theorem}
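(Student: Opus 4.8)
The plan is to run the multi-step upgradation of the integrability of $\nabla u$ depicted in \Cref{Fig:2D}, concentrating on the delicate case $d=2$ (for $d\geq 3$ the same scheme with a shorter chain recovers the result of \cite{rockner2023weak}). By the reduction made at the beginning of this section one may take $b,c,f$ smooth, since all the constants below will depend only on $d$, $\|b\|_{L^\infty_tL^{d,\infty}_x}$, $\|c\|_{L^\infty_t(L^1_x\cap L^\infty_x)}$ and $T$. Because $u$ will be bounded by \Cref{Prop:Holder} and $c\in L^\infty_t(L^1_x\cap L^\infty_x)$, the term $g:=cu+f$ again lies in $L^\infty_t(L^1_x\cap L^2_x)$, so using $\div b=0$ I would rewrite \eqref{Eq:BKE1} as $\partial_t u-\Delta u=\div(bu)+g$ and treat $g$ throughout exactly as $f$. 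The difficulty to overcome is that the parabolic $L^q$-$L^p$ theory applied to the source $\div(bu)$ only yields $\nabla u\in L^\nu_tL^\mu_x$ with $\mu<2=d$, which is too weak to reach a pair $(\mathfrak p,\mathfrak q)\in\sI$; the crux is to push the spatial integrability past the threshold $\mu=d$.

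First I would fix the base integrability: from the Aronson estimate \eqref{Eq:heat} and Feynman--Kac, $u\in L^\infty_t(L^1_x\cap L^\infty_x)$, hence $u\in L^\infty_tL^{r,1}_x$ for all $r\in(1,\infty)$ by interpolation; the Lorentz H\"older inequality then gives $bu\in L^\nu_tL^\mu_x$ with $\mu=2r/(2+r)<2$ and $\nu$ as large as one wants, and \Cref{Prop:lplq} converts this into $\nabla u\in L^\nu_tL^\mu_x$. Independently, the energy bound \Cref{Prop:energy} yields $\nabla u\in L^2_{t,x}$, and the key input is the Meyers-type estimate \Cref{Lem:Meyers} (via the reverse H\"older estimate \Cref{Lem:RHolder} and Gehring's lemma \Cref{Lem:Gerhing}), which upgrades this to $\nabla u\in L^\gamma_{t,x}$ for some $\gamma>2$ depending only on $d$ and $\|b\|_{L^\infty_tL^{2,\infty}_x}$ --- the first place where the spatial integrability of $\nabla u$ exceeds $d=2$.

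Then I would perform the two upgrades. Interpolating the high-in-time bound $\nabla u\in L^\nu_tL^\mu_x$ against the Meyers bound $\nabla u\in L^\gamma_{t,x}$ via \Cref{Lem:Inter1} gives $\nabla u\in L^q_tL^{l,1}_x$ for a suitable $l>2$ with $1<q<2l/(l-2)$; then $b\cdot\nabla u\in L^q_tL^p_x$ with $p=2l/(2+l)$, and \Cref{Prop:lplq} produces the first genuine second-order bound $\nabla^2u\in L^q_tL^p_x$. For the second upgrade I would combine $u\in L^\infty_tC^\alpha_x$ from \Cref{Prop:Holder} with $\nabla^2u\in L^q_tL^p_x$ through the refined Gagliardo--Nirenberg inequality \eqref{Eq:NE} (\Cref{Prop:GNI}) to obtain $\nabla u\in L^{\mathfrak q}_tL^{\mathfrak l,1}_x$ with $\delta=\tfrac{1-\alpha/2}{2-\alpha/2}$, $\mathfrak l=p/\delta$ and $\mathfrak q=q/\delta$, whence $b\cdot\nabla u\in L^{\mathfrak q}_tL^{\mathfrak p}_x$ with $\mathfrak p=2\mathfrak l/(2+\mathfrak l)$, and a final application of \Cref{Prop:lplq} puts $u\in\cH^{2,\mathfrak p}_{\mathfrak q}$, which is \eqref{Eq:D2u}. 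A bookkeeping of exponents through the chain shows that the parameters can be chosen so that $1/\mathfrak p+1/\mathfrak q<1$, i.e. $(\mathfrak p,\mathfrak q)\in\sI$, and so that $(\mathfrak p,\mathfrak q)$ depends only on $d$ and $\|b\|_{L^\infty_tL^{d,\infty}_x}$.

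The step I expect to be the main obstacle is this last one: at each interpolation the exponents must be chosen to remain admissible for the next $L^q$-$L^p$ application \emph{and} to steer the final pair strictly inside $\sI$, and it is precisely the quantitative gain $\gamma>2$ from the Meyers-type estimate that makes both feasible at once --- without it the chain cannot push $1/\mathfrak p+1/\mathfrak q$ below $1$. The remaining ingredients, namely the standard approximation removing the smoothness of $b,c,f$ and the incorporation of the lower-order term $cu$ into the source $g$, are routine.
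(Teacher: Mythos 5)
Your proposal reproduces the paper's own three-step upgradation argument essentially verbatim: base $L^q_t L^p_x$ estimate for $\nabla u$ with $\mu<2$ via the Aronson/heat-semigroup bound and \Cref{Prop:lplq}, first upgrade via the Meyers-type estimate (\Cref{Lem:Meyers}, built on \Cref{Lem:RHolder} and \Cref{Lem:Gerhing}) and interpolation (\Cref{Lem:Inter1}), second upgrade via the Gagliardo--Nirenberg estimate (\Cref{Prop:GNI}) against the H\"older bound (\Cref{Prop:Holder}), and a final application of \Cref{Prop:lplq}, together with the exponent bookkeeping to land $(\mathfrak p,\mathfrak q)$ in $\sI$. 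The only cosmetic difference is that you absorb $cu$ into the source at the outset, whereas the paper proves the $c=0$ estimate first and treats $c\neq0$ afterwards by the same substitution $f\rightsquigarrow cu+f$; both are justified by the a priori $L^\infty$ bound from \Cref{Prop:Holder}.
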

    
    In order to prove Theorem \ref{Thm:LqLp}, 
    let us first recall the following $L^q$-$L^p$ estimate for the heat equation proved by Krylov \cite{krylov2001heat}.

	\begin{proposition} [\cite{krylov2001heat}] \label{Prop:lplq}
		Let $p,q\in (1,\infty)$, $\alpha\in \mR$. Assume that $f\in  L^q_t H^{\alpha, p}_x$. 
  Then, the  heat equation 
  \begin{equation}\label{Eq:HE}
			\p_t u-\Delta u= f \ \mbox{ in } (0,T)\times \mR^d, \quad u(0)=0, 
		\end{equation} 
  has a unique solution in $L^q_t H^{2+\alpha, p}_x$, 
which satisfies the estimate
		\begin{equation}
			\|\p_t u\|_{L^q_t H^{\alpha, p}_x}+\|u\|_{L^q_t H^{2+\alpha,p}_x}\leq C \|f\|_{L^q_t H^{\alpha,p}_x}, 
		\end{equation}
		where $C=C(d, p, q, T)$. 
	\end{proposition}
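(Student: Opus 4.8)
\textbf{Proof plan for Proposition \ref{Prop:lplq}.}
The statement is the classical parabolic $L^q(L^p)$ (mixed-norm) maximal regularity estimate for the heat equation on the full space, due to Krylov \cite{krylov2001heat}; the task is to reconstruct a proof sketch. The plan is to reduce everything to the $\alpha = 0$ case and then to a Calder\'on--Zygmund / Fourier multiplier argument. First I would observe that the fractional derivative $\Lambda^\alpha = (-\Delta)^{\alpha/2}$ commutes with $\p_t - \Delta$ and with the solution operator: if $u$ solves \eqref{Eq:HE} with data $f$, then $\Lambda^\alpha u$ solves \eqref{Eq:HE} with data $\Lambda^\alpha f$. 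Hence it suffices to prove the estimate $\|\p_t u\|_{L^q_tL^p_x} + \|\nabla^2 u\|_{L^q_tL^p_x} \le C\|f\|_{L^q_tL^p_x}$ for $f \in L^q_tL^p_x$ and to note that $\|u\|_{L^q_tH^{2,p}_x} \lesssim \|u\|_{L^q_tL^p_x} + \|\nabla^2 u\|_{L^q_tL^p_x}$, with the zeroth-order term controlled by integrating the equation in time (or by the Duhamel bound $\|u(t)\|_{L^p_x} \le \int_0^t \|f(s)\|_{L^p_x}\,\d s$ on a finite horizon). Uniqueness in $L^q_tH^{2+\alpha,p}_x$ follows from the energy method (or from the representation by the heat semigroup) once the a priori estimate is in hand.

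For the core estimate, I would use the Duhamel representation $u(t) = \int_0^t e^{(t-s)\Delta} f(s)\,\d s$ and write $\p_t u$ and $\nabla^2 u$ as singular integral operators in the joint variable $(t,x)\in\mR^{1+d}$. Concretely, after extending $f$ by zero to negative times, $\nabla^2 u = \cK * f$ where $\cK(t,x) = \1_{t>0}\,\nabla^2 h(t,x)$ is a parabolic Calder\'on--Zygmund kernel: it satisfies the size bound $|\cK(t,x)| \lesssim (|t|^{1/2}+|x|)^{-(d+2)}$ and the H\"ormander cancellation condition with respect to the parabolic metric $\varrho((t,x),(t',x')) = |t-t'|^{1/2}+|x-x'|$, and its Fourier transform is the bounded multiplier $-\xi\otimes\xi/(i\tau+|\xi|^2)$, giving $L^2(\mR^{1+d})$-boundedness. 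By the Calder\'on--Zygmund theorem on the space of homogeneous type $(\mR^{1+d},\varrho,\,\d t\,\d x)$, $\nabla^2 u = \cK*f$ is bounded on $L^r(\mR^{1+d})$ for all $r\in(1,\infty)$; the same applies to $\p_t u$ via the identity $\p_t u = \Delta u + f$ (so $\p_t u = \mathrm{tr}(\nabla^2 u) + f$, or directly with the kernel $\1_{t>0}\p_t h$). To upgrade from the diagonal case $p=q=r$ to the mixed norm $L^q_tL^p_x$ I would invoke a vector-valued/Fefferman--Stein-type argument: the operator $f\mapsto \nabla^2 u$ also maps $L^q_t(\ell^\infty)$-type maximal functions appropriately, and one concludes by the Benedek--Calder\'on--Panzone theorem — a singular integral on $L^r(\mR;X)$ with an $X$-valued Calder\'on--Zygmund kernel, taking $X = L^p(\mR^d)$ — after verifying that the $x$-convolution operators $s\mapsto \nabla^2 h(s,\cdot)*$ form an operator-valued kernel on the time line with the right decay and cancellation, and that the $t=0$ slice problem (the elliptic resolvent estimate $\|\nabla^2(\lambda-\Delta)^{-1}g\|_{L^p} \lesssim \|g\|_{L^p}$ uniformly in $\lambda$ on a ray, i.e. $\mathcal R$-boundedness of $\{\lambda(\lambda-\Delta)^{-1}\}$) holds on $L^p(\mR^d)$, which is classical Mikhlin multiplier theory.

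Once $\|\nabla^2 u\|_{L^q_tL^p_x} + \|\p_t u\|_{L^q_tL^p_x} \lesssim \|f\|_{L^q_tL^p_x}$ is established on $(0,\infty)$, restricting to $(0,T)$ and adding the lower-order term gives \eqref{Eq:HE}'s estimate with $C = C(d,p,q,T)$; then conjugating by $\Lambda^\alpha$ returns the general $H^{\alpha,p}_x$ statement, using that $\Lambda^\alpha$ is an isometric isomorphism $H^{2+\alpha,p}_x \to H^{2,p}_x$ and commutes with all operators involved. Existence of the solution in $L^q_tH^{2+\alpha,p}_x$ is obtained by the a priori estimate plus density: approximate $f$ by smooth compactly supported $f_n$, solve \eqref{Eq:HE} classically, and pass to the limit using the estimate; uniqueness follows because the difference of two solutions solves the homogeneous equation with zero initial data, hence vanishes by the energy identity. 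The main obstacle is the mixed-norm upgrade: the single most delicate point is verifying the operator-valued Calder\'on--Zygmund / $\mathcal R$-boundedness hypotheses needed to apply Benedek--Calder\'on--Panzone (equivalently, to run the Fubini-type argument in Krylov's original approach), since naive iteration of scalar $L^p$ bounds does not yield $L^q_tL^p_x$ when $p\neq q$; handling this carefully — or else citing \cite{krylov2001heat} for precisely this step — is where the real content lies, whereas the parabolic-scaling kernel bounds and the $\Lambda^\alpha$-reduction are routine.
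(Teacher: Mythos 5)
The paper does not prove this proposition at all: it is quoted verbatim from \cite{krylov2001heat} and used as a black box, so there is no in-paper argument to measure you against. Your sketch is nevertheless a correct and essentially complete plan for the result, but it follows a different route from the cited source. Krylov's proof in \cite{krylov2001heat} proceeds via pointwise mean-oscillation (sharp function) estimates for $\nabla^2 u$ in terms of maximal functions of $f$, combined with a mixed-norm version of the Fefferman--Stein theorem; your plan instead establishes the diagonal case $p=q$ by scalar parabolic Calder\'on--Zygmund theory for the kernel $\1_{t>0}\nabla^2 h$ and then extrapolates in the time variable via the Benedek--Calder\'on--Panzone operator-valued singular integral theorem, with the $\Lambda^\alpha$-conjugation reducing general $\alpha$ to $\alpha=0$. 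Both routes are standard and complete; the sharp-function approach generalizes more readily to VMO coefficients and weights (which is why Krylov uses it), while your BCP route is shorter for the constant-coefficient heat equation and isolates the genuinely delicate point correctly, namely the H\"ormander condition for the $L^p_x$-operator-valued kernel $s\mapsto \nabla^2 h(s,\cdot)*$, which follows from Gaussian bounds on $\nabla^3 h$ and $\p_t\nabla^2 h$. The remaining ingredients you list (commutation of $\Lambda^\alpha$ with the solution operator, the Duhamel bound for the zeroth-order term on a finite horizon, existence by density, uniqueness by the energy identity) are all sound, so I see no gap.
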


	The next result gives a parabolic version of Sobolev's embedding (see, e.g., \cite{rockner2023weak}). 
	\begin{proposition} [Parabolic-type Sobolev embedding] \label{Prop:Inter2}
		Let $p, q\in (1,\infty)$, $r\in[p,\infty)$ and $s\in [q,\infty)$. Assume $\p_t u\in L^q_t L_x^p$, $u\in L^q_t W^{2,p}_x$.  If $1<2/q+d/p=2/s+d/r+1$, then 
		\begin{equation}\label{Eq:PSob1}
			\|\nabla u\|_{L^s_t L_x^r}\leq C \l( \|\p_{t} u\|_{L^{q}_t L_x^{p}} + \|\nabla^2 u\|_{L^{q}_t L_x^{p}}\r), 
		\end{equation}
		where $C$ only depends on $d, p, q, r, s$.
	\end{proposition}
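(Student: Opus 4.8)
The plan is to convert the claimed estimate, via the variation-of-constants formula, into a one-dimensional Hardy--Littlewood--Sobolev inequality in time, after using the spatial smoothing of the heat semigroup. I work with $u(0)=0$, which is the situation in all applications of this lemma (there $u$ solves \eqref{Eq:BKE1}); for a general initial value one would carry along an additional term $\nabla e^{t\Delta}u(0)$, which plays no role for us. Set $f:=\p_t u-\Delta u$; since $|\Delta u|\le \sqrt{d}\,|\nabla^2 u|$ pointwise, $\|f\|_{L^q_tL^p_x}\le \|\p_t u\|_{L^q_tL^p_x}+\sqrt{d}\,\|\nabla^2 u\|_{L^q_tL^p_x}$. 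Both $u$ and $v(t):=\int_0^t e^{(t-\sigma)\Delta}f(\sigma)\,\d\sigma$ solve $\p_t w-\Delta w=f$, so $u-v$ solves the homogeneous heat equation with zero initial value and hence vanishes, giving $u=v$ and thus
\[
    \nabla u(t)=\int_0^t \nabla e^{(t-\sigma)\Delta}f(\sigma)\,\d\sigma ,\qquad t\in(0,T].
\]

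First I would isolate the spatial gain. Writing $e^{\sigma\Delta}g=h(\sigma,\cdot)*g$ with $h$ as in \eqref{Eq:H}, the scaling $\nabla h(\sigma,x)=\sigma^{-\frac{d+1}{2}}(\nabla h)(1,x/\sqrt{\sigma})$ together with \eqref{Eq:h-Lr} gives $\|\nabla h(\sigma,\cdot)\|_{L^\ell_x}\le C\sigma^{-\frac12-\frac d2(1-\frac1\ell)}$ for every $\ell\ge 1$. Choosing $\ell$ by $\tfrac1\ell=1+\tfrac1r-\tfrac1p$ (admissible precisely because $r\ge p$) and applying Young's convolution inequality, we obtain for a.e.\ $t\in(0,T)$
\[
    \|\nabla u(t)\|_{L^r_x}\le C\int_0^t (t-\sigma)^{-\gamma}\,\|f(\sigma)\|_{L^p_x}\,\d\sigma,
    \qquad \gamma:=\tfrac12+\tfrac d2\Big(\tfrac1p-\tfrac1r\Big)\ge \tfrac12 .
\]

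Next I would integrate in time. The scaling identity in the hypothesis, $\tfrac2q+\tfrac dp=\tfrac2s+\tfrac dr+1$, is exactly the relation $\gamma=1+\tfrac1s-\tfrac1q$; since $s\ge q$ this forces $\gamma\in[\tfrac12,1]$, and the assumption $1<\tfrac2q+\tfrac dp$ is precisely what makes $\tfrac2s+\tfrac dr>0$, i.e.\ ensures that admissible finite exponents $(s,r)$ exist at all. When $s>q$ one has $\gamma\in[\tfrac12,1)$, so the right-hand side above is (a constant times) the one-sided Riesz potential of order $1-\gamma\in(0,\tfrac12]$ of the function $\sigma\mapsto\|f(\sigma)\|_{L^p_x}$; extending this function by $0$ outside $(0,T)$ and invoking the Hardy--Littlewood--Sobolev inequality on $\mR$ with the exponent relation $\tfrac1s=\tfrac1q-(1-\gamma)$, we get
\[
    \|\nabla u\|_{L^s_tL^r_x}\le C\,\|f\|_{L^q_tL^p_x}\le C\big(\|\p_t u\|_{L^q_tL^p_x}+\|\nabla^2 u\|_{L^q_tL^p_x}\big),
\]
with $C=C(d,p,q,r,s)$ only (in particular independent of $T$, by the scale invariance of HLS on the line). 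In the endpoint case $s=q$ one has $\gamma=1$ and $\tfrac1p-\tfrac1r=\tfrac1d$ (so $p<d$): here one does not use $\p_t u$ at all, but applies for a.e.\ $t$ the homogeneous Sobolev inequality $\|\nabla u(t)\|_{L^r_x}\le C\|\nabla^2 u(t)\|_{L^p_x}$ and then takes $L^q_t=L^s_t$ norms.

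The argument is essentially bookkeeping, so I do not expect a serious obstacle; the two points requiring care are (i) verifying that the single scaling relation in the hypothesis simultaneously matches the Young exponent $\tfrac1\ell=1+\tfrac1r-\tfrac1p$ in space and the Hardy--Littlewood--Sobolev exponent $\tfrac1s=\tfrac1q-(1-\gamma)$ in time, and (ii) excising the borderline $s=q$, which is an inadmissible endpoint for HLS and must be treated separately by the plain Sobolev embedding. (Alternatively, the whole statement can be obtained at once from the mixed-norm maximal regularity of Proposition \ref{Prop:lplq} combined with an anisotropic fractional Sobolev embedding, but the above route is more self-contained.)
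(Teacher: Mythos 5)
The paper does not prove this proposition itself; it is cited from \cite{rockner2023weak}, so a direct comparison is impossible. Your proof via the Duhamel representation $\nabla u(t)=\int_0^t\nabla e^{(t-\sigma)\Delta}f(\sigma)\,\d\sigma$, Young's convolution inequality in space (gaining $(t-\sigma)^{-\gamma}$ with $\gamma=\tfrac12+\tfrac d2(\tfrac1p-\tfrac1r)$), and the one-dimensional Hardy--Littlewood--Sobolev inequality in time is correct: the scaling identity $\tfrac2q+\tfrac dp=\tfrac2s+\tfrac dr+1$ is indeed equivalent to $\gamma=1+\tfrac1s-\tfrac1q$, the constraints $p\le r$, $q\le s$ give $\gamma\in[\tfrac12,1]$ as needed, and your separate treatment of the HLS endpoint $s=q$ via the homogeneous Sobolev inequality $\|\nabla g\|_{L^r}\le C\|\nabla^2 g\|_{L^p}$ (with $\tfrac1r=\tfrac1p-\tfrac1d$) is the right fix.

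One remark worth making stronger than you do: the restriction $u(0)=0$ is not merely a convenient simplification but is \emph{necessary} for the estimate as written. Taking $u(t,x)\equiv g(x)$ with $g\in W^{2,p}(\mR^d)$, $\nabla g\neq0$, one has $\p_t u\equiv 0$, and when $s>q$ the inequality would force $T^{1/s}\|\nabla g\|_{L^r}\le C\,T^{1/q}\|\nabla^2 g\|_{L^p}$, which fails as $T\to0$; equivalently, the spatial estimate $\|\nabla g\|_{L^r}\le C\|\nabla^2 g\|_{L^p}$ is ruled out by scaling in the subcritical range $\tfrac1p-\tfrac1r<\tfrac1d$. So either the cited reference carries an extra lower-order term such as $\|u\|_{L^q_tL^p_x}$ on the right, or it assumes $u(0)=0$; your proof handles the version that is actually invoked in the paper (applied to $\cT f$ and to solutions of \eqref{Eq:BKE1}, both with vanishing initial data, and in Lemma \ref{Lem:Ito} only for a qualitative convergence where the distinction is harmless). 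Your alternate route through Proposition \ref{Prop:lplq} plus anisotropic embeddings would prove the same restricted version and is probably closer to what the external reference does, but the HLS argument is more self-contained.
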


 \medskip
	\paragraph{\bf The case where $d\geq 3$:} 

 The proof is essentially presented in \cite{rockner2023weak}, see \Cref{Fig:Dgeq3} for the strategy of the proof. Hence we omit the details here, but remark that the strategy presented in \Cref{Fig:Dgeq3} can be implemented by selecting the parameters as 
\[
    r>\frac{d}{1/\delta-2}, \quad q > \frac{2}{1/\delta-d/r-2}
\]
such that
\[
    \frac{d}{\mathfrak{p}}+\frac{2}{\mathfrak{q}}=1+ 2\delta +{\delta}\l(\frac{d}{r}+\frac{2}{q} \r)<2. 
\]

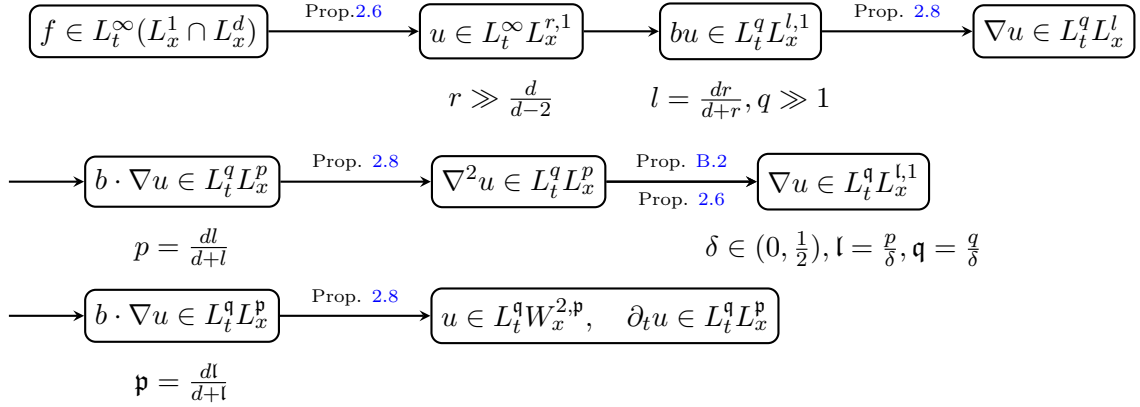
\begin{figure}[ht]
    \centering
    \begin{tikzpicture}[myStyle] 

        \node (f) {$ f \in L_t^\infty (L_x^1 \cap L_x^d) $};
        \node (u)  [right=of f] {$ u \in L_t^\infty L_x^{r,1}$};
        \node [below=of u, draw=none, yshift=\vdist/1.1] {$r\gg \frac{d}{d-2}$};
        \node (bu) [right=1cm  of u]  {$ bu \in L_t^q L_x^{l,1}$};
        \node [below=of bu, draw=none, yshift=\vdist/1.1] {$l=\frac{dr}{d+r}, q\gg 1$};
        \node (nabu) [right=of bu] {$ \nabla u \in L_t^q L_x^{l} $};

        \node (leftbgrad) [below=of f, xshift=-2cm, draw=none] {};
        \node (bgrad) [right=1cm of leftbgrad] {$ b \cdot \nabla u \in L_t^q L_x^{p} $};
        \node [below=of bgrad, draw=none, yshift=\vdist/1.1] {$p=\frac{dl}{d+l}$};
        \node (nabla2u) [right=of bgrad] {$ \nabla^2 u \in L_t^q L_x^{p} $};
        \node (nablau) [right=of nabla2u] {$ \nabla u \in L_t^{\mathfrak{q}} L_x^{\mathfrak{l},1} $};
        \node [below=of nablau, draw=none, yshift=\vdist/1.1] {$\delta\in (0,\frac{1}{2}), \mathfrak{l}=\frac{p}{\delta}, \mathfrak{q}=\frac{q}{\delta}$};

        \node (leftbgrad2) [below=of leftbgrad, draw=none] {};
        \node (bgrad2) [right=1cm of leftbgrad2] {$ b \cdot \nabla u \in L_t^{\mathfrak{q}} L_x^{\mathfrak{p}}$};
        \node [below=of bgrad2, draw=none, yshift=\vdist/1.1] {$\mathfrak{p}=\frac{d\mathfrak{l}}{d+\mathfrak{l}}$};
        \node (w2p) [right=of bgrad2] {$ u \in L_t^{\mathfrak{q}} W_x^{2,\mathfrak{p}}, \quad \partial_t u \in L_t^{\mathfrak{q}} L_x^{\mathfrak{p}} $};
        
        \draw (f) --node[draw=none,above] {\tiny Prop.\ref{Prop:Holder}} (u);
        \draw (u) -- (bu);
        \draw (bu) -- node[draw=none,above] {\tiny Prop. \ref{Prop:lplq}} (nabu);
        \draw  (leftbgrad) -- (bgrad); 
        \draw (bgrad) -- node[draw=none,above] {\tiny Prop. \ref{Prop:lplq}} (nabla2u);
        \draw (nabla2u) -- node[draw=none,above] {\tiny Prop. \ref{Prop:GNI}} (nablau);
        \draw (nabla2u) -- node[draw=none,below] {\tiny Prop. \ref{Prop:Holder}} (nablau);
        \draw (leftbgrad2) -- (bgrad2);
        \draw (bgrad2) -- node[draw=none,above] {\tiny Prop. \ref{Prop:lplq}} (w2p);
        
    \end{tikzpicture}
    \caption{Proof Strategy when \(d\geq 3\)}\label{Fig:Dgeq3}
\end{figure}

\medskip
	\paragraph{\bf The case where $d=2$:}
	As mentioned before, the 2D case is much more delicate than the high dimensional case $d\geq 3$. 
    One major obstacle is that the above strategy can not establish \( \nabla u \in L^q_t L^l_x \) with \( l > 2=d \). 
    It thus limits the application of 
     \Cref{Prop:lplq} 
     to implement the above strategy. 
	
    To overcome this problem, we 
    introduce the upgradation procedure as shown in \Cref{Fig:2D} of Section \ref{Sec-Intro}, 
    based on a new Meyers-type estimate 
    and the refined Gagliardo-Nirenberg estimate. In order to implement the upgradation strategy, 
let us first recall the famous Gerhing's Lemma. Let 
\[
    \avert f \avert_{L^p(A)} :=  \l( \fint_{A} |f|^p \r)^{\frac{1}{p}}=\l( \frac{1}{|A|}\int_{A} |f|^p \r)^{\frac{1}{p}}. 
\]
  	
  	\const{\Cgerhing}
    \begin{lemma}[Gerhing's Lemma, Proposition 1.3 of \cite{giaquinta1982partial}]\label{Lem:Gerhing}
	    Let $\theta\in [0,1)$. 
		Assume that  
		\begin{equation*}
			\avert v \avert_{L^p({Q_{R/2}(z))}}  \leq  \theta \avert v \avert_{L^p(Q_R(z))} + C_{\Cgerhing} \avert v \avert_{L^1(Q_R(z))} + C_{\Cgerhing}  \avert g \avert_{L^p(Q_{R}(z))}
		\end{equation*}
		holds for any $z=(t,x)\in \mR^{d+1}$ and $R>0$, 
        where $Q_{r}(z)=(t-R^2,t)\times B_r(x)$.  Then, 
    there exists $q= q(d,p,\theta, C_{\Cgerhing})>p$ such that for any $z\in \mR^{d+1}$ and $R>0$, 
		\begin{equation}\label{Eq:Gerhing}
			\avert v \avert_{L^{q}(Q_{R/2}(z))}\leq C \avert v \avert_{L^p({Q_{R}(z))}}+ C \avert g \avert_{L^{q}(Q_{R}(z))}, 
		\end{equation}
		where $C$ only depends on $d, p,\theta, C_{\Cgerhing}$ and $q$. 
    \end{lemma}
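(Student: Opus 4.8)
The statement is exactly Gehring's self‑improving integrability lemma (Proposition 1.3 of \cite{giaquinta1982partial}), transcribed to parabolic cylinders $Q_R(z)$, so the plan is simply to invoke that reference; for completeness let me indicate the structure of a self‑contained argument. The first step is to remove the term $\theta\avert v\avert_{L^p(Q_R(z))}$ from the right‑hand side, i.e.\ to upgrade the hypothesis to a genuine reverse Hölder inequality. Since the assumed inequality holds for every center and every radius, I would cover $Q_\rho(z)$ (for $R/2\le\rho<\sigma\le R$) by boundedly many cylinders $Q_{(\sigma-\rho)/2}(z_i)\subset Q_\sigma(z)$, apply the hypothesis on each, sum with finite overlap, and then feed the resulting inequality into the standard iteration (``hole‑filling'') lemma: if $\Phi(\rho)\le\theta\,\Phi(\sigma)+A(\sigma-\rho)^{-\beta}+B$ for all $R/2\le\rho<\sigma\le R$ with a fixed $\theta<1$, then $\Phi(\rho)\le C(\theta,\beta)\big(A(\sigma-\rho)^{-\beta}+B\big)$, applied to $\Phi(\rho)=\int_{Q_\rho(z)}|v|^p$. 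This yields
\begin{equation*}
\avert v\avert_{L^p(Q_{R/2}(z))}\;\le\;C\,\avert v\avert_{L^1(Q_R(z))}\;+\;C\,\avert g\avert_{L^p(Q_R(z))},
\end{equation*}
with $C=C(d,p,\theta,C_{\Cgerhing})$ and no $L^p$ term on the right.

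The heart of the proof is then a Calderón–Zygmund stopping‑time argument on the super‑level sets of the parabolic Hardy–Littlewood maximal function $\cM$. Fixing a cylinder and a threshold $\lambda$ above the relevant average of $|v|^p$, one runs a dyadic Calderón–Zygmund decomposition of $\{\cM(|v|^p)>\lambda\}$; on each selected sub‑cylinder the reverse Hölder inequality from the first step promotes the $L^1$‑average of $|v|$ to an $L^p$‑average, and combining this with Hölder's inequality and the usual dichotomy (either $|g|^p$ is of size $\sim\lambda$ on a definite portion of the cylinder, or $|v|$ overshoots $c\lambda$ there) produces a good‑$\lambda$ bound of the form
\begin{equation*}
\lambda\,\big|\{\cM(|v|^p)>\lambda\}\big|\;\le\;C\int_{\{|v|^p>c\lambda\}}|v|^p\;+\;C\int_{\{|g|^p>c\lambda\}}|g|^p .
\end{equation*}
Multiplying by $\lambda^{\eps-1}$ and integrating in $\lambda$ over $(\lambda_0,\infty)$, Fubini converts the right‑hand side into a multiple of $\int|v|^{p+\eps}+\int|g|^{p+\eps}$; since the left‑hand side controls $\int|v|^{p+\eps}$ up to the truncation at $\lambda_0$, for $\eps=\eps(d,p,\theta,C_{\Cgerhing})>0$ small the $|v|$ term is absorbed, and this fixes the gain $q:=p+\eps>p$. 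Undoing the maximal function and re‑localizing (covering $Q_{R/2}(z)$ by cylinders to which the above applies and bounding $\cM$ from below by averages over cylinders contained in $Q_R(z)$) gives \eqref{Eq:Gerhing}. All of this goes through verbatim on parabolic cylinders because they form a space of homogeneous type of homogeneous dimension $d+2$, so Vitali covering, the weak $(1,1)$ estimate for $\cM$, and the Calderón–Zygmund decomposition are all available.

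I expect the only genuinely delicate points to be the quantitative bookkeeping: one must choose $\eps$ explicitly small in terms of $d,p,\theta,C_{\Cgerhing}$ so that the absorption step is legitimate, and one must track the covering constants when transferring between the global maximal‑function estimate and the stated local form on $Q_{R/2}(z)\subset Q_R(z)$. These are routine in this circle of ideas — which is why in the body of the paper it suffices to cite \cite{giaquinta1982partial} — but they are where all the care must go.
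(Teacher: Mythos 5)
The paper itself contains no proof of this statement: it is quoted (with $q$-th powers rewritten as $L^p$-averages) from Proposition 1.3 of \cite{giaquinta1982partial}, so your primary move of simply invoking that reference is exactly what the paper does, and your supplementary outline --- parabolic Calder\'on--Zygmund/good-$\lambda$ decomposition followed by integration of the level-set inequality in $\lambda$ to gain a small exponent $\eps$ --- is the standard Gehring argument and is available on parabolic cylinders for the reasons you give. The one step of your sketch I would not let stand as written is the preliminary removal of the term $\theta\avert v\avert_{L^p(Q_R(z))}$ by covering plus the iteration lemma. When you cover $Q_\rho(z)$ by cylinders $Q_{(\sigma-\rho)/2}(z_i)$, raise the hypothesis to the $p$-th power and sum, the coefficient in front of $\int_{Q_\sigma(z)}|v|^p$ is not $\theta^p$ but $\theta^p$ times the overlap constant of the doubled cylinders $Q_{\sigma-\rho}(z_i)$ times $|Q_{(\sigma-\rho)/2}|/|Q_{\sigma-\rho}|$, i.e.\ a factor of order $c^{d+2}$ with $c>1$; this is not below $1$ for general $\theta\in[0,1)$ (nor even for $\theta=1/2$, the value arranged in \Cref{Lem:Meyers}), so the hypothesis of the iteration lemma fails and the $\theta$-term cannot be discarded this way. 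The actual proof does not perform this reduction: the $\theta$-term is carried into the level-set/Calder\'on--Zygmund stage and absorbed there, where the same unweighted quantity $\int_{\{|v|^p>\lambda\}}|v|^p$ appears on both sides (equivalently, one imposes a smallness condition on $\theta$, which would be harmless for the application in \Cref{Lem:Meyers} since the parameter $\eps$ there can be taken as small as one likes). Apart from this point, your outline is consistent with the cited proof.
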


    \begin{lemma}
        Let $d=2$. There is a constant $C$ such that for any $R>0$ and $u\in W^{1,2}(B_R)$, 
        \begin{equation}\label{eq:Lady-Ineq}
            \|u\|_{L^4(B_R)}^4 \leq C \|u\|_{L^2(B_R)}^2 \int_{B_R} \l( R^{-2} |u|^2+|\nabla u|^2\r). 
        \end{equation}
    \end{lemma}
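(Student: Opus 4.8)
The plan is to reduce to the unit ball by scaling and then deduce the inequality from the classical Ladyzhenskaya inequality on $\mathbb{R}^2$ via an extension argument; this is essentially the scaled form of the $2$D Gagliardo--Nirenberg inequality. First I would set $u_R(x):=u(Rx)$ for $x\in B_1$, so that a change of variables gives $\|u_R\|_{L^p(B_1)}^p = R^{-2}\|u\|_{L^p(B_R)}^p$ for every $p\in[1,\infty)$ and $\|\nabla u_R\|_{L^2(B_1)}^2 = \|\nabla u\|_{L^2(B_R)}^2$. Consequently the asserted estimate for $u$ on $B_R$ is, after multiplying through by $R^2$, exactly the statement $\|u_R\|_{L^4(B_1)}^4 \le C\|u_R\|_{L^2(B_1)}^2\big(\|u_R\|_{L^2(B_1)}^2+\|\nabla u_R\|_{L^2(B_1)}^2\big)$ on the fixed ball $B_1$, with a constant that no longer depends on $R$. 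Thus it suffices to treat the case $R=1$.

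For the $R=1$ case I would use a bounded linear extension operator $E\colon W^{1,2}(B_1)\to W^{1,2}(\mathbb{R}^2)$ (Stein's extension for the smooth bounded domain $B_1$), which satisfies $(Eu)|_{B_1}=u$, $\|Eu\|_{L^2(\mathbb{R}^2)}\le C\|u\|_{L^2(B_1)}$ and $\|\nabla Eu\|_{L^2(\mathbb{R}^2)}\le C\|u\|_{W^{1,2}(B_1)}$. Applying the global Ladyzhenskaya inequality $\|v\|_{L^4(\mathbb{R}^2)}^4\le C\|v\|_{L^2(\mathbb{R}^2)}^2\|\nabla v\|_{L^2(\mathbb{R}^2)}^2$ (which itself follows, for $v\in C_c^\infty$ and then by density, by writing $v^2$ as a one-dimensional integral of $2v\,\partial_i v$ in each coordinate and invoking Tonelli's theorem) to $v=Eu$ yields $\|u\|_{L^4(B_1)}^4\le\|Eu\|_{L^4(\mathbb{R}^2)}^4\le C\|u\|_{L^2(B_1)}^2\|u\|_{W^{1,2}(B_1)}^2$. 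Since $\|u\|_{W^{1,2}(B_1)}^2=\int_{B_1}(|u|^2+|\nabla u|^2)$, this is precisely the claim for $R=1$, and undoing the rescaling of the first step gives the general inequality.

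The step that requires the most attention is keeping the constant independent of $R$, but this is taken care of by performing the rescaling \emph{before} invoking any functional inequality, so that one always works on the fixed domain $B_1$, where both the extension operator and the Ladyzhenskaya inequality carry absolute constants. If one prefers not to quote an abstract extension theorem, an alternative is to extend $u$ explicitly across the sphere $\partial B_1$ by reflection (or to enclose $B_1$ in a fixed cube and reflect across its faces) together with a fixed cutoff; this is elementary but more computational, and the scaling reduction of the first step is still exactly what makes the resulting constant uniform in $R$.
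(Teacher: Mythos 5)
Your proof is correct and follows essentially the same route as the paper: reduce by rescaling to the unit ball, extend $u$ to $W^{1,2}(\mathbb{R}^2)$ by a bounded extension operator, and invoke the classical Ladyzhenskaya inequality on $\mathbb{R}^2$. (Minor slip: after rescaling, both sides of the inequality pick up a factor of $R^2$, so one divides rather than multiplies by $R^2$ to pass to the $R=1$ statement, but this does not affect the argument.)
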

    \begin{proof}
        Let $E$ be the standard extension operator in the theory of Sobolev space 
        (see \cite{evans2010partial}), which continuously maps $L^2(B_1)$ and $W^{1,2}(B_1)$ to $L^2(\mR^2)$ and $W^{1,2}(\mR^2)$, respectively. By the classical Lady\v{z}henskaya inequality on $\mR^2$, we have  
	\begin{align*}
		\|u\|_{L^4(B_1)}^4 \leq \|E u\|_{L^4(\mR^2)}^4 \leq C \|\nabla E u\|_{L^2(\mR^2)}^{2}\|E u\|_{L^2(\mR^2)}^{2} \leq C \|u\|_{W^{1,2}(B_1)}^{2} \|u\|_{L^2(B_1)}^{2}.
	\end{align*}
       The desired inequality follows by the above estimate and scaling. 
    \end{proof}

    \const{\CRHolder}
    The following lemma is crucial for the proof of \Cref{Thm:LqLp}.
    
    \begin{lemma}[Reverse H\"older estimate]\label{Lem:RHolder}
        Let $d=2$, $\rho>0$, and $u$ be a weak solution to \eqref{eq:pde} in $Q_\rho$ with $c=0$. Then, there exists a constant $C_{\CRHolder}$ only depending on $\|b\|_{L^\infty_t L^{2,\infty}_x(Q_\rho)}$ such that for any $R\in (0,\rho]$ and $\eps\in (0,1)$,   
	\begin{equation*}
		\begin{aligned}
			\iint_{Q_{\frac{R}{2}}} |\nabla  u|^2  \leq & \eps\l( 1+ \|u\|_{L^\infty_tL^2_x(Q_\rho)}^2 \r)  \iint_{Q_R} |\nabla  u|^2 \\
            & + C_{\CRHolder}\eps^{-\frac{3}{2}} R^{-2} \l(\iint_{Q_R} |\nabla u|^{\frac{4}{3}}\r)^{\frac{3}{2}}+ C_{\CRHolder}\eps^{-\frac{1}{3}}\iint_{Q_R} |f|^{\frac{4}{3}}, 
		\end{aligned}
	\end{equation*}
	for any $u\in L^\infty_tL^2_x(Q_\rho)$ and $f\in L^{\frac{4}{3}}_{t,x}(Q_R)$. 
    \end{lemma}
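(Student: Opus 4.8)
The goal is a Caccioppoli-type inequality with a reverse-Hölder twist: the exponent $4/3$ on the right-hand side is sub-quadratic, which is exactly what Gehring's lemma (\Cref{Lem:Gerhing}) needs to upgrade $\nabla u\in L^2_{t,x}$ to $L^\gamma_{t,x}$ for some $\gamma>2$. The plan is to test the equation \eqref{eq:pde} against $\varphi=\eta^2 u$ where $\eta$ is a standard parabolic cutoff, equal to $1$ on $Q_{R/2}$, supported in $Q_R$, with $|\nabla\eta|\lesssim R^{-1}$ and $|\partial_t\eta|\lesssim R^{-2}$. First I would write out the resulting identity: the $\partial_t$-term yields $\sup_t\int\eta^2 u^2$ plus a term controlled by $R^{-2}\iint_{Q_R}|u|^2$, the principal term $\int\nabla u\cdot\nabla(\eta^2 u)$ gives $\iint\eta^2|\nabla u|^2$ minus a cross term $2\iint\eta u\,\nabla u\cdot\nabla\eta$, and then there are the drift term $\iint(b\cdot\nabla u)\eta^2 u$ and the source term $\iint f\eta^2 u$. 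The standard Caccioppoli manipulations absorb the $\nabla\eta$ cross term into $\tfrac14\iint\eta^2|\nabla u|^2$ at the cost of $CR^{-2}\iint_{Q_R}|u|^2$.

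The two nonstandard terms are the drift term and the source term, and here is where the $4/3$-exponent and the factor $\|u\|_{L^\infty_tL^2_x}^2$ enter. For the drift term, since $\div b=0$ one can write $\iint(b\cdot\nabla u)\eta^2 u=\tfrac12\iint b\cdot\nabla(u^2)\eta^2=-\iint u^2\,b\cdot\nabla(\eta^2)=-2\iint \eta u^2\,b\cdot\nabla\eta$, so the dangerous factor $\nabla u$ disappears entirely and we are left with $\lesssim R^{-1}\iint_{Q_R}|b|\,|u|^2$. Now I would estimate this by Hölder in the Lorentz scale: on each time slice, $\||b(t)|u^2\|_{L^1_x}\le\|b(t)\|_{L^{2,\infty}_x}\|u(t)^2\|_{L^{2,1}_x}$, and then $\|u^2\|_{L^{2,1}_x}=\|u\|_{L^{4,2}_x}^2$. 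The Ladyzhenskaya-type inequality \eqref{eq:Lady-Ineq} (or rather its Lorentz refinement $\|u\|_{L^{4,2}(B_R)}^2\lesssim\|u\|_{L^2(B_R)}\,(R^{-2}\|u\|_{L^2(B_R)}^2+\|\nabla u\|_{L^2(B_R)}^2)^{1/2}$) then bounds this by $\|u(t)\|_{L^2(B_R)}(R^{-2}\|u(t)\|^2_{L^2(B_R)}+\|\nabla u(t)\|^2_{L^2(B_R)})^{1/2}$. Integrating in $t$, using $\sup_t\|u(t)\|_{L^2(B_R)}\le\|u\|_{L^\infty_tL^2_x(Q_\rho)}$ and a weighted Young's inequality with parameter $\eps$, one arrives at a bound of the shape $\eps(1+\|u\|^2_{L^\infty_tL^2_x(Q_\rho)})\iint_{Q_R}|\nabla u|^2 + C\eps^{-1}R^{-2}\|b\|_{L^\infty_tL^{2,\infty}_x}^2\iint_{Q_R}|u|^2$, and then the $\iint|u|^2$ term is itself converted to $R^2(\iint|\nabla u|^{4/3})^{3/2}$ by Sobolev–Poincaré in the parabolic sense (the precise power-counting $\tfrac{d+2}{d}$ with $d=2$ gives exponent $4/3$ and the $R$-scaling works out), which is where the $\eps^{-3/2}R^{-2}(\iint_{Q_R}|\nabla u|^{4/3})^{3/2}$ term comes from. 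The source term $\iint f\eta^2 u$ is handled similarly but more simply: Hölder and Young give $\le\eps\iint_{Q_R}|\nabla u|^2+C\eps^{-1/3}\iint_{Q_R}|f|^{4/3}$ after again trading an $\iint|u|^2$ (or $\iint|u|^{4}$, via the dual exponent $4$ to $4/3$, estimated by Ladyzhenskaya) against $\eps$ times the gradient.

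The main obstacle is bookkeeping the powers of $\eps$ and $R$ so they match the stated inequality exactly, and—more substantively—handling the appearance of $\|u\|_{L^\infty_tL^2_x}$ on the right. The point is that $u$ is only an a~priori $L^\infty_tL^2_x\cap L^2_tH^1_x$ subsolution on $Q_\rho$; one cannot invoke a global energy bound here, so the $\|u\|^2_{L^\infty_tL^2_x(Q_\rho)}$ factor must be carried through as written, which is fine for the subsequent Gehring argument because it is a fixed finite number and the coefficient $\theta=\eps(1+\|u\|^2)$ can be made $<1$ by choosing $\eps$ small. A secondary delicate point is that $\varphi=\eta^2 u$ is not in $C_c^\infty$, so strictly the test-function argument needs a density/Steklov-averaging justification to make the integration by parts in $t$ rigorous; I would mention this but not belabor it. One should also be careful that the Lorentz-Hölder and the Ladyzhenskaya inequality \eqref{eq:Lady-Ineq} are applied on balls $B_R$, not on $\mathbb R^2$, which is why \eqref{eq:Lady-Ineq} was stated with the $R^{-2}|u|^2$ correction term — this is exactly the inequality needed and it was proved just above, so no new localization work is required.
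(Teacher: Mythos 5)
Your proposed test function $\varphi=\eta^2 u$ generates, from the $|\nabla\eta|^2$ and $\partial_t\eta$ contributions, a term of size $R^{-2}\iint_{Q_R}|u|^2$, and you claim this is ``converted to $R^2(\iint_{Q_R}|\nabla u|^{4/3})^{3/2}$ by Sobolev--Poincar\'e in the parabolic sense.'' That step is false as stated: the Poincar\'e bound $\|v\|_{L^2(B_R)}\lesssim R\|\nabla v\|_{L^{4/3}(B_R)}$ (which is what the scaling forces here) requires $v$ to have a vanishing (weighted) average on $B_R$, and $u$ need not. Take $u\equiv 1$: then $\nabla u=0$, so $\iint_{Q_R}|u|^2$ cannot be bounded by any power of $\iint_{Q_R}|\nabla u|^{4/3}$. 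The same obstruction reappears in your estimate of the $f$-term, where \eqref{eq:Lady-Ineq} applied to $u$ (rather than to a mean-subtracted version) produces yet another uncontrollable $R^{-2}\iint_{Q_R}|u|^2$. This is not a cosmetic omission; it is the crux of the lemma, since a reverse-H\"older inequality for $|\nabla u|^{4/3}$ has to have a right-hand side that vanishes whenever $\nabla u$ does.

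The paper avoids this by testing against $\bar u\varphi_R^{10}\phi_R^2$, where $\bar u=u-\underline u_R$ and $\underline u_R(t)=\bigl(\int_{B_R}u(t)\varphi_R^{10}\bigr)\bigl(\int_{B_R}\varphi_R^{10}\bigr)^{-1}$ is a \emph{time-dependent} weighted spatial average. All low-order terms ($I_1$, $I_2$, and the $\iint|\bar u|^4$ contribution from the $f$-term) then involve $\bar u$, to which the Poincar\'e inequalities \eqref{Eq:Poincare1}--\eqref{Eq:Poincare2} apply, converting $R^{-2}\int_{B_R}\bar u^2$ into $(\int_{B_R}|\nabla u|^{4/3})^{3/2}$ with the correct $R$-power. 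The choice of a time-slice-wise average (as opposed to a space-time mean over $Q_R$) is itself essential: in the $\partial_t$-term one has $\iint\partial_t u\,\bar u\varphi^{10}\phi^2=\iint\partial_t\bar u\,\bar u\varphi^{10}\phi^2$ precisely because $\int_{B_R}\bar u(t)\varphi_R^{10}\,\d x=0$ for every $t$, which annihilates the otherwise troublesome $\partial_t\underline u_R$ contribution. Your Lorentz--H\"older manipulation of the drift term and the use of $\div b=0$ to integrate by parts are in the right spirit and do match the paper's $I_2$ estimate, but they too must be run on $\bar u$ rather than on $u$ for the Poincar\'e step to close.
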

	\begin{proof} 
 	We use the  cut-off function $\varphi\in C_c^\infty(B_1)$ satisfying $\varphi\in [0,1]$ and $\varphi\big|_{B_{\frac{1}{2}}}=1$,  and 
 let $\phi\in C_c^\infty((-1,1))$  satisfy $\phi\in [0,1]$ and $\phi\big|_{[-\frac{1}{2}, \frac{1}{2}]}=1$.   
		Let 
		\[
		    \varphi_R(x):= \varphi(\frac{x}{R}), \quad \phi_R(t):=\phi\l(\frac{t}{R^2}\r), 
		\]
		 and 
		 \[
		 \underline{u}_R:=\l( {\int_{B_R} u\varphi_R^{10}}\r) \l( {\int_{B_R} \varphi_R^{10}} \r)^{-1}, \quad \bar{u}_R:= u-\underline{u}_R. 
		 \]
	For simplicity, 
    the subscript $R$ is omitted below. Multiplying both sides of \eqref{Eq:BKE1} by $\bar {u} \varphi^{10}\phi^2$ and using the integration-by-parts formula, we have 
		\begin{equation*}
		\begin{aligned}
			\iint_{Q_R} \p_t u~\bar{u}\varphi^{10}\phi^2 =& \iint_{Q_R} \p_t (u-\underline{u}) (u-\underline{u}) \varphi^{10}\phi^2 \\
			=&  \frac{1}{2}\iint_{Q_R} \p_t (\bar{u}^2\varphi^{10}\phi^2)-\iint_{Q_R}\bar u^2\varphi^{10}\phi\phi', 
		\end{aligned}
		\end{equation*}
		and 
		\begin{equation*}
			-\iint_{Q_R} \Delta u~ \bar{u}\varphi^{10}\phi^2 =\iint_{Q_R} |\nabla u |^2 \varphi^{10}\phi^2+10 \iint_{Q_R} \bar{u} \varphi^9\phi^2~\nabla u\cdot\nabla \varphi. 
		\end{equation*}
	Since $b$ is divergence free, we get 
		\begin{equation*}
			\begin{aligned}
				\iint_{Q_R} b\cdot \nabla u~ \bar{u}\varphi^{10}\phi^2 =& \frac{1}{2}\iint_{Q_R} \varphi^{10}\phi^2~  b\cdot \nabla (\bar{u}^2)=-5 \iint_{Q_R} \bar{u}^2\varphi^9\phi^2 ~b\cdot \nabla \varphi. 
			\end{aligned}
		\end{equation*}
		Thus, 
		\begin{equation}\label{eq:IJ}
			\begin{aligned}
				&\underbrace{\sup_{t\in (-R^2, 0)} \int_{B_R} \bar u^2(t) \varphi^{10} \phi^2(t)}_{=:J_1}+ \underbrace{\iint_{Q_R} |\nabla u|^2 \varphi^{10} \phi^2}_{=:J_2} \\
				\leq &  \frac{C}{R^2}  \iint_{Q_R} \bar u^2 \varphi^{10} \phi+ \frac{C}{R} \iint_{Q_R} |\nabla u|\, \bar u \varphi^9 \phi^2 + \frac{C}{R} \iint_{Q_R}  |b|\,\bar u^2\varphi^9\phi^2 +C  \iint_{Q_R} f\bar u \varphi^{10} \phi^2\\ 
			    \leq & \frac{1}{2} \iint_{Q_R} |\nabla u|^2 \varphi^{10} \phi^2 +C \delta^{-\frac{1}{3}} \iint_{Q_R} |f|^{\frac{4}{3}}+\frac{\delta}{2} \iint_{Q_R} |\bar u|^4\quad (\forall \delta>0)\\
                    & + \underbrace{\frac{C}{R^2} \iint_{Q_R} \bar u^2\varphi^7\phi}_{=:I_1} + \underbrace{\frac{C}{R} \iint_{Q_R}  |b|\,\bar u^2\varphi^9\phi^2}_{=:I_2} . 
			\end{aligned}
		\end{equation}
		
		For $I_1$, by H\"older's inequality and the Poincar\'e-type inequality
  \eqref{Eq:Poincare2},   
  for any $\delta>0$, 
		\begin{equation}\label{eq:I1}
			\begin{aligned}
				I_1=&\frac{C}{R^2} \iint_{Q_R} \bar u^2\varphi^7\phi 
                \leq \frac{C}{R^2} \iint_{Q_R} (\bar u^{\frac{2}{3}}\varphi^{\frac{10}{3}}\phi^{\frac{2}{3}}) \bar u^{\frac{4}{3}}\\
				\leq & \frac{C}{R^2} \int_{-R^2}^0 \l( \int_{B_R} \bar u^2(t) \varphi^{10} \phi^2(t)\r)^{\frac{1}{3}} \l(\int_{B_R} \bar u^2(t) \r)^{\frac{2}{3}}\, \d t\\
				\leq&\delta \sup_{t\in (-R^2,0)} \int_{B_R} \bar u^2(t)\varphi^{10}\phi^2(t) +  \delta^{-\frac{1}{2}} \l[\int_{-R^2}^0 R^{-2} \l(\int_{B_R} \bar u^2(t) \r)^{\frac{2}{3}}\, \d t \r]^{\frac{3}{2}}\\
			{\leq} & \delta J_1 + C\delta^{-\frac{1}{2}} R^{-2} \l(\iint_{Q_R} |\nabla u|^{\frac{4}{3}}\r)^{\frac{3}{2}}. 
			\end{aligned}
		\end{equation}
		Regarding $I_2$, by the H\"older inequality \eqref{Eq:Holder}, the 
        Lady\v{z}enskaja-type inequality for Lorentz spaces \eqref{Eq:Lady} 
        and \eqref{eq:I1}, we obtain that for any $\eps>0$, 
		\begin{equation}\label{eq:I2}
			\begin{aligned}
				I_2  
                {\leq}&  \frac{C}{R} \|b\|_{L^\infty_t L^{2, \infty}_x (Q_R)}  \|\bar u \varphi^{\frac{9}{2}}\phi\|_{L^2_tL^{4,2}_x(Q_R)}^2\\
				\leq& \frac{C}{R} \|b\|_{L^\infty_t L^{2, \infty}_x (Q_R)}  \int_{-R^2}^0 \|\bar u(t) \varphi^{\frac{9}{2}}\phi(t)\|_{L^{4,2}_x(B_R)}^2 \,\d t \\
				{\leq}& \frac{C}{R}  \|b\|_{L^\infty_t L^{2, \infty}_x (Q_R)}  \int_{-R^2}^0  \|\bar u(t) \varphi^{\frac{9}{2}}\phi(t)\|_{L^2_x} \, \l\| \nabla \l(\bar u(t) \varphi^{\frac{9}{2}}\phi(t)\r)\r\|_{L^2_x} \, \d t\\
				\leq& \frac{C}{\eps R^2} \l( 1+\|b\|_{L^\infty_t L^{2, \infty}_x (Q_R)}^2 \r)  \iint_{Q_R} \bar u^2 \varphi^7 \phi +\frac{\eps}{2} \iint_{Q_R} |\nabla  u|^2\\
				{\leq}&  
                C \eps^{-1} \delta J_1
                + C\eps^{-1}\delta^{-\frac 12} R^{-2} \l(\iint_{Q_R} |\nabla u|^{\frac{4}{3}}\r)^{\frac{3}{2}}+ \frac{\eps}{2} \iint_{Q_R} |\nabla  u|^2. 
			\end{aligned}
		\end{equation}
		Combining \eqref{eq:IJ}-\eqref{eq:I2}, we get 
            \const{\crholder}
		\begin{equation*}
			\begin{aligned}
				(1-C_{\crholder}\delta/\eps)J_1+J_2\leq& \eps \iint_{Q_R} |\nabla  u|^2 + \delta \iint_{Q_R} |\bar u|^4 \\ &+ C_{\crholder}\eps^{-1}{\delta}^{-\frac{1}{2}}  R^{-2} \l(\iint_{Q_R} |\nabla u|^{\frac{4}{3}}\r)^{\frac{3}{2}}+ C_{\crholder}\delta^{-\frac{1}{3}}\iint_{Q_R} |f|^{\frac{4}{3}}, 
			\end{aligned}
		\end{equation*}
            where $C_{\crholder}$ only depends on $\|b\|_{L^\infty_tL^{2,\infty}_x(Q_\rho)}$. 

            \const{\cpoincare}
		Using \eqref{eq:Lady-Ineq} and the Poincar\'e-type  inequality \eqref{Eq:Poincare1}, we have 
		\begin{equation*}
		    \begin{aligned}
				\iint_{Q_R} |\bar u|^4 {\leq} & C \int_{-R^2}^{0}  \d t \int_{B_{R}}|\bar u|^2(t) \l(\int_{B_{R}} R^{-2}|\bar u(t)|^2+|\nabla \bar u(t)|^2\r)\\
				{\leq} & 
                \int_{-{R^2}}^{0}  \d t \int_{B_{{R}}}|\bar u|^2(t) \int_{B_{R}}|\nabla \bar u(t)|^2 \\
				\leq & C \sup _{t \in\left(-{R^2}, 0\right)}\left(\int_{B_{R}}|\bar u|^2(t)\right) \iint_{Q_R}|\nabla u|^2\\
                \leq & C_{\cpoincare} \sup _{t \in\left(-{R^2}, 0\right)}\left(\int_{B_{R}} u^2(t)\right) \iint_{Q_R}|\nabla u|^2, 
			\end{aligned}
		\end{equation*} 
            where $C$ is a constant independent of $\delta$ and $\eps$. Thus, 
            \begin{equation}
                \begin{aligned}
				(1-C_{\crholder}\delta/\eps)J_1+J_2\leq& \eps \iint_{Q_R} |\nabla  u|^2 + \delta C_{\cpoincare} \sup _{t \in\left(-{R^2}, 0\right)}\left(\int_{B_{R}} u^2(t)\right) \iint_{Q_R}|\nabla u|^2\\ &+ C_{\crholder}\eps^{-1}{\delta}^{-\frac{1}{2}}   R^{-2} \l(\iint_{Q_R} |\nabla u|^{\frac{4}{3}}\r)^{\frac{3}{2}}+ C_{\crholder}\delta^{-\frac{1}{3}} \iint_{Q_R} |f|^{\frac{4}{3}}. 
			\end{aligned}
            \end{equation}
            For any $\eps\in (0,1)$, by choosing $\delta= {\eps}/(C_{\crholder}+C_{\cpoincare})$, we obtain the desired estimate. 
	\end{proof}

    \const{\Cgerhing2}
    \begin{lemma} [Meyers-type estimate] \label{Lem:Meyers}
        Let $d=2$. There exists a universal constant $\gamma >2$ only depending on $\|b\|_{L^\infty_tL^{2,\infty}_x}$ such that for any weak solution to \eqref{Eq:BKE1} on $[0,T]\times \mR^d$ with $c=0$, it holds  
        \begin{align}\label{eq:Meyers}
            \|\nabla u\|_{L^\gamma_{t,x}} \leq C \|f\|_{L^{\frac{4}{3}}_{t,x}}^{\frac{1}{3}} \|f\|_{L^{\frac{2\gamma}{3}}_{t,x}}^{\frac{2}{3}}, 
	\end{align} 
        for any $f\in L^{\frac{4}{3}}_{t,x}\cap L^{\frac{2\gamma}{3}}_{t,x}$. Here the constant $C$ only depends on $\gamma$ and $\|b\|_{L^\infty_tL^{2,\infty}_x}$. 
    \end{lemma}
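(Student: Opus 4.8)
The plan is to upgrade the reverse Hölder inequality of \Cref{Lem:RHolder} into the self-improving form required by Gehring's Lemma (\Cref{Lem:Gerhing}), while tracking constants so that the resulting exponent $\gamma$ depends only on $\|b\|_{L^\infty_tL^{2,\infty}_x}$. First, by the homogeneity of \eqref{Eq:BKE1} under $(u,f)\mapsto(\lambda u,\lambda f)$ it suffices to treat the normalized case $\|f\|_{L^{4/3}_{t,x}}=1$ (if $f\equiv 0$ then $u\equiv 0$ by \Cref{Prop:energy}, and there is nothing to prove); the assertion then reduces to $\|\nabla u\|_{L^\gamma_{t,x}}\le C\|f\|_{L^{2\gamma/3}_{t,x}}^{2/3}$. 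The point of the normalization is that, by \Cref{Prop:energy} in the case $d=2$, $c\equiv 0$ (where the divergence-free drift term drops out of the energy identity), one gets $\|u\|_{L^\infty_tL^2_x}^2+\|\nabla u\|_{L^2_{t,x}}^2\le C_{\Cenergy}$ with $C_{\Cenergy}$ a constant depending at most on $\|b\|_{L^\infty_tL^{2,\infty}_x}$, not on $T$ or $f$. Finally, I would extend $u,f,b$ to a global-in-time weak solution on $\mR\times\mR^d$: put $u\equiv0$, $f\equiv0$ for $t<0$ (legitimate since $u(0)=0$), and for $t>T$ let $u$ solve the homogeneous equation $\p_tu=A_tu$ with datum $u(T,\cdot)\in L^2_x$ and $b$ extended constantly in time (preserving $\div b=0$ and the Lorentz norm). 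The energy bound persists with a comparable constant, $f$ stays in $L^{4/3}_{t,x}\cap L^{2\gamma/3}_{t,x}$, and the hypothesis of \Cref{Lem:Gerhing} becomes checkable on every parabolic cube.

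Next I would recast \Cref{Lem:RHolder}. Fix a parabolic cube $Q_R(z)$, $z\in\mR^{d+1}$, $R>0$, and apply \Cref{Lem:RHolder} on $Q_R(z)$ (with $\rho=R$), bounding the prefactor by $\|u\|_{L^\infty_tL^2_x(Q_R(z))}^2\le C_{\Cenergy}$. Dividing by $|Q_R(z)|\asymp R^4$ and using the $2$D geometry ($|Q_{R/2}|/|Q_R|=1/16$, and $R^{-2}|Q_R|^{3/2}/|Q_R|$ a universal constant), then setting $v:=|\nabla u|^{4/3}$ and $g:=|f|^{8/9}$ — so that $\fint_Q|\nabla u|^2=\avert v\avert_{L^{3/2}(Q)}^{3/2}$, $\fint_Q|\nabla u|^{4/3}=\avert v\avert_{L^{1}(Q)}$ and $\fint_Q|f|^{4/3}=\avert g\avert_{L^{3/2}(Q)}^{3/2}$ — one arrives, after raising the divided inequality to the power $2/3$ and using subadditivity of $s\mapsto s^{2/3}$, at
\begin{equation*}
\avert v\avert_{L^{3/2}(Q_{R/2}(z))}\ \le\ \theta\,\avert v\avert_{L^{3/2}(Q_{R}(z))}+C'\avert v\avert_{L^{1}(Q_{R}(z))}+C'\avert g\avert_{L^{3/2}(Q_{R}(z))},
\end{equation*}
with $\theta=\bigl(16\eps(1+C_{\Cenergy})\bigr)^{2/3}$ and $C'$ depending only on $\eps$ and $C_{\CRHolder}$. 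Choosing $\eps$ so that $\theta<1$ (e.g.\ $\theta=1/2$), both $\theta$ and $C'$ depend only on $\|b\|_{L^\infty_tL^{2,\infty}_x}$, and this is exactly the hypothesis of \Cref{Lem:Gerhing} with exponent $p=3/2$ and right-hand side datum $g$.

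Applying \Cref{Lem:Gerhing} then yields $q=q(2,\tfrac32,\theta,C')>\tfrac32$, depending only on $\|b\|_{L^\infty_tL^{2,\infty}_x}$, with $\avert v\avert_{L^{q}(Q_{R/2}(z))}\le C\avert v\avert_{L^{3/2}(Q_{R}(z))}+C\avert g\avert_{L^{q}(Q_{R}(z))}$ for all $z,R$. Put $\gamma:=\tfrac{4q}{3}>2$, so $v^q=|\nabla u|^{\gamma}$ and $g^q=|f|^{8q/9}=|f|^{2\gamma/3}$. Multiplying by $|Q_{R/2}(z)|^{1/q}$, the $g$-contribution is at most $C\|f\|_{L^{2\gamma/3}_{t,x}}^{8/9}$ (since $|Q_{R/2}|/|Q_R|\le1$), while the $v$-contribution equals $C|Q_R|^{\,1/q-2/3}\bigl(\iint_{Q_R(z)}|\nabla u|^2\bigr)^{2/3}\le C|Q_R|^{\,1/q-2/3}\|\nabla u\|_{L^2_{t,x}}^{4/3}$, which tends to $0$ as $R\to\infty$ because $q>\tfrac32$ makes the exponent negative and $\|\nabla u\|_{L^2_{t,x}}<\infty$. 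Letting $R\to\infty$ along $z=(T,0)$ — so that $Q_{R/2}(z)\uparrow(-\infty,T)\times\mR^d\supseteq(0,T)\times\mR^d$ — monotone convergence gives $\|\nabla u\|_{L^\gamma((0,T)\times\mR^d)}^{4/3}=\bigl(\iint|\nabla u|^{\gamma}\bigr)^{1/q}\le C\|f\|_{L^{2\gamma/3}_{t,x}}^{8/9}$, that is $\|\nabla u\|_{L^\gamma_{t,x}}\le C\|f\|_{L^{2\gamma/3}_{t,x}}^{2/3}$; undoing the normalization recovers \eqref{eq:Meyers}.

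The main obstacle is the bookkeeping that keeps $\gamma$ universal. One needs an a priori $L^\infty_tL^2_x$ bound for $u$ that is a genuine constant — this is exactly what forces the normalization $\|f\|_{L^{4/3}_{t,x}}=1$ (and what makes the divergence-free structure essential, so $C_{\Cenergy}$ carries no $f$-dependence), because only then can $\eps$, and hence $\theta$ and $C'$, be fixed so that $\theta<1$ with constants controlled solely by $\|b\|_{L^\infty_tL^{2,\infty}_x}$. One must also choose the powers $v=|\nabla u|^{4/3}$, $g=|f|^{8/9}$ precisely so that the reverse-Hölder output — which naturally carries a $(\cdot)^{3/2}$ — fits Gehring's $L^p$–$L^1$–$L^p$ template after raising to the power $2/3$, producing an $L^1$ (not $L^{4/3}$) average of $v$ on the right, as \Cref{Lem:Gerhing} demands, and yielding $\gamma>2$ after the scaling $\gamma=\tfrac{4q}{3}$. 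A secondary technical point is the global-in-time extension (and persistence of the energy bound) that makes Gehring's hypothesis available on every parabolic cube, together with the matching of the Gehring exponent $\gamma$ — determined before $f$ is specified — to the assumed integrability $f\in L^{4/3}_{t,x}\cap L^{2\gamma/3}_{t,x}$, which is what guarantees $g\in L^q_{t,x}$ and a finite right-hand side in \eqref{eq:Meyers}.
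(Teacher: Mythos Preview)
Your proposal is correct and follows essentially the same route as the paper: normalize by $\|f\|_{L^{4/3}_{t,x}}$, feed the global energy bound $\|u\|_{L^\infty_tL^2_x}^2\le C_{\Cenergy}$ into the prefactor of \Cref{Lem:RHolder}, choose $\eps$ to make $\theta<1$, set $v=|\nabla u|^{4/3}$, $g=|f|^{8/9}$, $p=3/2$, apply Gehring, and let $R\to\infty$. Your extension of the solution beyond $t=T$ (by solving the homogeneous equation) is a small extra piece of rigor that the paper leaves implicit --- the paper only extends to $t<0$ and then applies \Cref{Lem:Gerhing} on cubes with time-center in $(-\infty,T)$, which tacitly uses a domain version of Gehring's lemma --- but otherwise the arguments match step for step.
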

    \begin{proof}
        As before, we assume that $b\in C_b^\infty(\mR^d)$ with $\div b=0$, $f\in C_c^\infty(\mR^d)$.  Then \eqref{Eq:BKE1} admits a classical solution $u$ 
        that can be extended  continuously to $(-\infty, T]\times \mR^d$ by setting $u(t,\cdot)=0$ for all $t\leq 0$. 
        
        Let 
        \[
            \tilde{f}=f/\|f\|_{L^{\frac{4}{3}}_{t,x}}, \quad \tilde{u}=u/\|f\|_{L^{\frac{4}{3}}_{t,x}}.
        \]
        By the linearity of \eqref{Eq:BKE1}, $\tilde{u}$ also satisfies \eqref{Eq:BKE1} with $f$ replaced by $\tilde{f}$. Thanks to \Cref{Lem:RHolder} and \Cref{Prop:energy}, for any $R>0$ and $z\in (-\infty, T)\times \mR^2$, it holds that  
        \begin{equation}
            \begin{aligned}
                \iint_{Q_{\frac{R}{2}}(z)} |\nabla  \tilde{u}|^2  \leq &  \eps\l( 1+ C_{\Cenergy} \r)  \iint_{Q_R(z)} |\nabla  \tilde{u}|^2 \\
                &+ C_{\CRHolder}\eps^{-\frac{3}{2}}  R^{-2} \l(\iint_{Q_R(z)} |\nabla \tilde{u}|^{\frac{4}{3}}\r)^{\frac{3}{2}}+ C_{\CRHolder}\eps^{-\frac{1}{3}} \iint_{Q_R(z)} |\tilde{f}|^{\frac{4}{3}}. 
            \end{aligned}
        \end{equation}
        Letting $\eps :=2^{-10} \l( 1+ C_{\Cenergy} \r)^{-1}$ and 
        \[
            v :=|\nabla \tilde{u}|^{\frac{4}{3}}, \quad g :=|\tilde{f}|^{\frac{8}{9}}, \quad p :=\frac{3}{2}, 
        \]
        we obtain that 
        \[
            \avert v \avert_{L^p(Q_{{R}/{2}})} \leq \frac{1}{2} \avert v \avert_{L^p(Q_R)} + C_{\Cgerhing2}\avert v \avert_{L^1(Q_R)} +  C_{\Cgerhing2} 
            \avert g  \avert_{L^p(Q_R)},  
        \]
        where $C_{\Cgerhing2}$ only depends on $\|b\|_{L^\infty_tL^{2,\infty}_x}$.  In the light of \Cref{Lem:Gerhing}, there exists $\gamma>2$ only depending on $\|b\|_{L^\infty_t L_x^{2,\infty}}$, such that for any $R>0$, 
	\begin{align*}
            \iint_{Q_{R/2}} |\nabla \tilde{u}|^\gamma \leq& C R^{4-2\gamma} \l( \iint_{Q_R}|\nabla \tilde{u}|^2 \r)^{\gamma/2} + C \iint_{Q_R} |\tilde{f}|^{\frac{2\gamma}{3}}\\
            \leq &  C R^{4-2\gamma} + C \iint_{Q_R} |\tilde{f}|^{\frac{2\gamma}{3}}, 
	\end{align*}
        where $C$ only depends on $\gamma$ and $\|b\|_{L^\infty_t L^{2,\infty}_x}$. Letting $R\to \infty$, we obtain \eqref{eq:Meyers}. 
    \end{proof}
    
    We are now ready to prove \Cref{Thm:LqLp} in the case where $d=2$. 
	\begin{proof}[Proof of \Cref{Thm:LqLp}]
    The proof relies on three steps to upgrade the integrability of the gradient of solutions.   
            Again we focus on the smooth case where $b\in C_b^\infty(\mR^d)$ with $\div b=0$, $f\in C_c^\infty(\mR^d)$, and additionally $c=0$. Then \eqref{Eq:BKE1} admits a classical solution $u$ 
            which can be extended continuously to $(-\infty, T]\times \mR^d$ by setting $u(t,\cdot)=0$ for all $t\leq 0$. 

\medskip 
    \paragraph{\bf (i) Base estimates:}
		Let $\cT$ be the solution map for the heat equation \eqref{Eq:HE}, i.e.,  
		\[
		\cT(f)(t,x):= \int_0^t\!\!\!\int_{\mR^2} h(t-s,y) f(s,y)\,  \d y\, \d s, 
		\]
		where $h$ is the heat kernel as in \eqref{Eq:H}. 
        Then 
		\begin{equation}\label{eq:u-T}
			u=\cT f+\cT(b\cdot \nabla u)=\cT f+\nabla\cdot \cT(b u).  
		\end{equation}
		Set 
		\[
		      \norm f := \|f\|_{L^{\infty}_t L_x^1}+\|f\|_{L^{\infty}_t L_x^2}. 
		\]
		For any $\mu\in (1,2)$ and $\nu\in (2,\infty)$,  choosing 
		\begin{equation*}
			\eta=\l( \frac{1}{2\mu}+\frac{1}{2 \nu}+\frac{1}{4}\r)^{-1}\in (1,2), 
		\end{equation*}
		and using the $L^q$-$L^p$ estimate in \Cref{Prop:lplq} and the parabolic-type Sobolev embedding in \Cref{Prop:Inter2}, we have  \begin{equation*}
			 \|\nabla \cT(f)\|_{L^\nu_t L_x^\mu} \leq C \l(\|\p_t \cT(f)\|_{L^\eta_{t,x}}+ \|\nabla^2 \cT(f)\|_{L^\eta_{t,x}}\r) \leq C \|f\|_{L^\eta_{t,x}}\leq C\norm f. 
		\end{equation*}
		Moreover, using \Cref{Thm:Heat} and \Cref{lem:heat} we come to 
		\[
		    \|u\|_{L^{\infty}_t L_x^{r,1}} \leq C \norm f \quad \text{with}\quad  r=\l(\frac{1}{\mu}-\frac{1}{2}\r)^{-1}. 
		\] 
		Thus, 
		\begin{align}\label{eq:bu}
			\| b u \|_{L^\nu_t L_x^\mu } \leq C \|b\|_{L^\infty_t L_x^{2,\infty}} \|u\|_{L^\infty_t L_x^{r,1}} \leq C \norm f. 
		\end{align}
		Therefore, 
        using \eqref{eq:u-T}-\eqref{eq:bu} and \Cref{Prop:lplq} 
        again we obtain 
		\begin{equation}\label{eq:Du2}
			\|\nabla u\|_{L^\nu_t L^\mu_x } \leq C \| \nabla^2 \cT (b u) \|_{L^\nu_t L^\mu_x } + C \|\nabla \cT (f)\|_{L^\nu_t L^\mu_x }\leq  C \norm f. 
		\end{equation}

    \medskip 
     \paragraph{\bf (ii) First upgradation:} 
    We note that the integrability exponent 
    \(\mu\) in the last estimate \eqref{eq:Du2} is strictly less than two, 
    which is insufficient to derive \eqref{Eq:D2u}.  
    The key idea in this step is to 
    make use of the Meyers-type estimate 
    to upgrade the integrability of the gradient of solutions. 
    
     More precisely, let $\gamma>2$ be the integrability exponent as in \Cref{Lem:Meyers} and $\eps \in (0,\gamma^{-1})$ be a small parameter to be determined later. Set 
		\[
		    \theta=1-\eps \gamma \in (0,1), \quad \mu= \frac{1-\eps \gamma}{1/2-(\gamma/4+1/2)\eps} \in (1,2), \quad \nu>\frac{1-\eps\gamma}{(\gamma/4-1/2)\eps}\vee 1. 
		\] 
		Combining \eqref{eq:Du2} and the Meyers-type estimate \eqref{eq:Meyers} together, 
        we can upgrade the integrabilitiy of $\nabla u$ by  
		\[
		    \|\nabla u\|_{L^q_t L^{l,1}_x} \leq  \|\nabla u\|_{L^\nu_t L^\mu_x}^{\theta}\|\nabla u\|_{L^\gamma_{t,x}}^{1-\theta}\leq C \norm f , 
		\]
		where 
		\[
		    \frac{1}{l}= \frac{\theta}{\mu}+\frac{1-\theta}{\gamma}, \quad   \frac{1}{q}= \frac{\theta}{\nu}+\frac{1-\theta}{\gamma}>\eps. 
		\]
		Note that, 
        by our choice of parameters, 
        one has 
		\begin{equation}\label{eq:l-q}
			\frac{1}{l}=\frac{1}{2}-\frac{\gamma\eps}{4}+\frac{\eps}{2}<\frac{1}{2}, \quad \eps< \frac{1}{q}<\frac{\gamma\eps}{4}+\frac{\eps}{2}.
		\end{equation} 
        In particular, 
        the integrability exponent $l$ of the energy is 
        now raised above \(2=d\). Put 
        \[
        p=\frac{2l}{2+l}. 
        \]
	Then
    \begin{equation}\label{eq:bDu1}
			\| b\cdot \nabla u \|_{L^q_t L_x^{p}} 
            \leq C \|b\|_{L^\infty_t L_x^{2,\infty}} \|\nabla u\|_{L^{q}_t L_x^{l,1}} \leq C \norm f. 
		\end{equation}
		Note that 
		\begin{equation}\label{eq:p-q}
            1+\eps>\frac{1}{q}+\frac{1}{p}=\frac{1}{2}+\frac{1}{l}+\frac{1}{q}>1+\eps\l(\frac{3}{2}-\frac{\gamma}{4}\r)> 1\ \  (\mbox{if}\ \gamma<6), 
	\end{equation}
        due to \eqref{eq:l-q}. 
        
    \medskip 
    \paragraph{\bf (iii) Second  upgradation:} 
		We still need to further upgrade the integrability of $\nabla u$.  
  For this purpose, using the $L^q$-$L^p$ estimate in  \Cref{Prop:lplq} again and \eqref{eq:bDu1}, we see that 
		\[
		    \|\nabla^2 u\|_{L^{q}_t L_x^{p}} \leq C \l( \| b\cdot \nabla u \|_{L^{q}_t L_x^{p}}+ \|f\|_{L^{q}_t L_x^{p}}\r)\leq C  \norm f. 
		\]
	 By virtue of the fractional Gagliardo-Nirenberg estimate in \Cref{Prop:GNI},  
     we obtain the improved estimate 
		\[
		    \|\nabla u\|_{L^{\mathfrak{q_i}}_t L_x^{\mathfrak{l}_i}} \leq \|\nabla^2 u \|_{L^q_t L_x^{p}}^{\delta_i} \|u\|_{L^\infty_t C^{\alpha_i}}^{1-\delta_i} \leq C \norm f, 
		\]
		where $0<\alpha_1=\frac{1}{3}\alpha, \alpha_2= \alpha$, $\delta_i=\frac{1-\alpha_i}{2-\alpha_i}$ and $\alpha$ is the constant in  \Cref{Prop:Holder}, and 
		\[
		    \frac{1}{\mathfrak{l}_i}= \frac{\delta_i}{p} , \quad \frac{1}{\mathfrak{q}_i}=\frac{\delta_i}{q}. 
		\]
	Using the interpolation estimate \eqref{Eq:Inter1} again we have 
		\[
		    \|\nabla u\|_{L^{\mathfrak{q}}_t L_x^{\mathfrak{l},1}} \leq C \l( \|\nabla u\|_{L^{\mathfrak{q}_1}_t L_x^{\mathfrak{l}_1}}+ \|\nabla u\|_{L^{\mathfrak{q}_2}_t L_x^{\mathfrak{l}_2}}  \r) \leq C \norm f, 
		\]
		where 
		\[
		    \mathfrak{l}:=\frac{p}{\delta}, \quad \mathfrak{q}:=\frac{q}{\delta}  
            \mbox{\ \ with\ \ }
            0<\delta = \frac{1-\alpha/2}{2-\alpha/2}<\frac{1}{2}. 
		\]
		This further yields 
        the improved integrability estimate 
		\[
		    \|b\cdot \nabla u\|_{L^{\mathfrak{q}}_t L_x^{\mathfrak{p}}} \leq C \|b\|_{L^\infty_t L_x^{2,\infty}} \|\nabla u\|_{L^{\mathfrak{q}}_t L_x^{\mathfrak{l},1}} \leq C \norm f, 
		\]
		where 
		\[
		    \frac{1}{\mathfrak{p}}:= \frac{1}{\mathfrak{l}}+\frac{1}{2}=\frac{\delta}{p}+\frac{1}{2}. 
		\]
		Now choosing $\eps<\frac{1}{\gamma}\wedge \l(\frac{1}{2\delta}-1\r)$ and using \eqref{eq:p-q} we infer 
        that the improved integrability exponent $(\mathfrak{p}, \mathfrak{q})$ 
         exactly lies in \(\sI\), 
        that is, 
		\[
		    \frac{1}{\mathfrak{p}}+\frac{1}{\mathfrak{q}}=\frac{1}{2}+\delta\l(\frac{1}{2}+\frac{1}{l}+\frac{1}{q}\r)<\frac{1}{2}+\delta (1+\eps)<1. 
		\]
	Thus, applying the $L^q$-$L^p$ estimate in \Cref{Prop:lplq} again we obtain the desired estimate \eqref{Eq:D2u} in the case where $c=0$. 
        
    Now, the case where $c\in L^\infty_t(L^1_x\cap L^\infty_x)$ can be estimated easily 
    by applying the second order Sobolev estimate \eqref{Eq:D2u} for $c=0$ and \eqref{Eq:Holder1}: 
        \[
            \|u\|_{L^{\mathfrak{q}}_t W^{2, \mathfrak{p}}_x} + \|\p_t u\|_{L^{\mathfrak{q}}_t L_x^{\mathfrak{p}}}  \leq C\norm{cu+f}\leq C  \|u\|_{L_{t,x}^\infty} \|c\|_{L^\infty_t(L^1_x\cap L^\infty_x)}+ C \norm{f} \leq C \norm{f}, 
        \]
        where $C$ only depends on $d, \mathfrak{p}, \mathfrak{q}, T$, $\|b\|_{L^\infty_t L_x^{d,\infty}}$ and $\|c\|_{L^\infty_t (L^1_x \cap L^\infty_x)}$.
        The proof is complete. 
	\end{proof}

\section{SDEs with endpoint critical drifts}\label{Sec:SDE}

In this section, we aim to establish \Cref{Thm:SDE'} with the endpoint critical drifts. Without loss of generality, 
let us assume \(s=0\) in \eqref{Eq:SDE}.

\subsection{Existence}  \label{Subsec-Exist-SDE}
To begin with, let us start with the generalized It\^o formula. 

\begin{lemma}[Generalized It\^o's formula]\label{Lem:Ito}
    Let $(p,q)\in \sI$. Suppose that $X_t$ is a solution to \eqref{Eq:SDE} with \(s=0\) satisfying the following Krylov-type estimate 
    \[
    \bE  \int_0^T f(X_t) \d t \leq C \|f\|_{L^q_t L_x^p}, 
    \]
    and 
    $u\in L^q_t W^{2,p}_x$ with $\p_t u\in L^q_t L_x^p$.  Then, for any $0\leq t'< t\leq T$, we have \begin{equation}\label{Eq:Ito}
        u(t, X_{t}) - u(t', X_{t'}) = \int_{t'}^t (\p_r + A_r) u(r, X_r)\d r + \sqrt{2} \int_{t'}^t \nabla u(r, X_r) \cdot \d W_r. 
    \end{equation} 
\end{lemma}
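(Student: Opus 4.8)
\textbf{Proof plan for the generalized It\^o formula (Lemma~\ref{Lem:Ito}).}
The plan is to prove the formula first for smooth $u$ by the classical It\^o formula, and then pass to the limit for general $u\in L^q_t W^{2,p}_x$ with $\p_t u\in L^q_t L^p_x$ using the Krylov-type estimate as the crucial tool to control the error terms. First I would fix $0\le t'<t\le T$ and choose a sequence $u_n\in C_b^\infty([0,T]\times\mR^d)$ such that $u_n\to u$ in $L^q_t W^{2,p}_x$ and $\p_t u_n\to \p_t u$ in $L^q_t L^p_x$ (this is standard, e.g.\ by mollification in space and time together with truncation). For each $u_n$ the classical It\^o formula gives
\begin{equation*}
    u_n(t,X_t)-u_n(t',X_{t'}) = \int_{t'}^t (\p_r+A_r)u_n(r,X_r)\,\d r + \sqrt2\int_{t'}^t \nabla u_n(r,X_r)\cdot \d W_r,
\end{equation*}
where $A_r u_n = \Delta u_n + b(r,\cdot)\cdot\nabla u_n$. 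The task is then to pass to the limit $n\to\infty$ in each term.

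The drift term $\int_{t'}^t b(r,X_r)\cdot\nabla u_n(r,X_r)\,\d r$ needs a little care because $b$ is only in $L^\infty_t L^{2,\infty}_x$; here I would use that $\nabla u_n\to\nabla u$ in $L^q_t W^{1,p}_x\hookrightarrow L^{q}_t L^{r}_x$ for a better exponent $r$ (parabolic Sobolev embedding, or simply that $\nabla^2 u_n$ converges in $L^q_tL^p_x$), so that $b\cdot\nabla u_n \to b\cdot\nabla u$ in $L^{\tilde q}_t L^{\tilde p}_x$ with $(\tilde p,\tilde q)\in\sI$, by H\"older's inequality in Lorentz spaces as used repeatedly in Section~\ref{Sec:PDE}. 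Then the Krylov-type estimate
\begin{equation*}
    \bE\int_{t'}^t \l|(b\cdot\nabla u_n - b\cdot\nabla u)(r,X_r)\r|\,\d r \le C\,\|b\cdot\nabla u_n - b\cdot\nabla u\|_{L^{\tilde q}_t L^{\tilde p}_x}\to 0
\end{equation*}
handles the drift part; the same estimate applied to $\p_t u_n-\p_t u$, $\Delta u_n - \Delta u$, and (for the left-hand side) to $u_n-u$ — after upgrading $u_n\to u$ to convergence in $L^\infty_t L^\infty_x$ along a subsequence via the H\"older estimate of Proposition~\ref{Prop:Holder}, or directly in a suitable $L^q_tL^p_x$ and evaluating at fixed times using continuity — controls the remaining non-stochastic terms. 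For the stochastic integral, It\^o's isometry gives
\begin{equation*}
    \bE\l|\int_{t'}^t (\nabla u_n - \nabla u)(r,X_r)\cdot\d W_r\r|^2 = 2\,\bE\int_{t'}^t |\nabla u_n - \nabla u|^2(r,X_r)\,\d r \le C\,\|\,|\nabla u_n - \nabla u|^2\,\|_{L^{q/2}_t L^{p/2}_x},
\end{equation*}
and since $\nabla u_n\to\nabla u$ in $L^q_t L^{r}_x$ for some $r>p$ (again by the parabolic Sobolev embedding, Proposition~\ref{Prop:Inter2}, noting $(p,q)\in\sI$ gives room), $|\nabla u_n-\nabla u|^2\to0$ in $L^{q/2}_tL^{p/2}_x$, so the stochastic term converges to the right limit in $L^2(\Omega)$; passing to a subsequence gives a.s.\ convergence, enough to identify the limit.

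The main obstacle I anticipate is the passage to the limit in the term involving $b\cdot\nabla u$: one must be careful that the product $b\cdot\nabla u$ genuinely lies in a space $L^{\tilde q}_tL^{\tilde p}_x$ with $(\tilde p,\tilde q)\in\sI$ so that the Krylov estimate \eqref{Eq:Kry} (which is only assumed for exponents in $\sI$) is applicable, and that the approximating sequence converges there. This is exactly the bookkeeping on exponents that pervades Section~\ref{Sec:PDE} — using $b\in L^\infty_tL^{d,\infty}_x$, $\nabla u\in L^q_t L^{r}_x$ with $1/r = 1/p - 1/d$ (when $dp<dp$, i.e.\ after Sobolev embedding), and Lorentz-H\"older to get $b\cdot\nabla u\in L^q_t L^{p}_x$ with $1/p = 1/d + 1/r$, checking $d/p+2/q<2$. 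Once this is set up cleanly, the rest is a routine approximation argument, and the formula \eqref{Eq:Ito} follows. A secondary technical point is ensuring the endpoint evaluations $u_n(t,X_t)\to u(t,X_t)$ and $u_n(t',X_{t'})\to u(t',X_{t'})$ a.s.\ (or in $L^1$): this is where the boundedness and continuity of $u$ from Proposition~\ref{Prop:Holder} — $\sup_t\|u(t)\|_{C^\alpha_x}<\infty$ with $u$ continuous in $t$ — makes the pointwise-in-time evaluation meaningful and the convergence uniform.
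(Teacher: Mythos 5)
Your proposal follows the same route as the paper: mollify $u$, apply the classical It\^o formula to the smooth approximants, and pass to the limit term-by-term using the Krylov estimate, the Lorentz--H\"older inequality for the singular drift, and the parabolic Sobolev embedding for the stochastic integral. That is precisely what the paper does, so your overall plan is sound. Two details, however, need to be corrected. First, for the stochastic integral your displayed bound
$\bE\int_{t'}^t|\nabla u_n-\nabla u|^2(r,X_r)\,\d r\le C\||\nabla u_n-\nabla u|^2\|_{L^{q/2}_tL^{p/2}_x}$
is not what the Krylov estimate yields: \eqref{Eq:Kry} is available only for exponents in $\sI$, and $(p/2,q/2)\notin\sI$ (indeed $\frac{d}{p/2}+\frac{2}{q/2}=2(\frac{d}{p}+\frac{2}{q})$ may well exceed $2$). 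Applying Krylov at the original $(p,q)$ to $f=|\nabla u_n-\nabla u|^2$ instead gives $\le C\|\nabla u_n-\nabla u\|^2_{L^{2q}_tL^{2p}_x}$; closing the argument then requires exactly the parabolic Sobolev embedding \Cref{Prop:Inter2}, which — using $(p,q)\in\sI$, i.e.\ $\frac{d}{2p}+\frac{1}{q}<1$ — shows $\nabla u_n\to\nabla u$ in $L^{l}_tL^{2p}_x$ for some $l>2q$. You correctly sensed that ``$(p,q)\in\sI$ gives room'' but put the wrong exponent in the display. Second, for the endpoint evaluations the estimate you want is the parabolic embedding $\|u\|_{C^\alpha}\le C(\|\partial_t u\|_{L^q_tL^p_x}+\|\nabla^2 u\|_{L^q_tL^p_x})$ for arbitrary $u\in\cH^{2,p}_q$ (a Krylov--type Morrey embedding), not \Cref{Prop:Holder}, which is a statement about \emph{solutions} of the Kolmogorov equation and does not directly apply to the given $u$. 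A minor simplification: for the drift term there is no need to look for a new pair $(\tilde p,\tilde q)$ — the Lorentz--Sobolev embedding and H\"older's inequality place $b\cdot(\nabla u_n-\nabla u)$ directly in $L^q_tL^p_x$ with the original exponents, so Krylov applies at $(p,q)$ itself, as the paper does.
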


\begin{proof} 
Take any mollifier $\varrho\in C_c^\infty(\mR^{d+1})$ such that $\int \varrho=1$, and let  $\varrho^n(t,x)=n^{d+2} \varrho(n^2t, n x)$.  
    Let $u^n= u*\varrho^n\in C^\infty(\mR\times \mR^d)$. 
    It\^o's formula gives that 
	\begin{equation}\label{eq:ito}
			 u^n (t, X_t)-u^n(t', X_{t'})= \int_{t'}^t (\p_r + A_r) u^n(r, X_r)\d r + \sqrt{2} \int_{t'}^t \nabla u^n (r, X_r)\cdot \d W_r, 
		\end{equation} 
        where $A_r$ is the Kolmogorov operator as in Section \ref{Sec:PDE}. 
		Since 
		\begin{equation*}
			\begin{aligned}
				\|(\p_t + A_t) (u- u^n) \|_{L^q_t L_x^p} \leq& \|\p_t u - \p_t u*\rho^n\|_{L^q_t L_x^p}+\|\Delta u- \Delta u * \varrho^n\|_{L^q_t L_x^p} \\
				&+ \|b\|_{L^\infty_t L_x^{d,\infty}} \|\nabla u -\nabla u* \varrho^n\|_{L^q_t L_x^{\frac{dp}{d-p}, p}}\\
				\leq& C \|\p_t u- \p_t u * \varrho^n \|_{L^q_t L_x^p}+C \|\nabla^2 u- \nabla^2 u * \varrho^n \|_{L^q_t L_x^p} \to 0 
			\end{aligned}
		\end{equation*}
        as $n\to \infty$, 
		by the Krylov estimate one has 
		\[
		    \int_{t'}^t (\p_r + A_r) u^n(r, X_r)\d r \rightarrow\int_{t'}^t (\p_r + A_r) u(r, X_r)\d r 
      \ \ \text{in}\ \ L^1(\Omega,\bP). 
		\]
		Moreover, noting that 
		\[
           \frac{d}{p}+ \frac{2}{q}<\frac{d}{2p}+\frac{1}{q}+1, 
		\]
	    by the parabolic-type Sobolev embedding in \Cref{Prop:Inter2}, there exists \(l>2q\) such that 
		\begin{equation*}
			\begin{aligned}
				\|\nabla u- \nabla u^n\|_{L^{l}_t L_x^{2p}} \leq C\l( \|\p_t u- \p_t u^n\|_{L^q_t L_x^p} + \|\nabla^2 u-\nabla^2 u^n\|_{L^q_t L_x^p} \r)\to 0. 
			\end{aligned}
		\end{equation*}
		This along with Doob's inequality and Krylov's estimate yields that 
		\begin{align*}
		    &\bE \sup_{0\leq t'< t\leq T} \l| \int_{t'}^t \nabla u^n(r, X_r)\cdot \d W_r- \int_{t'}^t \nabla u(r, X_r)\cdot \d W_r \r|^2 \\
            \leq & C \bE \int_{0}^T |\nabla u^n(r, X_r)-|\nabla u^n(r, X_r)|^2 \d r \\
            \leq& C \|(\nabla u^n- \nabla u)\|_{L^{2q}_t L_x^{2p}}^2 \leq C \|(\nabla u^n- \nabla u)\|_{L^{l}_t L_x^{2p}}^2\to 0,
		\end{align*} 
        where the last step is due to \(l>2q\). 
        Therefore, the right-hand side of \eqref{eq:ito} converges to that  of \eqref{Eq:Ito}.  
  Taking into account that for  any $0<\alpha< (2-\frac{d}{p}-\frac{2}{q})\wedge 1$, 
		\[
		    \|u\|_{C^\alpha} \leq C \l(\|\p_t u\|_{L^q_t L_x^p}+ \|\nabla^2 u\|_{L^q_t L_x^p}\r) 
		\]
		(see \cite{krylov2005strong}),  
  we also infer that the left-side of \eqref{eq:ito}  converges to that  of \eqref{Eq:Ito}, and thus finish the proof. 
\end{proof}

We are now in a position to provide the
\begin{proof}[Proof of the existence part of \Cref{Thm:SDE'}:] 
Let \(s=0\) for simplicity. Take any mollifier $\varrho\in C_c^\infty(\mR^{d+1})$ such that $\int \varrho=1$, and let  $\varrho^n(t,x)=n^{d+2} \varrho(n^2t, n x)$.  Set 
		\[
		b_1= b \1_{\{|b|>1\}}, ~ b_2= b\1_{\{|b|\leq 1\}} ~ \mbox{ and }~ b_i^n=b_i*\varrho^n, ~ b^n= b*\varrho^n. 
		\]
		Let  
		\[
		  p_1={3d}/{4} ~ \mbox{ and }~  p_2=2d
		\]
        and $(p_i,q_i)\in\sI$, \(i=1,2\). 
        Noting that 
        \[
            \|b_1\|_{L^{\infty}_t L^{p_1}_x}^{p_1} \leq C \|b\|_{L^\infty_tL^{d,\infty}_x}^d \int_1^\infty \lambda^{p_1-1-d} \d \lambda\leq C \|b\|_{L^\infty_tL^{d,\infty}_x}^d
        \]
        and
        \[
            \|b_2\|_{L^{\infty}_tL^{p_2}_x}^{p_2} \leq C \|b_2\|_{L^\infty_tL^{d,\infty}}^{d} \|b_2\|_{L^\infty_{t,x}}^{p_2-d} \leq C \|b\|_{L^\infty_tL^{d,\infty}}^{d}, 
        \]
        we have that $C^\infty_b \ni b_i^n \to b_i$ in $L^{q_i}_tL^{p_i}_x$. Since \( b^n \in C^\infty_b \), the classical SDE theory guarantees the existence of a unique strong solution \(X^n\) to \eqref{Eq:SDE}, with \(b\) replaced by \(b^n\), starting from \(s=0\). Moreover, by the classical parabolic PDE theory, for any $(p,q)\in\sI$, $t_1\in (0,T]$ and $f\in L^q_t L_x^p$, there exists a unique solution $v^n$ in \(L^q_tW^{2,p}_x\) satisfying 
		\[
		  \p_t v^n + \Delta v^n+ b^n\cdot \nabla v^n+f=0, \quad v^n(t_1)=0. 
		\]
	Then, by It\^o's formula and \Cref{Prop:Holder}, 
   there exist $\alpha \in(0,1/2),C>0$ such that 
	for any $0\leq  t_0<t_1\leq  T$, 
		\begin{align}\label{eq:kry1}
			\sup_n \bE \int^{t_1}_{t_0} f(t,X_{t}^n)\d t\leq \sup_{x\in\mR^d} |v^n(t_0, x)|\leq C(t_1-t_0)^{\alpha}\| f\|_{L^q_t L_x^p}.
		\end{align}
        
        Now let $\tau \in [0,T]$ be any bounded stopping time. Since 
        $$
        X^n_{(\tau+\delta)\wedge T}-X^n_{\tau}=\int^{(\tau+\delta)\wedge T}_\tau b_n(t,X^n_{t})\d t+\sqrt{2}(W_{(\tau+\delta)\wedge T}-W_\tau),\ \ \delta>0, 
        $$ 
    applying \eqref{eq:kry1} to $f=b^n_1$ we get 
		$$
		\bE \int^{(\tau+\delta)\wedge T}_\tau |b^n|(t,X^n_{t})\d t 
        \leq C\delta^\alpha \l( \| b_1^n\|_{L^\infty_t L_x^{p_1}}+\|b_2^n\|_{L^\infty_{t,x}}\r) \leq C\delta^{\alpha}. 
		$$
		So one derives that 
		\begin{align*}
			\bE \sup_{0\leq u\leq \delta}|X^n_{\tau+u}-X^n_{\tau}|&
			\leq \bE \int^{\tau+\delta}_\tau |b_n|(t,X^n_{t})\d t+\sqrt{2} \bE \sup_{0\leq u\leq \delta}|W_{\tau+\delta}-W_\tau| \leq   C\delta^{\alpha},
		\end{align*}
		where $\alpha$ and $C>0$ are  independent of $n$.  
  Hence, by \cite[Lemma 2.7]{zhang2018singular}, we obtain
		$$
		\sup_n\bE \left(\sup_{t\in[0,T]; u\in [0,\delta]}|X^n_{(t+u)\wedge T}-X^n_{t}|^{1/2}\right)\leq C\delta^{\alpha}, 
		$$
	which along with Chebyshev's inequality 
 yields that for any $\eps>0$,
		$$
		\lim_{\delta\to 0}\sup_n \bP\left(\sup_{t\in[0,T]; u\in [0,\delta]}|X^n_{(t+u)\wedge T}(x)-X^n_{t}(x)|>\eps\right)=0.
		$$
        This implies the tightness of 
        the probability laws $\{\bP\circ (X^n, W)^{-1}\}$ in $\cP(C^{\otimes2}([0,T];\mR^d))$. 
        
        Hence, by Skorokhold's representation theorem, there exist a probability space $(\Omega, \cF, \bQ)$,  and $C^{\otimes2}([0,T]; \mR^d)$-valued random variables $(Y^n, B^n)$ and $(Y,B)$ on $(\Omega, \cF, \bQ)$ such that 
		\[
		(X^n, W)\overset{d}{=}(Y^n, B^n)
		\]
        and up to a subsequence \(
		(Y^n, B^n){\to} (Y, B), ~ \bQ-a.s.\). it is clear that $B$ is a Brownian motion. 
        Moreover,  for any $(p,q)\in \sI$ and $f\in L^q_t L_x^p$, 
		\begin{equation}\label{eq:kry2}
		    \bE_{\bQ} \int^{T}_{0} f(t,Y_{t}^n) \d t, ~ \bE_{\bQ} \int^{T}_{0} f(t,Y_{t}) \d t  \leq C\| f\|_{L^q_t L_x^p},  
		\end{equation} 
        where $C$ is independent of $n$. 
  
  In order to  prove that $(Y, B)$ is a weak solution to \eqref{Eq:SDE}, we only need to show that 
		\[
		   \int_0^T |b(t, Y_t)-b^n(t, Y^n_t)| \d t \to  0, 
           \quad {\rm in}\ \bQ-{\rm probability}.
		\]
       To this end,  using \eqref{eq:kry2} we derive that for any fixed $N, n\geq 1$, 
		\begin{align*}
		   &\bE_{\bQ} \int_0^T |b(t, Y_t)-b^n(t, Y^n_t)| \d t \\  
           \leq & 
           \sum_{i=1}^2 \bE_{\bQ} \int_0^T |b_i(t, Y_t)-b_i^N(t, Y_t)|\d t
		  + \sum_{i=1}^2 \bE_{\bQ} \int_0^T |b_i^N(t, Y_t^n)-b_i^n(t, Y_t^n)|\d t\\
          & +   \bE_{\bQ} \int_0^T |b^N(t, Y_t)-b^N(t, Y_t^n)| \d t \\           
          {\leq}&  C \sum_{i=1}^2  \|b_i-b_i^N\|_{L^{q_i}_t L_x^{p_i}}  
          + 
          C \sum_{i=1}^2 \|b^N_i-b^n_i\|_{L^{q_i}_t L_x^{p_i}} 
          +  \int_0^T\bE_{\bQ} \l|b^N(t, Y^n_t)-b^N(t, Y_t)\r|\d t. 
		\end{align*}
	    Letting $n\to\infty$ and then $N\to \infty$, we thus obtain the desired existence result. 

        At last, by \Cref{Thm:Heat}, for each \(n\), \(X^n\) admits a transition density function \(p^{n}_{s,t}(x,y)\) that satisfies \eqref{Eq:AE} with the universal constant \(C\). Therefore, each limit point of $\{X^n\}$ also satisfies estimates \eqref{Eq:AE} and \eqref{Eq:Kry}. 
        \end{proof}

\subsection{Uniqueness} 
Let us continue to prove the uniqueness part of \Cref{Thm:SDE'}. 

\begin{proof}[Proof of the uniqueness part of \Cref{Thm:SDE'}:]  
Suppose that \((\Omega^1, \cF^1, \cF^1_t, \bP^1; X^1, W^1)\) and \((\Omega^2\), 
\(\cF^2, \cF^2_t, \bP^2; X^2, W^2)\) are two weak solutions to \eqref{Eq:SDE}.

Let us first consider the case where each $X^i~(i=1,2)$ satisfies 
the Krylov-type estimate \eqref{Eq:Kry}, with \((p,q)=(\mathfrak{p},\mathfrak{q})\in \sI\). Here \((\mathfrak{p},\mathfrak{q})\) is the same pair as in \Cref{Thm:LqLp}.

For any $c, f\in C_c^\infty (\mR\times \mR^d)$, 
in view of \Cref{Thm:LqLp},  
equation \eqref{Eq:BKE2} has a solution $v$ satisfying that  $v\in L^{\mathfrak{q}}_t W^{2, \mathfrak{p}}_x$ and $\p_t v \in L^{\mathfrak{q}}_t L_x^{\mathfrak{p}}$ with $d/{\mathfrak{p}}+2/{\mathfrak{q}}<2$. 
  Then, by the generalized It\^o's formula  \eqref{Eq:Ito}, 
    \begin{align*}
        &\d \l( \exp \l( \int_0^t c(s, X^i_s) \d s\r) v(t, X^i_t)  \r) \\
        =&\exp \l( \int_0^t c(s, X^i_s) \d s\r) f(t, X^i_t) \d t+ \sqrt{2} \exp \l( \int_0^t c(s, X^i_s) \d s\r)\nabla v(t, X_t) \cdot\d W^i_t. 
    \end{align*}
    Taking expectation one gets  
    \begin{align*}
        v(0, x)= \bE_{\bP^i} \int_0^T \exp \l( \int_0^t c(s, X^i_s) \d s\r) f(t, X^i_t) \d t. 
    \end{align*}
    Thus, by the density argument, this yields that for any $\lambda >0$, 
    \[
    \bE_{\bP^1} \int_0^T \exp \l( \int_0^t c(s, X^1_s) \d s\r) \e^{-\lambda t} \d t=\bE_{\bP^2} \int_0^T \exp \l( \int_0^t c(s, X^2_s) \d s\r) \e^{-\lambda t} \d t, 
    \]
    and so  
    \[
        \bP^1\circ \l( \exp \l( \int_0^t c(s, X^1_s) \d s\r) \r)^{-1}= \bP^2\circ \l( \exp \l( \int_0^t c(s, X^2_s) \d s\r) \r)^{-1} ,\quad t\in [0,T]. 
    \]
    Therefore, again by the density argument, for every \( A \in \cB(\mR^d) \) and \( t \in [0,T] \), the moment generating functions of \( \int_0^t \1_A(X^i_s) \, \d s \) are identical for \(i=1,2\), which implies the uniqueness in law \( \bP^1 \circ (X^1)^{-1} = \bP^2 \circ (X^2)^{-1} \). This together with the existence part of \Cref{Thm:SDE'} imply that  the unique solution admits a transition density function satisfying \eqref{Eq:AE}. 

\medskip 
    Now, suppose that 
    \(X\) is a weak solution to \eqref{Eq:SDE} satisfying the generalized Krylov estimate \eqref{Eq:Kry'} with \(s=0\), where the pair \((\mathfrak{p},\mathfrak{q})\) is the same as in \Cref{Thm:LqLp}. We claim that it indeed 
    satisfies \eqref{Eq:Kry} 
    for \((p,q)=(\mathfrak{p},\mathfrak{q})\) and \(s=0\).  Thus,  by the arguments in the previous case, we obtain the desired assertion.

    To this end, 
    note that for any \(\delta \in (0, T)\), 
    the process \((X_{t})_{t \in [\delta, T]}\) satisfies  
    \[
        X_t=X_\delta+\int_\delta^t b(s, X_s)\d s + \sqrt{2} (W_{t}-W_{\delta}),  
    \]
    and \eqref{Eq:Kry} with \((p,q)=(\mathfrak{p},\mathfrak{q})\in \sI\), \(C=C_\delta\) and \(s\) replaced by \(\delta\). 
    Then, as in the previous case, 
    one has 
    \[
        \bE \int_\delta^{T} f(t, X_t) \d t \leq \bE v(\delta, X_\delta) \leq \|v\|_{L^\infty_{t,x}}\leq C \|f\|_{L^{\mathfrak{q}}_tL^{\mathfrak{p}}_x}. 
    \]
    Here \(v\in L^{\mathfrak{q}}_tL^{\mathfrak{p}}_x\) is the solution to \eqref{Eq:BKE2} with \(c=0\). Note that, thanks to \Cref{Thm:LqLp}, the constant \(C\) on the right-hand side of the inequality above is universal, i.e., independent of \(\delta\). Thus, taking the limit as \(\delta \to 0\), we conclude that \(X\) satisfies the Krylov estimate \eqref{Eq:Kry} 
    with \((p,q)=(\mathfrak{p}, \mathfrak{q})\) and $s=0$. Therefore, the proof for the uniqueness part of \Cref{Thm:SDE'} is complete. 

    The proof for the Markov property of \((\mP_{s,x}, X)\) is standard (cf. \cite{stroock2007multidimensional}), so we omit this here. 
\end{proof}

In the end of this section, 
we also have the well-posedness of 
the following linear Fokker-Planck equation 
for all dimensions $d\geq 2$, 
\begin{equation}\label{Eq:LFP}
    \p_t \rho = \Delta \rho- \div (b\rho), \quad \rho|_{t=0}=\zeta\in\cM(\mR^d). 
\end{equation}

\begin{definition}
        We say that \(\rho\in C_t\cM_x\) is a distributional solution to the  Fokker-Planck equation \eqref{Eq:LFP}, 
        if \(b \in L^1(\rho)\) and for any \(\phi\in C_c^\infty([0,T]\times\mR^d)\), it holds that 
        \begin{equation}\label{eq:weak}
            \<\rho(t),\phi(t)\>-\<\zeta, \phi(0)\>= \int_0^t\<\rho(s), \p_s \phi(s)+\Delta\phi(s)+b(s)\cdot\nabla \phi(s)\> \d s, \quad t\in [0,T],  
        \end{equation} 
        where $\<\cdot, \cdot\>$ 
        denotes the integration over $\mathbb{R}^d$. 
    \end{definition}

\begin{proposition}[Well-posedness of linear FPE]\label{Prop:LFP}
    Let $d\geq 2$. Assume that the drift $b$ satisfies \eqref{Eq:Ab}. Then, 
    the Fokker-Planck equation \eqref{Eq:LFP} has a unique distributional solution $\rho \in C_t\cM_x$ satisfying 
    \[
        \rho \in L^{\mathfrak{q}'}(\delta, T; L^{\mathfrak{p}'}_x), \quad \forall \delta\in (0,T), 
    \] 
    where $(\mathfrak{p}$,  $\mathfrak{q})$ 
    is as in Theorem \ref{Thm:SDE'}, 
    and $\mathfrak{p}', \mathfrak{q}'$ are the conjugate numbers of $\mathfrak{p}$ and $\mathfrak{q}$, respectively.
\end{proposition}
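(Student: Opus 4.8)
The plan is to build the solution explicitly from the fundamental solution $p_{0,t}(x,y)$ of $\p_t-A_t^*$ provided by \Cref{Thm:Heat}, namely
\[
\rho(t,\d y):=\l(\int_{\mR^d}p_{0,t}(x,y)\,\zeta(\d x)\r)\d y,\quad t\in(0,T],\qquad \rho(0)=\zeta .
\]
From the upper Gaussian bound in \eqref{Eq:heat} and $\int p_{0,t}(x,\cdot)\,\d y=1$ one gets $\sup_{t}|\rho(t)|(\mR^d)\le|\zeta|(\mR^d)$, and the Chapman--Kolmogorov identity together with the H\"older bound \eqref{Eq:pHolder} yields narrow continuity of $t\mapsto\rho(t)$ on $(0,T]$; narrow continuity at $t=0$ follows since $p_{0,t}(x,\cdot)$ is an approximate identity as $t\downarrow0$. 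The integrability bound comes from \Cref{lem:heat}: $\|\rho(t)\|_{L^{\mathfrak p'}_x}\le C\,t^{-d/(2\mathfrak p)}|\zeta|(\mR^d)$, and since $(\mathfrak p,\mathfrak q)\in\sI$ is equivalent to $\tfrac{d}{2\mathfrak p}+\tfrac1{\mathfrak q}<1$, i.e. $\tfrac{d}{2\mathfrak p}\,\mathfrak q'<1$, we in fact obtain $\rho\in L^{\mathfrak q'}(0,T;L^{\mathfrak p'}_x)$, hence $\rho\in L^{\mathfrak q'}(\delta,T;L^{\mathfrak p'}_x)$ for every $\delta$. The same Gaussian bound and H\"older's inequality in Lorentz spaces give $\int|b(s,y)|\,\rho(s,\d y)\le C\|b\|_{L^\infty_tL^{d,\infty}_x}\,s^{-1/2}$, so $b\in L^1(\rho)$. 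Finally, $\rho$ is a distributional solution in the sense of \eqref{eq:weak}: testing against $\phi\in C_c^\infty$ and using Fubini, the claim reduces to $\mE_{0,x}[\phi(t,X_t)]-\phi(0,x)=\mE_{0,x}\int_0^t(\p_s+A_s)\phi(s,X_s)\,\d s$, which holds by the backward Kolmogorov equation satisfied by $p_{s,t}$ (equivalently, by It\^o's formula for the Markov process associated with $p_{s,t}$).

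\textbf{Uniqueness.} Let $\rho^1,\rho^2$ be two solutions in the stated class and put $w:=\rho^1-\rho^2\in C_t\cM_x$; then $w$ solves \eqref{Eq:LFP} distributionally with $w|_{t=0}=0$ and $w\in L^{\mathfrak q'}(\delta,T;L^{\mathfrak p'}_x)$ for each $\delta$. The plan is a duality argument against the backward Kolmogorov equation. Fix $g\in C_c^\infty((0,T)\times\mR^d)$ and let $v$ solve \eqref{Eq:BKE2} with source $g$ and $c=0$ on $[0,T]$. By \Cref{Thm:LqLp} and \Cref{Prop:Holder}, $v\in L^{\mathfrak q}_tW^{2,\mathfrak p}_x$, $\p_tv\in L^{\mathfrak q}_tL^{\mathfrak p}_x$, $v$ is bounded and continuous, and by the Feynman--Kac representation $v(s,x)=\mE_{s,x}\int_s^Tg(r,X_r)\,\d r$ together with \eqref{Eq:heat} it decays at spatial infinity; crucially, since $(\mathfrak p,\mathfrak q)\in\sI$ one also has $b\cdot\nabla v\in L^{\mathfrak q}_tL^{\mathfrak p}_x$ (via \Cref{Prop:Inter2} and H\"older in Lorentz spaces), so every term of $A_s v$ pairs with $w(s)\in L^{\mathfrak p'}_x$.

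\textbf{Limiting step and the main obstacle.} I would then mollify, $v^n:=v*\varrho^n$, enlarge the admissible test class in \eqref{eq:weak} to include $v^n$ (using the spatial decay of $v$ plus a cutoff), and write $(\p_s+A_s)v^n=-g*\varrho^n+r^n$ with commutator $r^n:=b\cdot\nabla v^n-(b\cdot\nabla v)*\varrho^n\to0$ in $L^{\mathfrak q}_tL^{\mathfrak p}_x$ (using $\nabla v^n\to\nabla v$ in $L^{\mathfrak q}_tL^{\mathfrak l,1}_x$ so that multiplication by $b\in L^\infty_tL^{d,\infty}_x$ lands in $L^{\mathfrak q}_tL^{\mathfrak p}_x$). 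Applying the distributional formulation of $w$ on $[\delta,T]$ with test function $v^n$, letting $n\to\infty$ (the $r^n$ term killed by $w\in L^{\mathfrak q'}(\delta,T;L^{\mathfrak p'}_x)$) and using $v(T)=0$ gives
\[
\int_\delta^T\<w(s),g(s)\>\,\d s=\<w(\delta),v(\delta)\> .
\]
The main difficulty is the initial layer: one must let $\delta\to0$ and show $\<w(\delta),v(\delta)\>\to0$. Here I would use that $w(\delta)\rightharpoonup0$ narrowly (from $\rho^i\in C_t\cM_x$, which on the compact interval $[0,T]$ should also be used to bound $\sup_t|\rho^i(t)|(\mR^d)$) while $v(\delta)\to v(0)$ uniformly — the latter being continuity of $v$ in its initial-time argument, which is precisely what the Aronson/H\"older estimates \eqref{Eq:pHolder}--\eqref{Eq:pHolder2} of \Cref{Thm:Heat} deliver through the representation $v(s,\cdot)=\int_s^T\!\!\int p_{s,t}(\cdot,y)g(t,y)\,\d y\,\d t$. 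Hence $\int_0^T\<w(s),g(s)\>\,\d s=0$ for all such $g$, so $w\equiv0$ on $(0,T)$ and, by narrow continuity, on $[0,T]$; combined with the existence part this proves \Cref{Prop:LFP}. The routine points are the commutator/mollification estimate and the enlargement of the test-function class; the genuine obstacle is the $\delta\to0$ boundary term, handled by the heat-kernel regularity of Section~\ref{Sec:PDE}.
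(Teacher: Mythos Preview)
Your proposal is correct and follows the same duality strategy as the paper: existence via the transition density $p_{0,t}$ from \Cref{Thm:Heat}/\Cref{Thm:SDE'}, and uniqueness by testing against the solution $v$ of the backward equation supplied by \Cref{Thm:LqLp}. The paper's proof is terser --- it absorbs your explicit $\delta\to0$ boundary-layer step into the phrase ``as in the proof for the uniqueness part of \Cref{Thm:SDE'}, we can assume $\rho\in C_t\cM_x\cap L^{\mathfrak q'}_tL^{\mathfrak p'}_x$'' and calls the mollification/commutator passage a ``standard limiting argument'' --- but the underlying content is identical, and your handling of the initial layer via narrow continuity of $w(\delta)$ together with uniform (in $x$) time-continuity of $v$ is exactly what that compressed reference stands for.
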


\begin{proof}
    By \Cref{Thm:SDE'} and the linearity of \eqref{Eq:LFP}, the function \(\rho(t):=\int_{\mR^d} p_{0,t}(x,y)\zeta(\d x)\) is a distributional solution of  \eqref{Eq:LFP},   
    where $p_{0,t}$ is the transition density in \Cref{Thm:SDE'}. 
    
    Regarding the uniqueness, 
    as in the proof for the uniqueness part of \Cref{Thm:SDE'}, we can assume \(\rho \in C_t\cM_x \cap L^{\mathfrak{q}'}_tL^{\mathfrak{p}'}_x\). 
    Let \(v\in L^{\mathfrak{q}}_t W^{2,\mathfrak{p}}_x\) with \(\p_t v\in L^{\mathfrak{q}}_tL^{\mathfrak{p}}_x\) be the solution to \eqref{Eq:BKE2} with \(c=0\) 
    and \(f\in C_c^\infty((0,T)\times\mR^d)\). 
    Then, using a standard limiting argument, it can be shown that \eqref{eq:weak} holds 
    for \(\rho \in C_t\cM_x \cap L^{\mathfrak{q}'}_tL^{\mathfrak{p}'}_x\) and \(\phi\)  replaced by \(v\), 
    which leads to 
    \[
    -\<\zeta, v(0)\>= \int_0^T \<\rho(s), f(s)\> \d s, \quad \forall f\in C_c^\infty((0,T)\times\mR^d), 
    \]
    thereby yielding the uniqueness of \(\rho\). 
\end{proof}

\section{Construction of non-unique solutions}\label{Sec:EX}

This section is devoted to the non-uniqueness problem of SDEs and Fokker-Planck equations 
in the supercritical regime.

\subsection{SDEs with supercritical drifts}
	
Let us first prove the non-uniqueness result  in Theorem \ref{Thm:example} by constructing a divergence free drift $b\in L^{p,\infty}(\mR^d)$ with $d/2<p<d$ and $d\geq 3$, such that \eqref{Eq:SDE} have at least two distinct weak solutions starting from the origin.  

\begin{proof}[Proof of \Cref{Thm:example}]
The proof mainly proceeds in five steps. 

\medskip 
\paragraph{\bf 
Step 1: Construction of the drift} 

Let us first give the specific construction of the drift $b$ in the Lorentz space $L^{p,\infty}_x$ 
with $p\in (d/2, d)$.

 Let $g$ be a non-negative smooth function on $[0,\infty)$ such that $g'(r)\geq 0$ for all $r\geq 0$, $g(r)=0$ if $0\leq r\leq 1/2$ and $g(r)=1$ if $r>1$.  Let 
		\begin{align}  \label{alpha-def}
		    \alpha := \frac{d}{p}\in (1,2). 
		\end{align}
		For any $x=(x_1,\cdots, x_d)\in \mR^d$ with $x_d> 0$, set 
		\[
		r := \l(x_1^2+\cdots+x_{d-1}^2\r)^{1/2} ~~\mbox{ and }~~ 
         H(x) := r^{d-1} x_d^{-\alpha} g({x_d}/{r}).
		\]
	We define the drift $b$ by 
		\begin{equation}\label{Eq:bn}
			\begin{aligned}
				b_d(x):=&N  r^{2-d} \p_r H(x)\\
				=& N  (d-1) x_d^{-\alpha} g({x_d}/{r}) -N  r^{-1} x_d^{-\alpha+1}g'({x_d}/{r})  ~\mbox{ if }~x_d>0 
			\end{aligned}
		\end{equation}
		and for $1\leq i\leq d-1$, 
		\begin{equation}\label{Eq:bi}
			\begin{aligned}
				b_i(x):=& -N  x_i r^{1-d}  \p_{x_d} H(x)\\
				=&N  \alpha (x_i x_d^{-\alpha-1}) g({x_d}/{r}) -N  r^{-1}x_i x_d^{-\alpha} g'({x_d}/{r}) ~\mbox{ if }~ x_d>0. 
			\end{aligned}
		\end{equation}
	    Set 
		\[
		    b(x_1,\cdots, x_{d-1}, x_d):=- b(x_1,\cdots, x_{d-1}, -x_d) ~\mbox{ if }~ x_d<0
		\]
		and, if $x_d =0$, 
		\[
		    b(x_1,\cdots,x_{d-1},0):=0. 
		\]
		
    Next, we shall verify that 
    the constructed drift $b$ belongs to the Lorentz space $L_x^{p,\infty}$ and 
    is divergence free $\div\, b=0$. 

  To this end, we infer from 
		 \eqref{Eq:bn}-\eqref{Eq:bi} and the anti-symmetry of $b$ about the $x_d$-axis that 
		\begin{equation}\label{Eq-b-bdd}
			|b(x)|\leq C |x_d|^{-\alpha} \1_{\{r<2 |x_d|\}}. 
		\end{equation}
		Thus,  by the choice \eqref{alpha-def}, 
		\[
		    \|b\|_{ L^{p,\infty}}^p =\sup_{t>0} t^p \l| \l\{x: |b(x)|>t\r\}\r|\leq C \sup_{t>0} t^p \l|B_{t^{-\frac{1}{\alpha}}}\r|\leq C<\infty. 
		\]
		Note that, $b$ is smooth on $\mR^d\backslash\{0\}$.
		
		Moreover, by the construction, we have 
		\[
		\p_d b_d(x)= N r^{2-d} \p^2_{r x_d} H(x), ~\mbox{ for all } ~x_d>0 
		\]
		and 
		\[
		\p_i b_i(x)= -N[r^{1-d} + (1-d) x_i^2 r^{-d-1}] \p_{x_d}H(x) +N x_i^2 r^{-d} \p^2_{r x_d }H(x), 
		\]
		for all $x_d>0$ and $1\leq i\leq d-1$. Taking into account \eqref{Eq-b-bdd} and the anti-symmetry of $b$ about the $x_d$-axis, we obtain that $\div b(x)=0$ if $x\neq 0$.  
  
    In order to verify that $\div b=0$ in the sense of distribution, we need to show that  $\int b\cdot \nabla \varphi =0$ 
    for any $\varphi \in C_c^\infty(\mathbb{R}^d)$. 
    Actually, by the integration-by-parts formula and \eqref{Eq-b-bdd}, 
    for any $\rho >0$, 
		\begin{align*}
			\int b\cdot \nabla \varphi =&  \int_{B_\rho} b\cdot\nabla \varphi +\int_{\p B_\rho}b \varphi \cdot  \d \vec{\sigma} -\int_{B_\rho^c} \div b\cdot \nabla \varphi \\
			=&  \int_{B_\rho} b\cdot\nabla \varphi + \int_{\p B_\rho}b \varphi \cdot  \d \vec{\sigma} \leq  C \rho^{-\alpha+d-1}, 
		\end{align*}  
    which tends to zero as $\rho \to 0^+$, due to the fact that $ \alpha\in (1,2)$ and $d\geq 3$. 

\medskip 

    \paragraph{{\bf Step 2: Contradiction arguments}} 

    Let \(\Omega=C(\mR_+; \mR^d)\) and \(\omega_t\) be the canonical process. We consider a continuous functional $\sT$ on $\cP(\Omega)$ defined by 
    \[
    \sT(\mP) := \mE_{\mathbb{P}}  \l[ \int_{0}^\infty \e^{-t} f(\omega_t) \, \d t \r], 
     \]
    where 
    \[
    f(x):= \sgn(x_d)g(|x_d|). 
    \]
    Since $g|_{[0,\frac{1}{2}]}\equiv0$, $f$ is a continuous function on $\mR^d$. Thus, the map $\omega\mapsto \int_0^\infty \e^{-t} f(\omega_t) \d t$ is continuous from $\Omega$ to $\mR$. This yields that $\sT$ is continuous on $\cP(\Omega)$, i.e., 
    \begin{equation}
	\sT(\mP_n) \to \sT (\mP), ~\mbox{ if }~ \mP_n\Rightarrow \mP. 
    \end{equation}
    
    Thanks to \cite[Theorem 1.1]{zhang2021stochastic}, there exits at least one weak solution to \eqref{Eq:SDE} satisfying \eqref{Eq:Kry}, for any \(x\in \mR^d\) and \(s=0\). 
    
    Below we shall prove the non-uniqueness of solutions by contradiction arguments. Assume that the law of weak solutions staring from the origin is unique in the Krylov class \eqref{Eq:Kry}. Noting that \(b\) is smooth and uniformly bounded on \(B_\eps^c(0)\) for any \( \eps>0\), our assumption and \cite[Theorem 6.1.2]{stroock2007multidimensional} imply that martingale solutions to \eqref{Eq:SDE} staring from any \(x\in \mR^d\) is unique in the Krylov class \eqref{Eq:Kry}. 
		 
    Let $\mP_x$ denote the unique martingale solution starting from $x$. Our strategy is to find two sequences $(x^n)$ and $(y^n)$ in $\mR^d$ converging to zero such that   
		 \begin{equation}\label{Eq:Tx}
		 	\mP_{x^n}\Rightarrow \mP_0 ~ \mbox{ and }~ \lim_{n\to\infty} \sT(\mP_{x^n})\geq p_0>0 
		 \end{equation} 
         for some $p_0>0$, 
		 and 
		 \begin{equation}\label{Eq:Ty}
		 	\mP_{y^n}\Rightarrow \mP_0 ~ \mbox{ and }~ \lim_{n\to\infty}\sT(\mP_{y^n})=0. 
		 \end{equation}
		Then, the continuity of $\sT$ and \eqref{Eq:Tx}-\eqref{Eq:Ty}  yield that  
		 \[
		     0<p_0\leq \lim_{n\to\infty}\sT(\mP_{x^n})=\sT(\mP_0)=\lim_{n\to\infty}\sT(\mP_{y^n}) =0, 
		 \]
		 which leads to a contradiction.

   In order to construct such sequences, 
   we first note that, 
   since both $b$ and $f$ are anti-symmetric about the hyperplane $\Pi=\{x\in \mR^d: x_d=0\}$, 
   \begin{equation}\label{Eq:E0}
		\sT(\mP_y) = \mE_y  \l[ \int_{0}^\infty \e^{-t} f(\omega_t) \, \d t \r]= 0, \quad \mbox{ if }\ y_d=0. 
		\end{equation} 
    (Here we write $\mE_y$ for $\mE_{\mathbb{P}_y}$ for simplicity.) 
    
    Let $\mR^d\backslash \{0\}\ni z^n\to 0$. As  shown in the proof of \Cref{Thm:SDE'} (using \cite[Theorem 2.2]{zhang2021stochastic} instead of \Cref{Prop:Holder}), $\{\mP_{z^n}\}$ is tight in $\cP(\Omega)$ and 
    its accumulation points are martingale solutions to \eqref{Eq:SDE} with $s=0$ and $x=0$ and satisfy \eqref{Eq:Kry}. But by the uniqueness assumption, one has $\mP_{z^n}  \Rightarrow \mP_0$, $n\to \infty$. Therefore, if \(y^n\to 0\) with $(y^n)_d=0$, then \eqref{Eq:E0} implies \eqref{Eq:Ty}. 
		
    \medskip 
    It remains to construct the sequence \(x^n\to 0\) verifying \eqref{Eq:Tx}.

    \medskip 
\paragraph{\bf 
 Step 3: Reduction to exit probability estimate.}

In order to find such \(\{x^n\}\) satisfying \eqref{Eq:Tx},  
we define the cone 
    \[
    \cC_{k,h}= \left\{z\in \mR^d: k(z_1^2+\cdots+z_{d-1}^2)^{\frac{1}{2}}< z_d < h, ~  k,h>0 \right\}, \quad \cC_k=\cC_{k,\infty}. 
    \]

    Let 
    \[
    \overline{\tau}=\inf\{t>0: \omega_t\notin \cC_{1,2}\} ~\mbox{ and } ~ \tau=\inf\{t>0: \omega_t\notin \cC_{1}\} 
    \]
    be the first exit times of the canonical process from \(\cC_{1,2}\) and \(\cC_1\), respectively. Also let 
    \[
    \sigma_0=\inf\{t>0: (\omega_t)_d = 0\}, \quad \sigma_1=\inf\{t>\overline{\tau}: (\omega_t)_d = 1\}  
    \]
    be the first hitting times of \(\omega\) to the hyperplanes \(\{x_d=0\}\) and \(\{x_d = 1\}\) after time \(\overline{\tau}\), respectively. 
    Let 
    \[
    \kappa\in (1, (d-1)/\alpha).
    \]

        \medskip 
   \paragraph{\bf Claim:} 
    We have the exit probability estimate:          
        there exists a positive constant 
        $p_1$ such that 
	\begin{equation}\label{Eq:key}
		\inf_{x\in \cC_{\kappa,1}} \mP_x \big( \overline{\tau}< 1\wedge \tau, ~ \sigma_1>1+\overline{\tau} \big) \geq p_1>0.  
	\end{equation}
    Intuitively, 
    estimate \eqref{Eq:key} shows that the solution trajectories starting from \(x \in \cC_{\kappa,1}\) are likely to reach the hyperplane \(\{x_d = 2\}\) before exiting \(\cC_{1}\), and remain in the region \(\{x_d > 1\}\) for a unit time with high probability.

        Suppose that the exit probability estimate \eqref{Eq:key} is true. Then, taking into account the uniqueness assumption, $\{\mP_x\}$ forms a strong Markov process (see \cite{stroock2007multidimensional}), we infer that for any $x\in \cC_{\kappa, 1}$, 
		\begin{align*}
			\sT(\mP_x) =& \mE_x \int_0^{\sigma_0} \e^{-t} f(\omega_t) \d t+ \mE_x \left[ \e^{-\sigma_0} \left(\int_0^\infty   \e^{-t} f(\omega_t) \d t\right)\circ\theta_{\sigma_0}\right] \\
			\geq & \mE_x \left[ \1_{\big\{ \overline{\tau}< 1\wedge \tau, ~ \sigma_1>1+\overline{\tau} \big\}}  \int_{\overline{\tau}}^{\sigma_1} \e^{-t} \d t \right] \\&+ \mE_x\e^{-\sigma_0} \mE_{\omega_{\sigma_0}} \int_0^\infty   \e^{-t} f(\omega_t) \d t \geq p_1(\e^{-1}-\e^{-2})=:p_0>0, 
		\end{align*} 
		where  \(\theta_t\) is the shift operator, and we used the fact that $f(x)=1$ for all $x$ with $x_d\geq 1$ 
        and \eqref{Eq:E0}, \eqref{Eq:key} 
        in the last step. 
        This leads to the desirable limit \eqref{Eq:Tx}.

\medskip 

 Therefore, we are left to show the exit probability estimate \eqref{Eq:key}.  

 For this purpose, we let 
    \[
    B_t(\omega):= \omega_t- \omega_0 -\int_0^t b(\omega_s) \d s, \quad t\geq 0, 
    \]
    which is a Brownian motion under $\mP_x$ with variance $2t$, for any \(x\in \mR^d\).
    Set 
    \begin{align*}\label{Eq:OmegaN}
        \Omega_N:= \left\{\omega\in \Omega: |B_s(\omega)-B_t(\omega)|\leq N^{\frac{1}{2(1+\alpha)}}|s-t|^\frac{1}{1+\alpha},  s,t \in [0, 10] \right\} 
    \end{align*}
    and 
    \[
         \Omega^x:=\{\omega\in \Omega: \omega_0=x\} \mbox{ and }  \Omega_N^x:=\Omega_N\cap \Omega^x, \quad \mbox{ for all} x\in \mR^d. 
    \]
    Since $0<1/(1+\alpha)<1/2$, we can choose $N\gg 1$ such that 
    \[
        \mP_x(\Omega^x_N)\geq 1/2, \quad \mbox{ for all } x\in\mR^d. 
    \]
    
    Next we aim to show that for each \(x\in \cC_{\kappa, 1}\), all paths in \(\Omega^x_N\) reach the hyperplane \(\{y\in \mR^d: y_d=2\}\) before time \(1\wedge \tau\) when \(N\gg1\), i.e. 
    \begin{equation}\label{eq:tau2-leq-tau}
        \overline{\tau}(\omega)<1\wedge \tau(\omega), \quad \mbox{ for all }x\in \cC_{\kappa,1} \mbox{ and }  \omega\in \Omega_N^x. 
    \end{equation}
    In other words, all trajectories in \(\Omega^x_N\) correspond to the green paths depicted in \Cref{Fig:path}. 
    
    If \eqref{eq:tau2-leq-tau} holds, then 
    $$
	\mP_x (\overline{\tau}<1\wedge \tau)\geq \mP_x(\Omega_N^x)\geq  1/2, \quad x\in \cC_{\kappa,1}. 
    $$
    Since $b$ is uniformly bounded on $\{x\in \mR^d: x_d\geq 1\}$, there exists a positive constant $p_2>0$ such that 
    $$
	\inf_{\{x\in\mR^d:\, x_d=2\}} 
    \mP_x (\sigma_1>1) \geq p_2>0. 
    $$
    Using the above two estimates and the strong Markov property, we thus obtain that for all $x\in \cC_{\kappa, 1}$, 
    \begin{align*}
	\mP_x \big( \overline{\tau}< 1\wedge \tau, ~ \sigma_1>1+\overline{\tau} \big)
	=& \mP_x \big( \overline{\tau}<1\wedge \tau, ~  \sigma_1\circ\theta_{\overline{\tau}}>1 \big) \\
	\geq& \mP_x (\overline{\tau}<1\wedge \tau) \inf_{\{x\in\mR^d:\, x_d=2\}}\mP_x (\sigma_1>1)\\
	\geq& p_2/2=:p_1>0, 
    \end{align*}
    which yields \eqref{Eq:key}, thereby finishing the proof.  

    \medskip 
\paragraph{\bf 
 Step 4: Trajectories wander within the cone}  
 
Now, the proof reduces to proving \eqref{eq:tau2-leq-tau}. 
For this purpose, we shall first show 
in this step that for $N$ large enough, 
    \begin{align}\label{eq:OmegaN-good}
         \Omega_N^x \subseteq \l\{\omega\in \Omega^x:\  \omega_t\in \cC_{1},\   \forall t\in[0, t_x];\  \omega_{t_x}\in \cC_\kappa \r\},  
         \quad x\in \overline{\cC}_{\kappa,2}\backslash \{0\}, 
    \end{align}  
    where 
    \begin{equation}\label{eq:tx}
        t_x :=N ^{-1} |x_d|^{1+\alpha}, \quad x\in \mR^d.
    \end{equation} 
    The above inclusion indicates that the solution paths starting from \(x \in \cC_{\kappa, 2}\) 
stay in the larger cone \(\cC_1\) over a certain time interval \([0, t_x]\),  
and return back to the small cone  \(\cC_{\kappa}\) at time \(t_x\).

 \medskip 
In order to prove \eqref{eq:OmegaN-good}, we define 
    \[
    V_t^x(\omega) :=x_d+\int_0^t b_d(\omega_s) \d s ~\mbox{ and }~ \widehat x:=(x_1,\cdots, x_{d-1}). 
    \]
    Obviously, 
    \[
    (\omega_t)_d=V^x_t(\omega)+(B_t(\omega))_d,\quad \mbox{ for all } \omega\in \Omega^x. 
    \]

    Then, by the construction of the drift in \eqref{Eq:bn} and \eqref{Eq:bi}, we have that 
    \[
        b_d(y)=N  (d-1) y_d^{-\alpha},\quad  \mbox{ for all } y\in \cC_1 
    \]
    and for $1\leq i\leq d-1$, 
    \[
        b_i(y)= N  \alpha (y_iy_d^{-\alpha-1}), \quad  \mbox{ for all } y\in \cC_1. 
    \]
    Thus, for all \(x\in \cC_1, \omega\in \Omega^x\) and \(t\in [0, \tau(\omega)]\), \begin{align}\label{eq:Vt}
        \begin{aligned}
			V_t^x(\omega) -x_d=&\int_0^t b_d(\omega_s)\d s= N  (d-1)  \int_0^t [V^x_s(\omega)+(B_s(\omega))_d] ^{-\alpha} \d s\geq 0, \\
            \widehat \omega_t - \hat x=& N \alpha \int_0^t   [V^x_s(\omega)+(B_s(\omega))_d]^{-\alpha-1} \widehat \omega_s \d s+\widehat B_t(\omega). 
		\end{aligned}
	\end{align} 

    Then by \eqref{eq:Vt} and \eqref{eq:tx}, we have \begin{equation}\label{Eq:wt}
        \begin{aligned}
        &|B_t(\omega)-B_0(\omega)|=|B_t(\omega)| \leq N^{\frac{1}{2(1+\alpha)}}t_x^{\frac{1}{1+\alpha}} \\
        =&\eps(N) x_d   \leq \eps(N) V_t^x(\omega), \quad \mbox{ for all }x\in \overline{\cC}_{\kappa,2}\backslash \{0\},  \omega\in \Omega_N^x \mbox{ and } t\in [0, t_x\wedge \tau(\omega)], 
        \end{aligned}
    \end{equation}
    where 
    $$
        \eps = \eps(N) :=N ^{-\frac{1}{2(1+\alpha)}}\to 0,\ \ \text{as}\ \ N \to\infty.
    $$ 
    This together with \eqref{eq:Vt} yield  that 
    $$
        (1+\eps)^{-\alpha} N  (d-1) (V_t^x(\omega))^{-\alpha} \leq \frac{\d V_t^x(\omega)}{\d t} \leq (1-\eps)^{-\alpha} N  (d-1) (V_t^x(\omega))^{-\alpha}, 
    $$
    for all \(x\in \overline{\cC}_{\kappa,2}\backslash \{0\}\), \(\omega\in \Omega_N^x\) and \(t\in [0, t_x\wedge \tau(\omega)]\). By virtue of Chaplygin's Lemma, we derive \begin{align}\label{eq:est-Vt}
        \begin{aligned}
        \underline{V}_t^x(\omega)
        :=&\left[x_d^{1+\alpha}+(1+\eps)^{-\alpha}N  (d-1)(\alpha+1) t\right]^{\frac{1}{1+\alpha}}\\
        \leq& V_t^x(\omega) \leq \left[x_d^{1+\alpha}+(1-\eps)^{-\alpha}N  (d-1)(\alpha+1) t\right]^{\frac{1}{1+\alpha}}=:\overline{V}_t^x(\omega), 
        \end{aligned}
    \end{align}
    for all \(x\in \overline{\cC}_{\kappa,2}\backslash \{0\}, \omega\in \Omega_N^x\) and \(t\in [0, t_x\wedge \tau(\omega)]\). This along with \eqref{Eq:wt} yields that 
    \begin{equation}\label{Eq:Xd}
	(\omega_t)_d = V_t^x(\omega)+(B_t(\omega))_d  \geq (1-\eps)  V_t^x(\omega) \geq (1-\eps) \underline{V}_t^x(\omega), 
    \end{equation}
    for all \(x\in \overline{\cC}_{\kappa,2}\backslash \{0\}, \omega\in \Omega_N^x\) and \(t\in [0, t_x\wedge \tau(\omega)]\). 
        
    Next we aim to show that 
    \begin{equation}\label{eq:tx-tau}
        t_x < \tau(\omega), \quad \mbox{ for all } x\in \overline{\cC}_{\kappa,2}\backslash \{0\} \mbox{ and } \omega\in \Omega_N^x. 
    \end{equation}
    Actually, since $1<\kappa <{(d-1)}/\alpha$, $|\widehat x|< x_d/\kappa$ for each $x\in \cC_\kappa$, it follows from \eqref{eq:Vt} and \eqref{Eq:wt} that
    \begin{equation}\label{eq:hat-omega}
	\begin{aligned}
		|\widehat \omega_t|\leq& |\widehat x|+ N  \alpha \left|\int_0^t  \widehat \omega_s [V_s^x(\omega)+(B_s(\omega))_d]^{-\alpha-1} \d s\right|+ |\widehat{B}_t(\omega)|\\ 
		\leq & |\widehat x| + \eps V_t^x(\omega) + N  \alpha\int_0^t [V_s^x(\omega)+(B_s(\omega))_d]^{-\alpha} \d s\\
		= & |\widehat x| + \eps V_t^x(\omega) + \frac{\alpha}{d-1} (V_t^x(\omega)-x_d)\\ 
		\leq & \left(\eps + \frac{\alpha}{d-1}\right) V_t^x(\omega)+ \left(\frac1\kappa-\frac{\alpha}{d-1}\right)x_d\\ 
		\leq& \left(\eps+\frac{1}{\kappa}\right) V_t^x(\omega), \quad \mbox{ for all } x\in \overline{\cC}_{\kappa,2}\backslash \{0\}, \omega\in \Omega_N^x \mbox{ and } t\in [0, t_x\wedge\tau(\omega)], 
	\end{aligned}
    \end{equation} 
    where the last step is due to the fact that $x_d \leq V_t^x$ implied by \eqref{eq:Vt}. 
	In the second inequality above, we also used the fact that $|\widehat \omega_t| \leq (\omega_t)_d$ when $t\in [0,\tau(\omega)]$.  
    Combining the above estimate with \eqref{eq:est-Vt} and \eqref{Eq:Xd}, we come to  
		$$
		\frac{(\omega_t)_d}{|\widehat \omega_t|} \geq (1-\eps)\l(\eps+ \frac{1}{\kappa}\r)^{-1} >1, \quad \mbox{ for all }x\in \overline{\cC}_{\kappa,2}\backslash \{0\}, t\in [0, t_x\wedge \tau(\omega)] \mbox{ and } \omega\in \Omega_N^x, 
		$$
		provided that $N $ is sufficiently large. Since $\tau(\omega)$ is the first exit time of $\omega_t$ from $\cC_{1}$,  we get $t_x\wedge \tau(\omega) < \tau(\omega)$, which is \eqref{eq:tx-tau}. 
        
        Moreover, via \eqref{eq:est-Vt}, for sufficiently large $N$, \(x\in \overline{\cC}_{\kappa,2}\backslash \{0\}\) and \(\omega\in \Omega_N^x\), we also have 
        \[
        \overline{V}_{t_x}^x(\omega)\geq V_{t_x}^x(\omega)\geq \underline{V}_{t_x}^x(\omega) \geq  2^{1/3}x_d,\]
        and so by \eqref{Eq:Xd}-\eqref{eq:hat-omega}, 
		\begin{align*}
			\frac{(\omega_{t_x})_d}{|\widehat \omega_{t_x}|}
            \geq& \frac{(1-\eps) \underline{V}_{t_x}^x(\omega)}{\left(\eps + \frac{\alpha}{d-1}\right) \overline{V}_{t_x}^x(\omega)+ \left(\frac1\kappa-\frac{\alpha}{d-1}\right)x_d} \\
			\geq& \frac{(1-\eps) \underline{V}_{t_x}^x(\omega)}{\left[\eps + 2^{-1/3}\kappa^{-1}+ (1-2^{-1/3})\frac{\alpha}{d-1}\right]\overline{V}_{t_x}^x(\omega)}
            >  \kappa,  
		\end{align*}
	   where the last inequality was also due to \eqref{eq:tx}, \eqref{eq:est-Vt} and the fact that $\kappa<(d-1)/\alpha$ and that $\eps\to0$ as $N\to \infty$. Thus,  
    $$
    \omega_{t_x}\in \cC_\kappa, \mbox{ for any }  x\in \overline{\cC}_{\kappa,2}\backslash \{0\} \mbox{ and } \omega\in \Omega_N^x, 
    $$
    provided that $N\gg 1$.

    To sum up, we obtain 
    the desired inclusion \eqref{eq:OmegaN-good} for $N$ large enough. 
    
\medskip 
\paragraph{\bf 
 Step 5: Proof of 
 exit probability estimate} 
 Now we are ready to prove  \eqref{eq:tau2-leq-tau}, and so the  exit probability estimate \eqref{Eq:key}. 
    
    We begin by showing that 
    \begin{equation}\label{eq:tau-leq-1}
        \overline{\tau} < 1, \quad \mbox{ for all } x\in \cC_{\kappa,1}\mbox{ and }\omega\in \Omega_N^x. 
    \end{equation} 
    If not, then there exit \(x\in \cC_{\kappa,1}\) and \(\omega\in \Omega_N^x\) such that \((\omega_{{1}/{2}})_d<2\). On the other hand, 
    using that \(b_d(y)=N(d-1)y_d^{-\alpha}, ~y\in \cC_{1,2}\) and \eqref{eq:Vt}, we have 
    \[
        (\omega_{1/2})_d = V_{1/2}^x(\omega)+(B_{1/2}(\omega))_d \geq x_d+N(d-1)2^{-\alpha-1}- N^{\frac{1}{2(1+\alpha)}} >2. 
    \]
    This contradiction implies \eqref{eq:tau-leq-1}. 
    
    So, it remains to show that \(\overline{\tau}(\omega)<\tau(\omega)\) when \(x\in \cC_{\kappa,1}\), \(\omega\in \Omega_N^x\) (this implies the red path depicted in \Cref{Fig:path} cannot belong to \(\Omega^x_N\)). 
    Suppose, for contradiction, that this is not the case. Then there exist \(x\in \cC_{\kappa,1}\) and \(\omega\in \Omega_N^x\) such that \(\overline{\tau}(\omega)=\tau(\omega)\) and \(\omega_{\overline{\tau}}\in \partial \cC_1\). Let \(S(\omega)\) denote the last time at which \(\omega\) exits \(\cC_{\kappa,2}\) before \(\overline{\tau}(\omega)\), i.e. 
    \[
    S(\omega)=\sup\l\{0<t<\overline{\tau}(\omega): \omega_t\notin \cC_{\kappa,2}\r\}. 
    \]
    Then \(S(\omega)<\overline{\tau}(\omega)<1\), \(\omega_S\in \partial \cC_{\kappa,2}\backslash\{0\}\) and \(t_{\omega_S}<N^{-1}2^3<9\). Therefore, the inclusion in \eqref{eq:OmegaN-good} for \( x \in \overline{\cC}_{\kappa,2}\backslash\{0\} \) and \( \omega \in \Omega_N^x \) also holds, respectively, for the new starting point \( \omega_S \) and the new path \( \theta_{S} \omega \). Thus, we have 
    \[
        S(\omega)<S(\omega)+t_{S(\omega)}<\tau(\omega)=\overline{\tau}(\omega), \quad \omega_{S+t_{S}}\in \cC_{\kappa,2}, 
    \]
    which however contradicts the definition of \(S\). 
    Finally, we obtain \eqref{eq:tau2-leq-tau} and finish the proof.
\end{proof}

\medskip 
\subsection{FPE with supercritical drifts}

We close this section with 
the proof for the non-uniqueness of  linear Fokker-Planck equations 
with supercritical drifts. 

\begin{proof}[Proof of Corollary \ref{Cor-LinFPK-Nonuniq}]
    Let \(b\) be the same drift constructed in the above proof of \Cref{Thm:example}. By \cite[Theorem 1.1]{zhang2021stochastic}, there exists at least one weak solution to \eqref{Eq:SDE} satisfying the Aronson-type estimate \eqref{Eq:Kry} for any \(x \in \mathbb{R}^d\), and the one-dimensional marginal distribution of this solution solves \eqref{Eq:LFP} with the initial data \(\delta_x\). 

    Suppose that there is only one solution \(\rho\) to \eqref{Eq:LFP} with \(\rho(0)=\delta_0\) in \(C_t\cP_x\). For any \(x\in \mR^d\), let \(\mP_x\) be any martingale solution to \eqref{Eq:SDE}  with \(s=0\). We shall prove that  the one-dimensional marginal distribution of \(\mP_x\) is unique. 

For this purpose, let \(\sigma=\inf\{t>0: \omega_t=0\}\). 
Because the drift  \(b\) is smooth in \(\mR^d\backslash \{0\}\) and bounded on \(B_\eps^c(0)\) for any \(\eps>0\), 
by the standard SDE well-posedness theory, 
for any \(x\neq 0\) away from the origin,  
the restriction 
\(\mP_x\restriction_{\cF_\sigma}\) of the martingale solution $\mP_x$ 
to the 
filtration $\cF_\sigma$  
is uniquely determined. 
Then, for any \(x\neq 0\) 
and any bounded measurable function $f$, 
    \begin{align}  \label{Efwt}
\mE_x(f(\omega_t))=&\mE_x\l(f(\omega_t)\1_{\{\sigma\leq t\}}\r)+\mE_x\l(f(\omega_t)\1_{\{\sigma> t}\}\r)  \nonumber \\
        =& \mE_x\l[ \1_{\{\sigma\leq t\}} \mE_x \l(f(\omega_{t})|\cF_{\sigma}\r)\r]+\mE_x\l(f(\omega_{t\wedge \sigma})\1_{\{\sigma> t\}}\r)  \nonumber \\
        =& \mE_x\l[ \<f, \rho_o(t-\sigma)\>\1_{\{\sigma\leq t\}} \r]+\mE_x\l(f(\omega_{t\wedge \sigma})\1_{\{\sigma> t\}}\r), 
    \end{align}
    where the last identity 
    was due to the uniqueness  assumption and the fact that the regular conditional probability \(\mP_x(\cdot|\cF_\sigma)(\omega')\) is also a martingale solution to \eqref{Eq:SDE} starting from \(0\), for \(\mP_x\)-a.s. \(\omega'\). 
    Taking into account that 
    the distribution of \((\sigma, \omega_{\cdot\wedge\sigma})\) is uniquely determined under \(\mP_x\), 
    we thus infer that any one-dimensional marginal distribution of weak solutions to \eqref{Eq:SDE} 
    satisfies the same identity \eqref{Efwt}, 
    and thus,     
    is unique for all \(x\in \mR^d\). 

    Consequently, in view of \cite[Theorem 6.2.3]{stroock2007multidimensional}, 
we infer that \(\mathbb{P}_x\) is uniquely determined for every \(x \in \mathbb{R}^d\), 
which, however, contradicts the non-uniqueness result in \Cref{Thm:example}. 
\end{proof}

\section{MVE and NFPE with critical singular kernels}\label{Sec:MVE}

\subsection{Existence}

Let us start with the existence part in \Cref{Thm:MV} $(i)$. 

\begin{proof}[Proof of \Cref{Thm:MV} (i)]
    Let 
	\[
	  K_1 := K\1_{\{|K|>1\}}, ~ K_2:= K\1_{\{|K|\leq 1\}}~ \mbox{ and }~ 
      K^n := K* \varrho^n, ~  K_i^n :=K_i*\varrho^n, 
	\] 
    where $\varrho^n$ is the mollifier as in the proof of  \Cref{Thm:SDE'}. 

	Consider the approximate McKean-Vlasov equation 
	\begin{equation*}
			\l\{
		\begin{aligned}
			\d X_t^n &= K^n(t, X_t^n-y) \rho^n(t)(\d y) + \sqrt{2}\d W_t\\
			\rho^n(t) &=\mathrm{law} (X_t^n), ~\rho^n(0)=\zeta\in \cP(\mR^d). 
		\end{aligned}
		\r.
	\end{equation*}
 Its well-posedness is standard, see, for instance, \cite{rockner2021DDSDE, zhao2024existence}. Set 
	\begin{equation}\label{eq:bin}
	    b^n(t,x):= \int_{\mR^d} K^n(t, x-y) \rho^n(t, \d y) ~\mbox{ and }~ b^n_i(t,x):= \int_{\mR^d} K_i^n(t, x-y) \rho^n(t, \d y),  
	\end{equation}
	where $i=1,2$. Then 
	\begin{equation*}
		\div b^n=0, \quad \sup_n \|b^n\|_{L^\infty_t L_x^{d,\infty}}\leq C  \|K^n\|_{L^\infty_t L_x^{d,\infty}} \leq C \|K\|_{L^\infty_t L_x^{d,\infty}}<\infty. 
	\end{equation*} 
 
	Arguing as in the proof of \Cref{Thm:SDE'} 
    and applying the Skorokhold representation theorem 
    we infer that there exist a probability space $(\Omega, \cF, \bP)$ and a sequence of random maps $(Y,B)$ and  
   $\{(Y^n,B^n)\}$ such that, 
   $B^n, B$ are Brownian motions, 
	\[
	    (X^n, W^n)\overset{d}{=}(Y^n, B^n),\ \  (X, W)\overset{d}{=}(Y, B),\ \  \mbox{and}\ \ 
	    (Y^n, B^n) \to (Y, B),\ \ 
     \bP-a.s.. 
	\]
    Moreover, for any $(p,q)\in \sI$,  
    the Krylov estimate holds 
    \begin{equation}\label{eq:kry3}
        \bE \int^{T}_{0} f(t,Y_{t}^n) \d t, ~ \bE \int^{T}_{0} f(t,Y_{t}) \d t  \leq C\| f\|_{L^q_t L_x^p}.
    \end{equation}
    
    In order to prove that the limit  \((Y,B)\) is a weak solution of \eqref{Eq:MV}, 
    as in the arguments below \eqref{eq:kry2} in the proof of \Cref{Thm:SDE'}, 
    it suffices to show that 
    \begin{align} \label{bn-b-limit}
         b^n_i \to b_i:=K_i*\rho \text{ in } L^{q_i}_t L_x^{p_i},\ \ i=1,2,
    \end{align} 
    where \(\rho(t)\) is the distribution of \(Y_t\) and $(p_i, q_i)\in \sI$ is as in Subsection \ref{Subsec-Exist-SDE}. 
    
    For this purpose, given a large number $N\in \mN$, by the triangle inequality 
    \begin{equation*}
	\begin{aligned}
        |b^n_1-b_1| \leq&  |(K_1-K^N_1)*\rho|+|(K^n_1-K^N_1)*\rho^n|+|K^N_1*\rho-K^N_1*\rho^n| \\
        =:&  I_{1}+I_{2}+I_{3}.
	\end{aligned}
    \end{equation*}
    Noting that \(K_1 \in L^\infty_t L^{p_1}_x\), we have 
    \[
        \lim_{N\to\infty} \|I_{1}\|_{L^{q_1}_t L_x^{p_1}}\leq \lim_{N\to\infty} \|K_1-K^N_1\|_{L^{q_1}_t L_x^{p_1}}=0 
    \]
    and 
    \[
        \lim_{N\to\infty} \limsup_{n\to\infty} \|I_{2}\|_{L^{q_1}_t L_x^{p_1}}\leq \lim_{N\to\infty} \limsup_{n\to\infty} \|K_1^n-K^N_1\|_{L^{q_1}_t L_x^{p_1}}=0. 
    \]
    Regarding \(I_3\), since $K_1^N\in C_b^\infty(\mR^{d+1})$ and \(\sup_{t\in[0,T]}|Y_t-Y_t^n|\to 0\), \(\bP\)-a.s, it holds that  
    \begin{equation}\label{eq:KYn-to-KY}
    \begin{aligned}
        &K^N_1*\rho(t,x)-K^N_1*\rho^n(t,x)\\
        =&\mathbf{E} [K_1^N(t,x-Y_t)-K_1^N(t, x-Y_t^n)]\to 0, 
        \quad {\rm as}\ n\to\infty, \quad t\in [0,T], x\in \mR^d. 
    \end{aligned}
    \end{equation}
    Moreover, thanks to \eqref{Eq:AE}, we have 
    \begin{equation}\label{eq:KYn-leq-F}
        |K_1^N*\rho^n|(t, x)\leq C \int_{\mR^d} |K_1^N(x-y)| \d y\int_{\mR^d} h(Ct, z-y)\zeta(\d z)=:F^{N}(t,x).  
    \end{equation}
    Note that 
    \begin{align*}
        \|F^{N}\|_{L^{q_1}_tL^{p_1}_x} \leq& C \|K_1^N\|_{L^{q_1}_tL^{p_1}_x} \|h(Ct)*\zeta\|_{L^\infty_tL^1_x} \leq C \|K_1\|_{L^{\infty}_tL^{p_1}_x}\\
        \leq& C\|K\|_{L^\infty_tL^{d,\infty}_x}^{\frac{d}{p_1}} \l( \int_1^\infty \lambda^{p_1-1-d} \d \lambda \r)^{\frac{1}{p_1}}\leq C, 
    \end{align*}
    which along with \eqref{eq:KYn-to-KY}, \eqref{eq:KYn-leq-F} and the dominated convergence theorem implies that 
    \[
    \lim_{n\to\infty} \|I_{3}\|_{L^{q_1}_t L_x^{p_1}} = 0. 
    \]
    
    Combining the above estimates for \(I_i\), i=1,2,3, we obtain 
    \[
    \lim_{n\to\infty}\|b^n_1-b_1\|_{L^{q_1}_t L_x^{p_1}}\leq  \lim_{N\to\infty} \limsup_{n\to\infty} \sum_{i=1}^3\|I_i\|_{L^{q_1}_tL^{p_1}_x}=0. 
    \]
    Similar arguments also apply to the $b_2^n$ component:  
    \[
    \lim_{n\to\infty}\|b^n_2-b_2\|_{L^{q_2}_t L_x^{p_2}} =0.  
    \]
    Thus,  we obtain the desired limit \eqref{bn-b-limit} and 
    finish the proof of the existence part.

\medskip 
	Now, let $X$ be the solution to \eqref{Eq:MV} obtained by the above approximation procedure.  
    Note that, the drift satisfies the condition \eqref{Eq:Ab}, and thus, by Theorem \ref{Thm:SDE}, for each \(t\in (0,T]\) the distribution of $X_t$ has the density (still denoted by $\rho(t)$) satisfying
	\[
	    \rho(t,y)=\int_{\mR^d} p^b_{t_0, t}(x,y) \rho({t_0},x)\d x, \quad  t_0\in (0,t), 
	\]
	where  the drift $b(t,x)=K*\rho(t,x)$, 
    and $p^b_{s,t}$ is the transition density corresponding to the backward Kolmogorov operator \(\p_t+\Delta+b\cdot \nabla\), which satisfies the Aronson estimate \eqref{Eq:AE}. 
    
    Furthermore, 
    the drift is bounded on positive time, that is, 
	\[
	    \|b\|_{L^\infty([t_0, T]\times \mR^d)}\leq C {t_0}^{-\frac{1}{2}} \|K\|_{L^\infty_tL^{d,\infty}_x}<\infty, \quad t_0\in (0,T). 
	\]
	Since for each $t_0\in(0,t)$, $p^{b}_{t_0, t}= p^{b\1_{[t_0, T]}}_{t_0, t}$ and $b\1_{[t_0, T]}$ is bounded, 
 in view of \cite[Theorem 2.3]{chen2017heat}, we get 
	\[
	    |\nabla \rho(t, y)| \leq \l| \int_{\mR^d}\nabla_y p_{t_0, t}^{b\1_{[t_0, T]}}(x,y) \rho_{t_0}(\d x) \r| \leq C(d, t_0,K) |t-t_0|^{-\frac{1}{2}}, \quad t_0<t\leq T. 
	\]
    An application of inductive arguments 
    then leads to   
	\[
	    \|\nabla^k \rho(t)\|_{L_x^\infty} \leq C(k,t)<\infty, \quad k\geq 0,\quad t\in (0,T]. 
	\] 
    This gives the regularity of the corresponding density.  
\end{proof}
  
    \begin{proof}[Proof of \Cref{Thm:NFP}(i)]
    Let \(K^n\) be the same kernel as in the proof of \Cref{Thm:MV}, and let \(\rho^n\) be the solution to \eqref{Eq:NFP} with \(K\) replaced by \(K^n\). Let \(b^n\) be given by \eqref{eq:bin} and  \(p^n\) the transition density function associated with the operator \(\Delta + b^n \cdot \nabla\). 
    
    We first claim that \((\rho^n)\) is relatively compact in \(C_t\cM_x\). 
    To this end, we note that 
    \[
        \|\rho^n(t)\|_{\cM}\leq C \l\| \int_{\mR^d}p^n_{0,t}(x,\cdot) |\zeta|(\d x) \r\|_{\cM}\leq \|\zeta\|_{\cM}, \quad t\in [0,T], 
    \]
    which implies that \((\rho^n(t))\) is  compact in \(\cM\), for each \(t\in [0,T]\). According to the Arzelà-Ascoli theorem, we only need to prove that \((\rho^n)\) is equicontinuous in \(C_t\cM_x\). For this purpose, taking any \(f\in C_0(\mR^d)\), we have 
    \begin{align*}
        \l\langle \rho_{t+\delta}^n-\rho^n(t), f \r\rangle  =& \iint_{\mR^{2d}} \l[ p^n_{0,t+\delta}(x,y)-p^n_{0,t}(x,y)\r] f(y) \, \zeta(\d x) \, \d y  \\
        =& \int_{\mR} \zeta(\d x) \int_{\mR^d} p^n_{0,t}(x,y)  \l( \int_{\mR^d} p^n_{t,t+\delta}(y,z) f(z) \,\d z -f(y)\, \r) \d y. 
    \end{align*}
    Since 
    \begin{align*}
        &\l|\int_{\mR^d} p^n_{t,t+\delta}(y,z) f(z)\d z-f(y)\r|\\
        \leq& \int_{|z-y|<\sqrt[3]{\delta}} |f(y)-f(z)| p^n_{t,t+\delta}(y,z)\, \d z + 2\|f\|_{L^\infty} \int_{|z-y|>\sqrt[3]{\delta}} p^n_{t,t+\delta}(y,z)\, \d z \\
        \leq & C \int_{|z-y|<\sqrt[3]{\delta}} |f(y)-f(z)| h(C\delta, y-z)\, \d z + C \|f\|_{L^\infty} \int_{|z-y|>\sqrt[3]{\delta}} h(C\delta, y-z)\, \d z \\
        \leq & C \osc_{B_{\sqrt[3]{\delta}}(y)} f + C \delta,  
    \end{align*}
    We get 
    \[
    \sup_{n} |\l\langle \rho_{t+\delta}^n-\rho^n(t), f \r\rangle| 
    \leq C \sup_{y\in\mR^d}\osc_{B_{\sqrt[3]{\delta}}(y)} f + C \delta  \to 0, 
    \]
    which yields the equicontinuity of \((\rho^n)\) , as claimed. 
    
    Now, let \(\rho\) be the limit of \((\rho^n)\) along a subsequence, which, for simplicity, we still denote by \(n\). Noting that \(\rho^n\) is bounded in \(L^{q}_tL^{p}_x\), for any \((p,q)\in \sI\), therefore, we also have \(\rho^n\rightharpoonup \rho\) in \(L^{q}_tL^{p}_x\). Then, 
    as in the proof of \eqref{bn-b-limit}, one can verify that 
    \[
        \int_0^t \langle \rho^n(s), K^n * \rho^n(s) \cdot \nabla \phi(s) \rangle \, ds \to \int_0^t \langle \rho(s), K * \rho(s) \cdot \nabla \phi(s) \rangle \, ds, \quad \phi\in C_c^\infty ([0,T]\times \mR^d), 
    \]
    which implies that \(\rho\) is a distributional solution to \eqref{Eq:NFP}.
\end{proof}
    
\subsection{Uniqueness} 
Next, we prove the uniqueness part in \Cref{Thm:MV}. Recall that $\zeta\in \cM(\mR^d)$ can be decomposed uniquely as \(\zeta= \zeta_c+\zeta_a\), where $\zeta_c$ is the continuous part, i.e.,  $\zeta_c(\{x\})=0$ for all $x\in \mR^d$, and $\zeta_a=\sum_{i}c_i \delta_{x_i}$ is the purely atomic part. 
Also recall that $h$ is the Gaussian kernel given by \eqref{Eq:H}. 

We shall use the following estimate for Gaussian kernel proved by Giga-Miyakawa-Osada \cite{giga1988two}. 
	\const{\Cheat}
	\begin{lemma} [\cite{giga1988two}] \label{Lem:Decay} 
        For any $r>1$ and $\beta\geq 1$, 
		\begin{equation*}
			\limsup_{t\to 0}t^{\frac{d}{2r'}}\| h(t)* \zeta \|_{L_x^{r,\beta}}\leq C_{\Cheat}(d, r) |\zeta_a|,  
		\end{equation*}
		where $|\zeta_a|$ is the total variation of $\zeta_a$ of $\mathbb{R}^d$, and  $r'$ is the conjugate number of $r$.  
	\end{lemma}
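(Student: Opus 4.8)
The plan is to split $\zeta=\zeta_c+\zeta_a$ into its continuous and atomic parts, use $h(t)*\zeta=h(t)*\zeta_c+h(t)*\zeta_a$, and show that the atomic part alone is responsible for the bound $C_{\Cheat}|\zeta_a|(\mR^d)$ — in fact uniformly in $t>0$, not merely in the limit — while the continuous part contributes nothing as $t\to0^+$. Since $L^{r,\beta}$ is normable for $r>1$, $\beta\ge1$, the triangle inequality holds up to a fixed constant, so it suffices to treat the two pieces separately and add, which I would do at the very end via subadditivity of $\limsup$.

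For the atomic part the argument is routine: write $\zeta_a=\sum_i c_i\delta_{x_i}$ with $\sum_i|c_i|=|\zeta_a|(\mR^d)$, so that $h(t)*\zeta_a=\sum_i c_i\,h(t,\cdot-x_i)$, and by translation invariance of Lebesgue measure together with \eqref{Eq:h-Lr},
\[
 \|h(t)*\zeta_a\|_{L^{r,\beta}_x}\le C\sum_i|c_i|\,\|h(t,\cdot-x_i)\|_{L^{r,\beta}_x}=C\,\|h(t,\cdot)\|_{L^{r,\beta}_x}\,|\zeta_a|(\mR^d)\le C\,t^{-\frac{d}{2r'}}|\zeta_a|(\mR^d),
\]
which gives $\sup_{t>0}t^{\frac{d}{2r'}}\|h(t)*\zeta_a\|_{L^{r,\beta}_x}\le C_{\Cheat}|\zeta_a|(\mR^d)$.

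The real content is the continuous part, where I want $\lim_{t\to0^+}t^{\frac{d}{2r'}}\|h(t)*\zeta_c\|_{L^{r,\beta}_x}=0$. Since the Lorentz norm depends only on $|\cdot|$ and $|h(t)*\zeta_c|\le h(t)*|\zeta_c|$ pointwise, I may replace $\zeta_c$ by the finite nonnegative \emph{atomless} measure $\mu:=|\zeta_c|$. The key tool is a uniform small-ball estimate: for every $\eta>0$ there is $\delta_0>0$ with $\mu(B_\delta(x))\le\eta$ for all $x\in\mR^d$ and all $\delta\le\delta_0$; this follows from $\mu(B_\delta(x))\downarrow\mu(\{x\})=0$ for each fixed $x$, together with tightness of $\mu$ and a compactness argument excluding a sequence $x_n$, $\delta_n\to0$ with $\mu(B_{\delta_n}(x_n))\ge\eta$. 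Granting this, I split at radius $R=K\sqrt t$:
\[
 h(t)*\mu(x)\le (4\pi t)^{-d/2}\Big[\mu(B_R(x))+e^{-R^2/(4t)}\mu(\mR^d)\Big],
\]
choose $K$ large so that $e^{-K^2/4}\mu(\mR^d)\le\eta$, and restrict to $t\le(\delta_0/K)^2$, obtaining $\|h(t)*\mu\|_{L^\infty_x}\le 2\eta(4\pi t)^{-d/2}$, while $\|h(t)*\mu\|_{L^1_x}\le\mu(\mR^d)$. Feeding these into the elementary interpolation bound $\|f\|_{L^{r,\beta}_x}\le C\|f\|_{L^1_x}^{1/r}\|f\|_{L^\infty_x}^{1-1/r}$ — which one proves directly from the decreasing rearrangement using $f^*(s)\le\min(\|f\|_\infty,\|f\|_1/s)$, the relevant integrals converging because $r>1$ — yields $t^{\frac{d}{2r'}}\|h(t)*\mu\|_{L^{r,\beta}_x}\le C\mu(\mR^d)^{1/r}\eta^{1-1/r}$ for all small $t$, and letting $t\to0^+$ and then $\eta\to0^+$ closes this part. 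Adding the two contributions gives $\limsup_{t\to0}t^{\frac{d}{2r'}}\|h(t)*\zeta\|_{L^{r,\beta}_x}\le C_{\Cheat}|\zeta_a|(\mR^d)$.

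The main obstacle I anticipate is exactly the uniform small-ball estimate for atomless measures; once that is in hand, the rest is the $L^1$–$L^\infty$ interpolation bookkeeping together with the heat-kernel estimate \eqref{Eq:h-Lr}, and is mechanical.
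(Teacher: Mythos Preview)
The paper does not include its own proof of this lemma; it is quoted directly from \cite{giga1988two}. Your argument is correct and is essentially the classical Giga--Miyakawa--Osada proof: split $\zeta=\zeta_c+\zeta_a$, bound the atomic part uniformly in $t$ via \eqref{Eq:h-Lr} and the (normable) triangle inequality in $L^{r,\beta}$, and kill the continuous part as $t\to0$ using the uniform small-ball property of atomless finite measures combined with $L^1$--$L^\infty$ interpolation. The only addition relative to the original reference is the passage from $L^r$ to $L^{r,\beta}$, which you handle via the rearrangement bound $f^*(s)\le\min(\|f\|_\infty,\|f\|_1/s)$; this is clean and requires only $r>1$, $\beta\ge1$, exactly the stated range.
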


\begin{proof}[Proof of the uniqueness part in \Cref{Thm:MV}:]
    
    Suppose that $Y^{(1)}$ and $Y^{(2)}$ are two weak solutions to \eqref{Eq:MV} 
    with the density $\rho^{(1)}$ and $\rho^{(2)}$, respectively, 
    and satisfy the Krylov-type estimate \eqref{eq:kry_d}. Put $b^{(i)}=K*\rho^{(i)}$, $i=1,2$. 
    Since 
    \[
        \|b^{(i)}\|_{L^\infty_t L_x^{d,\infty}} \leq C(d) \|K\|_{L^\infty_t L_x^{d,\infty}}, 
    \]
    thanks to \Cref{Thm:SDE'},  
    we have the representation formula of solutions 
    \begin{equation}\label{eq:rho}
	      \rho^{(i)} (t,y) = \int_{\mR^d} p_{0, t}^{b^{(i)}}(x,y) \zeta (x) \d x, \quad i=1,2, 
    \end{equation}
    where $p^{b^{(i)}}_{0,t}$ is the transition density of the unique solution to \eqref{Eq:SDE} with the drift $b=b^{(i)}$ starting from the initial time $s=0$. 
    Moreover, by \eqref{eq:rho},  the Aronson-type estimate \eqref{Eq:AE} and \Cref{Lem:Decay}, there exists $T_0>0$ such that
        \const{\Cheat2}
     \begin{equation}\label{eq:decay}
         \sup_{t\in [0,T_0]} t^{\frac{d}{2r'}} \|\rho^{(i)}(t)\|_{L_x^{r,\beta}} \leq C_{\Cheat2} |\zeta_a|,  \quad \forall r>1,\ \beta\geq 1.   
    \end{equation}
    Set  
    \[
    \rho(t) :=\rho^{(1)}(t)-\rho^{(2)}(t)~\mbox{ and }~ b:=b^{(1)}-b^{(2)}.
    \]
    It follows from the Duhamel formula  
    of $\rho^{(i)}$, $i=1,2$, 
    that  
    \begin{equation}\label{eq:winteg}
	\begin{aligned}
         \rho(t)=&\int_0^t \nabla\cdot h(t-s)*(b^{(1)}(s)\,\rho(s)) \d s- \int_0^t \nabla\cdot h(t-s)*(b(s)\,\rho^{(2)}(s)) \d s \\
         =&:J_1(t)-J_2(t). 
	\end{aligned}
    \end{equation} 

    In light of Lemma \ref{Lem:Decay}, 
    we set the following norm as in \cite{giga1988two}
		\[
		    \|f\|_{r,T_0}:= \sup_{t\in [0,T_0]} t^{\frac{d}{2r'}} \|f(t)\|_{L_x^r} 
		\] 
    for any $r\in (1,d')$, 
		where $r'$ is the conjugate exponet of $r$, 
    which measures the propagation of heat flows. 
    
    By \eqref{eq:decay} and Young's inequality \eqref{Eq:Young2},  
    for any $s\in [0,T_0]$, 
		\begin{align*}
			\|b^{(1)}(s)\, \rho(s)\|_{L_x^r}\leq& \|b^{(1)}(s)\|_{L_x^\infty} 
            \|\rho(s)\|_{L_x^r} 
            \leq C \|\rho^{(1)}(s)\|_{L_x^{d',1}} \|\rho(s)\|_{L_x^r} \\
			\leq & C |\zeta_a| s^{-\frac{1}{2}} \|\rho(s)\|_{L_x^r}. 
		\end{align*}
		This yields that  
		\begin{equation}\label{eq:vorticityI}
			\begin{aligned}
				\|J_1\|_{r,T_0}\leq& C |\zeta_a| \sup_{t\in [0,T]} t^{\frac{d}{2r'}} \l[\int_0^t (t-s)^{-\frac{1}{2}} s^{-\frac{1}{2}(1+\frac{d}{r'})}\|\rho\|_{r,T_0}\, \d s\r]\\
				\leq & C B\l(\frac{1}{2}, \frac{1}{2}\l(1-\frac{d}{r'}\r)\r) |\zeta_a| \|\rho\|_{r,T_0}. 
			\end{aligned}
		\end{equation}
		Regarding the second term $J_2$, 
        using \eqref{eq:decay} and Young's inequality \eqref{Eq:Young1} again we derive 
		\begin{align*}
			\|b(s)\rho^{(2)}(s)\|_{L_x^r} \leq \|b(s)\|_{L_x^l} \|\rho^{(2)}(s)\|_{L_x^{d'}} 
            \leq C  |\zeta_a| s^{-\frac{1}{2}}  \|\rho(s)\|_{L_x^r} 
		\end{align*}
		with $r\in (1, d')$ and ${1}/{l}={1}/{r}+{1}/{d}-1$. Thus, we have 
		\begin{equation}\label{eq:vorticityJ}
			\begin{aligned}
				\|J_2\|_{r,T_0} \leq & |\zeta_a| \sup_{t\in [0,T]}  t^{\frac{d}{2r'}}  \l[ \int_0^t (t-s)^{-\frac{1}{2}} s^{-\frac{1}{2}(1+\frac{d}{r'})} \|\rho\|_{r,T_0}\, \d s\r]\\
				\leq & C B\l(\frac{1}{2}, \frac{1}{2}\l(1-\frac{d}{r'}\r)\r) |\zeta_a| \|\rho\|_{r,T_0}. 
			\end{aligned}
		\end{equation}
		
    Therefore, plugging \eqref{eq:vorticityI} and \eqref{eq:vorticityJ} into \eqref{eq:winteg} 
        and using the smallness of the initial mass we thus get 
		\const{\Crho}
		\[
		\|\rho\|_{r,T_0}\leq C_{\Crho} |\zeta_a| \|\rho\|_{r,T_0}\leq \eps_0C_{\Crho}\|\rho\|_{r,T_0}, 
		\]
    where $C_{\Crho}$ only depends on $d, r$ and $\|K\|_{L^\infty_tL^{d,\infty}_x}$. 
    It follows that 
    $\|\rho\|_{r,T_0}=0$ if $\eps_0<C_{\Crho}^{-1}$.

     Note that for any $\rho \in C_t \cP_x$, the convolution $K * \rho$ belongs to the critical Lorentz space $L^\infty_t L^{d,\infty}_x$ and satisfies the divergence free condition $\div(K * \rho) = 0$. In view of the uniqueness result for \eqref{Eq:SDE} in \Cref{Thm:SDE'}, 
    for solutions in the Krylov class, the uniqueness of the path law for \eqref{Eq:MV} is a consequence of the uniqueness of the marginal distributions. 
    
    Therefore, the proof of  \Cref{Thm:MV} is complete.  
    \end{proof}

\medskip

        The uniqueness part of \Cref{Thm:NFP} can be proved in an analogous manner as that of \Cref{Thm:MV}, with the modification that \Cref{Prop:LFP} is used in place of \Cref{Thm:SDE}. So the details are omitted here.

\medskip

Next, we present the proof for \Cref{Thm:NS}. 
\begin{proof}[Proof of \Cref{Thm:NS}]  
    The uniqueness with smallness condition follows immediately from Theorem \ref{Thm:NFP}. In the case without smallness condition, by virtue of \Cref{Prop:LFP}, for any distributional solution  \(\rho\) to \eqref{Eq:NS} with \(\rho(0) = \zeta\in \cM(\mR^2)\) and satisfying  \eqref{uniq-supercritical}, one still has the following representation formula:  
    \begin{equation}\label{eq:rho-Inter2}
        \rho(t, y) = \int_{\mR^2} p^{b}_{0, t}(x, y) \zeta(x) \d x, 
    \end{equation}
    where \(b = K_{\mathrm{BS}} * \rho\), and $p^{b}_{0,t}$ is the transition density of the unique solution to \eqref{Eq:SDE} with $s=0$. Using this representation formula and the Aronson-type estimate \eqref{Eq:AE} 
we derive 

        \[
        \|\rho(t)\|_{L^\infty} \leq C \|h(Ct,\cdot)\|_{L^\infty} |\mu| \leq \frac{C}{t}|\zeta|, \quad t\in (0,T] 
        \]
        and 
        \[
        \|\rho(t)\|_{L^1} \leq C \|h(Ct,\cdot)\|_{L^1} |\zeta| \leq C |\zeta|<\infty, \quad t\in [0,T]. 
        \]
        Moreover, for each \(t>0\), \eqref{Eq:pHolder} implies that \(\rho(t)\in C_0(\mR^2)\). 
Thus, we infer that \(\rho\in C((0,T];L^1\cap L^\infty)\) and the solution lives in the uniqueness solution class in \cite{Isabelle2005uniqueness}. 
Therefore, by virtue of \cite[Theorem 1.1]{Isabelle2005uniqueness}, 
we obtain the desired uniqueness assertion.  
    \end{proof}

    Our result for the 2D vorticity NSE 
    also implies the uniqueness 
    for the original 2D NSE in the velocity formulation.  
    This is the content of Corollary \ref{Cor:NS} below. 
    
    \begin{corollary}\label{Cor:NS} 
     Let \((\mathfrak{p}, \mathfrak{q})\in \sI\) be the pair as in \Cref{Thm:NS}.
        Suppose that \(u^{(1)}\) and \(u^{(2)}\) are two distributional solutions 
        to the $2$D NSE 
        \begin{equation*}
            \partial_t u+(u \cdot \nabla) u=\Delta u-\nabla p, \quad \div u=0, \quad u|_{t=0}=u_0. 
        \end{equation*}
        and that \(u^{(i)}\in L^{\mathfrak{q}'}_t\dot{W}^{1,\mathfrak{p}'}_x\), \(u^{(i)}(t)\in L^{2,\infty}_x\) and \(\nabla\times u^{(i)}:= \partial_1 u_2^{(i)}-\partial_2 u_1^{(i)}\in C_t\cM_x\), 
        $i=1,2$. 
        Then \(u^{(1)} = u^{(2)}\). 
    \end{corollary}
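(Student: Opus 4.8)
The plan is to reduce the velocity formulation to the vorticity formulation, invoke \Cref{Thm:NS}, and then recover the velocity field from its vorticity by the Biot--Savart law. For $i=1,2$ I would set $\rho^{(i)} := \nabla\times u^{(i)} = \partial_1 u_2^{(i)} - \partial_2 u_1^{(i)}$. By hypothesis $\rho^{(i)}\in C_t\cM_x$, so in particular $\zeta := \nabla\times u_0\in\cM(\mR^2)$ is a well-defined finite measure and is the common initial datum of both $\rho^{(i)}$; moreover, since $u^{(i)}\in L^{\mathfrak{q}'}_t\dot W^{1,\mathfrak{p}'}_x$, the vorticity $\rho^{(i)}$ is a fixed linear combination of components of $\nabla u^{(i)}$, hence $\rho^{(i)}\in L^{\mathfrak{q}'}_t L^{\mathfrak{p}'}_x$, i.e. $\rho^{(i)}$ belongs to the uniqueness class \eqref{uniq-supercritical} of \Cref{Thm:NS} on every interval $(\delta,T)$.

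Next I would establish the Biot--Savart reconstruction $u^{(i)} = K_{\mathrm{BS}}*\rho^{(i)}$. Since $\rho^{(i)}(t)\in\cM(\mR^2)$ and $K_{\mathrm{BS}}\in L^{2,\infty}(\mR^2)$, the convolution $v^{(i)}(t):=K_{\mathrm{BS}}*\rho^{(i)}(t)$ is well defined and lies in $L^{2,\infty}_x$ by Young's inequality in Lorentz spaces (\Cref{Prop:Lorentz}), and satisfies $\div v^{(i)}(t)=0$ and $\nabla\times v^{(i)}(t)=\rho^{(i)}(t)$ in $\mathcal D'(\mR^2)$. Hence $w^{(i)}(t):=u^{(i)}(t)-v^{(i)}(t)$ obeys $\div w^{(i)}(t)=0$ and $\nabla\times w^{(i)}(t)=0$, which in $\mR^2$ forces $\Delta w^{(i)}_j(t)=0$ for $j=1,2$. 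Since $w^{(i)}(t)\in L^{2,\infty}_x\hookrightarrow\mathcal S'(\mR^2)$, each component is a tempered harmonic distribution, hence a polynomial; but a nonzero polynomial cannot lie in $L^{2,\infty}(\mR^2)$ (constants violate the weak-$L^2$ bound at small levels, positive-degree polynomials at large levels), so $w^{(i)}(t)\equiv0$ for a.e.\ $t$, i.e. $u^{(i)}=K_{\mathrm{BS}}*\rho^{(i)}$.

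With this in hand I would pass to the vorticity equation: taking the curl of the momentum equation — equivalently, testing the distributional formulation of the $2$D NSE against divergence-free fields of the form $\nabla^\perp\psi=(-\partial_2\psi,\partial_1\psi)$ with $\psi\in C_c^\infty$ — eliminates the pressure, and since $\div u^{(i)}=0$ the nonlinearity becomes $u^{(i)}\cdot\nabla\rho^{(i)}=\div(u^{(i)}\rho^{(i)})$, the product $u^{(i)}\rho^{(i)}$ being locally integrable by Hölder in Lorentz spaces applied to $u^{(i)}\in L^{2,\infty}_x$ and $\rho^{(i)}\in L^{\mathfrak{p}'}_x$. Thus $\rho^{(i)}$ is a distributional solution of $\partial_t\rho^{(i)}=\Delta\rho^{(i)}-\div(u^{(i)}\rho^{(i)})$ with $u^{(i)}=K_{\mathrm{BS}}*\rho^{(i)}$ and $\rho^{(i)}(0)=\zeta$, i.e. of \eqref{Eq:NS}, and it is narrowly continuous because $\rho^{(i)}\in C_t\cM_x$. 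Since both $\rho^{(1)}$ and $\rho^{(2)}$ lie in the class \eqref{uniq-supercritical}, \Cref{Thm:NS} yields $\rho^{(1)}=\rho^{(2)}$, whence $u^{(1)}=K_{\mathrm{BS}}*\rho^{(1)}=K_{\mathrm{BS}}*\rho^{(2)}=u^{(2)}$.

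I expect the main obstacle to be the velocity-reconstruction step, namely justifying that the Biot--Savart formula reproduces $u^{(i)}$ from $\rho^{(i)}$ within the precise function class of the statement; this is exactly the Liouville-type assertion that a tempered, componentwise-harmonic vector field in $L^{2,\infty}(\mR^2)$ must vanish, and the role of the hypotheses $u^{(i)}(t)\in L^{2,\infty}_x$ and $u^{(i)}\in L^{\mathfrak{q}'}_t\dot W^{1,\mathfrak{p}'}_x$ is precisely to make this argument and the subsequent product-integrability checks go through. The remaining points — that the curl of a distributional NSE solution solves the vorticity equation and that $\rho^{(i)}$ is narrowly continuous and in the uniqueness class — are routine.
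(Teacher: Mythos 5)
Your proposal is correct and follows essentially the same route as the paper: reduce to the vorticity formulation by establishing the Biot--Savart reconstruction $u^{(i)}=K_{\mathrm{BS}}*\rho^{(i)}$ via a Liouville argument for the divergence-free, curl-free remainder in $L^{2,\infty}(\mR^2)$, then test the momentum equation against $\nabla^\perp\psi$ and invoke \Cref{Thm:NS}. The only difference is cosmetic: for the Liouville step the paper estimates $|H(x)|\leq CR^{-1}$ directly from the mean-value property and a layer-cake bound, whereas you classify tempered harmonic distributions as polynomials and then rule polynomials out of $L^{2,\infty}(\mR^2)$; both are standard and equally valid here.
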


    We need the following auxiliary lemma. 
    \begin{lemma}\label{Lem:rep}
        Suppose that \(F\) is a divergence-free vector field in \(L^{2,\infty}(\mR^2;\mR^2)\) and \(\nabla\times F\in \cM\). Then 
        \[
        F= K_{\mathrm{BS}}*(\nabla \times F), 
        \] 
        where $K_{\mathrm{BS}}$ is the Biot-Savart law given by \eqref{eq:BS}. 
    \end{lemma}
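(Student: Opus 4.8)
\textbf{Proof proposal for Lemma \ref{Lem:rep}.}

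The plan is to verify the identity $F = K_{\mathrm{BS}} * (\nabla \times F)$ by showing that the difference $G := F - K_{\mathrm{BS}} * (\nabla \times F)$ is a distribution that is both divergence-free and curl-free, hence harmonic componentwise, and then to use the integrability/decay hypotheses to conclude that $G$ is identically zero. First I would set $\omega := \nabla \times F \in \cM(\mR^2)$ and record the two classical facts about the Biot--Savart kernel $K_{\mathrm{BS}}$ from \eqref{eq:BS}: (a) $\div K_{\mathrm{BS}} = 0$ in the sense of distributions, since $K_{\mathrm{BS}} = \nabla^\perp E$ where $E(x) = \tfrac{1}{2\pi}\log|x|$ is the fundamental solution of the Laplacian on $\mR^2$ and $\div \nabla^\perp \equiv 0$; and (b) $\nabla \times K_{\mathrm{BS}} = \Delta E = \delta_0$ in the sense of distributions. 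Since $K_{\mathrm{BS}} \in L^{2,\infty}_x$ and $\omega$ is a finite measure, the convolution $K_{\mathrm{BS}} * \omega$ is a well-defined tempered distribution (indeed in $L^{2,\infty}_x$ by Young's inequality in Lorentz spaces, cf.\ \Cref{App:Inter}), so $G$ makes sense.

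Next I would compute the divergence and curl of $G$ distributionally. By (a) and the fact that convolution commutes with differentiation, $\div(K_{\mathrm{BS}} * \omega) = (\div K_{\mathrm{BS}}) * \omega = 0$, so $\div G = \div F - 0 = 0$. Similarly, by (b), $\nabla \times (K_{\mathrm{BS}} * \omega) = (\nabla \times K_{\mathrm{BS}}) * \omega = \delta_0 * \omega = \omega$, so $\nabla \times G = \omega - \omega = 0$. Thus $G$ is a curl-free, divergence-free vector field in $\mR^2$; writing out $\partial_1 G_2 - \partial_2 G_1 = 0$ and $\partial_1 G_1 + \partial_2 G_2 = 0$ one sees that $(G_1, -G_2)$ satisfies the Cauchy--Riemann equations, so each component $G_i$ is harmonic (as a distribution, hence smooth by Weyl's lemma).

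Finally I would invoke a Liouville-type argument to kill $G$. Both $F$ and $K_{\mathrm{BS}} * \omega$ lie in $L^{2,\infty}(\mR^2)$, so $G \in L^{2,\infty}(\mR^2)$; a harmonic function on all of $\mR^2$ lying in $L^{2,\infty}$ (or in any $L^{p,\infty}$ with $p < \infty$) must vanish identically, since the mean-value property forces $|G(x)| \lesssim R^{-2/2}\|G\|_{L^{2,\infty}(B_R(x))} \to 0$ as $R \to \infty$ for every $x$. This gives $G \equiv 0$, i.e.\ $F = K_{\mathrm{BS}} * (\nabla \times F)$. The main obstacle I anticipate is making the identity $(\nabla \times K_{\mathrm{BS}}) * \omega = \omega$ fully rigorous when $\omega$ is merely a finite measure rather than a smooth function: one should argue by mollifying $\omega$ (convolving with a standard approximate identity $\varrho^n$), applying the smooth case where all manipulations are classical, and passing to the limit using that $K_{\mathrm{BS}} * \varrho^n * \omega \to K_{\mathrm{BS}} * \omega$ in $L^{2,\infty}_x$ (or in $\cD'$) and that $\nabla \times F$ is assumed to be exactly $\omega$; care is needed because $K_{\mathrm{BS}}$ is only in the weak-$L^2$ space, so Young's inequality must be used in its Lorentz-space form, which is available from the appendix. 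The harmonicity/Liouville step is then routine.
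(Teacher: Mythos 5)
Your proof is correct and follows essentially the same route as the paper's. You define the difference $G := F - K_{\mathrm{BS}}*(\nabla\times F)$, show it is simultaneously divergence-free and curl-free (hence harmonic by Weyl's lemma), and then kill it with a Liouville argument: the mean-value property combined with the $L^{2,\infty}$ bound gives $|G(x)|\lesssim R^{-1}\|G\|_{L^{2,\infty}}\to 0$. This is exactly the paper's argument, including the same decay rate $R^{-1}$. The paper is slightly more terse: it asserts $\div H = 0$ and $\nabla\times H = 0$ without invoking $K_{\mathrm{BS}} = \nabla^\perp E$ or worrying about mollification, and it obtains harmonicity from the identities $\Delta H_1 = \p_1(\div H)-\p_2(\nabla\times H)$ and $\Delta H_2 = \p_2(\div H)+\p_1(\nabla\times H)$ rather than the Cauchy--Riemann phrasing you use; it also bounds $\|H\|_{L^1(B_R)}$ via the layer-cake formula directly rather than via Lorentz H\"older, but both computations yield the same $R^{-1}$ decay. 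Your extra remarks on justifying $(\nabla\times K_{\mathrm{BS}})*\omega=\omega$ by mollification and on well-definedness of the convolution via Lorentz Young are sensible precautions that the paper leaves implicit, not a departure from its method.
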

    
    \begin{proof}
        Let \(G:=K_{\mathrm{BS}}*(\nabla \times F)\in L^{2,\infty}\) and \(H:=F-G\). 
        Then, 
        \(\div H=0\) and \(\nabla\times H=0\) in the sense of distribution. This implies 
        \begin{align*}
            0=&\partial_1(\div H)-\partial_2(\nabla\times H) = \Delta H_1, \\
            0=&\partial_2(\div H)+\partial_1(\nabla\times H)=\Delta H_2
        \end{align*}
        in the sense of distribution. Noting that \(H\in L^{2,\infty}\subseteq L_{loc}^1\), by the mean value theorem for harmonic functions, for each \(x\in \mR^2\),  we derive 
        \begin{align*}
            |H(x)|=&\frac{1}{|B_R|}\l|\int_{B_R(x)} H(y)\d y\r|\leq C R^{-2} \int_0^\infty |\{y\in B_R(x): |H(y)|>t\}| \d t\\
                \leq& C R^{-2} \int_0^\infty (t^{-2} \wedge R^2) \d t \leq CR^{-1}\to 0 
                \quad {\rm as}\ R\to \infty, 
        \end{align*}
        which means \(F=G=K_{\mathrm{BS}}*(\nabla \times F)\). 
    \end{proof}
    
    \begin{proof}[Proof of \Cref{Cor:NS}]
        We omit the superscript \(i\) below for simplicity. 
        Let 
        \[
        \rho :=\nabla\times u\in L^{\mathfrak{q}'}_tL^{\mathfrak{p'}}_x\cap C_t\cM_x.
        \] 
        In view of \Cref{Lem:rep}, 
        we have \(u= K_{\mathrm{BS}}*\rho\), 
        where $K_{\mathrm{BS}}$ is the Biot-Savart law given by \eqref{eq:BS}. 
        
        For any \(\phi\in C_c^\infty([0,T]\times\mR^2)\), set \(\varphi=\nabla^{\perp}\phi\). Then \(\varphi\) is divergence-free. The integration-by-parts formula yields that   
        \[
        \<\rho, \phi\>=\<\nabla\times K_{\mathrm{BS}}*\rho, \phi\>=-\<u, \nabla^{\perp}\phi\>=-\<u,\varphi\> 
        \]
        and
        \[
        \<u\rho,\nabla \phi\>=\<\nabla\times u, u\cdot\nabla \phi\>=-\<u, \nabla^{\perp} (u\cdot\nabla \phi)\>=-\int u_i u_j \p_j \varphi_i. 
        \]
        Moreover, 
        by the equation of $u$, 
        for almost every \(s,t\in (0, T)\), 
        \begin{align*}
            &\<u(t),\varphi(t)\>-\<u(s), \varphi(s)\>\\
            =& \int_s^t\<u(r), \p_r \varphi(r)+\Delta\varphi(r)\>\d r+ \int_s^t \d r\int_{\mR^2} u_i(r) u_j(r)\p_j\varphi_i(r). 
        \end{align*}
        Thus, combining the above calculations together we derive 
        \[
        \<\rho(t),\phi(t)\>-\<\rho(s),\phi(s)\>= \int_s^t \<\rho(r), \p_r \phi(r)+\Delta\phi(r)+u(r)\cdot\nabla \phi(r)\>\d r, 
        \]
        for almost every \(s,t\in (0,T)\). Taking into account \(\rho\in C_t\cM_x\), we then obtain 
        \[
        \<\rho(t),\phi(t)\>-\<\nabla\times u_0,\phi(0)\>= \int_0^t \<\rho(s), \p_s \phi(s)+\Delta\phi(s)+u(s)\cdot\nabla \phi(s)\>\d s, \quad t\in [0,T]. 
        \] 
That is, $\rho$ satisfies the $2$D vorticity NSE. 
        
        Therefore, by virtue of \Cref{Thm:NS}, we obtain \(\nabla\times u^{(1)}=\nabla \times u^{(2)}\), which along with \Cref{Lem:rep} yields the desired uniqueness $ u^{(1)}= u^{(2)}$. 
    \end{proof}
    
\section{Nonlinear Markov processes} \label{Sec:NMP}

Let us first review the notions from \cite{rehmeier2022nonlinear}. 
Let \(\Omega_s:= C([s,T]; \mR^d)\) 
be the set of all continuous paths in \(\mR^d\) starting from time s equipped with the topology of locally uniform convergence, 
and \(\cB(\Omega_s\)) the corresponding Borel \(\sigma\)-algebra. Define for \(0\leq s\leq r\leq t\leq T\), 
\[
     \pi^s_t: \Omega_s\to \mR^d; \quad \pi^s_t(\omega)=\omega(t), ~ \omega\in \Omega_s,
\]
\[
\cF^s_t:= \sigma (\pi^s_r: s\leq r\leq t), 
\]
and 
\[
\theta^s_r:\Omega_s\to\Omega_r; \quad (\theta^s_r\omega)(t)=\omega(t). 
\]
\begin{definition}[Nonlinear Markov Process \cite{rehmeier2022nonlinear}]\label{Def:NMP}
    Let $\cQ \subseteq \mathcal{P}(\mR^d)$. A nonlinear Markov process is a family  of probability measures $\left(\mathbb{P}_{s, \zeta}\right)_{(s, \zeta) \in [0,T] \times \cQ}$ on $\mathcal{B}\left(\Omega_s\right)$ such that
    \begin{enumerate}[(i)]
        \item For all $0 \leq s \leq r \leq t$ and $\zeta \in \cQ$, the marginals $\mu_t^{s, \zeta}:=\mathbb{P}_{s, \zeta} \circ\left(\pi_t^s\right)^{-1}$ belong to $\cQ$.
        \item The nonlinear Markov property holds, i.e. for all $0 \leq s \leq r \leq t\leq T, \zeta \in \cQ$ 
        \begin{equation*}
            \begin{aligned}
                \mathbb{P}_{s, \zeta}\left(\pi_t^s \in A \mid \mathcal{F}_{r}^{s}\right)(\cdot)=p_{(s, \zeta),\left(r, \pi_r^s(\cdot)\right)}\left(\pi_t^r \in A\right), \quad \mathbb{P}_{s, \zeta}\text{-a.s.} \text { for all } A \in \mathcal{B}(\mathbb{R}^d)
            \end{aligned}
        \end{equation*}
        where $p_{(s, \zeta),(r, y)}, y \in \mathbb{R}^d$, is a regular conditional probability kernel from $\mathbb{R}^d$ to $\mathcal{B}\left(\Omega_r\right)$ of $\mathbb{P}_{r, \mu_r^{s, \zeta},}\left[\cdot \mid \pi_r^r=y\right], y \in \mathbb{R}^d$. 
    \end{enumerate}
\end{definition}

\begin{proof}[Proof for \Cref{Thm:NMP}]
    We only provide the proof of (i), as the proof of (ii) follows identically. 
    
    Fix \(s\geq 0\) and \(\zeta\in \cP(\mR^d)\). Let \( \mP_{s,\zeta} \) be one of the limiting distributions in \( \cP(\Omega_s) \) of the approximation processes constructed in \Cref{Sec:MVE} with initial time \( s \). Then \(\mP_{s,\zeta}\) is a martingale solution to \eqref{Eq:MV} (with initial time \(s\)). Set
    \[
       \mu^{s,\zeta}_t 
:= \mathbb{P}_{s, \zeta} \circ (\pi^s_r)^{-1}, \quad s\leq t\leq T. 
    \] 
    Then, by \eqref{eq:kry3}, we see that 
    \[
        \int_s^T\!\!\!\int_{\mR^d}f(t,x)\mu^{s,\zeta}_t(\d x)\d t = \mE_{s,\zeta} \int_s^T f(t, \pi^s_t) \d t\leq C \|f\|_{L^q(s,T; L^p_x)}, 
    \]
    for any $(p,q)\in \sI$, namely, \((\mu^{s,\zeta})\) belongs to the Krylov class. Thanks to \Cref{Thm:NFP}, $\mu^{s,\zeta}$ is uniquely determined when $\zeta\in \cP_{\eps_0}$. 

    In order to prove the desired assertions,  
    by virtue of \cite[Corollary 3.9]{rehmeier2022nonlinear}, 
    we shall verify the following two conditions:
    \begin{enumerate}[(1)]
        \item[(a)] \(\{\mu^{s,\zeta}\}_{(s,\zeta)\in [0,T]\times\cP(\mR^d)}\) satisfies the flow property  
        \eqref{eq:flow} with \(\cQ=\cP(\mR^d)\); 
        \item[(b)] 
        \(\mu^{s,\zeta}\) is the unique solution  to the linearized Fokker-Planck equation 
        \begin{equation}\label{eq:linearized}
            \partial_t\rho=\Delta\rho-\div((K*\mu^{s,\zeta})\rho), \quad \rho|_{t=s}=\zeta, 
        \end{equation}
        that satisfies
        \begin{equation}\label{eq:rho-leq-mu}
            \rho(t)\leq C\mu^{s,\zeta}_t, \quad s\leq t\leq T,  
        \end{equation}
    \end{enumerate}
    where \(C>0\) is a constant. 
    
    To this end, we note that for any $r\in [s,T]$, the path $(\mu^{s,\zeta}_t)_{t \in [r,T]}$ solves \eqref{Eq:NFP} with initial datum $(r, \mu^{s,\zeta}_r)$. In view of \Cref{Thm:NFP}, for any $r\in (s,T]$,
    $\mu^{s,\zeta}_r$ admits a bounded density which implies that $\mu^{s,\zeta}_r\in \mathcal{P}_{\varepsilon_0}$. Therefore, invoking \Cref{Thm:NFP} once again, the flow property \eqref{eq:flow} is verified. 
    
    Moreover, 
    let 
    \[
    b(t,x) := \int_{\mathbb{R}^d} K(t,x-y) \mu^{s,\zeta}_t(\mathrm{d}y), \quad s\leq t\leq T.
    \]
    Then $b \in L^\infty([s,T]; L^{d,\infty}_x)$ and $\div\, b = 0$. Since $(\mu^{s,\zeta})$ lie in the Krylov class, 
    any curve of probability measures satisfying \eqref{eq:rho-leq-mu} also belongs to the Krylov class. Thus, 
    in view of the well-posedness result in \Cref{Prop:LFP}, 
    the linear Fokker-Planck equation \eqref{eq:linearized} admits a unique probabilistic solution satisfying \eqref{eq:rho-leq-mu}, which verifies condition $(b)$. 
    
    Therefore, the proof is complete.
\end{proof}

\appendix 
\section{Lorentz spaces}\label{App:Lorentz}
\setcounter{equation}{0}
\renewcommand\theequation{A.\arabic{equation}}

This section contains several useful properties of Lorentz spaces and corresponding interpolation estimates.  
For more detailed explanations, we refer to the nice monograph \cite{grafakos2008classical}.  
 
	\begin{definition}
		The Lorentz space is the space of complex-valued measurable functions $f$ on a measure space $(X,\mu)$ such that the following quasinorm is finite
		\[
		\|f\|_{L^{p,q}(X,\mu )}= p^{\frac{1}{q}} \l\{ \int_0^\infty t^{q-1} \l[\mu(\{x: |f(x)|>t\}) \r]^{\frac{q}{p}} ~ \d t \r\}^{\frac{1}{q}}, 
		\]
		where $0< p,q \leq  \infty$.  When $q=\infty$, we set 
		$$
		\|f\|_{{L^{{p,\infty }}(X,\mu )}} =\sup _{{t>0}} t \l[\mu ( \left\{x:|f(x)|>t\right\}) \r]^{\frac{1}{p}}.
		$$
		It is also conventional to set $L^{\infty,\infty}(X, \mu) = L^{\infty}(X, \mu)$. The space  $L^{p, q}$ is contained in $L^{p, r}$ whenever  $q<r$. 
	\end{definition}
	
	\begin{proposition}\label{Prop:Lorentz}
		\begin{enumerate}[(i)]
			\item Assume that $0< p, q\leq \infty$ and $\theta>0$, then 
			\begin{equation}\label{Eq:ftheta}	\|f^\theta\|_{L^{p,q}}=\|f\|_{L^{\theta p, \theta q}}^\theta. 
			\end{equation}
			\item (H\"older's inequality) Assume that $1\leq p_1, p, p_2\leq \infty$ and $1\leq q_1, q_2\leq \infty$, then 
			\begin{equation}\label{Eq:Holder}
				\|fg\|_{L^{p,q}} \leq C \|f\|_{L^{p_1,q_1}} \|g\|_{L^{p_2, q_2}}, 
			\end{equation}
			where 
			\[
			    \frac{1}{p}=\frac{1}{p_1}+\frac{1}{p_2} ~\mbox{ and }~  \frac{1}{q}=\frac{1}{q_1}+\frac{1}{q_2}. 
			\]
			\item (Young's inequality) Assume that $1<p_1, p, p_2<\infty$, $1\leq q_1, q_2\leq \infty$, then 
			\begin{equation}\label{Eq:Young1}
				\|f*g\|_{L^{p,q}}\leq C \|f\|_{L^{p_1,q_1}} \|g\|_{L^{p_2, q_2}}, 
			\end{equation}
			where 
			\[
			    1+\frac{1}{p}=\frac{1}{p_1}+\frac{1}{p_2}, \quad  \frac{1}{q}\leq \frac{1}{q_1}+\frac{1}{q_2}; 
			\]
            and if $1<p_1, p_2<\infty, 1\leq q_1, q_2 \leq \infty$, then
            \begin{equation}\label{Eq:Young2}
                \|f * g\|_{L^{\infty}} \leq C\|f\|_{L^{p_1, q_1}}\|g\|_{L^{p_2, q_2}},
            \end{equation}
            where 
            \[
            \frac{1}{p_1}+\frac{1}{p_2}=1  ~\mbox{ and }~  \frac{1}{q_1}+\frac{1}{q_2}\geq 1. 
            \]
			\item 
			Assume that $1<p\leq \infty$ and $1\leq q\leq \infty$, then 
			\begin{equation}\label{Eq:Max}
			    \|f\|_{L^{p,q}}\leq \|\cM f\|_{L^{p,q}}\leq C \|f\|_{L^{p,q}}, 
			\end{equation}
			where $\cM f$ is the Hardy-Littlewood maximal function of $f$. 
		\end{enumerate}
	\end{proposition}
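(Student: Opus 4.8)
All four assertions are classical facts about Lorentz spaces, and the plan is to reduce (ii)--(iv) to the decreasing rearrangement
\[
f^{*}(t):=\inf\{\lambda>0:\ \mu(\{|f|>\lambda\})\le t\},\qquad t>0,
\]
for which $\|f\|_{L^{p,q}}$ equals, up to constants depending only on $p,q$, the mixed norm $\big\|t^{1/p}f^{*}(t)\big\|_{L^{q}(\mR_{+},\,\d t/t)}$, together with the classical H\"older and Marcinkiewicz interpolation theorems; throughout we follow \cite{grafakos2008classical}. Assertion (i) will instead be obtained directly from the definition.

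For (i): in $\|f^{\theta}\|_{L^{p,q}}^{q}=p\int_{0}^{\infty}t^{q-1}\big[\mu(\{|f|>t^{1/\theta}\})\big]^{q/p}\,\d t$ I would substitute $s=t^{1/\theta}$ and use $q/p=(\theta q)/(\theta p)$ to recognise the right-hand side as $\|f\|_{L^{\theta p,\theta q}}^{\theta q}$; the case $q=\infty$ is the same computation without the integral. For (ii): I would start from the elementary rearrangement bound $(fg)^{*}(t)\le f^{*}(t/2)\,g^{*}(t/2)$, itself a consequence of the inclusion $\{|fg|>\lambda_{1}\lambda_{2}\}\subseteq\{|f|>\lambda_{1}\}\cup\{|g|>\lambda_{2}\}$, then split $t^{1/p}=2^{1/p}(t/2)^{1/p_{1}}(t/2)^{1/p_{2}}$ using $1/p=1/p_{1}+1/p_{2}$, and finish with H\"older's inequality on $(\mR_{+},\d t/t)$ with the conjugate pair of exponents $q_{1}/q,\ q_{2}/q$ (note $q/q_{1}+q/q_{2}=1$), followed by the substitution $t/2\mapsto t$; the endpoint cases where some $q_{i}=\infty$ are handled the same way.

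For (iii): I would deduce \eqref{Eq:Young2} from (ii). Since $1/q_{1}+1/q_{2}\ge1$ one has $q_{2}\le q_{1}'$, hence $L^{p_{2},q_{2}}\hookrightarrow L^{p_{2},q_{1}'}=L^{p_{1}',q_{1}'}$, so for fixed $x$ the Lorentz H\"older inequality (ii) in the $y$-variable, together with the translation invariance of the Lorentz quasinorm, gives $|f*g(x)|\le C\|f\|_{L^{p_{1},q_{1}}}\|g(x-\cdot)\|_{L^{p_{1}',q_{1}'}}\le C\|f\|_{L^{p_{1},q_{1}}}\|g\|_{L^{p_{2},q_{2}}}$. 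For \eqref{Eq:Young1} I would invoke O'Neil's convolution theorem \cite{grafakos2008classical}, whose proof rests on the pointwise estimate $(f*g)^{**}(t)\le t\,f^{**}(t)g^{**}(t)+\int_{t}^{\infty}f^{*}(s)g^{*}(s)\,\d s$ for the maximal rearrangement $f^{**}(t)=t^{-1}\int_{0}^{t}f^{*}$, combined with Hardy-type inequalities to control the two resulting terms over the full admissible range of Lorentz indices. For (iv): the first inequality is immediate, since $|f|\le\cM f$ a.e.\ by Lebesgue differentiation forces $f^{*}\le(\cM f)^{*}$ and hence $\|f\|_{L^{p,q}}\le\|\cM f\|_{L^{p,q}}$; for the second, I would use that $\cM$ is sublinear, of weak type $(1,1)$ and bounded on $L^{\infty}$, and apply the Lorentz-space version of the Marcinkiewicz interpolation theorem to conclude $\cM:L^{p,q}\to L^{p,q}$ for all $1<p<\infty$, $1\le q\le\infty$, the case $p=\infty$ being trivial from $\cM f\le\|f\|_{L^{\infty}}$.

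Since every step is standard, there is no genuine obstacle here; the only points that require care are the bookkeeping of the off-diagonal and endpoint exponents in (ii)--(iv), and, in \eqref{Eq:Young1}, the passage from the pointwise rearrangement estimate for convolutions to the full range of Lorentz exponents.
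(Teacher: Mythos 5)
Your proposal is correct and follows essentially the same route as the paper: parts (i)--(iii) are the classical rearrangement/H\"older/O'Neil facts (the paper simply cites \cite{oneil1963convolution, grafakos2008classical}, which you spell out in more detail), and part (iv) is obtained exactly as in the paper by combining the weak $(1,1)$ and strong $(\infty,\infty)$ bounds for $\cM$ with the Marcinkiewicz interpolation theorem in Lorentz spaces, supplemented by the easy observation $|f|\le\cM f$ a.e.\ for the lower bound. The added detail in (i) (the change of variables $s=t^{1/\theta}$) and (ii) (the rearrangement inequality $(fg)^*(t)\le f^*(t/2)g^*(t/2)$ followed by H\"older on $(\mR_+,\d t/t)$), as well as the reduction of \eqref{Eq:Young2} to (ii) via the embedding $L^{p_2,q_2}\hookrightarrow L^{p_1',q_1'}$, are all accurate.
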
 
 
   \begin{proof}
   	    The proofs of (i)-(iii) can be found in \cite{oneil1963convolution, grafakos2008classical}. For (iv), since 
   	    \begin{equation*}
   	    	\| \cM f \|_{L^{1,\infty}} \leq C \|f\|_{L^1} ~~\mbox{ and }~~ \| \cM f \|_{L^{\infty}} \leq \|f\|_{L^\infty}, 
   	    \end{equation*}
   	    by the Marcinkiewicz interpolation theorem for Lorentz spaces (see \cite{hunt1965extension}), one has 
   	    \begin{equation*}
   	    	\|\cM f\|_{L^{p,q}} \leq C_p \|f\|_{L^{p, q}} 
   	    \end{equation*} 
        for any $1<p\leq \infty$. 
   \end{proof}
	
	For any $s\in (0,d)$. The Riesz potential $I_sf$ of a locally integrable function $f$ on $\mR^d$ is defined by
	\[
		(I_{s}f)(x) :=K_s* f(x)={\frac {1}{c_{d,s}}}\int _{\mathbb {R} ^{d}}{\frac {f(y)}{|x-y|^{d-s }}}\,\mathrm {d} y. 
	\]
	We have 
	\begin{lemma} 
    [Boundedness of Riesz potential]
    \label{Lem:Sob}
		Let $0<s<d$ and $p\in (1, d/s)$. 
  There exists $C>0$ such that for any $f\in L^p$, 
		\begin{equation}\label{Eq:Sob}
			\|I_s f \|_{L^{q,p}}\leq C \|f\|_{L^p}, 
		\end{equation}
		where $\frac{1}{q}=\frac{1}{p}-\frac{s}{d}$.  
	\end{lemma}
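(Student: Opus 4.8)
The plan is to realize the Riesz potential as a convolution against the homogeneous kernel $G(x) := |x|^{-(d-s)}$, so that $I_s f = c_{d,s}^{-1}\, G * f$, and then to invoke the Lorentz-space Young inequality \eqref{Eq:Young1}. The first step is to record that $G$ lies in the weak Lorentz space $L^{d/(d-s),\infty}(\mR^d)$. This is immediate from \eqref{Eq:ftheta} applied with exponent $\theta = d-s$: one has $\|G\|_{L^{d/(d-s),\infty}} = \big\|\,|x|^{-1}\,\big\|_{L^{d,\infty}}^{\,d-s}$, and $|x|^{-1}\in L^{d,\infty}(\mR^d)$ by the elementary computation $\big|\{x : |x|^{-1} > t\}\big| = |B_1|\, t^{-d}$, which gives $\big\|\,|x|^{-1}\,\big\|_{L^{d,\infty}} = |B_1|^{1/d}$. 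Hence $\|G\|_{L^{d/(d-s),\infty}} =: C_{d,s} < \infty$.

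Next I would apply \eqref{Eq:Young1} with $f \in L^p = L^{p,p}$, so $p_1 = q_1 = p$, and $g = G \in L^{d/(d-s),\infty}$, so $p_2 = d/(d-s)$ and $q_2 = \infty$. The structural hypotheses of \eqref{Eq:Young1} are met: $1 < p < \infty$ since $p \in (1, d/s)$, and $1 < d/(d-s) < \infty$ since $0 < s < d$. The first-index balance $1 + \tfrac1q = \tfrac1p + \tfrac{d-s}{d}$ is exactly $\tfrac1q = \tfrac1p - \tfrac sd$, and this output exponent satisfies $1 < p < q < \infty$ precisely because $p < d/s$ forces $\tfrac1q > 0$. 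For the second Lorentz index, the admissible range is $\tfrac1r \le \tfrac1{q_1} + \tfrac1{q_2} = \tfrac1p$, so we may take $r = p$, obtaining
\[
\|I_s f\|_{L^{q,p}} = c_{d,s}^{-1}\,\|G * f\|_{L^{q,p}} \le C\, c_{d,s}^{-1}\, \|G\|_{L^{d/(d-s),\infty}}\,\|f\|_{L^{p,p}} = C\, c_{d,s}^{-1} C_{d,s}\,\|f\|_{L^p},
\]
which is the asserted bound with $\tfrac1q = \tfrac1p - \tfrac sd$.

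There is no real obstacle here: the statement is a direct consequence of the Lorentz-space Young inequality already recorded in Proposition \ref{Prop:Lorentz}. The only point deserving attention is the index bookkeeping — one must check that the convolution kernel sits in the \emph{weak} space $L^{d/(d-s),\infty}$ (not in any strong Lorentz space), and that feeding the second index $q_1 = p$ on the $L^p$ factor pushes the target second index all the way down to $p$. This is exactly the refinement of the statement over the classical Hardy–Littlewood–Sobolev bound $\|I_s f\|_{L^q} \le C\|f\|_{L^p}$, and it is what makes \eqref{Eq:Sob} strong enough for the sector-condition estimate used in the introduction.
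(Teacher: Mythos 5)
Your proof is correct and follows essentially the same route as the paper: both arguments rest on the observation that the Riesz kernel $|x|^{-(d-s)}$ lies in $L^{d/(d-s),\infty}$, combined with the Lorentz-space Young inequality (the paper cites O'Neil's Theorem 2.6, which is precisely the estimate recorded as \eqref{Eq:Young1}). The only difference is that you carry out the index bookkeeping explicitly, which is a welcome clarification of the paper's one-line application.
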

 
	\begin{proof}
		Noting that 
		\[
		K_{s}(x)={\frac  {1}{c_{d, s}}}{\frac  {1}{|x|^{{d-s }}}}\in L^{\frac{d}{d-s},\infty} 
		\] 
	and	using Theorem 2.6 of \cite{oneil1963convolution} one has  
		\[
		\|I_s f \|_{L^{q,p}}\leq C \|K_s\|_{L^{\frac{d}{d-s},\infty}} \|f\|_{L^p}\leq C \|f\|_{L^p}. 
		\]	
	\end{proof}

\section{Interpolation Inequalities}\label{App:Inter}
\setcounter{equation}{0}
\renewcommand\theequation{B.\arabic{equation}}

Below we collect several useful  interpolation estimates in Lorentz spaces. 

\begin{lemma}\label{Lem:Inter1}
		Let $1\leq p_1<p<p_2< \infty$, $q\geq 1$ and 
		\[
		    \frac{1}{p}=\frac{\theta}{p_1}+\frac{1-\theta}{p_2}. 
		\]
		Then 
		\begin{equation}\label{Eq:Inter1}
			\|f\|_{L^{p,q}} \leq C \|f\|_{L^{p_1,\infty}}^{\theta} \|f\|_{L^{p_2,\infty}}^{1-\theta}. 
		\end{equation}
	\end{lemma}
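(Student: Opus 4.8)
The plan is to prove the interpolation inequality \eqref{Eq:Inter1} by reducing it to the elementary definition of Lorentz quasi-norms via the distribution function. First I would recall that for a measurable function $f$, writing $d_f(\lambda):=|\{x:|f(x)|>\lambda\}|$ for its distribution function, the weak-type quantities appearing on the right-hand side are
\[
    \|f\|_{L^{p_1,\infty}}=\sup_{\lambda>0}\lambda\, d_f(\lambda)^{1/p_1},
    \qquad
    \|f\|_{L^{p_2,\infty}}=\sup_{\lambda>0}\lambda\, d_f(\lambda)^{1/p_2},
\]
so that, denoting $A:=\|f\|_{L^{p_1,\infty}}$ and $B:=\|f\|_{L^{p_2,\infty}}$, one has the two pointwise bounds $d_f(\lambda)\le (A/\lambda)^{p_1}$ and $d_f(\lambda)\le (B/\lambda)^{p_2}$ for every $\lambda>0$.

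The key step is then to split the integral defining $\|f\|_{L^{p,q}}$ at the level $\lambda_0$ where the two bounds balance, namely $(A/\lambda_0)^{p_1}=(B/\lambda_0)^{p_2}$, which gives $\lambda_0 = (A^{p_1}/B^{p_2})^{1/(p_1-p_2)}$; for $\lambda\le\lambda_0$ the bound $d_f(\lambda)\le (B/\lambda)^{p_2}$ is the smaller one (since $p_2>p$, the integral $\int_0^{\lambda_0}\lambda^{q-1}(B/\lambda)^{p_2 q/p}\,\d\lambda$ converges at $0$), and for $\lambda\ge\lambda_0$ the bound $d_f(\lambda)\le (A/\lambda)^{p_1}$ is the smaller one (since $p_1<p$, the integral $\int_{\lambda_0}^\infty\lambda^{q-1}(A/\lambda)^{p_1 q/p}\,\d\lambda$ converges at $\infty$). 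Carrying out the two elementary power-law integrals produces, up to constants depending only on $p_1,p_2,p,q$, a bound of the form $C\,\lambda_0^{q(1-p_2/p)}B^{p_2 q/p}+C\,\lambda_0^{q(1-p_1/p)}A^{p_1 q/p}$, and substituting the value of $\lambda_0$ and simplifying the exponents — using $1/p=\theta/p_1+(1-\theta)/p_2$, which is exactly what makes the powers of $A$ and $B$ collapse to $A^{\theta q}B^{(1-\theta)q}$ — yields $\|f\|_{L^{p,q}}^q\le C A^{\theta q}B^{(1-\theta)q}$, i.e. \eqref{Eq:Inter1}. For the case $q=\infty$ the same splitting at $\lambda_0$ applied directly to $\sup_\lambda \lambda d_f(\lambda)^{1/p}$ gives the result without any integration.

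I do not expect a genuine obstacle here; this is a classical consequence of the $K$-functional description of Lorentz spaces as a real interpolation scale (cf. \cite{grafakos2008classical}), and the only care needed is bookkeeping the exponents so that the convergence conditions $p_1<p<p_2$ are used in the right places and the algebraic identity for $\theta$ is invoked to simplify the final product. If one prefers, an even shorter route is to quote directly that $(L^{p_1,\infty},L^{p_2,\infty})_{\theta,q}=L^{p,q}$ with equivalence of norms, which is standard; but since the paper is otherwise self-contained at the level of elementary Lorentz-space manipulations, I would present the distribution-function splitting argument in full as above.
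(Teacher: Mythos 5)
Your strategy — bound the distribution function by the two weak-type estimates, split the integral defining $\|f\|_{L^{p,q}}$ at the crossover level $\lambda_0$, compute the two power-law integrals, and use $1/p=\theta/p_1+(1-\theta)/p_2$ to collapse the exponents — is precisely the proof in the paper. However, your assignment of the two bounds to the two regions is swapped, and the integrals you exhibit as evidence of convergence in fact diverge. Since $p_1<p_2$, for $\lambda<\lambda_0$ the pointwise minimum of $(A/\lambda)^{p_1}$ and $(B/\lambda)^{p_2}$ is $(A/\lambda)^{p_1}$, and for $\lambda>\lambda_0$ it is $(B/\lambda)^{p_2}$ — the opposite of what you wrote. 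Concretely, $\int_0^{\lambda_0}\lambda^{q-1}(B/\lambda)^{p_2q/p}\,\d\lambda$ has integrand $\sim\lambda^{q-1-p_2q/p}$ near $0$, which is integrable there if and only if $p>p_2$, false in our setting; and $\int_{\lambda_0}^\infty\lambda^{q-1}(A/\lambda)^{p_1q/p}\,\d\lambda$ converges at $\infty$ only if $p<p_1$, also false. The correct pairing is the $p_1$ bound on $(0,\lambda_0)$, integrable near $0$ because $p>p_1$, and the $p_2$ bound on $(\lambda_0,\infty)$, integrable at $\infty$ because $p<p_2$; this is exactly what the hypothesis $p_1<p<p_2$ is for. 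Notably, the final expression you write down, $C\,\lambda_0^{q(1-p_2/p)}B^{p_2q/p}+C\,\lambda_0^{q(1-p_1/p)}A^{p_1q/p}$, is the output of the correct pairing, so this is a local slip in the explanatory parentheticals rather than a flaw in the plan; once the two regions are swapped, the argument goes through and coincides with the paper's.
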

	\begin{proof}
		Let $N_1=\|f\|_{L^{p_1,\infty}}$ and $ N_2=\|f\|_{L^{p_2,\infty}}$. Then 
		\[
		  \l|\{f>t\}\r| \leq \frac{N_1^{p_1}}{t^{p_1}} ~ \mbox{ and }~ \l|\{f>t\}\r| \leq \frac{N_2^{p_2}}{t^{p_2}}. 
		\]
		Thus, by definition, we derive 
		\begin{equation*}
			\begin{aligned}
				\|f\|_{L^{p,q}}=& p^{\frac{1}{q}} \l( \int_0^\infty t^{q-1} \l| \{|f|>t\}\} \r|^{\frac{q}{p}} ~ \d t \r)^{\frac{1}{q}}\\
				=& p^{\frac{1}{q}} \l( \int_0^\infty t^{q-1} \l(N_1^{p_1} t^{-{p_1}} \wedge N_2^{p_2} t^{-{p_2}}\r)^{\frac{q}{p}} \d t \r)^{\frac{1}{q}}\\
				\leq& p^{\frac{1}{q}} \l[ N_1^{qp_1/p} \int_{0}^{\l(\frac{N_2^{p_2}}{N_1^{p_1}}\r)^{\frac{1}{p_2-p_1}}} t^{q-1-\frac{qp_1}{p}} \d t +N_2^{qp_2/p} \int_{\l(\frac{N_2^{p_2}}{N_1^{p_1}}\r)^{\frac{1}{p_2-p_1}}}^{\infty} t^{q-1-\frac{qp_2}{p}} \d t \r]^{\frac{1}{q}}\\
				\leq & C(p,p_1,p_2,q) N_1^\theta N_2^{1-\theta}, 
			\end{aligned}
		\end{equation*}
		where $\theta=\frac{1/p-1/p_2}{1/p_1-1/p_2}$. 
	\end{proof}

	Recall that $\Lambda = (-\Delta)^{1/2}$. For any $s\geq 0$, $1\leq p<\infty$ and $q>0$, the homogeneous Lorentz-Bessel space is defined by 
	\[
	  \dot H^{s}_{p,q}:= \{f\in \sS'(\mR^d): \Lambda^s f\in L^{p,q}\},
	\]
	and 
	\[
	    \|f\|_{\dot H^{s}_{p,q}}:= \|\Lambda^s f\|_{L^{p,q}}.
	\]
	
	\smallskip
	
	The following fractional Gagliardo-Nirenberg type estimate in Lorentz spaces involving Besov-H\"older norms is useful in the proof of \Cref{Thm:LqLp}. 
	\begin{proposition}[Fractional Gagliardo-Nirenberg type estimate]\label{Prop:GNI}
    Let $1<p, p_1<\infty$, $1\leq q, q_1 \leq \infty$, $0<\alpha<\sigma\leq s<\infty$ and 
			\[
			\theta= \frac{\sigma-\alpha}{s-\alpha} = \frac{p_1}{p}=\frac{q_1}{q}\in (0,1]. 
			\]
			Then 
			\begin{equation}\label{Eq:NE}
				\begin{aligned}
					\|\Lambda^\sigma u\|_{L^{p,q}} \leq C \|\Lambda^s u\|_{L^{p_1, q_1}}^\theta \|u\|_{\dot{B}^\alpha_{\infty,\infty}}^{1-\theta}, \quad \forall u\in \dot{H}^{s}_{p_1, q_1}\cap \dot{B}^\alpha_{\infty,\infty}, 
				\end{aligned}
			\end{equation} 
			where $\dot B^\alpha_{\infty,\infty}$ is the  homogeneous Besov space (see    \cite{bahouri2011fourier}). 
	\end{proposition}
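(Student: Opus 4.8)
\textbf{Proof plan for Proposition \ref{Prop:GNI}.} The plan is to reduce the Lorentz-space Gagliardo--Nirenberg inequality to its $L^\infty$ counterpart via a Littlewood--Paley (dyadic) decomposition together with a real-interpolation argument, rather than attempting a direct pointwise or Fourier-multiplier estimate. First I would introduce the homogeneous Littlewood--Paley blocks $\dot\Delta_j u$, $j\in\mZ$, so that $u=\sum_j \dot\Delta_j u$ in $\sS'/\mathcal{P}$, and recall the Bernstein inequalities: for each $j$, $\|\Lambda^{\sigma}\dot\Delta_j u\|_{L^r}\asymp 2^{j\sigma}\|\dot\Delta_j u\|_{L^r}$ (with the analogous equivalence in any $L^{r,s}$ since the Fourier multiplier acts as a convolution with an $L^1$ kernel, uniformly after rescaling). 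The definition of $\dot B^\alpha_{\infty,\infty}$ gives $\|\dot\Delta_j u\|_{L^\infty}\le 2^{-j\alpha}\|u\|_{\dot B^\alpha_{\infty,\infty}}$ for all $j$, and the hypothesis $u\in \dot H^s_{p_1,q_1}$ controls $\|\Lambda^s\dot\Delta_j u\|_{L^{p_1,q_1}}$ in an $\ell^{q_1}$-summed (or, at worst, supremum) sense.

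\textbf{Main interpolation step.} The key point is that the condition $\theta=\frac{\sigma-\alpha}{s-\alpha}=\frac{p_1}{p}=\frac{q_1}{q}$ is exactly the scaling that allows one to interpolate, frequency by frequency, between the high-regularity bound in $L^{p_1,q_1}$ and the Besov--H\"older bound in $L^\infty$. Concretely, for each $j$ I would write, using that $L^{p,q}=(L^{p_1,q_1},L^\infty)_{1-\theta,\,?}$ in the appropriate real-interpolation sense (equivalently, a direct computation with the distribution function of $\dot\Delta_j u$ exactly as in the proof of \Cref{Lem:Inter1}),
\begin{equation*}
\|\dot\Delta_j u\|_{L^{p,q}}\le C\|\dot\Delta_j u\|_{L^{p_1,q_1}}^{\theta}\,\|\dot\Delta_j u\|_{L^\infty}^{1-\theta}.
\end{equation*}
Applying $\Lambda^\sigma$, using Bernstein to trade $2^{j\sigma}=2^{j\theta s}2^{j(1-\theta)\alpha}$ (valid precisely because $\sigma=\theta s+(1-\theta)\alpha$), and the two a priori bounds above, I obtain
\begin{equation*}
\|\Lambda^\sigma\dot\Delta_j u\|_{L^{p,q}}\le C\bigl(2^{js}\|\dot\Delta_j u\|_{L^{p_1,q_1}}\bigr)^{\theta}\,\bigl(2^{-j\alpha}\|u\|_{\dot B^\alpha_{\infty,\infty}}\bigr)^{(1-\theta)}\cdot 2^{-j\alpha(1-\theta)}\cdot 2^{j\alpha(1-\theta)},
\end{equation*}
so that the right-hand side is $C a_j^{\theta}\,\|u\|_{\dot B^\alpha_{\infty,\infty}}^{1-\theta}$ with $a_j:=\|\Lambda^s\dot\Delta_j u\|_{L^{p_1,q_1}}$. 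Summing in $j$ and invoking the almost-orthogonality of the blocks (Minkowski/$\ell^q$-$\ell^{q_1}$ with H\"older in $j$, again using $q_1/q=\theta$), together with the equivalence $\bigl\|(\,\|\Lambda^s\dot\Delta_j u\|_{L^{p_1,q_1}})_j\bigr\|_{\ell^{q_1}}\lesssim \|\Lambda^s u\|_{L^{p_1,q_1}}$, yields
\begin{equation*}
\|\Lambda^\sigma u\|_{L^{p,q}}\le C\|\Lambda^s u\|_{L^{p_1,q_1}}^{\theta}\,\|u\|_{\dot B^\alpha_{\infty,\infty}}^{1-\theta},
\end{equation*}
which is \eqref{Eq:NE}.

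\textbf{Expected main obstacle.} The routine parts are the Bernstein inequalities in Lorentz spaces and the frequency-localized interpolation bound, which is a direct distribution-function computation. The delicate point is assembling the dyadic pieces back into a global $L^{p,q}$ estimate for $\Lambda^\sigma u$: one cannot simply sum $\|\Lambda^\sigma\dot\Delta_j u\|_{L^{p,q}}$ in $\ell^1$ because that would cost a Besov norm on the output side rather than an $L^{p,q}$ norm. The correct device is that for $1<p<\infty$ the homogeneous Triebel--Lizorkin space $\dot F^0_{p,2}$ coincides with $L^p$ (with the Lorentz variant $L^{p,q}$ handled by real interpolation in the second index), so one controls $\|\Lambda^\sigma u\|_{L^{p,q}}$ by the $L^{p,q}$-norm of the square function $\bigl(\sum_j |\Lambda^\sigma\dot\Delta_j u|^2\bigr)^{1/2}$; the per-frequency bound above then has to be fed into this square-function estimate, for which one uses that the sequence $(2^{-j\alpha(1-\theta)})_j$ together with the interpolated blocks lies in the right mixed-norm space. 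Tracking the three coupled exponent constraints simultaneously through this square-function argument — and checking that the hypothesis $\alpha<\sigma\le s$ guarantees convergence of the relevant geometric series at both ends — is where the real care is needed; everything else is bookkeeping.
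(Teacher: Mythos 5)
Your approach is genuinely different from the paper's, but it has a gap at the reassembly step that you flag as "delicate" without actually closing. The paper does not interpolate block by block in the norm. Instead it produces a \emph{pointwise} estimate: for a fixed cut-off frequency $k$, the low-frequency part $\sum_{j\le k}\dot\Delta_j\Lambda^\sigma u(x)$ is bounded by $C\,2^{k(\sigma-\alpha)}\|u\|_{\dot B^\alpha_{\infty,\infty}}$ using only the Besov bound, while the high-frequency part $\sum_{j>k}\dot\Delta_j\Lambda^\sigma u(x)$ is bounded by $C\,2^{-k(s-\sigma)}(\cM\Lambda^s u)(x)$ by absorbing each block's kernel into the Hardy--Littlewood maximal function. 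Optimizing over $k$ \emph{at each point} $x$ yields $|\Lambda^\sigma u(x)|\lesssim[(\cM\Lambda^s u)(x)]^\theta\|u\|_{\dot B^\alpha_{\infty,\infty}}^{1-\theta}$, after which the Lorentz inequality \eqref{Eq:NE} is immediate from the identity $\|f^\theta\|_{L^{p,q}}=\|f\|_{L^{\theta p,\theta q}}^\theta$ and the Lorentz boundedness of $\cM$ (\Cref{Prop:Lorentz}(iv)). No block-wise summation ever occurs, which is precisely what makes the proof short.

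The gap in your plan is concrete. Your per-frequency interpolation gives a bound on the \emph{norms} $\|\Lambda^\sigma\dot\Delta_j u\|_{L^{p,q}}\lesssim a_j^\theta\,\|u\|_{\dot B^\alpha_{\infty,\infty}}^{1-\theta}$ with $a_j=\|\Lambda^s\dot\Delta_j u\|_{L^{p_1,q_1}}$. Summing these in $\ell^q$ produces a $\dot B^0_{p,q}$-type norm on the left, not $\|\Lambda^\sigma u\|_{L^{p,q}}$, and on the right you invoke the inequality $\|(a_j)_j\|_{\ell^{q_1}}\lesssim\|\Lambda^s u\|_{L^{p_1,q_1}}$, which is the embedding $L^{p_1,q_1}\hookrightarrow\dot B^0_{p_1,q_1}$ (Lorentz base) and is \emph{false} in general: it requires $q_1\ge\max(p_1,2)$, which is not assumed here (and indeed cannot be, since the proposition is applied in the paper with $q_1=\infty$ on one occasion but also with small $q$ values such that the conjugate constraints fail). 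The square-function alternative you sketch at the end does not repair this, because the Triebel--Lizorkin reduction $\|\Lambda^\sigma u\|_{L^{p,q}}\approx\|(\sum_j|\Lambda^\sigma\dot\Delta_j u|^2)^{1/2}\|_{L^{p,q}}$ needs a \emph{pointwise} per-block bound to be fed into the $\ell^2$-in-$j$ sum inside the spatial norm, whereas what you derived is a bound on the block's Lorentz norm; one cannot interchange the spatial and frequency summations in the way the square-function argument requires without additional input. The paper's maximal-function device is exactly the pointwise substitute that makes this work: it trades the per-block Lorentz norms for a single maximal function that dominates the entire high-frequency tail at each point, after which there is no reassembly problem at all.
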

	\begin{remark}
			\Cref{Prop:GNI} implies that if $j, m\in \mN$, $0<j<m$, and $1<p_1<p<\infty$ such that $\alpha= {(jp-mp_1)}/{(p-p_1)}\in (0,1)$, then 
			\begin{equation}\label{eq:gni}
				\|\nabla^j u\|_{L^p} \leq C\|\nabla^m u\|_{L^{p_1}}^{\theta} [u]_{\alpha}^{1-\theta} ~ \mbox{ with }\theta={p_1}/{p}\in (0,1),  
			\end{equation}
            where $[u]_{\alpha}$ is the H\"older seminorm of $u$. Estimate \eqref{eq:gni} was first proved by Nirenberg in \cite{nirenberg1966extended}. 
	\end{remark}
	
    \begin{proof}[Proof of \Cref{Prop:GNI}] 
	We only need to consider the case that $\sigma<s$, i.e., $\theta\in (0,1)$. Let $\dot{\Delta}_j$ and $\dot{\Delta}'_j$ denote the homogeneous dyadic blocks (see \cite{bahouri2011fourier}) given by cutoff functions $\varphi$ and $\varphi'$, respectively, and $\varphi=\varphi \varphi'$. Then 
			\begin{align*}
				\dot{\Delta}_j \Lambda^\sigma u =  (\Lambda^\sigma \dot{\Delta}_j) \dot{\Delta}'_j u = 2^{j\sigma} \phi_j* (\dot{\Delta}'_j u), 
			\end{align*}
			where 
            \[
                \phi^\sigma:= \sF^{-1}(|\cdot|^\sigma \varphi) ~\mbox{ and }~ \phi_j^\sigma(x)=2^{dj} \phi^\sigma(2^j x).
            \]
            For the low-frequency part, by the above identity, 
			\begin{align*}
				\l| \sum_{j\leq k} \dot{\Delta}_j \Lambda^\sigma u(x) \r|  \leq C\sum_{j\leq k}  2^{j\sigma} \|\dot{\Delta}'_j u\|_\infty \leq C\sum_{j\leq k} 2^{j(\sigma-\alpha)} \|u\|_{\dot{B}^\alpha_{\infty,\infty}}. 
			\end{align*}
			For the high-frequency part, we have 
			\begin{equation}\label{eq:high}
				\begin{aligned}
					\left| \sum_{j>k} \dot{\Delta}_j \Lambda^{\sigma} u(x)\right| \leq& \sum_{j>k}\left|(\Lambda^{\sigma-s}\dot{\Delta}_{j})\Lambda^{s} u(x)\right|= \sum_{j > k} 2^{-j(s-\sigma)} \l|\phi^{\sigma-s}_j*\left(\Lambda^{s} u\right)(x)\r| \\
                    \leq& C\left(\sum_{j > k} 2^{-j(s-\sigma)}\right)\left(\cM\Lambda^{s} u\right)(x) \leq C 2^{-k(s-\sigma)}\left(\cM\Lambda^{s} u\right)(x), 
				\end{aligned}
			\end{equation}
            where we also used the fact that $|\phi^{\alpha}_j * f(x)| \leq C(\varphi, d, \alpha) \cM f(x)$.  
            Thus, we obtain  
			\begin{equation}\label{eq:lambda1}
				\left|\Lambda^{\sigma} u(x)\right| \leq C 2^{k (\sigma-\alpha)} \|u\|_{\dot{B}^\alpha_{\infty,\infty}}+C 2^{-k(s-\sigma)}\left(\cM\Lambda^{s} u\right)(x). 
			\end{equation}
			Choosing 
			\[
			k\approx (s-\alpha)^{-1}\log_2 \l( \frac{(\cM \Lambda^s u)(x)}{\|u\|_{\dot{B}^\alpha_{\infty,\infty}}}\r) 
			\] 
			and using \eqref{eq:lambda1},  we obtain that 
			\[
			|\Lambda^\sigma u(x)| \leq [(\cM\Lambda^s u)(x)]^{\theta} \|u\|_{\dot{B}^\alpha_{\infty,\infty}}^{1-\theta}, \quad \theta= \frac{\sigma-\alpha}{s-\alpha}. 
			\]
			Therefore, by \eqref{Eq:ftheta} and \eqref{Eq:Max}, and noting that $\theta p=p_1>1$ and $\theta q=q_1\geq 1$, 
			\begin{equation*}
				\begin{aligned}
					\|\Lambda^\sigma u\|_{L^{p,q}} \leq& \|\cM\Lambda^s u\|_{L^{\theta p,\theta q}}^\theta \|u\|_{\dot{B}^\alpha_{\infty,\infty}}^{1-\theta}\\
                    \leq& \|\Lambda^s u\|_{L^{p_1, q_1}}^\theta \|u\|_{\dot{B}^\alpha_{\infty,\infty}}^{1-\theta} ~ \mbox{ with }~ \theta= \frac{\sigma-\alpha}{s-\alpha}=\frac{p_1}{p}=\frac{q_1}{q}, 
			\end{aligned}
			\end{equation*}  
    which yields \eqref{Eq:NE}. 
	\end{proof}

    The following Lady\v{z}enskaja-type estimate for Lorentz space plays a crucial role in the proof of our main results when $d=2$. 
	\begin{lemma} [Lady\v{z}enskaja-type estimate]
		Let $d\geq 2$. For any $u\in W^{1,2}(\mR^d)$, it holds that 
		\begin{equation}\label{Eq:Lady}
			\|u\|_{L^{\frac{2d}{d-1},2}(\mR^d)} \leq C\|\nabla u\|_{L^2(\mR^d)}^{\frac{1}{2}} \|u\|_{L^2(\mR^d)}^{\frac{1}{2}}. 
		\end{equation}
	\end{lemma}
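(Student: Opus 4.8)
The plan is to deduce the inequality from the boundedness of the Riesz potential $I_{1/2}$ (\Cref{Lem:Sob}) together with a Cauchy--Schwarz interpolation on the Fourier side, which has the advantage of treating $d=2$ and $d\geq 3$ uniformly. The key observation is that the target exponent is exactly the one produced by a half-derivative gain: taking $s=\tfrac12$ and $p=2$ in \Cref{Lem:Sob} (admissible precisely because $d\geq 2$ forces $2\in(1,2d)=(1,d/s)$), one gets $\tfrac1q=\tfrac12-\tfrac1{2d}=\tfrac{d-1}{2d}$, i.e. $q=\tfrac{2d}{d-1}$, with the second Lorentz index equal to $p=2$, so
\[
\|I_{1/2}f\|_{L^{\frac{2d}{d-1},2}}\leq C\|f\|_{L^2},\qquad f\in L^2.
\]

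First I would reduce the claim to the estimate $\|u\|_{L^{\frac{2d}{d-1},2}}\leq C\|\Lambda^{1/2}u\|_{L^2}$, where $\Lambda=(-\Delta)^{1/2}$. Indeed, for $u\in W^{1,2}(\mR^d)$ both $\widehat u$ and $|\xi|\widehat u$ lie in $L^2$, so Cauchy--Schwarz in the frequency variable gives $\int_{\mR^d}|\xi|\,|\widehat u(\xi)|^2\,\d\xi\leq \big(\int|\widehat u|^2\big)^{1/2}\big(\int|\xi|^2|\widehat u|^2\big)^{1/2}$, which by Plancherel reads
\[
\|\Lambda^{1/2}u\|_{L^2}^2\leq \|u\|_{L^2}\,\|\nabla u\|_{L^2};
\]
in particular $\Lambda^{1/2}u\in L^2$. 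Applying $I_{1/2}$ to $f:=\Lambda^{1/2}u$, the Fourier multipliers $|\xi|^{-1/2}$ and $|\xi|^{1/2}$ cancel, so $I_{1/2}(\Lambda^{1/2}u)=c_d\,u$ for a dimensional constant $c_d$, and the displayed boundedness of $I_{1/2}$ yields $\|u\|_{L^{\frac{2d}{d-1},2}}\leq C\|\Lambda^{1/2}u\|_{L^2}$. Combining the two displays gives \eqref{Eq:Lady}.

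I do not expect a serious obstacle here; the only points needing a line of care are the justification that $\Lambda^{1/2}u\in L^2$ (handled by the Cauchy--Schwarz bound above, or by a density argument starting from $u\in C_c^\infty(\mR^d)$) and the bookkeeping of constants in the identity $I_{1/2}\circ\Lambda^{1/2}=c_d\,\mathrm{Id}$, which I would absorb into $C$. For comparison, when $d\geq 3$ one may instead argue without fractional operators: the Sobolev--Lorentz embedding $\|u\|_{L^{\frac{2d}{d-2},2}}\leq C\|\nabla u\|_{L^2}$ (again \Cref{Lem:Sob}, now with $s=1$) combined with the Lorentz interpolation inequality \eqref{Eq:Inter1}, using the relation $\tfrac{d-1}{2d}=\tfrac12\cdot\tfrac12+\tfrac12\cdot\tfrac{d-2}{2d}$ together with $L^2\hookrightarrow L^{2,\infty}$ and $L^{\frac{2d}{d-2},2}\hookrightarrow L^{\frac{2d}{d-2},\infty}$, reproduces the inequality in those dimensions; but the half-derivative route is the one I would write up, since it also covers the critical case $d=2$ relevant to the Biot--Savart law.
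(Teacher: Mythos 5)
Your proof is correct and takes essentially the same route as the paper: apply the Lorentz boundedness of the Riesz potential (Lemma \ref{Lem:Sob} with $s=\tfrac12$, $p=2$) to reduce to $\|u\|_{L^{2d/(d-1),2}}\leq C\|\Lambda^{1/2}u\|_{L^2}$, then interpolate $\|\Lambda^{1/2}u\|_{L^2}\leq\|\nabla u\|_{L^2}^{1/2}\|u\|_{L^2}^{1/2}$ via Cauchy--Schwarz on the Fourier side. You spell out the steps (admissibility of the exponents, the identity $I_{1/2}\Lambda^{1/2}=c_d\,\mathrm{Id}$, and the $L^2$-membership of $\Lambda^{1/2}u$) that the paper leaves implicit, but the underlying argument is the same.
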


	\begin{proof}
		Using \eqref{Eq:Sob}, we derive 
		\begin{align*}
			\|u\|_{L^{\frac{2d}{d-1},2}(\mR^d)} {\leq}& C \|\Lambda^{\frac{1}{2}} u\|_{L^2(\mR^d)}
   {\leq}C\|\Lambda  u\|_{L^2(\mR^d)}^{\frac{1}{2}} \|u\|_{L^2(\mR^d)}^{\frac{1}{2}}, 
   \end{align*} 
   which yields  \eqref{Eq:Lady}. 
	\end{proof} 
 
    The following lemma contains the refined version of Poincar\'e's estimates. Let 
    \[
		\fint_{A} f : = \frac{1}{|A|} \int f ~\mbox{ and }~  \avert f \avert_{L^p(A)}:= \l( \fint_{A} |f|^p \r)^{\frac{1}{p}}. 
	\]
	
    \begin{lemma}[Poincar\'e-type estimates]
    	Let $0\leq \varphi \in C_c^\infty(B_1)$ such that $\int_{B_1} \varphi >0$. Given $R>0$, set $\varphi_R=\varphi(\cdot/R)$. Let 
    	\[
    	\underline{u}= \l(\int_{\mR^d} \varphi_R\r)^{-1} \int_{\mR^d} u\varphi_R~~\mbox{ and }~~  \bar{u}= u- \underline{u}. 
    	\] 
        Then, the following holds: 
        \begin{enumerate}[(i)]
            \item For any \(q\in [1,\infty)\), 
            \begin{equation}\label{Eq:Poincare1}
                \avert\bar u\avert_{L^q(B_R)} \leq C R \avert\nabla u\avert_{L^q(B_R)}.
            \end{equation}
            \item For any $q\in [1,d)$ and $p\in [q, \frac{dq}{d-q}]$, \begin{equation}\label{Eq:Poincare2}
    	    \avert\bar u\avert_{L^p(B_R)} \leq C R \avert\nabla u\avert_{L^q(B_R)}.  
    	\end{equation}
        \end{enumerate}
        Here the constant $C$ only depends on $d, p, q$ and $\varphi$.  
    \end{lemma}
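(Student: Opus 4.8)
The plan is to reduce everything to the unit ball $B_1$ by scaling, apply the classical Poincar\'e--Wirtinger and Sobolev--Poincar\'e inequalities there, and then check that replacing the ordinary mean by the $\varphi_R$-weighted mean $\underline u$ only changes constants.

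First I would rescale. Setting $v(y):=u(Ry)$ one has $\nabla v(y)=R\,\nabla u(Ry)$, and since $\varphi_R(Ry)=\varphi(y)$ the weighted mean is unchanged: $\underline v=\bigl(\int\varphi\bigr)^{-1}\int v\varphi=\underline u$, so $\bar v(y)=v(y)-\underline v=\bar u(Ry)$. Changing variables $x=Ry$ and using $|B_R|=R^d|B_1|$ gives
\[
\avert\bar u\avert_{L^p(B_R)}=\avert\bar v\avert_{L^p(B_1)},\qquad \avert\nabla u\avert_{L^q(B_R)}=R^{-1}\,\avert\nabla v\avert_{L^q(B_1)}.
\]
Hence both \eqref{Eq:Poincare1} and \eqref{Eq:Poincare2} are equivalent to their $R=1$ counterparts, the factor $R$ on the right-hand side being exactly what this dilation produces; it suffices to treat $R=1$.

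Next, writing $(u)_{B_1}:=\fint_{B_1}u$ for the ordinary mean, I would observe that
\[
\underline u-(u)_{B_1}=\Bigl(\int_{\mR^d}\varphi\Bigr)^{-1}\int_{B_1}\bigl(u-(u)_{B_1}\bigr)\varphi,
\]
so that, since $0\le\varphi\in C_c^\infty(B_1)$ is bounded with $\int\varphi>0$, H\"older's inequality yields $\bigl|\underline u-(u)_{B_1}\bigr|\le C(\varphi,q)\,\avert u-(u)_{B_1}\avert_{L^q(B_1)}$ for every $q\in[1,\infty)$. Consequently, for any $p\ge1$,
\[
\avert\bar u\avert_{L^p(B_1)}\le\avert u-(u)_{B_1}\avert_{L^p(B_1)}+\bigl|\underline u-(u)_{B_1}\bigr|\le C\bigl(\avert u-(u)_{B_1}\avert_{L^p(B_1)}+\avert u-(u)_{B_1}\avert_{L^q(B_1)}\bigr).
\]
For part (i) I would take $p=q$ and invoke the Poincar\'e--Wirtinger inequality $\avert u-(u)_{B_1}\avert_{L^q(B_1)}\le C\avert\nabla u\avert_{L^q(B_1)}$ on the bounded Lipschitz domain $B_1$, valid for all $q\in[1,\infty)$. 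For part (ii), since $|B_1|<\infty$ Jensen's inequality gives $\avert u-(u)_{B_1}\avert_{L^p(B_1)}\le\avert u-(u)_{B_1}\avert_{L^{q^*}(B_1)}$ whenever $p\le q^*:=dq/(d-q)$, and the Sobolev--Poincar\'e inequality then bounds the right-hand side by $C\avert\nabla u\avert_{L^q(B_1)}$. In both cases the display above yields $\avert\bar u\avert_{L^p(B_1)}\le C\avert\nabla u\avert_{L^q(B_1)}$, and undoing the scaling of the first step finishes the argument.

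There is no real obstacle here; the only points needing care are the bookkeeping of the normalized (averaged) norms $\avert\cdot\avert$ under dilation in the first step, and the elementary fact used in the second step that the $\varphi$-weighted mean $\underline u$ differs from the ordinary mean by a quantity already controlled by the Poincar\'e seminorm. Everything else is the textbook (Sobolev--)Poincar\'e inequality on the fixed ball $B_1$.
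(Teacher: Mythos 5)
Your proof is correct, but it takes a genuinely different route from the paper's. The paper first uses the Sobolev embedding $W^{1,q}(B_1)\hookrightarrow L^p(B_1)$ to reduce both parts to the single base estimate $\|\bar u\|_{L^q(B_1)}\le C\|\nabla u\|_{L^q(B_1)}$, and then proves that base estimate by a compactness--contradiction argument in the style of Evans (\S5.8.1): a would-be counterexample sequence $u_k$ with $\underline u_k=0$, $\|u_k\|_{L^q}=1$, $\|\nabla u_k\|_{L^q}\to0$ converges (via Rellich) to a constant whose $\varphi$-mean is zero, hence to $0$, contradicting the normalization. You instead avoid compactness entirely: you quantify the difference $|\underline u-(u)_{B_1}|\le C\avert u-(u)_{B_1}\avert_{L^q(B_1)}$ and then invoke the classical Poincar\'e--Wirtinger and Sobolev--Poincar\'e inequalities on $B_1$ directly. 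Both arguments are sound and short; yours is more explicit (it tracks the dependence on $\varphi$ and yields an in-principle computable constant, and it cleanly isolates the only role of $\varphi$ as shifting the mean by a controlled amount), whereas the paper's is ``softer'' and reproves the Poincar\'e inequality with the weighted mean from scratch rather than reducing it to the unweighted statement. One point worth double-checking in your version, which you handle correctly, is that the Sobolev--Poincar\'e inequality is needed up to the critical exponent $q^*=dq/(d-q)$ including the endpoint, which holds on the ball; and the reduction of the general case to $R=1$ by dilation, which you verify carefully for the averaged norms $\avert\cdot\avert$ and the weighted mean.
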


    \begin{proof}
    	 By scaling, we may assume that $R=1$. For any \(q\in [1,\infty)\) and \(p\in [q, \frac{dq}{d-q}]\), using  Sobolev's estimate, we have 
    	\[
    	\|\bar u\|_{L^p(B_1)}\leq C \|\bar u\|_{W^{1,q}(B_1)}\leq C \|\bar u\|_{L^q(B_1)}+ C \|\nabla \bar u\|_{L^q(B_1)}. 
    	\]
    	Thus, it suffices to prove 
    	\[
    	\|\bar u\|_{L^q(B_1)}\leq C \|\nabla u\|_{L^q(B_1)}, \quad q\in [1,\infty). 
    	\] 
    	Arguing as in the proof of \cite[Theorem 1 in Section 5.8.1]{evans2010partial}, and assuming that the above estimate does not hold, we infer that there exists a sequence $u_k$ such that 
    	\[
    	\underline{u}_k = 0, \quad \left\| {u}_k \right\|_{{L^q}(B_1)} = 1 \ \mbox{ and }\  \|  \nabla u_k \|_{L^{q} (B_1)} \leqslant \frac{1}{k}.
    	\]
    	Thus, there exist a subsequence ${\{ {u_{{k_j}}}\} _{j \geqslant 1}} \subseteq {\{ {u_k}\} _{k \geqslant 1}}$ and $u \in L^q(B_1)$ such that $ u_{k_j} \to u$ in $L^q(B_1)$ and $\underline{u}=0$. Moreover, $\nabla u_{k_j}\rightharpoonup \nabla u$ in $L^q(B_1)$ and 
    	\[
    	\|\nabla u\|_{L^q(B_1)} \leq \liminf_{j\to\infty} \|\nabla u_{k_j}\|_{L^q(B_1)} =0.  
    	\] 
    Therefore, \(u\) is a constant in \(B_1\). 
    Taking into account $\underline{u}=\int_{B_1} u \varphi=0$, we infer that  $u=0$, which however contradicts the fact that $\|u\|_{L^q(B_1)}=1$.
    \end{proof}

\section*{Acknowledgements}
Funded by the Deutsche Forschungsgemeinschaft (DFG, German Research Foundation)-Project-ID 317210226-SFB 1283. D. Zhang is also grateful for the NSFC grants (No. 12271352, 12322108) and Shanghai Frontiers Science Center of Modern Analysis. The research of G. Zhao is also supported by the NSFC grants (Nos. 12288201, 12271352, 12201611) and National Key Research and Development Program of China (No. 2024YFA1013503). 

The authors would like to thank Zimo Hao and Dejun Luo for valuable discussions and suggestions.


\begin{thebibliography}{10}

\bibitem{bahouri2011fourier}
Hajer Bahouri, Jean-Yves Chemin, and Rapha{\"e}l Danchin.
\newblock {\em Fourier analysis and nonlinear partial differential equations},
  volume 343.
\newblock Springer Science \& Business Media, 2011.

\bibitem{Barbu2024}
Viorel Barbu.
\newblock Nonlinear {F}okker-{P}lanck equtions with singular integral drifts
  and {M}ckean-{V}lasov {SDE}s.
\newblock {\em arXiv preprint arXiv: 2410.09822}, 2024.

\bibitem{barbu2023uniqueness}
Viorel Barbu and Michael R\"{o}ckner.
\newblock Uniqueness for nonlinear {F}okker-{P}lanck equations and for
  {M}c{K}ean-{V}lasov {SDE}s: the degenerate case.
\newblock {\em J. Funct. Anal.}, 285(4):Paper No. 109980, 37, 2023.

\bibitem{BR2024nonlinear}
Viorel Barbu and Michael R\"ockner.
\newblock {\em Nonlinear {F}okker-{P}lanck flows and their probabilistic
  counterparts}, volume 2353 of {\em Lecture Notes in Mathematics}.
\newblock Springer, Cham, [2024] \copyright 2024.

\bibitem{barbu2023ns}
Viorel Barbu, Michael R{\"o}ckner, and Deng Zhang.
\newblock Uniqueness of distributional solutions to the 2{D} vorticity
  {N}avier-{S}tokes equation and its associated nonlinear {M}arkov process.
\newblock {\em arXiv preprint arXiv:2309.13910, J. Eur. Math. Soc., to appear},
  2025+.

\bibitem{beck2019stochastic}
Lisa Beck, Franco Flandoli, Massimiliano Gubinelli, and Mario Maurelli.
\newblock Stochastic {ODE}s and stochastic linear {PDE}s with critical drift:
  regularity, duality and uniqueness.
\newblock {\em Electron. J. Probab.}, 24:Paper No. 136, 72, 2019.

\bibitem{bogachev2015fokker}
Vladimir~I. Bogachev, Nicolai~V. Krylov, Michael R{\"o}ckner, and Stanislav~V
  Shaposhnikov.
\newblock {\em {F}okker-{P}lanck-Kolmogorov Equations}, volume 207.
\newblock American Mathematical Soc., 2015.

\bibitem{BCV22}
Tristan Buckmaster, Maria Colombo, and Vlad Vicol.
\newblock Wild solutions of the {N}avier-{S}tokes equations whose singular sets
  in time have {H}ausdorff dimension strictly less than 1.
\newblock {\em J. Eur. Math. Soc.}, 24(9):3333--3378, 2022.

\bibitem{BV19}
Tristan Buckmaster and Vlad Vicol.
\newblock Nonuniqueness of weak solutions to the {N}avier-{S}tokes equation.
\newblock {\em Ann. of Math. (2)}, 189(1):101--144, 2019.

\bibitem{de2022wasserstein}
Paul-Eric Chaudru~de Raynal and Noufel Frikha.
\newblock Well-posedness for some non-linear {SDE}s and related {PDE} on the
  {W}asserstein space.
\newblock {\em J. Math. Pures Appl. (9)}, 159:1--167, 2022.

\bibitem{chen2017heat}
Zhen-Qing Chen, Eryan Hu, Longjie Xie, and Xicheng Zhang.
\newblock Heat kernels for non-symmetric diffusion operators with jumps.
\newblock {\em Journal of Differential Equations}, 263(10):6576--6634, 2017.

\bibitem{CL22}
Alexey Cheskidov and Xiaoyutao Luo.
\newblock Sharp nonuniqueness for the {N}avier-{S}tokes equations.
\newblock {\em Invent. Math.}, 229(3):987--1054, 2022.

\bibitem{cheskidov2023nonuniqueness}
Alexey Cheskidov and Xiaoyutao Luo.
\newblock {$L^2$}-critical nonuniqueness for the 2{D} {N}avier-{S}tokes
  equations.
\newblock {\em Ann. PDE}, 9(2):Paper No. 13, 56, 2023.

\bibitem{evans2010partial}
Lawrence~C. Evans.
\newblock {\em Partial Differential Equations}.
\newblock The American Mathematical Society, 2010.

\bibitem{fabes1986new}
Eugene~B. Fabes and Daniel~W. Stroock.
\newblock A new proof of {M}oser's parabolic {H}arnack inequality using the old
  ideas of {N}ash.
\newblock {\em Arch. Rational Mech. Anal.}, 96(4):327--338, 1986.

\bibitem{FHM14}
Nicolas Fournier, Maxime Hauray, and St\'{e}phane Mischler.
\newblock Propagation of chaos for the 2{D} viscous vortex model.
\newblock {\em J. Eur. Math. Soc. (JEMS)}, 16(7):1423--1466, 2014.

\bibitem{galeati2025almost}
Lucio Galeati.
\newblock Almost-everywhere uniqueness of {L}agrangian trajectories for 3{D}
  {N}avier--{S}tokes revisited.
\newblock {\em J. Math. Pures Appl. (9)}, 200:Paper No. 103723, 2025.

\bibitem{Isabelle2005uniqueness}
Isabelle Gallagher and Thierry Gallay.
\newblock Uniqueness for the two-dimensional {N}avier-{S}tokes equation with a
  measure as initial vorticity.
\newblock {\em Math. Ann.}, 332(2):287--327, 2005.

\bibitem{gallagher2005uniqueness}
Isabelle Gallagher, Thierry Gallay, and Pierre-Louis Lions.
\newblock On the uniqueness of the solution of the two-dimensional
  {N}avier-{S}tokes equation with a {D}irac mass as initial vorticity.
\newblock {\em Math. Nachr.}, 278(14):1665--1672, 2005.

\bibitem{giaquinta1982partial}
Mariano Giaquinta and Michael Struwe.
\newblock On the partial regularity of weak solutions of nonlinear parabolic
  systems.
\newblock {\em Math. Z.}, 179(4):437--451, 1982.

\bibitem{giga1988two}
Yoshikazu Giga, Tetsuro Miyakawa, and Hirofumi Osada.
\newblock Two-dimensional {N}avier-{S}tokes flow with measures as initial
  vorticity.
\newblock {\em Arch. Rational Mech. Anal.}, 104(3):223--250, 1988.

\bibitem{grafakos2008classical}
Loukas Grafakos.
\newblock {\em Classical {F}ourier {A}nalysis}.
\newblock Springer, 2008.

\bibitem{grafner2024weak}
Lukas Gr{\"a}fner and Nicolas Perkowski.
\newblock Weak well-posedness of energy solutions to singular {SDE}s with
  supercritical distributional drift.
\newblock {\em arXiv preprint arXiv:2407.09046}, 2024.

\bibitem{hao2023second}
Zimo Hao, Michael R{\"o}ckner, and Xicheng Zhang.
\newblock Second order fractional mean-field {SDE}s with singular kernels and
  measure initial data.
\newblock {\em accepted by Ann. Probab.}, 2025+.

\bibitem{hao2023sdes}
Zimo Hao and Xicheng Zhang.
\newblock {SDE}s with supercritical distributional drifts.
\newblock {\em arXiv preprint arXiv:2312.11145}, 2023.

\bibitem{hunt1965extension}
Richard~A. Hunt.
\newblock An extension of the {M}arcinkiewicz interpolation theorem to
  {L}orentz spaces.
\newblock {\em Bull. Amer. Math. Soc.}, 70:803--807, 1964.

\bibitem{kinzebulatov2025strong}
Damir Kinzebulatov and Kodjo~Raphael Madou.
\newblock Strong solutions of {SDE}s with singular (form-bounded) drift via
  {R}oeckner-{Z}hao approach.
\newblock {\em Stochastics and Dynamics}, 2025+.

\bibitem{kinzebulatov2022heat}
Damir Kinzebulatov and Yuliy~A. Sem\"enov.
\newblock Heat kernel bounds for parabolic equations with singular
  (form-bounded) vector fields.
\newblock {\em Math. Ann.}, 384(3-4):1883--1929, 2022.

\bibitem{krylov2001heat}
Nicolai~V. Krylov.
\newblock The heat equation in {$L_q((0,T),L_p)$}-spaces with weights.
\newblock {\em SIAM J. Math. Anal.}, 32(5):1117--1141, 2001.

\bibitem{krylov2021stochastic2}
Nicolai~V. Krylov.
\newblock On stochastic equations with drift in {$L_d$}.
\newblock {\em Ann. Probab.}, 49(5):2371--2398, 2021.

\bibitem{krylov2021strong}
Nicolai~V. Krylov.
\newblock On strong solutions of {I}t\^o's equations with {$\sigma\in W^1_d$}
  and {$b\in L_d$}.
\newblock {\em Ann. Probab.}, 49(6):3142--3167, 2021.

\bibitem{krylov2023diffusion}
Nicolai~V. Krylov.
\newblock On diffusion processes with drift in {$L_{d+1}$}.
\newblock {\em Potential Anal.}, 59(3):1013--1037, 2023.

\bibitem{krylov2023strong}
Nicolai~V. Krylov.
\newblock On strong solutions of {I}t\^o's equations with {$D\sigma$} and {$b$}
  in {M}orrey classes containing {$L_d$}.
\newblock {\em Ann. Probab.}, 51(5):1729--1751, 2023.

\bibitem{krylov2025strong}
Nicolai~V. Krylov.
\newblock On weak and strong solutions of time inhomogeneous {I}t\^o's
  equations with {VMO} diffusion and {M}orrey drift.
\newblock {\em Stochastic Process. Appl.}, 179:Paper No. 104505, 23, 2025.

\bibitem{krylov2005strong}
Nicolai~V. Krylov and Michael R\"ockner.
\newblock Strong solutions of stochastic equations with singular time dependent
  drift.
\newblock {\em Probab. Theory Related Fields}, 131(2):154--196, 2005.

\bibitem{LQZZ24}
Yachun Li, Peng Qu, Zirong Zeng, and Deng Zhang.
\newblock Sharp non-uniqueness for the 3{D} hyperdissipative {N}avier-{S}tokes
  equations: beyond the {L}ions exponent.
\newblock {\em J. Math. Pures Appl. (9)}, 190:Paper No. 103602, 64, 2024.

\bibitem{lu2025non}
Huaxiang L{\"u}, Michael R{\"o}ckner, and Xiangchan Zhu.
\newblock Non-uniqueness of (stochastic) {L}agrangian trajectories for {E}uler
  equations.
\newblock {\em arXiv preprint arXiv:2504.16687}, 2025.

\bibitem{ma1992dirichletform}
Zhi~Ming Ma and Michael R\"{o}ckner.
\newblock {\em Introduction to the theory of (nonsymmetric) {D}irichlet forms}.
\newblock Universitext. Springer-Verlag, Berlin, 1992.

\bibitem{mckean1966class}
Henry~P. McKean, Jr.
\newblock A class of {M}arkov processes associated with nonlinear parabolic
  equations.
\newblock {\em Proc. Nat. Acad. Sci. U.S.A.}, 56:1907--1911, 1966.

\bibitem{mishura2016existence}
Yuliya Mishura and Alexander~Yu. Veretennikov.
\newblock Existence and uniqueness theorems for solutions of
  {M}c{K}ean-{V}lasov stochastic equations.
\newblock {\em Theory Probab. Math. Statist.}, (103):59--101, 2020.

\bibitem{mohammed2015sobolev}
Salah-Eldin~A. Mohammed, Torstein~K. Nilssen, and Frank~N. Proske.
\newblock Sobolev differentiable stochastic flows for {SDE}s with singular
  coefficients: applications to the transport equation.
\newblock {\em Ann. Probab.}, 43(3):1535--1576, 2015.

\bibitem{nash1958continuity}
John Nash.
\newblock Continuity of solutions of parabolic and elliptic equations.
\newblock {\em Amer. J. Math.}, 80:931--954, 1958.

\bibitem{nirenberg1966extended}
Louis Nirenberg.
\newblock An extended interpolation inequality.
\newblock {\em Ann. Scuola Norm. Sup. Pisa Cl. Sci. (3)}, 20:733--737, 1966.

\bibitem{oneil1963convolution}
Richard O'Neil.
\newblock Convolution operators and {$L(p,\,q)$} spaces.
\newblock {\em Duke Math. J.}, 30:129--142, 1963.

\bibitem{qian2019parabolic}
Zhongmin Qian and Guangyu Xi.
\newblock Parabolic equations with divergence-free drift in space
  {$L^\ell_tL^q_x$}.
\newblock {\em Indiana Univ. Math. J.}, 68(3):761--797, 2019.

\bibitem{rehmeier2022nonlinear}
Marco Rehmeier and Michael R{\"o}ckner.
\newblock On nonlinear {M}arkov processes in the sense of {M}ckean.
\newblock {\em arXiv preprint arXiv:2212.12424}, 2022.

\bibitem{rockner2021DDSDE}
Michael R\"{o}ckner and Xicheng Zhang.
\newblock Well-posedness of distribution dependent {SDE}s with singular drifts.
\newblock {\em Bernoulli}, 27(2):1131--1158, 2021.

\bibitem{rockner2025strong}
Michael R{\"o}ckner and Guohuan Zhao.
\newblock Sdes with critical time dependent drifts: strong solutions.
\newblock {\em arXiv preprint arXiv:2103.05803}, 2021.

\bibitem{rockner2023weak}
Michael R\"{o}ckner and Guohuan Zhao.
\newblock S{DE}s with critical time dependent drifts: weak solutions.
\newblock {\em Bernoulli}, 29(1):757--784, 2023.

\bibitem{stroock2007multidimensional}
Daniel~W. Stroock and {S}.{R}.~Srinivasa {V}aradhan.
\newblock {\em Multidimensional diffusion processes}.
\newblock Springer, 2007.

\bibitem{veretennikov1980strong2}
Alexander~Ju. Veretennikov.
\newblock Strong solutions and explicit formulas for solutions of stochastic
  integral equations.
\newblock {\em Mat. Sb. (N.S.)}, 111(153)(3):434--452, 480, 1980.

\bibitem{zhang2011stochastic}
Xicheng Zhang.
\newblock Stochastic homeomorphism flows of {SDE}s with singular drifts and
  {S}obolev diffusion coefficients.
\newblock {\em Electron. J. Probab.}, 16:no. 38, 1096--1116, 2011.

\bibitem{zhang2018singular}
Xicheng Zhang and Guohuan Zhao.
\newblock Singular {B}rownian diffusion processes.
\newblock {\em Commun. Math. Stat.}, 6(4):533--581, 2018.

\bibitem{zhang2021stochastic}
Xicheng Zhang and Guohuan Zhao.
\newblock Stochastic {L}agrangian path for {L}eray's solutions of 3{D}
  {N}avier-{S}tokes equations.
\newblock {\em Comm. Math. Phys.}, 381(2):491--525, 2021.

\bibitem{zhao2024existence}
Guohuan Zhao.
\newblock Existence and uniqueness for {M}c{K}ean-{V}lasov equations with
  singular interactions.
\newblock {\em Potential Anal.}, 62(3):625--653, 2025.

\bibitem{zvonkin1974transformation}
Alexander~K. Zvonkin.
\newblock A transformation of the phase space of a diffusion process that will
  remove the drift.
\newblock {\em Mat. Sb. (N.S.)}, 93(135):129--149, 152, 1974.

\end{thebibliography}
\end{document}